\newcommand*{\bracketize}[1]{[#1]}
\setlist[itemize]{topsep=3pt,itemsep=3pt}
\definecolor{myRED}{HTML}{E6332A}
\definecolor{myGRAY}{HTML}{8E8E8E}
\definecolor{myMIDDGRAY}{HTML}{E5E5E5}
\definecolor{myMIDGRAY}{HTML}{E2E2E2}
\definecolor{myDARKGRAY}{HTML}{404647}
\definecolor{myLIGHTGRAY}{HTML}{F9F9F9}
\definecolor{myBLUE}{HTML}{03468F}
\definecolor{myPURPLE}{HTML}{8C54D0}
\definecolor{myGREEN}{HTML}{007355}
\definecolor{myYELLOW}{HTML}{FFD300}
\definecolor{myDARKYELLOW}{HTML}{FF9000}
\numberwithin{equation}{section}
\newtheorem{theorem}{Theorem}[section]
\newtheorem{lemma}[theorem]{Lemma}
\newtheorem{proposition}[theorem]{Proposition}
\newtheorem{corollary}[theorem]{Corollary}
\newtheorem{claim}[theorem]{Claim}
\newtheorem*{theorem*}{Theorem}
\newtheorem*{pushinglemma*}{Pushing Lemma}
\newtheorem*{lemmasep*}{Proposition 7.6}
\newtheorem*{definition*}{Definition}
\newtheorem{thmx}{Theorem}
\newtheorem{thmxc}{Corollary}
\theoremstyle{definition}
\newtheorem{remark}[theorem]{Remark}
\newcommand{\mycomment}[1]{}
\newcommand\sbullet[1][.75]{\mathbin{\vcenter{\hbox{\scalebox{#1}{$\bullet$}}}}}
\newcommand{\sspc}{\hspace*{0.04cm}}
\newcommand{\spc}{\hspace*{0.08cm}}
\newcommand{\pp}{{\sspc\prime}}
\newcommand{\homeo}{\mathrm{Homeo}}
\newcommand{\mcg}{\mathrm{MCG}}
\renewcommand{\O}{O}
\renewcommand{\P}{\mathcal P}
\newcommand{\OO}{\mathcal O}
\newcommand{\geo}{\mathsmaller{\#}}
\newcommand{\R}{\mathbb R}
\newcommand{\N}{\mathbb N}
\newcommand{\Z}{\mathbb Z}
\newcommand{\G}{\mathcal G}
\newcommand{\F}{\mathcal F}
\newcommand{\B}{\mathcal B}
\newcommand{\class}[1]{\big[\hspace*{0.08cm} #1 \hspace*{0.08cm} \big]}
\newcommand{\bclass}[1]{\big[\hspace*{0.08cm} #1 \hspace*{0.08cm} \big]_{\B}}
\newcommand{\orb}{\mathrm{Orb}(f)}
\title[A foliated viewpoint on homotopy Brouwer theory]{A foliated viewpoint on homotopy Brouwer theory}
\author[N. Schuback]{Nelson Schuback}
\address{Sorbonne Université and Université Paris Cité, CNRS, IMJ-PRG, F-75005 Paris, France}
\email{\href{mailto:nelson.schuback@imj-prg.fr}{nelson.schuback@imj-prg.fr}}
\thanks{This project has received funding from the European Union’s Horizon 2020 research and innovation programme under the Marie Skłodowska-Curie grant agreement No 945332. \includegraphics*[scale = 0.03]{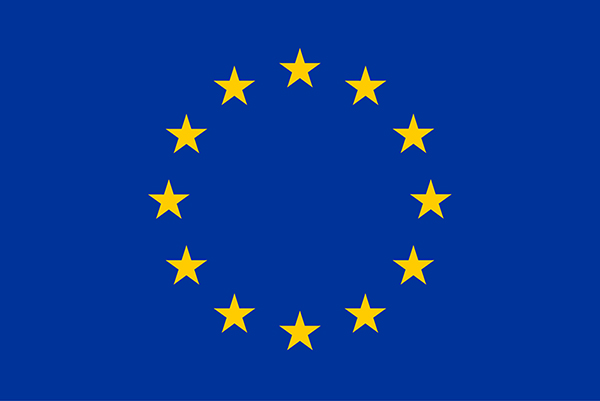}}
\begin{document}

\begin{abstract}
   Brouwer homeomorphisms are fixed-point–free, orientation-preserving homeomorphisms of the plane. In recent years, their dynamics have been mostly studied through two complementary approaches, one introduced by Handel \cite{handel99} and the other by Le Calvez \cite{lec1},\break each offering a distinct perspective on the behavior of Brouwer homeomorphisms. In this work, we present a unified framework that allows Le Calvez's foliated methods to recover, and in some cases improve, the classical results of Handel’s theory.

\end{abstract}

\maketitle
    \bigskip
\quad {\bf Keywords:} Planar homeomorphisms, Homotopy Brouwer theory, Big mapping class groups, Transverse foliations, Transverse trajectories.

\bigskip
\quad {\bf MSC 2020:}  37E30 (Primary); 57K20 (Secondary)

\vspace*{-0cm}

\tableofcontents

\vspace*{-0.4cm}

\setlength{\baselineskip}{1.25\baselineskip}

\section{Introduction}

In his unpublished preprint \cite{Han86}, Handel proposed a description of the dynamics of zero-entropy surface homeomorphisms, claiming that their orbits could be “tracked’’ by a geodesic lamination of the surface \cite[Theorem 2.5]{Han86}. However, questions regarding the completeness of his arguments persist, and to this day there is still no published or widely accepted proof of Handel's claim in its full extent. Despite this, the preprint has been highly influential: many of its ideas and methods have since been developed into independent works. A notable example can be found in \cite{handel99}, where Handel laid the foundations of the so-called Homotopy Brouwer Theory.\break  Since its inception, this theory has shown to be an important tool to study surface homeomorphisms, enabling, for example, the proof of the $C^0$ Arnold conjecture on surfaces \cite{MATSUMOTO2000191}.

As is typical in topological surface dynamics, many of Handel's ideas rely on a profound understanding of the dynamics of Brouwer homeomorphisms, that is, orientation-preserving, fixed-point free, homeomorphisms of the plane. These maps arise naturally in topological surface dynamics, since any (orientation-preserving) surface homeomorphism, when restricted to the complement of its fixed-point set, is lifted to a Brouwer homeomorphism on the universal cover. These maps were named after L. E. J. Brouwer, who first studied them in \cite{Brouwer} as part of his efforts to address Hilbert’s fifth problem. Mistakenly, Brouwer initially believed that any Brouwer homeomorphism $f$ is conjugate to a translation, but he later realized that this was not the case. 
Eventually, his investigations culminated in the \emph{Brouwer Translation Theorem} \cite{Brouwer}, which asserts that through every point of the plane passes a \textit{Brouwer line} $\lambda$ of $f$, which is a properly embedded line $\lambda: \R \longrightarrow \R^2$ dynamically transverse to $f$, in the sense that,
$$f(L(\lambda)) \subset L(\lambda)\quad \text{ and } \quad f^{-1}(R(\lambda)) \subset R(\lambda),$$
where $L(\lambda)$ and $R(\lambda)$ denote the components of \(\R^2\setminus \lambda\) respectively on the left and right of \(\lambda\). 
In particular, this theorem implies that the non-wandering set of $f$ is empty, and that every single orbit $\O(f,x) = \{f^n(x)\}_{n \in \Z}$ is a closed infinite discrete subset of the plane.

 The main purpose of Homotopy Brouwer Theory is to describe the homotopic behavior of a Brouwer homeomorphism $f$ relative to a finite collection of its orbits \(\OO=\{\O_1,\dots,\O_r\}\). It does  so by studying and classifying the \emph{Brouwer mapping class} $\class{f,\sspc \OO}$, which is defined as the isotopy class of $f$ relative to the set $\OO:=\bigcup_{i=1}^r \O_i \subset \R^2$. 
 In \cite{handel99}, Handel introduces a framework, inspired by the ideas of the Nielsen–Thurston machinery, that that enables the classification of all Brouwer mapping classes in the cases $r = 1$ and $r = 2$. This approach was subsequently refined by Le Roux \cite{LEROUX_2017} and later by Bavard \cite{BAVARD_2017}, culminating in classifications of Brouwer mapping classes relative to $r = 3$ and $r = 4$ orbits, respectively. 

 The slow progress in this area can be largely attributed to the intricate and technical nature of Handel's machinery, which heavily relies on the notion of \textit{fitted families} \cite[Section 5]{handel99} and \textit{homotopy translation arcs}---a homotopic analogue of Brouwer's translation arcs. However, 
 recent advances toward a Nielsen–Thurston like classification for surfaces of infinite type \cite{bestvina2023nielsenthurstonclassificationsurfacesinfinite} have reignited interest in the study of big mapping class groups, and in particular, in the subfamily of Brouwer mapping classes within the big mapping class group $\mcg(\R^2 \setminus \Z)$. Therefore, advancements in Homotopy Brouwer theory would likely play an important role in achieving a complete version of a Nielsen–Thurston classification for surfaces of infinite type.

 Since the establishment of Handel's machinery, the framework of Brouwer Theory has been fundamentally reshaped. In his seminal work \cite{lec1}, Le Calvez presented a foliated version of the Brouwer translation theorem, which asserts that there always exists an oriented topological  foliation of the plane \(\F\) whose leaves are Brouwer lines of $f$, called a \textit{transverse foliation} of $f$.\break  This result establishes an entirely new framework to describe the dynamics of Brouwer homeomorphisms by means of trajectories positively transverse $\F$. In its equivariant form \cite{lec2}, this framework serves as the basis for the modern and powerful techniques of Forcing Theory, developed by Le Calvez and Tal \cite{LCTal2018,LeCalvezTal2022}. Among the numerous applications of Forcing Theory, one also finds results in the direction of Handel’s preprint. For example, \cite[Theorem A]{LeCalvezTal2022}, which provides a definitive proof of a key result in the preprint \cite[Theorem 9.1]{Han86}. And more recently, \cite[Corollary G]{garcíasassi2024}, which presents and ergodic theoretical version of the main theorem \cite[Theorem 2.5]{Han86}, proving the existence of a geodesic lamination tracking typical orbits in the zero-entropy case.

 The main objective of this work is to develop a framework that allows the foliated methods developed by Le Calvez to recover, and in some cases extend, the classical results of Homotopy Brouwer Theory, independetly of the Nielsen--Thurston-inspired machinery developed by Handel.

\subsection*{Description of the results}

\quad \ Let $f$ be a Brouwer homeomorphism, and let $\OO = \{\O_{\sspc 1},\ldots,\O_{\sspc r}\}$ be a collection of orbits of $f$. Since every orbit of $f$ is an infinite discrete closed subset of $\R^2$, the surface $\R^2\setminus \OO:=\R^2\setminus \bigcup_{i=1}^r\O_{\sspc i}$ is homeomorphic to the infinite-type surface $\R^2\setminus (\sspc\Z \times \{0\})$, known as the \textit{flute surface} (see \cite{Basmajian1993HyperbolicSF}). Consequently, the surface \(\R^2\setminus \OO\) admits a complete hyperbolic metric of first kind, and the map $f\vert_{\R^2\setminus \OO}$ induces an action $f_\geo$ on the space of geodesics of \(\R^2\setminus \OO\).

\vspace*{0.2cm}
A \textit{geodesic planar line} is a complete geodesic of $\R^2\setminus \OO$ that is a properly embedded line on the whole plane $\R^2$. Given a geodesic planar line $g$, we introduce the following convention:

\begin{itemize}
    \item We say that $g$ \textit{separates} $\OO'\subset \OO$ if each orbit in $\OO'$ lies entirely in either $R(g)$ or $L(g)$, with both $R(g)$ and $L(g)$  containing at least one orbit from $\OO'$ each.
    
    \vspace{0.1cm}
    \item We say that $g$ is \textit{crossed} by $\OO'\subset \OO$ if each orbit in $\OO'$ intersects both $R(g)$ and $L(g)$.
\end{itemize}

\begin{thmx}\label{thmx:structural_thm} There exists a family $\sspc\G$ of pairwise disjoint geodesic planar lines satisfying

    \begin{itemize}
        \item[\textup{\textbf{(i)}} ] Every geodesic in $\G$ is isotopic on  $\R^2\setminus \OO$ to a Brouwer line of $f$.
        
        \item[\textup{\textbf{(ii)}}$\spc$] Every connected component of $\R^2\setminus(\G\cup \OO)\sspc$ is homeomorphic to $\R^2$ or $\R^2\setminus \{0\}.$
        
        \item[\textup{\textbf{(iii)}}] The family $\G$ decomposable into two subfamilies $\G = \mathcal S\sspc \sqcup \sspc\mathcal P$ satisfying
        
\vspace*{0.1cm}

\noindent --- Each geodesic $\sigma \in \mathcal S$ separates $\OO$ and satisfies $f_\geo(\sigma)=\sigma$.

    \vspace*{0.1cm}

       \noindent--- Each geodesic $\lambda \in \mathcal P$ is crossed by some non-empty subset of $\OO$ and satisfies $$\overline{\rule{0pt}{3.5mm}L(f^{n+1}_\geo(\lambda))}\subset L(f^{n}_\geo(\lambda)).$$
       
        \item[\textup{\textbf{(iv)}}] For any two distinct orbits $\O_i,\O_j \in \OO$, the following dichotomy holds:
        
        \vspace*{0.1cm}

         \noindent--- Either there exists a geodesic $\lambda \in \mathcal P$ crossed by $\{\O_i,\O_j\}$.

         \vspace*{0.1cm}
        
        \noindent--- Or there exists a geodesic $\sigma \in \mathcal S$ that separates $\{\O_i,\O_j\}$.
    \end{itemize}
\end{thmx}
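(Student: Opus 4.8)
My plan is to produce $\G$ in two stages. First, build a topological model: a family of pairwise disjoint properly embedded lines of $\R^2$, each disjoint from $\OO$ and isotopic rel $\OO$ to a Brouwer line of $f$, carrying the data of items (ii)--(iv) formulated up to isotopy. Then straighten this model, replacing each line with its unique geodesic representative on the hyperbolic surface $\R^2\setminus\OO$. I would start from a transverse foliation $\F$ of $f$. Its leaves are Brouwer lines, and for a leaf $\phi$ the transverse trajectory of a point $z\in\phi$ leaves $\phi$ positively at time $0$ and can never re-enter $R(\phi)$, so $f(z)\in L(\phi)$ and hence $f(\phi)\subset L(\phi)$; since $f$ commutes with its iterates, each $f^{\sspc n}(\phi)$ is again such a Brouwer line, so $\{f^{\sspc n}(\phi)\}_{n\in\Z}$ is a nest of pairwise disjoint Brouwer lines with $\overline{L(f^{\sspc n+1}(\phi))}\subset L(f^{\sspc n}(\phi))$ for all $n$ --- precisely the nesting required of a member of $\mathcal P$ (and the same holds for any Brouwer line disjoint from its image). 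Likewise, since $f$ is fixed-point free, the transverse trajectory from any $x\in\OO$ to $f(x)$ crosses some leaf at an interior time, and that leaf separates $x$ from $f(x)$.

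\textbf{Reduction to the irreducible case.} Call a Brouwer line $\sigma$ of $f$ reducing if it separates $\OO$ and is isotopic to $f(\sigma)$ rel $\OO$; then its geodesic representative is $f_\geo$-fixed. Since $\OO$ has only $r$ orbits there are finitely many partitions of it, hence finitely many isotopy classes of reducing lines. I would build the topological model by induction on $r$: if a reducing line $\sigma$ exists, isotope $f$ rel $\OO$ to fix $\sigma$, so that $f$ preserves each of the two complementary half-planes --- each homeomorphic to $\R^2$, carrying fewer than $r$ orbits, and on which $f$ restricts to a Brouwer homeomorphism with its own transverse foliation --- apply the inductive hypothesis there, and put $\sigma$ in $\mathcal S$. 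This leaves the \emph{irreducible} case, where $f$ admits no reducing line, so $\mathcal S=\varnothing$ and $\mathcal P$ is to be built from leaves of $\F$.

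\textbf{The irreducible case, and straightening.} The crucial claim is that an irreducible $f$ has every pair of orbits $\O_i,\O_j$ crossed by a common leaf of $\F$. If not, a separation argument in the (simply connected, possibly non-Hausdorff) leaf space of $\F$ produces a leaf $\phi$ with $\O_i\subset R(\phi)$ and $\O_j\subset L(\phi)$; pushing $\phi$ forward, $U:=\bigcap_{n\ge 0}L(f^{\sspc n}(\phi))$ is a nonempty, connected, $f$-invariant open set containing $\O_j$ and disjoint from $\overline{\O_i}$, and a component of $\partial U$ is an $f$-invariant --- hence reducing --- Brouwer line separating $\O_i$ from $\O_j$, a contradiction. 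Granting this, I would assemble a family of pairwise disjoint Brouwer lines disjoint from their images (leaves of $\F$ are of this kind) which (a) contains, for each pair of orbits, one line crossed by both, and (b) separates every two distinct points of $\OO$, with every complementary region of the family together with $\OO$ a disk or a once-punctured disk: this is the topological $\mathcal P$. Straightening the assembled model then yields the theorem: geodesic representatives of pairwise disjoint essential proper lines are again pairwise disjoint with the same complementary combinatorics, giving (i) and (ii); separating members are isotopic to reducing lines, so their geodesics are $f_\geo$-fixed, while crossing members are Brouwer lines disjoint from their images, so the nesting of $\{f^{\sspc n}(\cdot)\}$ transfers to their geodesic iterates, giving (iii); and (iv) is point (a) together with the reduction of the previous paragraph.

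\textbf{Main obstacle.} The heart of the proof is the crucial claim above --- upgrading ``$\O_i,\O_j$ are not crossed by a common leaf of $\F$'' to ``$\O_i,\O_j$ are separated by a reducing line'' --- together with the parallel task of separating \emph{individual} orbit points so as to obtain the once-punctured-disk decomposition. Both hinge on a fine analysis of the leaf space of $\F$ relative to the $r$ transverse trajectories: one must understand its branching (non-Hausdorff) points well enough to locate separating leaves, and must control the frontier of the nested intersection $\bigcap_{n\ge 0}L(f^{\sspc n}(\phi))$ --- its non-emptiness, the fact that a component of it is an honest properly embedded line, and that this line is genuinely $f$-invariant. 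This is presumably where the technical core of the paper lies. A secondary, essentially bookkeeping, difficulty is the inductive reduction: isotoping $f$ to fix a reducing line, transporting a transverse foliation onto each half-plane, and verifying that the models built on the two sides, once reassembled and straightened in the single metric of $\R^2\setminus\OO$, remain pairwise disjoint geodesic planar lines with the correct nesting and $f_\geo$-equivariance.
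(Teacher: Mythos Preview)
Your approach is substantially more complicated than the paper's, and the place you identify as ``the main obstacle'' is a difficulty of your own making that the paper entirely avoids.

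The paper's construction is direct: take a transverse foliation $\F$ that is generic for $\OO$ (each leaf meets at most one point of $\OO$), and let $\G$ be the set of geodesic representatives of \emph{all} essential leaves of $\F$. That is the whole construction --- no induction on $r$, no selective assembly of a topological model, no analysis of $\bigcap_{n\ge 0} L(f^n(\phi))$. The decomposition $\G = \mathcal S \sqcup \mathcal P$ is purely definitional: $\mathcal P$ consists of those geodesic leaves crossed by at least one orbit, $\mathcal S$ of those crossed by none. The $f_\geo$-invariance of each $\sigma \in \mathcal S$ is then a two-line observation: if $\phi^\geo = \sigma$ is crossed by no orbit, the strip $L(\phi)\cap R(f(\phi))$ contains no point of $\OO$, so $\phi$ and $f(\phi)$ are isotopic rel $\OO$ and hence $f_\geo(\sigma)=\sigma$. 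The nesting for $\mathcal P$ comes from applying the Straightening Principle to the disjoint pair $\phi, f(\phi)$. The dichotomy (iv) is read off directly from the foliation.

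Your ``crucial claim'' --- upgrading a separating leaf to a reducing line via the frontier of $U=\bigcap_{n\ge 0} L(f^n(\phi))$ --- is therefore unnecessary, and also genuinely problematic as stated: the nested intersection $U$ need not be open, its frontier need not be a single properly embedded line, and even granting $f(U)=U$ you would still owe an argument that a boundary component is a \emph{Brouwer} line isotopic to its image rel $\OO$. None of this is needed. The conceptual slip is treating $\mathcal S$ as something to be \emph{constructed} (reducing lines found by a limiting process) rather than as the leaves of $\F$ that happen to miss every orbit --- whose $f_\geo$-fixedness is then almost free.
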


Essentially, the family of geodesic leaves $\G$ provides a geometric structure that enables us to track the homotopic dynamics of $\class{f,\OO}$. The subfamily $\mathcal S$ of \emph{separating geodesics} plays a role analogous to that of reducing lines in Handel's theory, as they partition the set of orbits $\OO$ into dynamically independent subsets. On the other hand, the subfamily $\mathcal P$ of \emph{pushing geodesics} describes the homotopic dynamics of $f$, as they are crossed by orbits in $\OO$ and are pushed forward by $f_\geo$ in a manner reminiscent of Brouwer lines.

It is worth noting that the family $\G$ is not necessarily uniquely defined for $\class{f,\OO}$, since it is constructed as a family of geodesic representatives of leaves of a transverse foliation $\F$ of $f$, which always exists but are not uniquely defined \cite{lec1}.

\begin{figure}[h!]
    \center 
    \hspace*{-0.2cm}\begin{overpic}[width=6.6cm, height=3.7cm, tics=10]{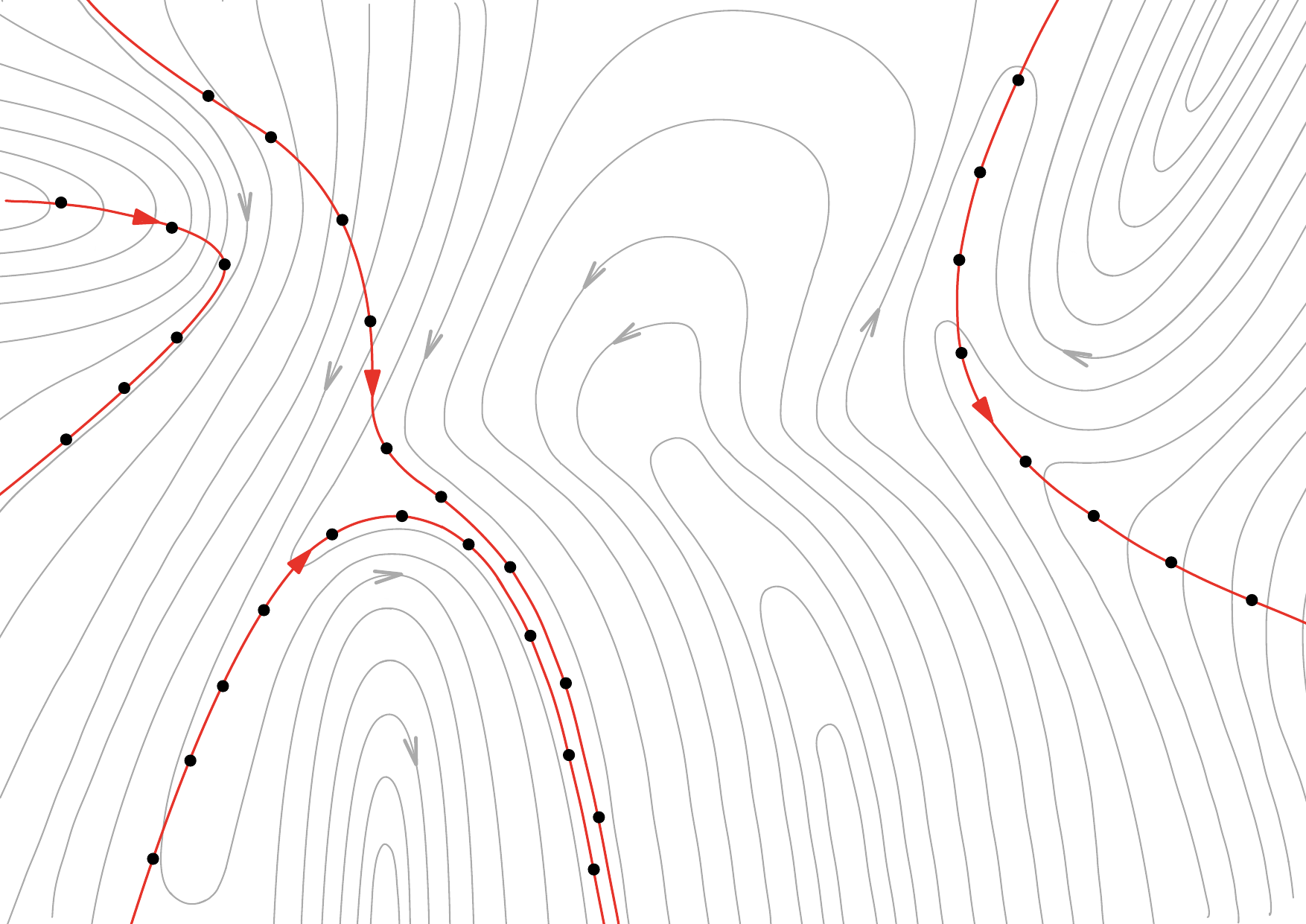}\put (91,1) {\colorbox{white}{\color{gray}\LARGE$\displaystyle \rule{0cm}{0.46cm}\F$}}
\end{overpic}
\hspace*{1.2cm}\begin{overpic}[width=6.6cm, height=3.7cm, tics=10]{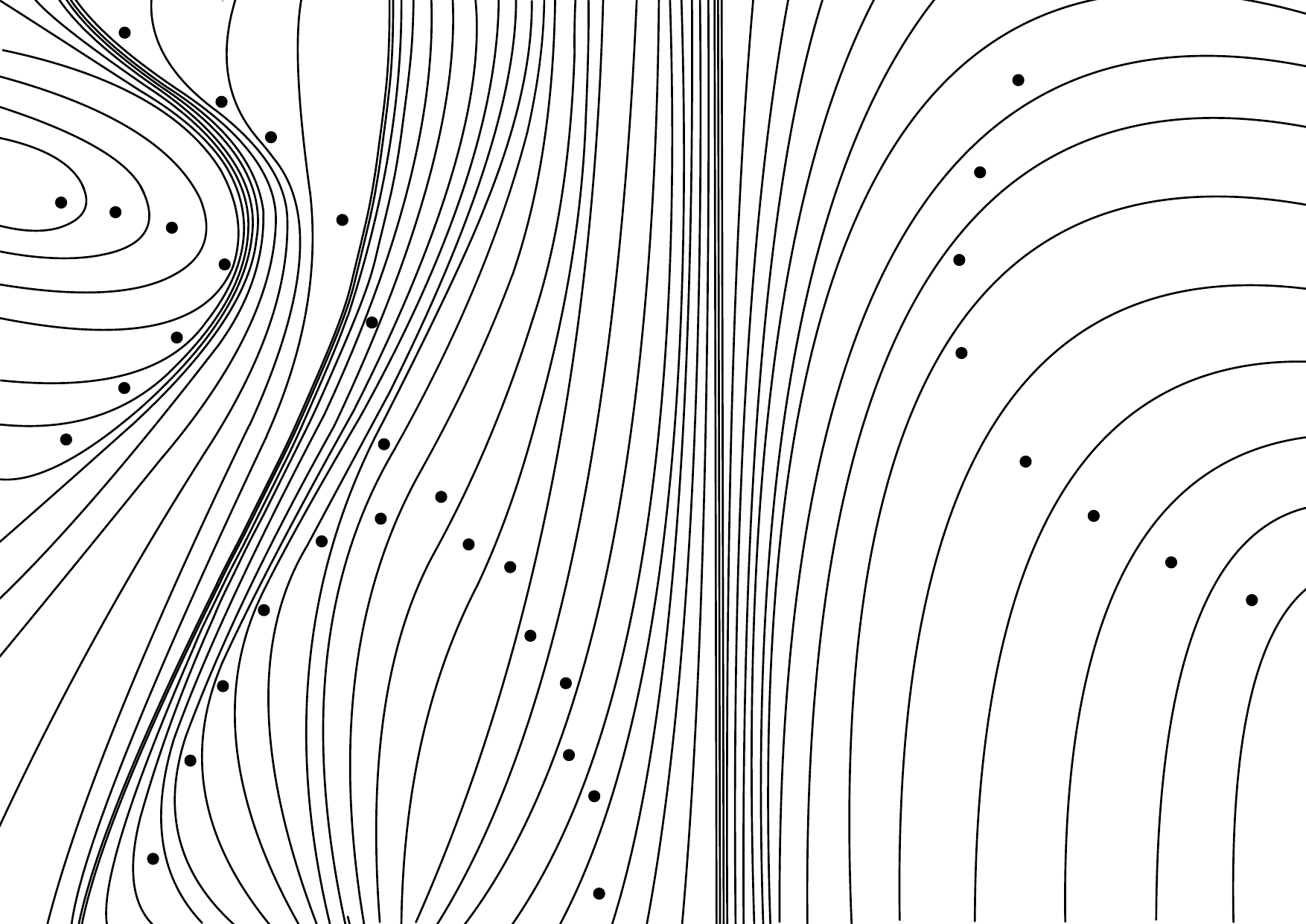}
    \put (90,2) {\colorbox{white}{\color{black}\LARGE$\displaystyle \rule{0cm}{0.48cm}\sspc \G$}}
\end{overpic}
\end{figure}

We now turn to the principal dynamical feature of our foliated framework, the Pushing Lemma. This result desribes the dynamical behavior of pushing-equivalent classes in $\P$ and serves as a key ingredient in establishing our approach independently of Handel’s original machinery.

 Two geodesic leaves $\lambda, \lambda^\pp \in \mathcal P$ are said to be \textit{pushing-equivalent} if they are crossed the same orbits in $\OO$, and no orbit in $\OO$ lies entirely on the connected component of $\R^2\setminus (\lambda \cup \lambda^\pp)$  bounded by $\lambda$ and $\lambda^\pp$. Roughtly speaking, two pushing geodesic leaves are pushing equivalent whenever the region between them has finitely many punctures and is trivially laminated by $\G$.  Note that this relation defines an equivalence relation $\sim$ on the subfamily $\mathcal P\subset \G$.

 \vspace*{0.3cm}

\begin{figure}[h!]
    \center 
    \hspace*{-0.1cm}\begin{overpic}[width=3.3cm, height=3.4cm, tics=10]{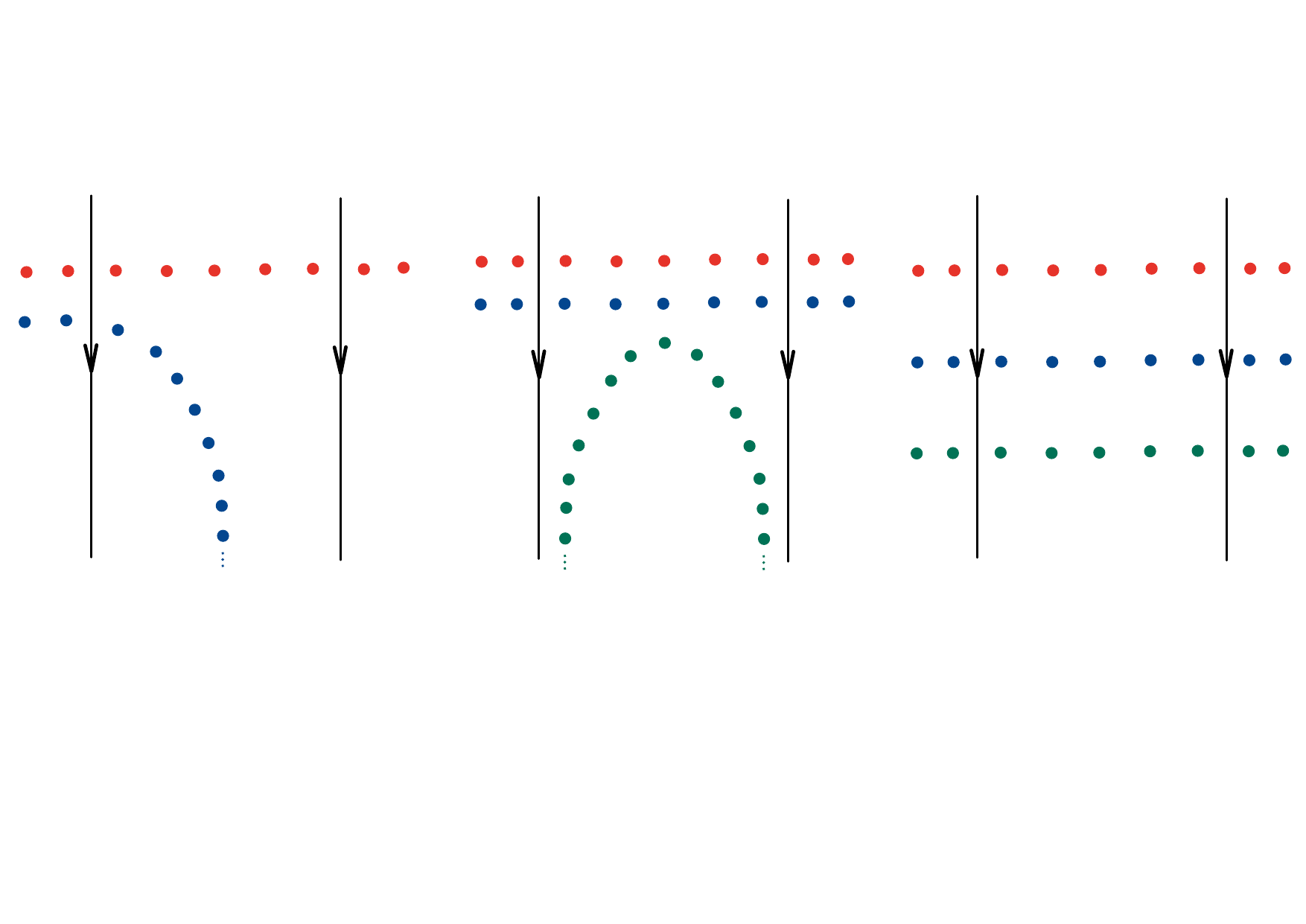}
        \put (34,-15) {{\color{black}\large$\displaystyle \lambda\not\sim\lambda'$}}
        \put (9,95) {{\color{black}\large$\displaystyle \lambda$}}
        \put (66,95) {{\color{black}\large$\displaystyle \lambda'$}}
        \put (-14.5,78.5) {{\color{myRED}\large$\displaystyle \O_{\sspc i}$}}
        \put (-14.5,61) {{\color{myBLUE}\large$\displaystyle \O_{\sspc j}$}}
\end{overpic}
\hspace*{1.5cm}\begin{overpic}[width=3.3cm, height=3.4cm, tics=10]{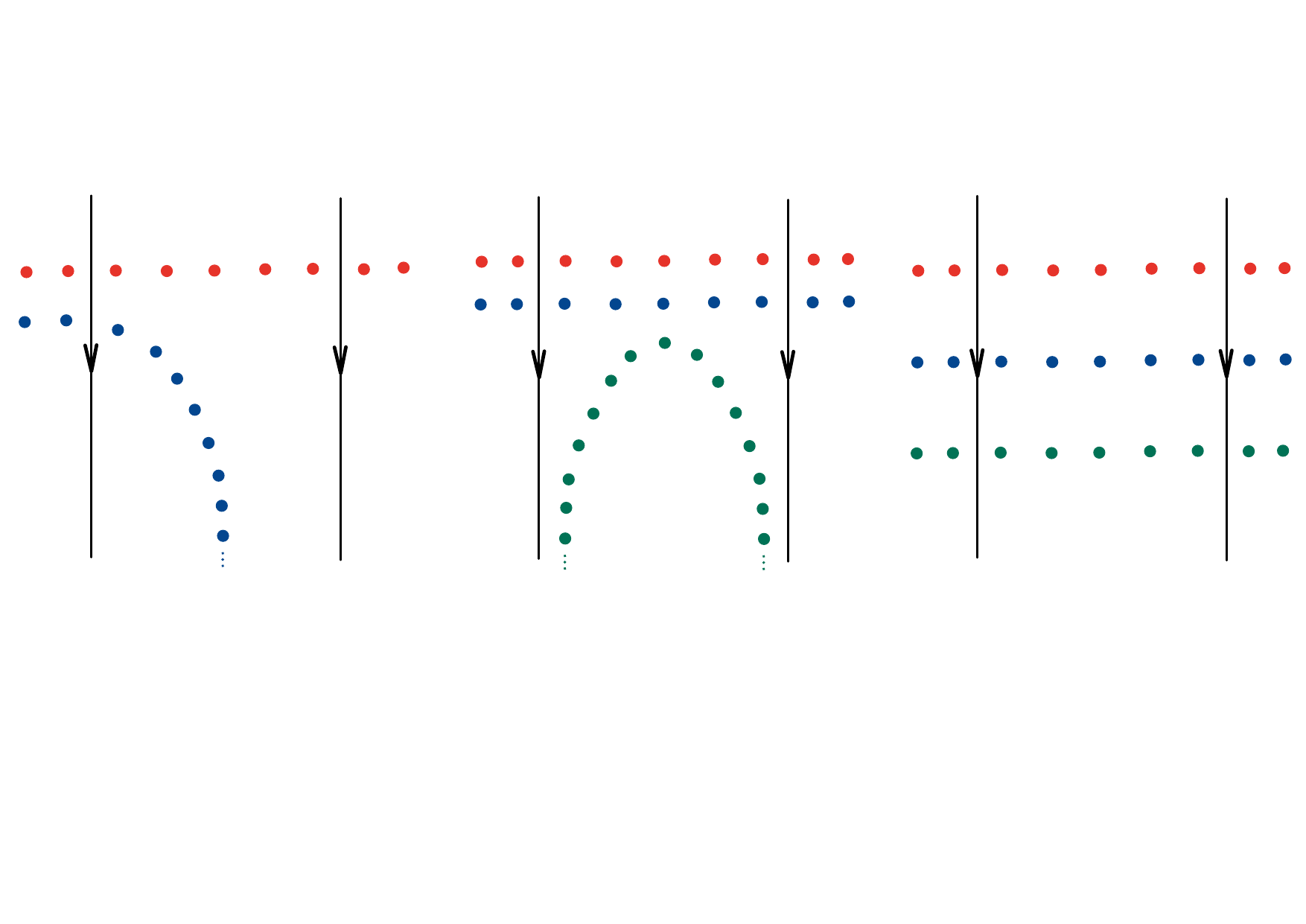}
    \put (9,95) {{\color{black}\large$\displaystyle \lambda$}}
    \put (34,-15) {{\color{black}\large$\displaystyle \lambda\not\sim\lambda'$}}
        \put (66,95) {{\color{black}\large$\displaystyle \lambda'$}}
        \put (-14.6,81.5) {{\color{myRED}\large$\displaystyle \O_{\sspc i}$}}
        \put (-14.6,67.2) {{\color{myBLUE}\large$\displaystyle \O_{\sspc j}$}}
        \put (31,25) {{\color{myGREEN}\large$\displaystyle \O_{\sspc k}$}}
\end{overpic}
\hspace*{1.55cm}\begin{overpic}[width=3.2cm, height=3.4cm, tics=10]{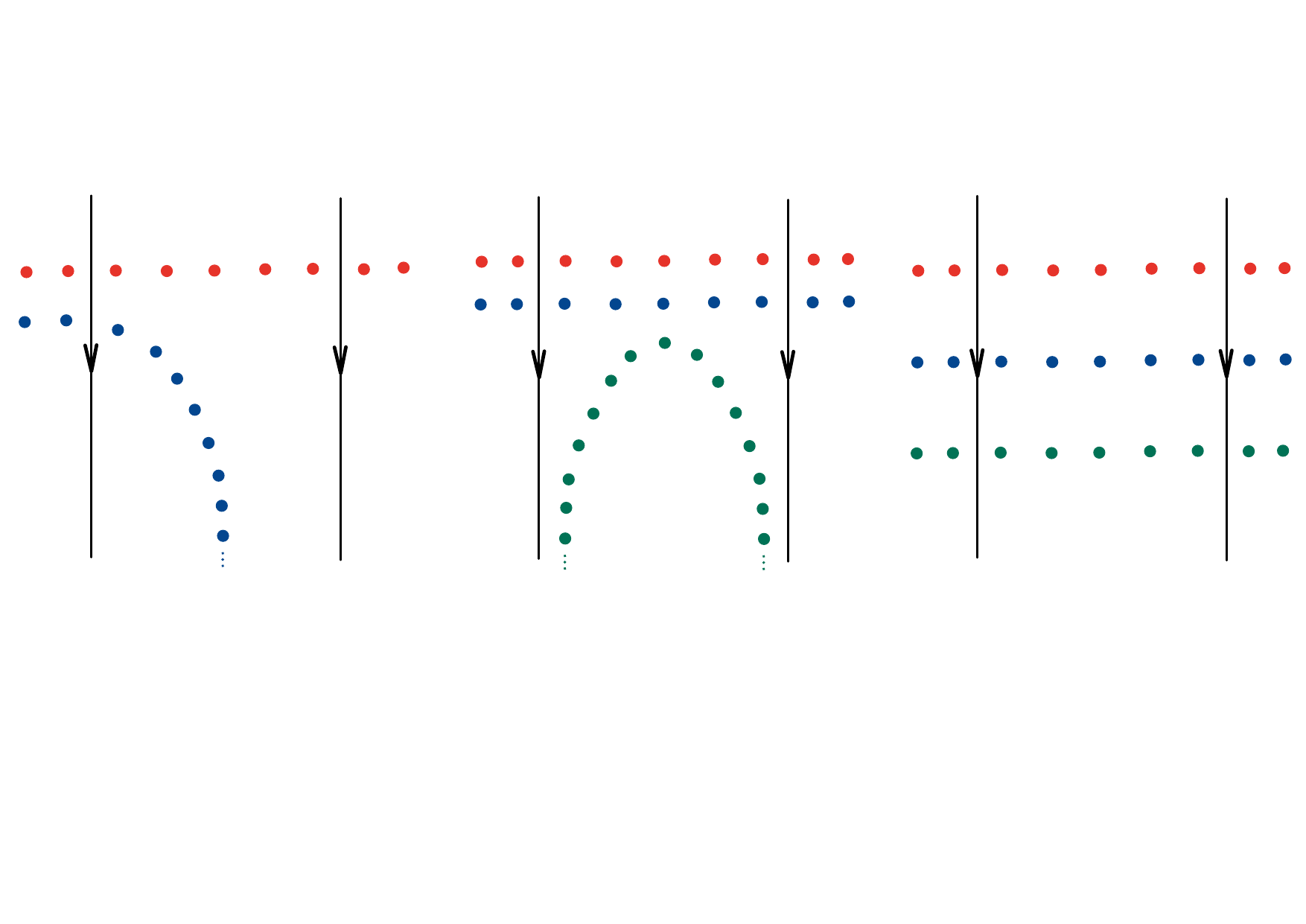}
    \put (9,95) {{\color{black}\large$\displaystyle \lambda$}}
    \put (34,-15) {{\color{black}\large$\displaystyle \lambda\sim\lambda'$}}
        \put (66,95) {{\color{black}\large$\displaystyle \lambda'$}}
        \put (-14,78) {{\color{myRED}\large$\displaystyle \O_{\sspc i}$}}
        \put (-14,53) {{\color{myBLUE}\large$\displaystyle \O_{\sspc j}$}}
        \put (-14,28) {{\color{myGREEN}\large$\displaystyle \O_{\sspc k}$}}
\end{overpic}
\end{figure}

 \vspace*{0.3cm}

\begin{pushinglemma*}
    Two geodesics \(\lambda,\lambda^{\prime} \in \P\) satisfying \(L(\lambda') \subset L(\lambda)\) are pushing-equivalent if, and only if, there exists an integer $N>0$ that satisfies
    \mycomment{-0.07cm}
    $$ L(f^N_\geo(\lambda)) \subset L(\lambda').
    $$
\end{pushinglemma*}

Before we head to our main theorem, we need to establish a result that finds the simplest possible family $\G$ satisfying Theorem \ref{thmx:structural_thm}. For that, we say that a pushing geodesic leaf $\lambda \in \mathcal P$ is 
\vspace*{0.1cm}
\begin{itemize}
    \item[] \textit{Forward-wandering$\spc$:} if the family of iterates $\{f^n_\geo(\lambda)\}_{n>0}$ is locally-finite.
    \vspace*{0.1cm}
    \item[] \textit{Backward-wandering$\spc$:} if the family of iterates $\{f^n_\geo(\lambda)\}_{n<0}$ is locally-finite.
\end{itemize}
\vspace*{0.05cm}
Observe that geodesic leaves in $\mathcal S$ are always fixed by $f_\geo$, and thus they cannot be wandering.

\vspace*{0.1cm}
\begin{thmx}\label{thmx:II-B}
    There exists a family $\sspc\G$ of pairwise disjoint geodesic planar lines such that, in addition to the properties of Theorem \ref{thmx:structural_thm}, the following holds:
\begin{itemize}[leftmargin=1.5cm]
    \item[\textup{\textbf{(W1)}}] For any forward-wandering geodesic $\lambda \in \P$, the subfamily 
    $$\G_{L,\lambda}:=\{\lambda^\pp \in \G \mid \lambda^\pp\subset \overline{\rule{0cm}{0.22cm}L(\lambda)}\}$$
    is forward-invariant under $f_\geo$ and is formed by geodesics pushing-equivalent to $\lambda$.

    \vspace*{0.1cm}
    \item[\textup{\textbf{(W2)}}] Similarly, for any backward-wandering geodesic $\lambda \in \P$, the subfamily
    $$\G_{R,\lambda}:=\{\lambda^\pp \in \G \mid \lambda^\pp\subset \overline{\rule{0cm}{0.22cm}R(\lambda)}\}$$
    is backward-invariant under $f_\geo$ and is formed by geodesics pushing-equivalent to $\lambda$.
\end{itemize}
\end{thmx}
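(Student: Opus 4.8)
The plan is to start from a family $\G = \mathcal S \sqcup \mathcal P$ furnished by Theorem~\ref{thmx:structural_thm} and, for each wandering pushing geodesic, to surgically replace the part of $\G$ lying ``beyond'' it by a rigid, $f_\geo$-equivariant pattern. By the symmetry $f \leftrightarrow f^{-1}$, which interchanges $L$ with $R$ and forward- with backward-wandering geodesics, it is enough to arrange (W1); and since the surgery attached to a forward-wandering $\lambda$ alters $\G$ only inside $\overline{L(\lambda)}$, I would treat one such $\lambda$ at a time, deferring to the end the question of whether the surgeries for the various wandering geodesics can be made mutually compatible.

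Two facts would drive the argument. First, applying the Pushing Lemma to $\lambda$ and its iterate $f^n_\geo(\lambda)$ --- legitimate because $L(f^n_\geo(\lambda)) \subset L(\lambda)$ by Theorem~\ref{thmx:structural_thm}(iii), and taking $N=n$ --- every forward iterate $f^n_\geo(\lambda)$, $n \ge 0$, of a pushing geodesic $\lambda \in \mathcal P$ is pushing-equivalent to $\lambda$, hence crossed by exactly the set $I_\lambda$ of orbits crossing $\lambda$. Second --- and this is the key structural point --- if $\lambda$ is forward-wandering then the ``deep end'' $E_\lambda := \bigcap_{n\ge 0} L(f^n_\geo(\lambda))$ is empty. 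Indeed $E_\lambda$ is closed and $f_\geo$-invariant (the $n=0$ constraint being redundant since $\overline{L(f_\geo(\lambda))}\subset L(\lambda)$); if its topological boundary were nonempty, a boundary point would be approached from the complement, so every small circle about it would be crossed by infinitely many of the bi-infinite lines $f^n_\geo(\lambda)$, contradicting local finiteness; thus $E_\lambda$ is clopen in $\R^2$, and as $\lambda \cap E_\lambda = \emptyset$ this forces $E_\lambda = \emptyset$. Consequently no orbit lies entirely in $L(\lambda)$, and $\overline{L(\lambda)}$ is exhausted by the iterates $\{f^n_\geo(\lambda): n \ge 0\}$ together with the ``necks'' $U_n := \overline{L(f^n_\geo(\lambda))}\setminus L(f^{n+1}_\geo(\lambda))$; moreover $f_\geo(U_n) = U_{n+1}$, and the monotonicity of orbit points across a Brouwer line shows each $U_n$ contains exactly one point of each orbit of $I_\lambda$ and no other orbit point.

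Given this, I would build the refined family over $\overline{L(\lambda)}$ as follows: keep $\{f^n_\geo(\lambda): n\ge 0\}$; cut one neck $U_0$ into $|I_\lambda|$ once-punctured disks by $|I_\lambda|-1$ geodesic planar lines $\mu_1,\dots,\mu_{|I_\lambda|-1}$, each isotopic to a Brouwer line (produced as in the proof of Theorem~\ref{thmx:structural_thm}); and propagate the subdivision by placing $f^n_\geo(\mu_j)$ in $U_n$. The resulting subfamily of $\G$ inside $\overline{L(\lambda)}$ is then sent into itself by $f_\geo$, hence forward-invariant, and every member is pushing-equivalent to $\lambda$: the iterates by the first fact, and each $\mu_j$ because --- every orbit of $I_\lambda$ having exactly one point in $U_0$, with its neighbours in the adjacent regions $R(\lambda)$ and $L(f_\geo(\lambda))$ --- the line $\mu_j$ is crossed by all of $I_\lambda$ and has only finitely many orbit points, and no whole orbit, between itself and $\lambda$. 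Properties (i)--(iv) persist: (i) and (iii) because the new geodesics are again pushing geodesics isotopic to Brouwer lines, so they inherit the nesting of their iterates; (iv) because a pushing or separating geodesic witnessing the dichotomy for a pair of orbits one of which leaves $L(\lambda)$ could not have lain inside $L(\lambda)$ (it would fail to cross, resp. to separate, that pair), while a pair of orbits both inside $L(\lambda)$ is witnessed by $\lambda$ itself.

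The main obstacle I anticipate is performing all these surgeries compatibly --- simultaneously over every forward- and backward-wandering geodesic. The reconciling observation is that if a wandering geodesic $\lambda_1$ lies in the region $\overline{L(\lambda_2)}$ cleaned for another wandering geodesic $\lambda_2$, then by the above $\lambda_1$ is already pushing-equivalent to $\lambda_2$, so their pushing classes coincide and the two surgeries agree on the overlap; hence the cleaned regions are pairwise nested, disjoint, or equal up to a common pushing class, and the construction can be organised consistently, e.g. by ordering the regions by inclusion and proceeding from the outside inwards, or by a Zorn-type argument over families already satisfying (W1)--(W2) on an up-set of wandering geodesics. Verifying that a later surgery never destroys a (W1) or (W2) relation already secured for a larger region, together with the homeomorphism-type and isotopy checks indicated above, is the remaining routine work; the conceptual heart is the emptiness of $E_\lambda$ for wandering $\lambda$.
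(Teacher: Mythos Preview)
Your approach and the paper's share the same geometric picture --- replace what lies beyond a forward-wandering $\lambda$ by the $f_\geo$-orbit of a subdivision of a single neck --- and your argument that $E_\lambda = \bigcap_{n\ge 0} L(f^n_\geo(\lambda)) = \varnothing$ is exactly the content of local finiteness. The implementation, however, differs in an important way. You work directly on the geodesic family $\G$, discarding whatever lay in $L(\lambda)$ and installing $\{f^n_\geo(\lambda),\,f^n_\geo(\mu_j):n\ge 0\}$ by hand. The paper instead works one level below, on the pair $(f,\F)$: by iterated applications of the Straightening Principle it isotopes $f$ through Brouwer homeomorphisms to a limit $f_\infty\in\class{f,\OO}$ for which $f_\infty^{\,n}(\phi)=f^n_\geo(\lambda)$ for all $n\ge 0$, and then builds a \emph{transverse foliation} $\F'$ of $f_\infty$ that agrees with $\F$ on $\overline{R(\phi)}$ and is trivial and $f_\infty$-invariant on $\overline{L(\phi)}$ (Proposition~\ref{lemma:simplification_wandering_leaves}). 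The new $\G'$ is then simply the geodesic lamination of $\F'$, so every clause of Theorem~\ref{thmx:structural_thm} is inherited automatically rather than re-verified; and, more to the point, $\G'$ still arises from a genuine transverse foliation, which is what the proofs of the Pushing Lemma, Lemma~\ref{lemma:limited_winding}, and Theorem~\ref{thmx:II-C} subsequently rely on. Your direct surgery produces (essentially) the same $\G'$ but does not construct the underlying $(\F',f_\infty)$, so the downstream machinery would have to be rebuilt. Your compatibility paragraph is also thinner than it looks: a backward-wandering $\lambda'$ sitting in $L(\lambda)$ forces $\lambda$ itself to be both forward- and backward-wandering, and then the two cleaned half-planes $\overline{L(\lambda)}$ and $\overline{R(\lambda')}$ genuinely overlap --- resolvable, but not by ``proceeding from the outside inwards''.
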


Relying on the Pushing Lemma and the technical methods introduced in \cite{schuback1}, we obtain a complete characterization of wandering geodesic leaves,  which until now could only be derived using Handel’s machinery (see \cite[Case (1) in the proof of Proposition 6.6]{handel99}).

\mycomment{0.04cm}

\begin{thmx}\label{thmx:II-C}
     Every geodesic leaf $\lambda\in \G$ satisfies the following equivalences
     \mycomment{-0.05cm}
     $$
\ \ \lambda  \text{ is forward-wandering } \ \spc \iff \sspc \text{ no orbit in } \OO \text{ lies entirely on } L(\lambda),
$$
\mycomment{-0.6cm}
$$
\ \ \lambda \text{ is backward-wandering }  \iff \sspc \text{ no orbit in } \OO \text{ lies entirely on } R(\lambda).
$$
\end{thmx}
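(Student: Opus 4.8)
The plan is to prove the forward-wandering equivalence; the backward-wandering case then follows by applying the argument to $f^{-1}$ (observing that $f^{-1}_\geo$-wandering corresponds to backward-wandering and that $L,R$ swap roles, up to orientation conventions). First, the direction ``$\lambda$ forward-wandering $\Rightarrow$ no orbit lies entirely in $L(\lambda)$'' should be the easier implication: if some $\O_i\subset L(\lambda)$, then since $\lambda\in\P$ satisfies $\overline{L(f_\geo^{n+1}(\lambda))}\subset L(f_\geo^{n}(\lambda))$ by Theorem~\ref{thmx:structural_thm}(iii), the nested regions $L(f_\geo^n(\lambda))$ all contain $\O_i$. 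I would argue that a point of $\O_i$ (or rather, infinitely many points of $\O_i$, exploiting that $\O_i$ is a discrete closed set that must accumulate only at the ``ends'' of the nested intersection) witnesses an accumulation of the leaves $\{f_\geo^n(\lambda)\}_{n>0}$, contradicting local finiteness. The cleanest way is probably: local finiteness of $\{f_\geo^n(\lambda)\}_{n>0}$ forces $\bigcap_n \overline{L(f_\geo^n(\lambda))}$ to be disjoint from any compact set meeting only finitely many leaves, hence the nested intersection is ``at infinity'' and cannot contain a full orbit, since a full orbit $\O_i$, while going to infinity, still meets every large compact set in finitely many but unboundedly many points — one then needs that the leaves escape every compact set, which is exactly local finiteness of a family of properly embedded lines.

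For the converse, ``no orbit entirely in $L(\lambda)$ $\Rightarrow$ $\lambda$ forward-wandering,'' I would invoke the Pushing Lemma together with Theorem~\ref{thmx:II-B}. Here is the intended chain: assume no orbit of $\OO$ lies entirely in $L(\lambda)$. Using the structural family $\G$ from Theorem~\ref{thmx:II-B}, consider the leaves in $\G$ sitting inside $\overline{L(\lambda)}$. If one can show $\lambda$ is pushing-equivalent to each of its forward iterates $f_\geo^n(\lambda)$ — equivalently, by the Pushing Lemma applied with $\lambda'=f_\geo^n(\lambda)$ (noting $L(f_\geo^n(\lambda))\subset L(\lambda)$ and the existence of $N$ with $L(f_\geo^{n+N}(\lambda))\subset L(f_\geo^n(\lambda))$, which is automatic from (iii)) — then the pushing-equivalence statement says precisely that the region between $\lambda$ and $f_\geo^n(\lambda)$ contains no full orbit and is trivially laminated. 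The hypothesis ``no orbit in $L(\lambda)$'' is what guarantees no orbit gets trapped between $\lambda$ and $f_\geo^n(\lambda)$ for any $n$. Then I would use the technical machinery of \cite{schuback1} (and property (W1) of Theorem~\ref{thmx:II-B}) to upgrade ``each $f_\geo^n(\lambda)$ is pushing-equivalent to $\lambda$'' to genuine local finiteness of the whole forward orbit $\{f_\geo^n(\lambda)\}_{n>0}$: a locally infinite family would produce an accumulation leaf, which — being a limit of pairwise-disjoint pushing-equivalent geodesics all crossed by the same orbits — would have to coincide with a geodesic that is crossed by those orbits yet bounds a region with no full orbit, forcing the family to stabilize, i.e. be finite on compact sets.

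The main obstacle I anticipate is the converse direction, specifically establishing that the region between $\lambda$ and $f_\geo^n(\lambda)$ contains no complete orbit. A priori, even if no orbit lies entirely in $L(\lambda)$, an orbit could have finitely many points in $L(\lambda)$ and still have all of \emph{those} points lying between $\lambda$ and $f_\geo^n(\lambda)$ for small $n$; but as $n\to\infty$ the region $L(\lambda)\setminus L(f_\geo^n(\lambda))$ exhausts $L(\lambda)$, so any point of an orbit in $L(\lambda)$ is eventually captured — yet ``eventually'' is not ``for all $n$,'' and pushing-equivalence of $\lambda$ with $f_\geo^n(\lambda)$ is needed for \emph{every} $n$. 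Resolving this requires a careful argument that the \emph{forward} orbit points in $L(\lambda)$ are all positioned close to $\lambda$ — i.e. that $f$ moves points of such an orbit toward $\lambda$ consistently — which is where the dynamical transversality $f(L(\lambda))\subset L(\lambda)$ of the Brouwer-line representative and the results of \cite{schuback1} on transverse trajectories will do the real work. I would isolate this as a separate claim and prove it first before assembling the equivalence.
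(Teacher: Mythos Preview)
Your easy direction is fine and matches the paper's argument (Remark~\ref{remark:about_thm_II_A_and_II_B}(ii)): if some $\O_i\subset L(\lambda)$ then $\O_i\subset\bigcap_n L(f_\geo^n(\lambda))$, and any arc from a point of $\O_i$ to $\lambda$ meets every $f_\geo^n(\lambda)$, killing local finiteness.

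The converse direction, however, has a genuine gap. Your plan is to show that $\lambda\sim f_\geo^n(\lambda)$ for every $n$ and then ``upgrade'' this to local finiteness via (W1) and an accumulation-leaf argument. The first step is true but vacuous: it is already built into Lemma~\ref{lemma:geodesic_leaves}(i), and the Pushing Lemma applied to the pair $(\lambda,f_\geo^n(\lambda))$ gives you nothing new (take $N=n$). The second step is where the content lies, and neither of your proposed tools works. Invoking (W1) is circular: (W1) is a statement \emph{about} forward-wandering leaves, so you cannot use it to establish that $\lambda$ is forward-wandering. Your sketch of an accumulation argument (``a locally infinite family would produce an accumulation leaf\ldots forcing the family to stabilize'') is not a proof; nothing prevents the nested sets $L(f_\geo^n(\lambda))$ from having nonempty intersection even when no full orbit lies there --- see the illustrative example in Remark~\ref{remark:about_thm_II_A_and_II_B}(iii), which is included precisely to show that the structural properties of Theorem~\ref{thmx:structural_thm} alone do not suffice.

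The paper's actual argument is quite different and substantially harder. It proceeds by induction on the number of orbits crossing $\lambda$. In the base case $r=1$, one straightens the picture so that the pushing-equivalent geodesic leaves $\{\lambda_k\}_{k\ge0}$ in $\G\vert_{\overline{L(\lambda)}}$ are isotopic to a locally finite family of vertical lines; the Pushing Lemma then gives $L(f_\geo^{M_k}(\lambda))\subset L(\lambda_k)$ for each $k$, and since $\bigcap_k L(\lambda_k)=\varnothing$ one concludes $\bigcap_n L(f_\geo^n(\lambda))=\varnothing$. For the induction step, one splits the crossing orbits into those that eventually leave the pushing-equivalence class of $\lambda$ (indices $I_{\mathrm{out}}$) and those that do not ($I_\omega$); the leaves $\eta_i$ through which the $I_{\mathrm{out}}$-orbits exit are crossed by at most $r-1$ orbits, hence forward-wandering by induction, and one applies Proposition~\ref{lemma:simplification_wandering_leaves} to simplify $\G$ on $L(\eta_i)$. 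One then builds explicit piecewise-linear comparison lines $V_N$ (using Lemma~\ref{lemma:limited_winding} to control intersections with the geodesic trajectories), proves $L(f_\geo^{M_N}(\lambda^*))\subset L(V_N^\geo)$ via the Pushing Lemma and several bigon-counting claims, and finally uses local finiteness of $\{V_N\}_N$ to conclude. The point is that the Pushing Lemma is used to compare iterates of $\lambda$ against an \emph{external} locally finite family of geodesics, not against its own iterates; constructing that external family is the heart of the proof, and it is absent from your plan.
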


Property (iv) from Theorem \ref{thmx:structural_thm} implies that in the cases $r\leq 2$, there are two possible settings: 

\vspace*{0.1cm}
\begin{itemize}[leftmargin = 1.4cm]
    \item[(1)] Either there exists a pushing geodesic leaf $\lambda \in \P$ crossed by every orbit in $\OO$.
    
    \vspace*{0.1cm}
    \item[(2)] Or there are two orbits in $\OO$, which are separated by a separating geodesic leaf.
\end{itemize}

\vspace*{0.1cm}
\noindent By treating case (1) with Theorem \ref{thmx:II-C}, and case (2) with Proposition \ref{prop:separated_geodesic_trajectories}, we recover Handel's classification of Brouwer mapping classes relative to $r\leq2$ orbits, originally established in \cite{handel99}.
Furthermore, our approach extends Handel’s results to the setting of relative fixed-point–free isotopies, yielding a new classification theorem.

The \textit{fine Brouwer mapping class} $\bclass{f,\sspc\OO}$ is the set of all Brouwer homeomorphisms $f^\pp$ that are isotopic to $f$ relative to $\OO$ via an isotopy $(f_t)_{t\in [0,1]}$ that satisfies $\text{Fix}(f_t) = \varnothing$ for all $t \in [0,1]$.\break
By denoting the space of Brouwer homeomorphisms by $\mathcal B$, we can finally state our final result.
    
\begin{thmxc}\label{cor:absolute_brouwer_classification-intro}
 If there exists a pushing geodesic leaf $\lambda \in \P$ crossed by every orbit in $\OO$, in particular if $r=1$, then the fine Brouwer mapping class $\bclass{f,\sspc\OO}$ coincides with $\class{f,\OO} \cap \mathcal B$.
\end{thmxc}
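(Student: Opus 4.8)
The inclusion $\bclass{f,\OO}\subseteq\class{f,\OO}\cap\mathcal B$ is immediate: a fine isotopy is in particular an isotopy rel $\OO$, and its endpoint is Brouwer. So the content is the reverse inclusion, and the plan is the following: given $f'\in\class{f,\OO}\cap\mathcal B$, build an isotopy from $f$ to $f'$, relative to $\OO$, through fixed-point-free homeomorphisms.

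The first step is to put $f$ into a translation-like normal form. Let $\lambda\in\mathcal P$ be crossed by every orbit in $\OO$. If a geodesic of $\G$ separated $\OO$, disjointness from $\lambda$ would force some orbit entirely onto one side of $\lambda$, contradicting the hypothesis; hence $\mathcal S=\varnothing$ and $\G=\mathcal P$. By Theorem \ref{thmx:II-C}, $\lambda$ is both forward- and backward-wandering, so we may choose $\G$ as in Theorem \ref{thmx:II-B}: then $\G_{L,\lambda}$ and $\G_{R,\lambda}$ are respectively forward- and backward-invariant and consist of geodesics pushing-equivalent to $\lambda$, and since every geodesic of $\G$ lies in $\overline{L(\lambda)}$ or $\overline{R(\lambda)}$, the \emph{whole} family $\G$ is a single pushing-equivalence class, while the iterates $\{f^n_\geo(\lambda)\}_{n\in\Z}$ form a proper, locally finite, pairwise-disjoint sweep of the plane. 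Combined with Theorem \ref{thmx:structural_thm}(ii), this shows $\G$ is a "grid": it cuts $\R^2$ into disks and once-punctured disks, with exactly one orbit point trapped between consecutive geodesics, and $f$ shifts the grid by one block. This is precisely the combinatorial model underlying case (1) of the recovered classification; its isomorphism type $c(f)$ — essentially the cyclic arrangement of the $r$ orbits inside a block together with the block-shift — depends only on $\class{f,\OO}$, since the absence of a separating part leaves no reducible region in which data could hide. Let $M_c$ be an explicit Brouwer homeomorphism realizing a type $c$ (for $r=1$ simply a plane translation).

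Next I would straighten $f$ to $M_{c(f)}$ through a \emph{foliated} isotopy: an isotopy $(f_t)_{t\in[0,1]}$ rel $\OO$, with $f_0=f$ and $f_1=M_{c(f)}$, such that each $f_t$ admits a transverse foliation. One deforms the transverse foliation of $f$ alongside $f$, using the grid above as scaffolding; the only freedom to control is the mapping class of $f$ restricted to each finitely-punctured-disk piece, and there the Pushing Lemma pins it down — a discrepancy would be a Dehn twist along a grid line, which the Pushing Lemma's criterion excludes — the finite-type rigidity being handled as in \cite{schuback1}. The point of working with foliated isotopies is that a homeomorphism admitting a transverse foliation is automatically fixed-point-free, so $(f_t)$ is a fine isotopy. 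Performing the same construction for $f'$ yields a fine isotopy from $f'$ to $M_{c(f')}$; since $\class{f,\OO}=\class{f',\OO}$ we have $c(f)=c(f')$, hence $M_{c(f)}=M_{c(f')}$, and concatenating the two fine isotopies gives one from $f$ to $f'$. Thus $f'\in\bclass{f,\OO}$, as required.

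The main obstacle is the foliated straightening of the previous paragraph: carrying a transverse foliation along the entire isotopy, so as never to create a coincidence point, means the proof cannot simply invoke the classification of Brouwer mapping classes as a black box but must revisit its construction while transporting the foliated data — this is exactly where the techniques of \cite{schuback1} and the Pushing Lemma do the work. The subsidiary difficulties are the rigidity of the finitely-punctured-disk pieces up to isotopy rel boundary and punctures (control of Dehn twisting) and the properness of the patched isotopy on the non-compact plane; both are kept in check by the grid $\{f^n_\geo(\lambda)\}_{n\in\Z}$, which plays the role of a global transverse section. Finally, the hypothesis that $\lambda$ be crossed by \emph{every} orbit is essential rather than cosmetic: it is what forces $\mathcal S=\varnothing$, and without it the reducible pieces carry twisting invisible to $\class{f,\OO}$ but visible to fine isotopy, so the equality fails in general; the case $r=1$ is the cleanest instance, where the model is a plane translation and no twisting occurs at all.
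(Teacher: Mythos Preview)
Your proposal follows essentially the same route as the paper: the paper reduces Corollary~A to Corollary~6.1, whose proof applies Theorem~C to see that $\lambda$ is both forward- and backward-wandering, and then applies Proposition~4.1 (the ``foliated straightening'' you describe) on both sides of $\lambda$ to produce a fine isotopy from $f$ to some $F$ conjugate to the translation $T$. Your ``model $M_{c(f)}$'' plays the role of this $F$, and your insistence that each $f_t$ admit a transverse foliation is exactly property~(i) of Proposition~4.1 (each $\overline{f_t}$ is Brouwer). The Pushing Lemma enters only indirectly, through the proof of Theorem~C; the Dehn-twist discussion you give is not part of the paper's mechanism.

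Where your proposal is incomplete is the sentence ``Performing the same construction for $f'$''. The straightening of Proposition~4.1 takes as input a \emph{leaf} $\phi$ of a transverse foliation of the map being straightened; this is used at the very first step, where one needs $\phi$ and $f(\phi)$ disjoint so that the Straightening Principle applies to the pair $\{\lambda, f(\lambda)\}$, and that disjointness holds precisely because $\phi$ is a Brouwer line. For $f'\in\class{f,\OO}\cap\mathcal B$ you only know $(f')_\geo=f_\geo$, so $\lambda$ is still bi-wandering under $(f')_\geo$; but $\lambda$ need not be a Brouwer line of $f'$, and a given transverse foliation $\F'$ of $f'$ need not contain any leaf crossed by every orbit (for $r\geq 2$ one can write down transverse foliations of $T$ itself, relative to $\Z\times\{1,\dots,r\}$, with no common leaf). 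So before rerunning the straightening for $f'$ you must either produce a Brouwer line of $f'$ in the isotopy class of $\lambda$, or show that \emph{some} transverse foliation of $f'$ has a common leaf, or generalise Proposition~4.1 to drop the hypothesis $\phi\in\F$. For $r=1$ the second option is automatic (any essential leaf is a ``common'' leaf), which is why the statement singles out that case; for $r\geq 2$ this is exactly where the content lies, and neither your invariant $c(f)$ nor the Dehn-twist remark addresses it. The paper itself only asserts that Corollary~6.1 implies Corollary~A without spelling out this step, so your outline is at the same level of detail as the paper---but you should be aware that this is the point where the real work hides.
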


\section{Preliminaries}

\subsection{Topology on the flute surface}\label{sec:topology_R2Z}

TThe surface $\R^2\setminus \Z$ given by puncturing the plane at the integer points $\Z\equiv\Z\times\{0\} \sspc\subset \R^2$ is often called the \textit{flute surface}. Note that, $\R^2\setminus \Z$ is homeomorphic to $\R^2\setminus X$ if, and only if, $X\subset \R^2$ is the image of a proper embedding of $\Z$ into $\R^2$. The fundamental group $\pi_1(\R^2\setminus \Z)$ is isomorphic to the free group on countably many generators, with the generators represented by loops around each point of $\Z\subset \R^2$. Since $\pi_1(\R^2\setminus \Z)$ is not finitely generated, $\R^2\setminus \Z$ is called a surface of infinite type.

Let $\homeo\sspc(\R^2\setminus \Z)$ be the group of self homeomorphisms of $\R^2\setminus \Z\sspc$, endowed with the compact-open topology, or equivalently, the topology of uniform convergence on compact sets.  Let $\homeo^+\sspc(\R^2\setminus \Z)$ be the subgroup of orientation-preserving homeomorphisms of $\R^2\setminus \Z$.
We similarly define the homeomorphism groups $\homeo\sspc(\R^2)$ and $\homeo^+\sspc(\R^2)$ for the plane.

Each homeomorphism $h \in \homeo\sspc(\R^2\setminus \Z)$ admits a unique extension to the plane, often denoted by $\overline{\rule{0cm}{0.28cm}\sspc h\sspc }\in \homeo\sspc(\R^2)$. This means that $\overline{\rule{0cm}{0.28cm}\sspc h\sspc }$ satisfies $\overline{\rule{0cm}{0.28cm}\sspc h\sspc }\vert_{\R^2\setminus \Z} = h$, and thus $\overline{\rule{0cm}{0.28cm}\sspc h\sspc }(\Z) = \Z$.

An isotopy $(h_t)_{t\in [0,1]}$ on $\R^2\setminus \Z$ is a continuous path $t\longmapsto h_t$ within $\homeo\sspc(\R^2\setminus \Z)$.  
Two homeomorphisms $h,h^\pp \in \homeo\sspc(\R^2\setminus \Z)$ are isotopic if there exists an isotopy $(h_t)_{t\in [0,1]}$ on $\R^2\setminus \Z$ such that $h_0 = h$ and $h_1 = h^\pp\sspc$. This defines an equivalence relation, and the isotopy class of $h\in \homeo\sspc(\R^2\setminus \Z)$ is denoted by $\class h$. We similarly define isotopies on the plane.

Given a closed subset $X\subset \R^2$, we say that an isotopy $(h_t)_{t\in [0,1]}$ on $\R^2$ is relative to $X$ if it satisfies $h_t(x) = h_0(x)$, for all $t\in [0,1]$ and $x \in X$. This also defines an equivalence relation in $\homeo\sspc(\R^2)$, and the isotopy class of $h \in \homeo\sspc(\R^2)$ relative to $X$ is denoted by $\class{h,\sspc X}$.

As explained in \cite{BeguinCrovisierLeRoux2020}, the notion of isotopy on $\R^2$ relative to a closed subset $X\subset\R^2$ is generally stronger than the notion of isotopy on the surface $\R^2\setminus X$. However, in the particular
\noindent case where $X$ is totally disconnected, both notions are equivalent \cite[Proposition 1.5]{BeguinCrovisierLeRoux2020}. Since $\Z\subset \R^2$ is closed and totally disconnected, every isotopy $(h_t)_{t\in [0,1]}$ on $\R^2\setminus \Z$ is extended to an isotopy $(\overline{\rule{0cm}{0.28cm}\sspc h\sspc }_t)_{t\in [0,1]}$ on $\R^2$ relative to $\Z$, and vice versa through the restriction map.

 Let $\homeo_{\sspc 0}\sspc(\R^2\setminus \Z)$ be the set of homeomorphisms of $\R^2\setminus \Z$ isotopic to the identity, meaning, the connected component of $\homeo\sspc(\R^2\setminus \Z)$ that contains the identity map $\textup{id}_{\R^2\setminus \Z}$.

The \textit{mapping class group} of $\R^2\setminus \Z\sspc$ is then defined as the quotient group
 \mycomment{-0.12cm}
$$\textup{MCG}(\R^2\setminus \Z) := \homeo^+\sspc(\R^2\setminus \Z) /\sspc \homeo_{\sspc 0}\sspc(\R^2\setminus \Z).$$

 \mycomment{-0.24cm}
\noindent
In other words, $\mcg(\R^2\setminus \Z)$ is the group of isotopy classes of elements of $\homeo^+\sspc(\R^2\setminus \Z)$.

A \textit{line} on $\R^2\setminus \Z$ is a proper embedding of $\R$ into $\R^2\setminus \Z$. A \textit{planar line} on $\R^2\setminus \Z$ is a line that is also a topological line on the plane.
We define the \textit{$\alpha$-limit point} and \textit{$\omega$-limit point} of a line $\ell$ as the points $\alpha(\ell),\sspc \omega(\ell) \in \R^2$ that realize the following limits (if they exist):
 \mycomment{-0.12cm}
$$\alpha(\ell):=\lim_{t\rightarrow-\infty} \ell(t) \quad \text{ and } \quad \omega(\ell):=\lim_{t\rightarrow\infty} \ell(t).$$
Note that, planar lines correspond to lines that have neither an $\alpha$-limit nor an $\omega$-limit point.

For any line $\ell$, we denote by $\overline{\rule{0cm}{0.3cm}\sspc \ell\sspc }$ the closure of $\ell$ in $\R^2$, or equivalently, $\overline{\rule{0cm}{0.3cm}\sspc \ell\sspc } = \ell \cup \{\alpha(\ell),\sspc \omega(\ell)\}$.

Recall that, a family of lines $\{\ell_i\}_{i \in I}$ is called locally-finite if, for every compact set $K\subset \R^2$, the set of indices $\{i\in I : \ell_i \cap K \neq \varnothing\}$ is finite.

A \textit{proper multiline} on $\R^2\setminus \Z$ is a (possibly finite) family of lines $\{\ell_i\}_{i \in I}$ such that 
\begin{itemize}
    \item[$\sbullet$] For any $i\in I$, if the limit point $\alpha(\ell_i)$ exists, then  $\alpha(\ell_i) = \omega(\ell_{i-1})$.
    \item[$\sbullet$] The union $\bigcup_{i\in I} \overline{\rule{0cm}{0.3cm}\sspc \ell_i\sspc }$ is a topological line on the plane.
\end{itemize}
 \mycomment{0.2cm}
Consequently, if $\{\ell_i\}_{i \in I}$ is a proper multiline, then it is locally-finite and pairwise disjoint.

We reserve the notation $\Lambda= \prod_{i\in I}\ell_i$ to denote a proper multiline $\{\ell_i\}_{i \in I}$, and similarly to the definition of left and right sides of a topological line on the plane, we denote
 \vspace{0.1cm}
$$R(\Lambda):= R
\Bigl( \sspc \bigcup_{i\in I} \overline{\rule{0cm}{0.3cm}\sspc \ell_i\sspc } \sspc\Bigr) \quad \text{ and } \quad L(\Lambda):= L\Bigl( \sspc \bigcup_{i\in I} \overline{\rule{0cm}{0.3cm}\sspc \ell_i\sspc } \sspc\Bigr),$$
where $\bigcup_{i\in I} \overline{\rule{0cm}{0.3cm}\sspc \ell_i\sspc }$ is oriented naturally according to the orientation of the lines $\ell_i$.

\vspace*{0cm}

\begin{figure}[h!]
    \center
    \hspace*{-6cm}\begin{overpic}[width=8.3cm, height=3.7cm, tics=10]{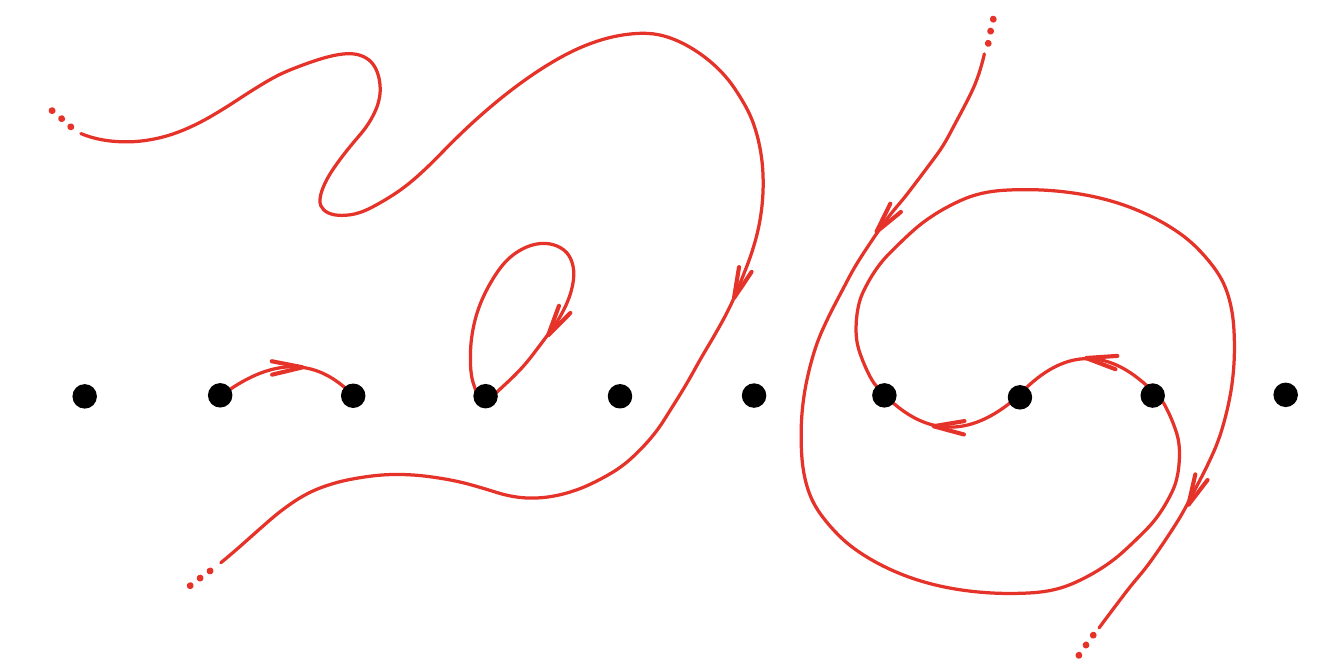}
        \put (33,42) {{\color{myRED}\large$\displaystyle \ell_3$}}
        \put (108,35.2) {{\color{black}\large$\displaystyle \ell_1,\spc...\sspc,\spc\ell_7 \ \text{ are lines}$}}
        \put (108,23.2) {{\color{black}\large$\displaystyle \ell_3\  \text{ is a planar line}$}}
        \put (108,10.2) {{\color{black}\large$\displaystyle \Lambda=\prod_{i=4}^7 \ell_i\  \text{ is a proper multiline}$}}
        \put (-3,16.5) {{\color{black}\large$\displaystyle \Z$}}
        \put (65.5,35) {{\color{myRED}\large$\displaystyle \ell_4$}}
        \put (20,23.3) {{\color{myRED}\large$\displaystyle \ell_1$}}
        \put (80,23.3) {{\color{myRED}\large$\displaystyle \ell_5$}}
        \put (70,19.5) {{\color{myRED}\large$\displaystyle \ell_6$}}
        \put (40,30.5) {{\color{myRED}\large$\displaystyle \ell_2$}}
        \put (87,32) {{\color{myRED}\large$\displaystyle \ell_7$}}
\end{overpic}
\end{figure}

Two lines $\ell$ and $\ell^\pp$ are said to be isotopic if there exists an isotopy $(h_t)_{t\in [0,1]}$ on $\R^2\setminus \Z$ that satisfies $h_0 = \textup{id}_{\R^2\setminus \Z}$ and $h_1(\ell) = \ell^\pp\sspc$. Roughtly speaking, two lines $\ell$ and $\ell^\pp$ are isotopic if one can be deformed into the other through an isotopy on $\R^2\setminus \Z$. This defines an equivalence relation in the set of lines on $\R^2\setminus \Z$, and the isotopy class of a line $\ell$ is denoted by $\class{\ell}$.

The following result is well-known and can be traced back to Epstein \cite{Epstein1966}. 

\begin{theorem}[Epstein, \cite{Epstein1966}]\label{thm:epstein}
    Let $\ell,\ell^\pp:\R\longrightarrow \R^2\setminus \Z$ be two lines. If there exists a proper continuous map
    $ H:[0,1]\times \R \longrightarrow \R^2\setminus \Z^2 $ that satisfies $H(0,t) = \ell(t)$ and $H(1,t) = \ell^\pp(t)\sspc$, for all $t\in \R$, then the lines $\ell$ and $\ell^\pp$ are isotopic.
\end{theorem}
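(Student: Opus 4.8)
\textbf{Proof proposal for Theorem \ref{thm:epstein} (Epstein).}

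The plan is to reduce the statement to Epstein's classical classification of proper arcs and curves on surfaces. The hypothesis is that the two lines $\ell,\ell^\pp$ are \emph{properly homotopic} (the map $H$ is proper), and the conclusion is that they are \emph{ambiently isotopic} through an isotopy of $\R^2\setminus\Z$ starting at the identity. For compact surfaces this is exactly the content of Epstein's theorem that homotopic simple closed curves (or arcs rel endpoints) are ambient isotopic; the only issue is that $\R^2\setminus\Z$ is non-compact and of infinite type, and that the ``curves'' here are proper lines rather than closed curves or compact arcs.

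First I would set up the non-compact framework carefully. A proper line $\ell$ on $\R^2\setminus\Z$ determines two ends of the surface (one for each direction $t\to\pm\infty$), or possibly the same end; since $\R^2\setminus\Z$ has a well-understood end space (a Cantor-like set together with the plane's end, as described in Section \ref{sec:topology_R2Z}), the properness of $H$ forces $\ell$ and $\ell^\pp$ to be asymptotic to the same pair of ends and, moreover, to be homotopic through proper maps keeping track of those ends. The key step is then an exhaustion argument: write $\R^2\setminus\Z=\bigcup_n K_n$ with $K_n$ compact, essential, $\pi_1$-injective subsurfaces (for instance complements of small disks around finitely many punctures, intersected with a large round disk). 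Using properness of $H$, arrange that outside a large compact set both $\ell$ and $\ell^\pp$ already coincide with a fixed standard ``tail'' going out to the relevant ends — this is possible because two proper rays asymptotic to the same end of a surface are ambient isotopic near that end. Now the problem is localized to a compact subsurface $K=K_n$ (with boundary), where $\ell\cap K$ and $\ell^\pp\cap K$ are finite disjoint unions of properly embedded arcs with matching endpoints on $\partial K$, and $H$ restricts (after a further proper homotopy) to a homotopy rel $\partial K$ between them.

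On the compact surface $K$ with boundary I would then invoke Epstein's original results directly: two properly embedded essential arcs in a compact surface that are homotopic rel endpoints are ambient isotopic (via an isotopy fixing $\partial K$ pointwise), and simple closed curves that are freely homotopic are ambient isotopic — here all the pieces are arcs. Concatenating the ambient isotopy on $K$ (extended by the identity outside $K$) with the isotopies used to standardize the tails yields the desired isotopy $(h_t)$ of $\R^2\setminus\Z$ with $h_0=\mathrm{id}$ and $h_1(\ell)=\ell^\pp$. One must check that the resulting concatenation is continuous in $t$ and that each $h_t$ is a homeomorphism of $\R^2\setminus\Z$ (not just of $K$), which is routine since all the pieces are supported in nested compact sets and agree on overlaps.

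The main obstacle I anticipate is the non-compactness: ensuring that the ``standardization at infinity'' step is legitimate, i.e.\ that a proper homotopy between two proper rays exiting the \emph{same} end of an infinite-type surface can be promoted to an ambient isotopy near that end, and that this can be done \emph{uniformly} enough to splice with the compact-part isotopy without destroying properness. This is where the hypothesis that $H$ is proper (rather than merely continuous) is essential — it is precisely what rules out the line ``escaping to a different end'' and what controls the behavior of the homotopy in each neighborhood of infinity. A clean way to handle it is to pass once and for all to an exhaustion in which the tails of $\ell$ and $\ell^\pp$ are made to agree, then quote the compact case; alternatively one can cite the extension of Epstein's theorem to surfaces of infinite type in the form used in the infinite-type mapping class group literature. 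Since this theorem is invoked only as a known black box later in the paper, a short proof along these lines, or a direct citation to \cite{Epstein1966} with the remark that properness replaces compactness, suffices.
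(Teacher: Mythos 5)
First, note that the paper itself offers no proof of this statement: it is quoted as a classical fact with a citation to Epstein, so the only "approach of the paper" to compare against is that citation. Judged on its own terms, your outline correctly identifies the classical ingredients (exhaust by compact subsurfaces, invoke Epstein's arc and curve theorems there), but the one step that carries all of the difficulty in the infinite-type setting is justified by a false claim. You assert that two proper rays asymptotic to the same end of a surface are ambient isotopic near that end, and you use this to "standardize the tails" before reducing to a compact piece. On $\R^2\setminus\Z$ this is false for the non-isolated end (the planar end onto which the punctures accumulate): the rays $t\mapsto(t,1)$ and $t\mapsto(t,-1)$, $t\geq 0$, exit the same end but are separated by infinitely many punctures, so they are not even properly homotopic, let alone ambient isotopic in a neighborhood of that end. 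The end to which a proper ray converges is a far coarser invariant than its proper homotopy class. What your argument actually requires is the statement that the restriction of the given proper homotopy $H$ to the tails — which \emph{is} a proper homotopy of rays — can be promoted to an ambient isotopy supported near infinity. That is true, but it is essentially the theorem being proved, specialized to half-lines; deriving it from "same end" is where the reduction becomes circular, and this is precisely the point your own last paragraph flags as the "main obstacle" without resolving it.

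There are also smaller gaps in the compact part (arranging that $H$ restricts to a homotopy rel $\partial K$ rather than one that sweeps arcs across $\partial K$; handling the inessential components of $\ell\cap K$, to which Epstein's arc theorem in the form "homotopic essential arcs are isotopic" does not directly apply; matching up the combinatorics of which arc of $\ell\cap K$ corresponds to which arc of $\ell^\pp\cap K$), but these are routine by comparison. The honest repair is either to cite Epstein's original theorem directly — his paper treats properly embedded lines in open surfaces by an innermost-bigon argument that never factors through the end space — or, for the essential lines that are the only ones actually used in this paper, to run the argument through the hyperbolic structure of Section 2.2: a proper homotopy forces lifts of $\ell$ and $\ell^\pp$ to $\mathbb H^2$ to share endpoints on $\partial_\infty\mathbb H^2$, hence to have the same geodesic representative, after which the Straightening Principle supplies the ambient isotopy. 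Non-essential lines bound once-punctured-free half-planes and are handled by a separate Schoenflies argument. As written, though, the proposal does not constitute a proof.
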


Note that, if $\ell$ and $\ell^\pp$ are isotopic lines, then they share the same $\alpha$- and $\omega$-limit points. Moreover, if $\ell$ and $\ell^\pp$ are isotopic planar lines, then the punctures of $\R^2\setminus \Z$ lying on the right side $R(\ell)$ are the same as those lying on $R(\ell^\pp)$, and similarly for the left sides $L(\ell)$ and $L(\ell^\pp)$.

Two proper multilines $\Lambda = \medmath{\prod_{i\in I}}\spc \ell_i \sspc$ and $\Lambda^\pp = \medmath{\prod_{j\in J}}\spc \ell_j^\pp \sspc$ are said to be \textit{globally isotopic} if there exists an isotopy $(h_t)_{t\in [0,1]}$ on $\R^2\setminus \Z$ that satisfies  $h_0 = \textup{id}_{\R^2\setminus \Z}$ and $h_1\bigl(\bigcup_{i\in I} \ell_i\bigr) = \bigcup_{j\in J} \ell_j^\pp\sspc$.

A line $\ell$ is said to be \textit{essential} on $\R^2\setminus \Z$ if there exists a compact subset $K\subset \R^2\setminus\Z$ such that, for any isotopy $(h_t)_{t\in [0,1]}$ on $\R^2\setminus \Z$ from the identity, $h_t(\ell) \cap K \neq \varnothing$ for all $t\in [0,1]$. Equivalently, $\ell$ is essential if no connected component of $\R^2\setminus( \Z\cup \ell)$ is homeomorphic to $\R^2$.
Note that, the property of being essential is preserved under isotopies.

A proper multiline $\Lambda = \medmath{\prod_{i\in I}}\spc \ell_i \sspc$ is said to be \textit{globally essential} if the family $\{\ell_i\}_{i\in I}$ is formed by pairwise non-isotopic (up to reversing orientation) essential lines.

 \mycomment{0.2cm}

\begin{figure}[h!]
    \center
    \hspace*{0cm}\begin{overpic}[width=14cm, height=3.9cm, tics=10]{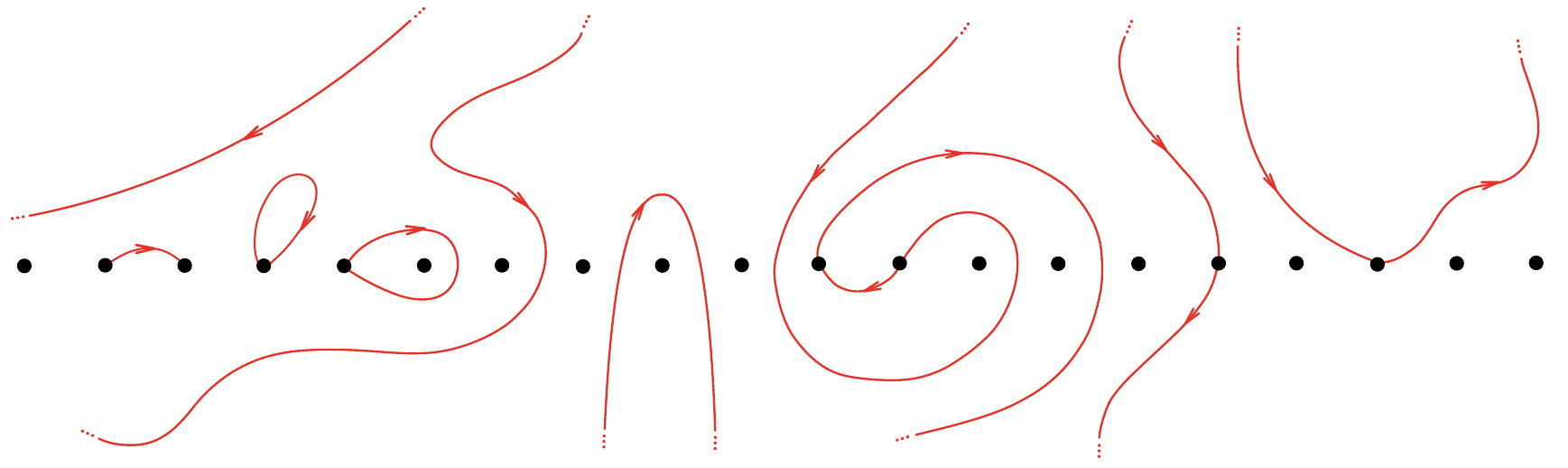}
        \put (101.5,11.5) {{\color{black}\large$\displaystyle \Z$}}
        \put (13.5,21.5) {{\color{myRED}\large$\displaystyle \ell_1$}}
        \put (19.5,18.3) {{\color{myRED}\large$\displaystyle \ell_3$}}
        \put (25,15.6) {{\color{myRED}\large$\displaystyle \ell_4$}}
        \put (30.5,18.8) {{\color{myRED}\large$\displaystyle \ell_5$}}
        \put (41,17.7) {{\color{myRED}\large$\displaystyle \ell_6$}}
         \put (54,22.2) {{\color{myRED}\large$\displaystyle \ell_7$}}
         \put (53.5,7.6) {{\color{myRED}\large$\displaystyle \ell_8$}}
        \put (8,9.5) {{\color{myRED}\large$\displaystyle \ell_2$}}
        \put (82,17.5) {{\color{myRED}\large$\displaystyle \ell_{12}$}}
        \put (92.9,18.6) {{\color{myRED}\large$\displaystyle \ell_{13}$}}
        \put (60,20) {{\color{myRED}\large$\displaystyle \ell_9$}}
        \put (74.5,20.4) {{\color{myRED}\large$\displaystyle \ell_{10}$}}
        \put (75.7,5.8) {{\color{myRED}\large$\displaystyle \ell_{11}$}}
\end{overpic}
\end{figure}

In the figure above, we illustrate the lines $\ell_1, \dots, \ell_{13}$, which satisfy the following:
\begin{itemize}[leftmargin=1.4cm]
\item[$\sbullet$] $\ell_1$ and $\ell_3$ are non-essential lines, while all the others are essential;
 \mycomment{0.2cm}
\item[$\sbullet$] $\displaystyle \Lambda = \prod_{i=7}^9 \ell_i$ and $\displaystyle \Lambda' = \prod_{i=10}^{11} \ell_i$ are globally essential proper multilines;
\item[$\sbullet$] $\displaystyle \Lambda^{\prime \prime} = \prod_{i=12}^{13} \ell_i$ is a proper multiline that is not globally essential, since $\ell_{12}$ becomes isotopic to $\ell_{13}$ after reversing its orientation.
\end{itemize}

\subsection{Hyperbolic geometry on the flute surface}\label{sec:hyperbolic_geometry_R2Z}

The flute surface $\R^2\setminus \Z$ can be endowed with a complete hyperbolic metric of first kind with infinite volume (see \cite{Basmajian1993HyperbolicSF}). This essentially means that $\R^2\setminus \Z$ can be realized as a quotient of the form
$$\R^2\setminus \Z = \mathbb H^2/\Gamma,$$
where $\mathbb H^2$ is the hyperbolic plane and $\Gamma < \text{Isom}^+(\mathbb H^2)$ is a torsion-free Fuchsian subgroup admitting the whole unit circle as its limit set, that is, $\Lambda_\Gamma =\partial_\infty \mathbb H^2$ (see \cite{katok1992fuchsian} for details).

 \mycomment{0.1cm}

\begin{remark}
    Easy method to construct a complete hyperbolic metric of first kind on $\R^2\setminus \Z$:
    
    \vspace*{0.35cm}

    Let $T$ be the horizontal unitary translation on $\R^2$. Note that the quotient
    $ \Sigma:=(\R^2\setminus \Z )/\spc T$
    defines a topological surface homeomorphic to the $3$-punctured sphere, which is known to admit a complete hyperbolic metric of finite volume. Thus, there exists a finitely generated torsion-free Fuchsian subgroup $\Gamma < \text{Isom}^+(\mathbb H^2)$ such that $\Sigma=\mathbb H^2/\Gamma$ and $\text{vol}(\mathbb H^2/\Gamma)<\infty$. From the finite volume condition, it follows automatically that $\Lambda_\Gamma = \partial_\infty \mathbb H^2$ (see \cite{katok1992fuchsian}).

    Next, observe that $\R^2\setminus \Z \longrightarrow (\sspc\R^2\setminus \Z \sspc)/\spc T$ is a normal covering map of infinite degree.  This implies, by the Galois correspondence for covering spaces, that there exists an infinite index normal subgroup $\Gamma^\pp \lhd \Gamma$ that realizes the quotient $\R^2\setminus \Z = \mathbb H^2/\Gamma^\pp$. Observe that this induces a complete hyperbolic metric of infinite volume on $\R^2\setminus \Z$, since 
     \mycomment{-0.1cm}
    $$\text{vol}(\mathbb H^2/\Gamma^\pp) = [\Gamma:\Gamma^\pp]\cdot \text{vol}(\mathbb H^2/\Gamma) = \infty.$$

     \mycomment{-0.25cm}
    \noindent However, even if the volume of $\R^2\setminus \Z = \mathbb H^2/\Gamma^\pp$ is infinite, we still have that $\Lambda_{\Gamma^\pp} = \partial_\infty \mathbb H^2$. Indeed, using the fact that $\Gamma^\pp$ is a normal subgroup of $\Gamma$, one can show that the limit set $\Lambda_{\Gamma^\pp}$ is invariant under the action of $\Gamma$. Using the minimality of the action of $\Gamma$ on $\Lambda_\Gamma = \partial_\infty \mathbb H^2$, one can conclude that $\Lambda_{\Gamma^\pp} = \Lambda_\Gamma = \partial_\infty \mathbb H^2$.
    Thus, the resulting hyperbolic structure on $\R^2\setminus \Z$ induced by the quotient $\R^2\setminus \Z = \mathbb H^2/\Gamma^\pp$ is indeed of first kind.
\end{remark}

The above remark is added as an illustration, and it does not play any role in the arguments of this work. Now, we present the features of such a hyperbolic structure on $\R^2\setminus \Z$.

 \mycomment{0.2cm}

The (complete) geodesics on the surface $\R^2\setminus \Z$ are the projections of the (complete) geodesics of $\mathbb H^2$ by the universal covering map $\widetilde \pi:\mathbb H^2\longrightarrow \mathbb H^2/\Gamma = \R^2\setminus \Z$. 
A \textit{geodesic line} on $\R^2\setminus \Z$ is a geodesic that is also a line.
A \textit{geodesic lamination} on $\R^2\setminus \Z$ is a closed subset of $\R^2\setminus \Z$ that is foliated by pairwise disjoint geodesic lines. It is a classic result that, if $\{g_i\}_{i\in I}$ is a family of pairwise disjoint  geodesic lines on $\R^2\setminus \Z$, then the closure  $\overline{\rule{0cm}{0.31cm}\bigcup_{i\in I} g_i}$ is a geodesic lamination.

An important consequence of the hyperbolic structure on $\R^2\setminus \Z$ being of first kind is that every essential line $\ell$  on $\R^2\setminus \Z$ is isotopic to a unique geodesic line $\ell^\geo$, which we referred to as the \textit{geodesic representative} of $\ell$ (see \cite[Proposition 2.1]{MATSUMOTO2000191} for a detailed proof). 

We remark that, if $\{\ell_i\}_{i\in I}$ is a locally-finite family of essential lines, then the family of geodesic representatives $\{\ell_i^\geo\}_{i\in I}$ is also locally-finite (see \cite[Lemma 1.4, Lemma 3.2]{LEROUX_2017}).

It is a well-known fact that geodesic representatives minimize the number of intersections within their isotopy classes, and this property is called the \textit{minimal intersection property}. More precisely, for any two essential lines $\ell$ and $\ell^\pp$ on $\R^2\setminus \Z$, the following inequality holds
$$\#(\ell^\geo \cap (\ell^\pp)^\geo) \leq \#(\ell \cap \ell^\pp).$$
Even further, for any bigon $B\subset\R^2$ formed by the geodesics $\ell^\geo$ and $(\ell^\pp)^\geo$, meaning a topological disk $B\subset \R^2$ whose boundary is formed by two arcs $\alpha\subset \ell^\geo$ and $\alpha^\pp\subset (\ell^\pp)^\geo$ sharing the same endpoints, there exists at least one puncture of $\R^2\setminus\Z$ lying in the interior of $B$ (For more details, see \cite[Proposition 1.2]{LEROUX_2017}.)

Any homeomorphism $h \in \homeo\sspc(\R^2\setminus \Z)$ induces an action on geodesic lines of $\R^2\setminus \Z$, denoted by $h_\geo$, that maps any geodesic line $g$ to the geodesic representative of $h(g)$, that is, $h_\geo(g) := h(g)^\geo.$
In particular, this implies that $h_\geo(\ell^\geo) = h(\ell)^\geo$, for any essential line $\ell$.

The next result is central to our arguments, and it is refered to as the \textit{Straightening Principle}. It corresponds to \cite[Lemma 3.5]{handel99}, but we also refer to \cite[Lemma 1.4, Lemma 3.2]{LEROUX_2017}. In his master's thesis \cite{Conejeros2012}, Conejeros provides a detailed proof of this result.

\begin{lemma}[Straightening Principle, \cite{handel99}]\label{lemma:straightening_principle}
    Let $\{\ell_i\}_{i\in I}$ be a locally-finite family of essential lines on $\R^2\setminus \Z$ that is pairwise disjoint and pairwise non-isotopic. Then, there exists an isotopy $(h_t)_{t\in [0,1]}$ on $\R^2\setminus \Z$ from the identity that satisfies $h_1(\ell_i) = \ell_i^\geo$ for every $i\in I$.
\end{lemma}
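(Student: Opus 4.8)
\emph{The plan} is to straighten the lines one at a time, performing each step inside a thin neighbourhood of the line being straightened, and then to pass to the limit; local finiteness is what makes the resulting infinite composition of ambient isotopies converge and the concatenated path of homeomorphisms continuous at its endpoint. First, two reductions. By the minimal intersection property, $\#(\ell_i^\geo\cap\ell_j^\geo)\le\#(\ell_i\cap\ell_j)=0$ for $i\ne j$, so the geodesic representatives $\{\ell_i^\geo\}_{i\in I}$ are pairwise disjoint; they are pairwise distinct because the $\ell_i$ are pairwise non-isotopic, and locally finite by \cite[Lemma 1.4, Lemma 3.2]{LEROUX_2017}. The core of the argument is then the following \emph{relative straightening} statement: if $\Gamma$ is a locally finite, pairwise disjoint union of essential lines, and $\ell$ is an essential line disjoint from $\Gamma$ whose geodesic representative $\ell^\geo$ is also disjoint from $\Gamma$, then there is an isotopy $(g_t)_{t\in[0,1]}$ on $\R^2\setminus\Z$ from the identity, supported in an open set $W$ with $\ell\cup\ell^\geo\subset W$ and $\overline{W}\cap\Gamma=\varnothing$, such that $g_1(\ell)=\ell^\geo$; note that such a $g_1$ is the identity on $\Gamma$.

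To prove the relative straightening, one first checks that $\ell$ and $\ell^\geo$ lie in the closure of a single connected component $U$ of $\R^2\setminus(\Z\cup\Gamma)$. For each line $\gamma$ of $\Gamma$, both $\ell$ and $\ell^\geo$ are disjoint from $\gamma$, hence each lies on one side of $\gamma$; since $\ell$ and $\ell^\geo$ are isotopic they share the same $\alpha$- and $\omega$-limit points and, in the planar case, the same punctures of $\Z$ on each side, and a short case analysis — treating planar lines and lines carrying limit points separately — shows that they lie on the \emph{same} side of $\gamma$. As $\gamma$ was arbitrary, $\ell,\ell^\geo\subset\overline{U}$, so, the closed sets $\Gamma$ and $\ell\cup\ell^\geo$ being disjoint in the metrizable space $\R^2\setminus\Z$, we may pick an open $W$ with $\ell\cup\ell^\geo\subset W$ and $\overline{W}\subset U$. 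Finally, $\ell$ and $\ell^\geo$ are isotopic essential lines, so by a standard bigon/minimal-position argument (the geometric intersection number of isotopic lines is $0$, and a bigon could be used to reduce intersections) $\ell$ can be isotoped, through a $W$-supported isotopy, to be disjoint from $\ell^\geo$; the region of $\R^2\setminus(\Z\cup\ell\cup\ell^\geo)$ lying between the two disjoint isotopic lines then contains no puncture, and $\ell$ is pushed across it onto $\ell^\geo$ by an isotopy supported in a neighbourhood of its closure, again inside $W$. Concatenating the two isotopies yields $(g_t)$. (Epstein's Theorem~\ref{thm:epstein} may be invoked to upgrade a proper homotopy inside $U$ to an honest isotopy.)

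Granting this, enumerate $I$, finite or $I=\N$, and iterate: assuming an isotopy $h^{(n-1)}$ from the identity with $h^{(n-1)}_1(\ell_i)=\ell_i^\geo$ for $i<n$, put $\Gamma_n=\bigcup_{i<n}\ell_i^\geo$ and apply the relative straightening to the line $h^{(n-1)}_1(\ell_n)$ — disjoint from $\Gamma_n$, and isotopic to $\ell_n^\geo$ because $h^{(n-1)}_1$ is isotopic to the identity — obtaining a correction $(g^{(n)}_t)$ supported in $W_n$ with $h^{(n-1)}_1(\ell_n)\cup\ell_n^\geo\subset W_n$ and $\overline{W_n}\cap\Gamma_n=\varnothing$; let $h^{(n)}$ be the concatenation of $h^{(n-1)}$ with $t\mapsto g^{(n)}_t\circ h^{(n-1)}_1$, so that $h^{(n)}_1(\ell_i)=\ell_i^\geo$ for all $i\le n$. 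If $I$ is finite this proves the lemma. If $I=\N$, a routine bookkeeping — using local finiteness of $\{\ell_n^\geo\}$ and of $\{\ell_n\}$ and choosing each $W_n$ thin enough — ensures that $\{W_n\}_n$ can be taken to be a locally finite family of subsets of $\R^2\setminus\Z$; then $h_1:=\lim_n h^{(n)}_1$ is a well-defined homeomorphism of $\R^2\setminus\Z$ with $h_1(\ell_i)=\ell_i^\geo$ for every $i$, and the concatenation of the $(g^{(n)}_t)$ reparametrised onto the intervals $[\,1-2^{-(n-1)},\,1-2^{-n}\,]$ is continuous — at $t=1$ because on each compact set the maps are eventually independent of $t$ as $t\to1$ — hence an isotopy from $\mathrm{id}_{\R^2\setminus\Z}$ to $h_1$.

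The main obstacle is the relative-straightening step, and within it the positional claim that $\ell$ and $\ell^\geo$ occupy the \emph{same} complementary component of $\Gamma$: this is exactly where one converts ``isotopic lines have the same limit points and the same sides'' into a statement about the sides of the auxiliary lines of $\Gamma$, with genuinely different bookkeeping for planar lines and for lines carrying an $\alpha$- or $\omega$-limit point. The secondary difficulty is purely point-set: arranging the supports $W_n$ and the intermediate homeomorphisms so that the infinite composition is a homeomorphism and the infinite concatenation of isotopies remains continuous at the last instant — which is the precise place where local finiteness of $\{\ell_i\}_{i\in I}$ cannot be dispensed with.
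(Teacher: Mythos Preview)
The paper does not give its own proof of this lemma: it is stated as the Straightening Principle and attributed to \cite[Lemma~3.5]{handel99}, with further references to \cite[Lemmas~1.4, 3.2]{LEROUX_2017} and to \cite{Conejeros2012} for a detailed argument. So there is nothing to compare against; your proposal must be judged on its own.

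Your architecture --- a relative straightening of one line keeping a fixed family $\Gamma$ untouched, then an inductive application, then a limit --- is the right plan, and the relative step is essentially sound (the bigons and the strip between $\ell$ and $\ell^\geo$ do lie in $U$: any line of $\Gamma$ trapped in the puncture-free strip would be isotopic to $\ell$, contradicting pairwise non-isotopy). The genuine gap is the limit. You assert that ``routine bookkeeping'' makes $\{W_n\}_n$ locally finite, but this is precisely where the difficulty sits. Each $W_n$ must contain $h^{(n-1)}_1(\ell_n)$, which is not $\ell_n$ but its image under the composite $g^{(n-1)}_1\circ\cdots\circ g^{(1)}_1$ of all previous corrections. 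Local finiteness of $\{\ell_n\}$ and $\{\ell_n^\geo\}$ does not by itself control $\{h^{(n-1)}_1(\ell_n)\}_n$: to test whether $h^{(n-1)}_1(\ell_n)$ meets a fixed compact $K$ you must test $\ell_n$ against $(h^{(n-1)}_1)^{-1}(K)$, and that compact set varies with $n$ and could a priori grow without bound as the corrections pile up. Breaking this circularity requires a real inductive invariant --- for instance, arranging along a fixed exhaustion $K_1\subset K_2\subset\cdots$ that $W_k\cap K_m=\varnothing$ for all large $k$, which then forces $(h^{(n-1)}_1)^{-1}(K_m)$ to stabilise in $n$ and finally lets local finiteness of $\{\ell_n\}$ bite --- or, as in the cited references, passing to the universal cover $\mathbb H^2$ and straightening lifts equivariantly, where the canonical nature of the geodesic target handles convergence for free. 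Either route works, but neither is bookkeeping.
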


Due to Lemma \ref{lemma:straightening_principle}, every globally essential proper multiline $\Lambda = \prod_{i\in I}\ell_i$ is globally isotopic to $\Lambda^\geo = \prod_{i\in I}\ell_i^\geo$, which is also a proper multiline, called the \textit{geodesic representative} of $\Lambda$.

\begin{remark}\label{remark:straightening_principle}
    In Lemma \ref{lemma:straightening_principle}, we state a version of the Straightening Principle sufficient for our purposes. 
    However, in particular for the proof of Proposition \ref{lemma:simplification_wandering_leaves}, we need to remark that the isotopy $(h_t)_{t\in [0,1]}$ given by the Straightening Principle can be taken to not move lines which already coincide with their geodesic representatives. 
\end{remark}

\subsection{Homotopy Brouwer Theory}\label{sec:homotopy_brouwer_theory}
Let \( f:\mathbb{R}^2 \longrightarrow \mathbb{R}^2 \) be a \textit{Brouwer homeomorphism}, that is, an orientation-preserving and fixed-point free homeomorphism of the plane.
The orbit of a point \( x \in \mathbb{R}^2 \) under \( f \) is defined as the set
$ \mathcal{O}(f, x) = \{f^n(x) \mid n \in \mathbb{Z}\} \subset \mathbb{R}^2.$

Let $\OO = \{\O_{\sspc 1},\ldots,\O_{\sspc r}\}$ be a finite collection of distinguished orbits of $f$.  By abuse of notation, we may denote $\OO := \bigcup_{i=1}^r\O_{\sspc i}\subset \R^2$.
The isotopy class of $f$ relative to the set $\OO$ is denoted by $\class{f,\sspc \OO}$, and it is referred to as a \emph{Brouwer mapping class relative to $r$-orbits}. Alternatively, $\class{f,\sspc \OO}$ can be defined as the isotopy class of the map $f\vert_{\R^2\setminus \OO}$ on the surface $\R^2\setminus \OO$, which is homeomorphic to $\R^2\setminus \Z$, because $\OO$ is an infinite closed discrete subset of $\R^2$ (see \cite[Proposition 1.5]{BeguinCrovisierLeRoux2020}).
This alternative viewpoint allows us to see Brouwer mapping classes as elements of $\mcg(\R^2\setminus \Z)$.

The main objective of Homotopy Brouwer Theory is to study the dynamics of $\class{f,\sspc \OO}$, particularly, to classify up to conjugacy all Brouwer mapping classes relative to $r$-orbits for a certain integer $r>0$. To be more precise, we say that $\class{f,\sspc \OO}$ and $\class{g,\OO^\pp}$ are conjugate if there exists an orientation-preserving homeomorphism $h:\R^2\longrightarrow \R^2$ that satisfies
$$ h(\OO^\pp)=\OO \quad \text{ and } \quad \bigl[\spc f,\sspc \OO\spc\bigr] = \bigl[\spc h\circ g \circ h^{-1},\sspc \OO\spc\bigr].$$
We note that, our notion of conjugacy follows the one considered by Bavard in \cite{BAVARD_2017}, which is the most natural one when seeing Brouwer mapping classes as elements of $\mcg(\R^2\setminus \Z)$. It is worth noting that, in \cite{handel99}, Handel also consider conjugacies by orientation-reversing homeomorphisms, and in \cite{LEROUX_2017}, Le Roux consider orientation-preserving conjugacies that preserve the indexation of the orbits. These differences are not a problem, as the statements of the result can be adapted depending on the context.

Although occasionally mentioned to highlight parallels with the classical framework of Homotopy Brouwer Theory, this work does not rely on homotopy translation arcs, homotopy Brouwer lines, or even the machinery of Fitted Families introduced in \cite{handel99}. Consequently, we do not provide their formal definitions in this preliminary section, as they are unnecessary for the development of our arguments and appear here only as points of reference.

In what follows we present the main classification results of Brouwer mapping classes.

The first classification theorem was presented by Handel. The proof of this result is based on the machinery of homotopy translation arcs and fitted families (see \cite{handel99} for details).
Before we enunciate the result, denote $\Z_i:=\Z \times \{i\}$ and $\Z_{i,j}:=\Z \times \{i,j\}$, for any $i,j\in \Z$. 
Let $T$ be the horizontal translation on $\R^2$, that moves each point in $\Z_i$ one unit to the right, and let $R$ be the time-one map of a Reeb flow (as illustrated in the next figure), that moves each point in $\Z_1$ one unit to the right and each point in $\Z_2$ one unit to the left.

\vspace*{0.2cm}
\begin{figure}[h!]
    \center 
    \hspace*{-0.4cm}\begin{overpic}[width=5cm, height=3cm, tics=10]{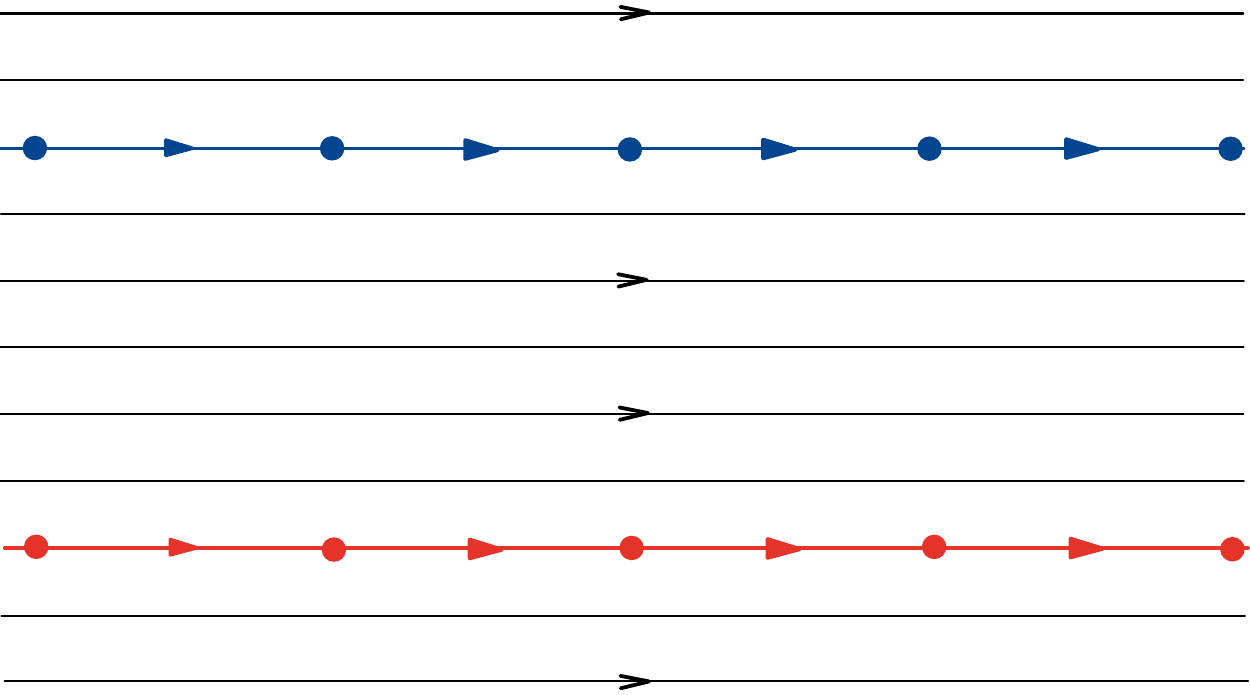}
        \put (-15,56) {{\color{black}\Large$\displaystyle T$}}
        \put (105,45) {{\color{myBLUE}\large$\displaystyle \Z_2$}}
        \put (105,9.5) {{\color{myRED}\large$\displaystyle \Z_1$}}
\end{overpic}
\hspace*{2.9cm}\begin{overpic}[width=5cm, height=3cm, tics=10]{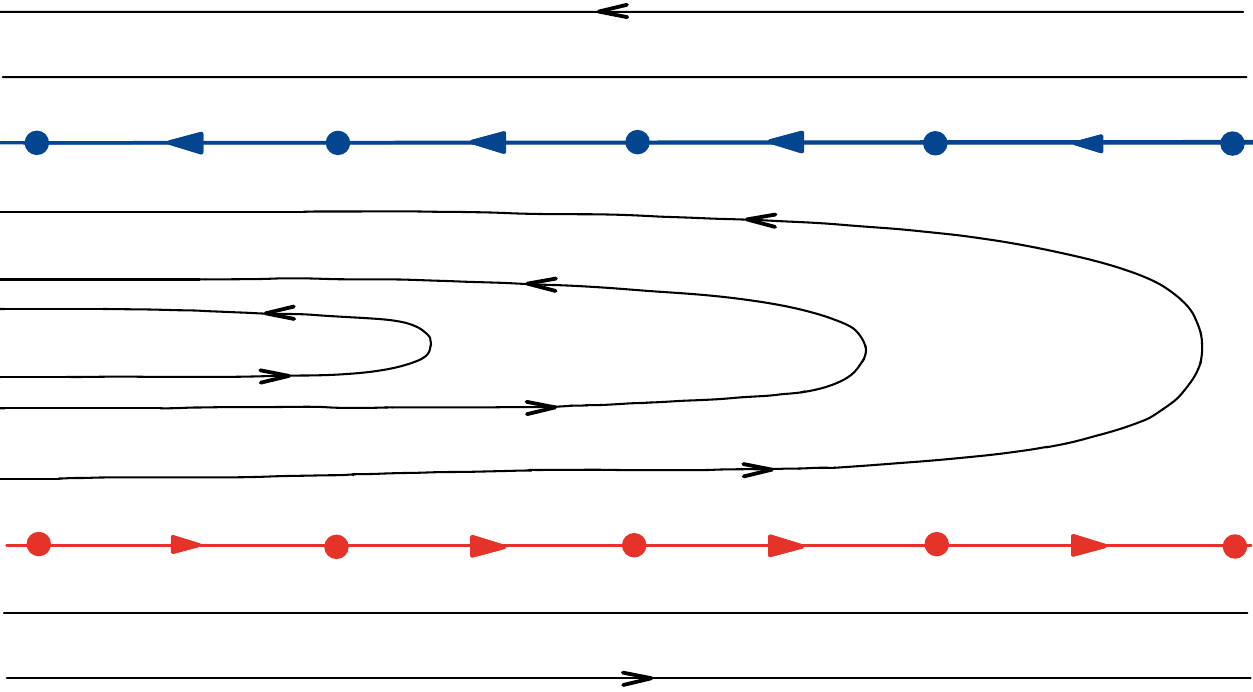}
    \put (-15,56) {{\color{black}\Large$\displaystyle R$}}
    \put (105,45) {{\color{myBLUE}\large$\displaystyle \Z_2$}}
        \put (105,9.5) {{\color{myRED}\large$\displaystyle \Z_1$}}
\end{overpic}
\end{figure}

\vspace*{-0.2cm}

\begin{theorem*}[Handel, {\cite{handel99}}]
\label{thm:handel_classification}
Every Brouwer mapping class $\class{f,\OO}$ relative to $r \leq 2$ orbits  admits a complete classification:
\begin{itemize}[leftmargin=1.2cm]
    \item[\textup{\textbf{(i)}}] If $r=1$, the class $\class{f,\sspc \OO}$ is conjugate to the translation class $\class{T,\spc\Z_1}$.
    \item[\textup{\textbf{(ii)}}] If $r=2$, the class $\class{f,\sspc \OO}$ is conjugate to one of the following classes
    $$ \class{T\sspc,\spc \Z_{1,2}}\sspc, \quad \class{T^{-1},\spc \Z_{1,2}}\sspc, \quad \class{R\sspc ,\spc \Z_{1,2}}\sspc,  \quad \class{R^{-1},\spc \Z_{1,2}}\sspc.$$
\end{itemize}
\end{theorem*}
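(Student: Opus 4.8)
The plan is to feed $\class{f,\OO}$ into the structure theorems of the paper and read off its conjugacy class from the combinatorics of the resulting geodesic family. Applying Theorem~\ref{thmx:II-B} to $(f,\OO)$ produces a family $\G=\mathcal S\sqcup\mathcal P$ of pairwise disjoint geodesic planar lines satisfying items (i)--(iv) of Theorem~\ref{thmx:structural_thm} together with (W1)--(W2). For $r\le 2$, item~(iv) gives a clean dichotomy: \textbf{(1)} some $\lambda\in\mathcal P$ is crossed by every orbit of $\OO$, or \textbf{(2)} $r=2$ and some $\sigma\in\mathcal S$ separates $\O_1$ from $\O_2$. Note that when $r=1$ no separating geodesic can exist, whereas $\G\neq\varnothing$ by item~(ii) (otherwise $\R^2\setminus\OO$ would be homeomorphic to $\R^2$ or $\R^2\setminus\{0\}$, which it is not); hence $\mathcal P\neq\varnothing$ and, having only the orbit $\O_1$ at its disposal, any leaf of $\mathcal P$ is crossed by it, so $r=1$ falls under case~(1). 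I would treat the two cases separately.

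\textit{Case (1).} Since $\lambda$ is crossed by every orbit, no orbit lies entirely on $L(\lambda)$ or on $R(\lambda)$, so Theorem~\ref{thmx:II-C} makes $\lambda$ both forward- and backward-wandering. As $\G$ is pairwise disjoint, every leaf of $\G$ lies in $\overline{L(\lambda)}$ or in $\overline{R(\lambda)}$, so by (W1)--(W2) all of $\G$ is pushing-equivalent to $\lambda$ (in particular $\mathcal S=\varnothing$), and the bi-infinite family $\{f^n_\geo(\lambda)\}_{n\in\Z}$ stays inside $\G$, is nested by item~(iii), and --- being wandering in both directions --- exhausts the plane. It therefore cuts $\R^2\setminus\OO$ into a bi-infinite chain of strips along which $f$ shifts by one; the Pushing Lemma (applied to consecutive iterates) shows each strip carries only finitely many punctures, and an elementary orbit-tracking argument forces each strip to contain exactly one point of each orbit (under $f$ a point of a fixed orbit moves to the next strip, so there is at most one per orbit, and distinctness of consecutive leaves together with equivariance gives at least one). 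After replacing $f$ by the representative of $\class{f,\OO}$ that sends $f^n_\geo(\lambda)$ onto $f^{n+1}_\geo(\lambda)$ for every $n$ (legitimate by the Straightening Principle, Remark~\ref{remark:straightening_principle} and Theorem~\ref{thm:epstein}, since these iterates form a locally finite pairwise disjoint family), this strip picture becomes exactly the one produced by $T$ acting on the vertical strips of $\R^2\setminus\Z_1$ when $r=1$, and of $\R^2\setminus\Z_{1,2}$ when $r=2$ (both orbits then sweep through the strips in the same direction, so no Reeb behaviour is possible). Building the conjugacy strip by strip via $h\circ f=T\circ h$ shows that $\class{f,\OO}$ is conjugate to $\class{T,\Z_1}$ if $r=1$, and to $\class{T,\Z_{1,2}}$ if $r=2$.

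\textit{Case (2).} Here $\sigma\in\mathcal S$ satisfies $f_\geo(\sigma)=\sigma$; since $f$ preserves orientation and each orbit setwise, $f$ preserves the half-planes $L(\sigma)\supset\O_1$ and $R(\sigma)\supset\O_2$ up to isotopy relative to $\OO$, and by item~(i) $\sigma$ is isotopic to a genuine Brouwer line of $f$. Restricting $\class{f,\OO}$ to each side yields a Brouwer mapping class relative to the single orbit sitting on that side, hence conjugate to $\class{T,\Z_1}$ by case~(1); gluing the two half-plane translation models back along $\sigma$, where they are necessarily counter-directed, one obtains the Reeb-type class $\class{R,\Z_{1,2}}$ (or $\class{R^{-1},\Z_{1,2}}$, or $\class{T^{-1},\Z_{1,2}}$ after an orientation change). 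I would isolate this restriction-and-gluing step as Proposition~\ref{prop:separated_geodesic_trajectories}. Combining the two cases covers every $\class{f,\OO}$ with $r\le 2$ and yields the stated classification; running the case~(1) strip argument in the category of fixed-point-free isotopies is moreover what promotes $\class{f,\OO}\cap\mathcal B$ to the fine Brouwer mapping class $\bclass{f,\OO}$ in Corollary~\ref{cor:absolute_brouwer_classification-intro}.

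The main obstacle is turning the combinatorial strip picture of case~(1) into an honest conjugating homeomorphism: once $f$ is normalized to send each $f^n_\geo(\lambda)$ exactly onto the next, one must define $h$ on one strip compatibly with its puncture(s) and its two boundary leaves, propagate it to all strips through $h\circ f=T\circ h$, and check that the dynamics inside a single strip cannot braid the one or two orbits in a way inequivalent to the standard model --- this is where fixed-point-freeness, together with the discreteness and properness of orbits coming from Brouwer's translation theorem, is genuinely used, and where the Pushing Lemma's guarantee that a strip contains no extra leaf of $\G$ is essential. A secondary technical point is case~(2): one must make precise the sense in which the restriction of $\class{f,\OO}$ to one side of a separating leaf is a Brouwer mapping class relative to one orbit (the restriction of a Brouwer homeomorphism to the interior of a Brouwer half-plane need not be onto), and verify that the glued model does not depend on the infinitely many ways of matching up the punctures on the two sides of $\sigma$.
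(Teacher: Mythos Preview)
Your dichotomy into cases (1) and (2) matches the paper exactly, and your handling of case~(1) is essentially correct, though the paper streamlines it: rather than building the conjugacy strip by strip, the paper applies Proposition~\ref{lemma:simplification_wandering_leaves} on both sides of $\phi$ (this is Corollary~\ref{cor:absolute_brouwer_classification_fbt}). That proposition produces, via a fixed-point-free isotopy, a representative $f'$ admitting a \emph{trivial and $f'$-invariant} transverse foliation, from which conjugacy to $T$ is immediate. This absorbs your ``main obstacle'' wholesale --- there is no need to worry about braiding inside a strip, because the foliation itself has already been straightened.

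Case~(2) is where your proposal diverges substantively. Proposition~\ref{prop:separated_geodesic_trajectories} is \emph{not} a restriction-and-gluing statement; it asserts that in the totally separated case the geodesic transverse trajectories are $f_\geo$-invariant, i.e.\ $f_\geo((\gamma_\O^n)^\geo)=(\gamma_\O^{n+1})^\geo$ for every $\O\in\OO$ and $n\in\Z$. From this, Corollary~\ref{corollary:separated_case} uses the Straightening Principle and Alexander's trick on the complementary regions of $\bigcup_i\Gamma_i$ to produce a flow representative with the $\Gamma_i$ as integral curves --- no half-plane restriction ever occurs, so your ``secondary technical point'' (that restricting a Brouwer homeomorphism to a Brouwer half-plane is not onto) simply does not arise. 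Your restrict-and-glue route could perhaps be made to work, but you have correctly identified its gap and not closed it; the paper's trajectory-invariance approach avoids it entirely. Finally, your claim that the two half-plane models are ``necessarily counter-directed'' is not made by the paper: the concluding Remark allows all four models $T$, $T^{-1}$, $R$, $R^{-1}$ to arise from case~(2), with the specific one read off from the shape of the $\Gamma_i$.
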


 \mycomment{-0.1cm}
The Brouwer mapping class $\class{f,\sspc \OO}$ is said to be a \textit{flow-class} if it admits the time one map of a flow as a representative. Observe that, according to Handel's classification, we know that every Brouwer mapping class relative to at most two orbits is a flow class.

The next classification result came with \cite{LEROUX_2017}, where Le Roux classified all Brouwer mapping classes relative to three orbits, up to conjugacy. In particular, he shows that in the case relative to three orbits, every Brouwer mapping class is also a flow-class.

Soon after, Bavard introduced powerful new techniques in \cite{BAVARD_2017}, which enabled the classification of Brouwer mapping classes relative to four orbits and, possibly, paving the way  towards more general classifications. A striking consequence of her result is that, unlike the cases relative to one, two, and three orbits, there exists infinitely many conjugacy classes of Brouwer mapping classes relative to four orbits. Not only that, but also, some of these Brouwer mapping classes are not flow classes. Suggesting that, from four orbits onward, the behavior of Brouwer mapping classes may become significantly more intricate.

We do not include here the precise statements of Le Roux's and Bavard's results, as they are not yet directly recovered by the methods developed in this work.

\subsection{Foliated Brouwer Theory}\label{sec:foliated_brouwer}
Let \( f:\mathbb{R}^2 \longrightarrow \mathbb{R}^2 \) be a \textit{Brouwer homeomorphism}, that is, an orientation-preserving and fixed-point free homeomorphism of the plane.

The orbit of a point \( x \in \mathbb{R}^2 \) under \( f \) is defined as the set
$$ \mathcal{O}(f, x) = \{f^n(x) \mid n \in \mathbb{Z}\} \subset \mathbb{R}^2.$$
Sometimes, it may be useful to denote the set of orbits of \( f \) by $\orb$. As proved in \cite{Brouwer}, every orbit of $f$ is wandering and, thus, it has an empty limit set. This means that every orbit of $f$ is the image of a proper embedding of $\mathbb{Z}$ into the plane.

A \textit{Brouwer line} for \( f \) is an oriented topological line \( \lambda\) on \( \mathbb{R}^2 \) that satisfies
$$ f(L(\lambda)) \subset L(\lambda) \quad \text{ and } \quad f^{-1}(R(\lambda)) \subset R(\lambda),$$
where \( L(\lambda) \) and \( R(\lambda) \) denote the left and right sides of \( \lambda \). The classical \textit{Brouwer translation theorem} \cite{Brouwer} asserts that through every point of the plane passes a Brouwer line for \( f \).

\begin{theorem*}[Le Calvez, \cite{lec1}] There exists an oriented topological foliation \(\F\) of the plane such that every leaf $\phi \in \F$ is a Brouwer line of $f$.
\end{theorem*}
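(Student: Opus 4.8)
The plan is to follow Le Calvez's strategy via \emph{brick decompositions} of the plane. Recall that a brick decomposition of $\R^2$ is a locally finite $1$-complex $\Sigma \subset \R^2$ all of whose vertices have valence $3$ and all of whose complementary regions --- the \emph{bricks} --- are open topological disks; it is \emph{free} for $f$ if $\overline{b}\cap f(\overline{b})=\varnothing$ for every brick $b$. First I would produce a free brick decomposition: since $f$ is fixed-point free, every sufficiently small closed disk is disjoint from its image, so one can build $\Sigma$ fine enough --- bricks shrinking suitably near infinity, which, using properness of $f$, also makes the decomposition locally finite --- to be free. Then, deleting edges of $\Sigma$ one at a time as long as freeness persists and applying Zorn's lemma on the resulting poset, I would pass to a \emph{maximal} free brick decomposition $\mathcal D$, i.e.\ one in which amalgamating any two adjacent bricks would break freeness.

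The next step is the combinatorial order carried by $\mathcal D$. On the set $\mathcal B$ of bricks, write $b\to b'$ when $f(\overline b)\cap\overline{b'}\neq\varnothing$, and let $\preccurlyeq$ be the reflexive--transitive closure of $\to$. The crucial claim, and the only place where the dynamics of a Brouwer homeomorphism really intervenes, is that $\preccurlyeq$ is a \emph{genuine partial order}: there is no loop $b=b_0\to b_1\to\cdots\to b_n=b_0$ with $n\geq1$. Indeed, such a loop is a periodic disk chain --- free disks moved only by the first power of $f$ --- in the sense of Franks's combinatorial strengthening of the Brouwer plane translation theorem, and would therefore force $f$ to have a fixed point, contrary to hypothesis. (In passing this recovers the fact that a Brouwer homeomorphism has no periodic orbit.)

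Given the order, the construction of the foliation is essentially Le Calvez's. For a brick $b$ put $a(b)=\bigcup_{b\preccurlyeq b'}\overline{b'}$ (the attractor downstream of $b$) and $r(b)=\bigcup_{b'\preccurlyeq b}\overline{b'}$ (the repeller upstream of $b$); freeness forces $f(a(b))\subset\operatorname{int} a(b)$ and $f^{-1}(r(b))\subset\operatorname{int} r(b)$ for every $b$, while maximality of $\mathcal D$ is what imposes, at each edge and each vertex, a local product behaviour of these nested regions that prevents the separating curves one wants to insert ``between'' the $a(\cdot)$'s and the $r(\cdot)$'s from ever merging. Performing this local-to-global patching along a locally finite exhaustion of $\R^2$ produces an oriented topological foliation $\F$; local finiteness of $\mathcal D$ together with properness of $f$ makes each leaf a properly embedded line. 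Finally, for a leaf $\phi$ the side $L(\phi)$ is, by construction, a union of attractors $a(b)$, so $f(L(\phi))\subset L(\phi)$, and symmetrically $f^{-1}(R(\phi))\subset R(\phi)$: every leaf is a Brouwer line.

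The hard part will be the third step --- manufacturing an honest non-singular foliation out of the combinatorial order. Concretely one must (a) choose the maximal free brick decomposition so that the induced order is locally finite enough for the leaves to come out as genuine proper lines, and (b) build the local models and patch them across edges and vertices so that no two separating arcs collide; it is precisely maximality that is meant to rule this out, and making the argument watertight is the technical core of Le Calvez's paper (and the step that needs the most additional care in the equivariant refinement). By comparison, once order-compatibility has been built into the construction, verifying that the leaves are Brouwer lines is routine.
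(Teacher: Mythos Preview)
The paper does not give its own proof of this statement: it is quoted verbatim as a theorem of Le Calvez \cite{lec1} in the preliminaries (Section~\ref{sec:foliated_brouwer}) and used as a black box throughout. There is therefore nothing to compare your proposal against in this paper.

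That said, your sketch is a faithful outline of Le Calvez's original argument: free brick decomposition, pass to a maximal one, use Franks's lemma on periodic free disk chains to show the induced relation $\preccurlyeq$ is a partial order, then assemble the foliation from the attractor/repeller sets $a(b)$ and $r(b)$. You correctly flag that the delicate step is the last one --- turning the combinatorics of the order into an honest nonsingular foliation with proper leaves --- and that maximality is exactly what makes the local patching work. One small correction: the maximal free decomposition is not obtained via Zorn's lemma on a poset of edge-deletions (deleting edges one at a time from a locally infinite complex need not terminate in a well-defined object); in Le Calvez's treatment one argues more carefully, and in fact the maximality one needs is a local condition (every edge is ``essential'' in the sense that merging the two adjacent bricks would destroy freeness), which can be achieved by a direct construction or a transfinite induction rather than an abstract appeal to Zorn. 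Apart from that, the plan is sound as a roadmap, but as you yourself acknowledge, the third step is where all the actual work lies and your proposal does not yet contain it.
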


Any such foliation of the plane by Brouwer lines of \( f \) is called a \textit{transverse foliation} of \( f \). It is worth noting that a Brouwer homeomorphism admits several transverse foliations, which  may exhibit significantly different structures and non-homeomorphic leaf spaces, for instance.

Let us fix a transverse foliation $\F$ of a Brouwer homeomorphism \( f \). In this contex, one can show that every point \( x \in \mathbb{R}^2 \) may be connected to its image \( f(x) \) by a path $\gamma_x:[0,1]\longrightarrow \R^2$ that is positively transverse to the foliation $\F$, in the sense that $\gamma_x$ intersects each leaf of $\phi \in \F$ at most once, always from right $R(\phi)$ to left $L(\phi)$. By concatenating the paths $\gamma_{f^n(x)}$ for all integers $n\in \Z$, we obtain a positively transverse path 
$$ \Gamma := \prod_{n\in \Z} \gamma_{f^n(x)},$$
that connects the entire orbit $\mathcal{O}(f,x)$, called a \textit{transverse trajectory} of $\O(f,x)$ (with respect to the foliation $\F$). Since the transverse foliation $\F$ is usually fixed, we often omit its mention when referring to transverse trajectories.
It is worth noting that an orbit $\O(f,x)$ admits uncountably many transverse trajectories, which depend on the choice of paths $\{\gamma_{f^n(x)}\}_{n \in \Z}$.

Indenpendently of the choice of transverse paths $\{\gamma_{f^n(x)}\}_{n \in \Z}$, the leaves that are intersected by the transverse trajectory $\Gamma_\O$ of $\O=\O(f,x)$ are determined by the orbit itself.  In fact, the set $C_\O \subset \F$ of leaves \textit{crossed} by the orbit $\O$ can be equivalently defined as
$$ C_\O = \{\phi \in \F \mid \phi \cap \Gamma_\O \neq \varnothing\} = \{\phi \in \F \mid \O \text{ intersects } R(\phi) \text{ and } L(\phi)\}.$$

We remark that, if the choice of paths $\{\gamma_{f^n(x)}\}_{n \in \Z}$ is locally-finite on the plane, then the transverse trajectory $\Gamma_\O$ is a topological line, and it is called a \textit{proper transverse trajectory} of the orbit $\O$. The converse is also true.

\textbf{ Generic transverse foliations:}\
A transverse foliation $\F$ of \( f \) is said to be \textit{generic} for $\OO$ if each leaf $\phi \in \F$ intersects at most one orbit in $\OO$. This condition is generic, as we explain below.

Assume that a leaf $\phi \in \F$ intersects two orbits in $\OO$, at points $x,x^\pp\in \phi$. Let $B$ be a closed ball centered at $x$ that is sufficiently small so that $B \cap \OO = \{x\}$ and $f(B) \cap B = \varnothing.$ Observe that, we can perturbe $\F$ inside the ball $B$ in such a way that $x$ no longer lies on the leaf passing through $x^\pp$. Since $B$ satisfies $f(B) \cap B = \varnothing$, the leaves of $\F$ continue to be Brouwer lines of \( f \) after the perturbation. Now, since $\OO$ is a finite collection of orbits of $f$, and each orbit of $f$ is a closed discrete subset of $\R^2$, we have that for any compact set $K\subset \R^2$, the set $K\cap \OO$ is finite. Therefore, by repeating the perturbation process described above on a locally-finite family of disjoint balls covering $\OO$, we can obtain a transverse foliation $\F'$ of \( f \) that is generic for $\OO$.

\vspace*{0.3cm}
\textbf{ Proper transverse trajectories:}\ In a previous work \cite{schuback1}, we established that every orbit of $f$ admits a proper transverse trajectory with respect to the transverse foliation $\F$. Moreover, in the case where $\F$ is generic for $\OO$, we demonstrated the following structural result.

\begin{theorem}\label{theorem:1B-intro}
    Let \(\F\) be a transverse foliation of $f$, which we assume to be generic for \(\OO\). Then, there exists a family $\bigl\{\Gamma_{\O}\bigr\}_{\O \in \OO}$ of proper transverse trajectories of the orbits in $\OO$ that satisfy
    \mycomment{-0.12cm}
    $$\Gamma_{\O} \cap \Gamma_{\O^\pp} \cap \overline{\rule{0pt}{3.6mm}L(\phi)} = \varnothing, \quad \forall \sspc \O,\O^\pp \in \OO,\  \O \neq \O^\pp.$$
\end{theorem}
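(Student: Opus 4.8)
The plan is to produce the family $\{\Gamma_\O\}_{\O\in\OO}$ by starting from an arbitrary choice of proper transverse trajectories and improving it through local surgeries that successively destroy the ``forbidden'' intersections. By the existence result recalled just above (from \cite{schuback1}), for each $\O\in\OO$ fix a proper transverse trajectory $\Gamma_\O^{0}$: it is a topological line, positively transverse to $\F$, and the set of leaves it crosses is exactly $C_\O$. If all the required conditions already hold we are done; otherwise we reduce a suitable complexity step by step.

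The first thing to isolate is the local picture at an intersection. If $z\in\Gamma_\O\cap\Gamma_{\O^\pp}$ with $\O\neq\O^\pp$ and $\phi_z$ is the leaf of $\F$ through $z$, then, both trajectories being positively transverse to $\F$, each crosses $\phi_z$ exactly once, from $R(\phi_z)$ to $L(\phi_z)$; in particular $\phi_z\in C_\O\cap C_{\O^\pp}$, so every intersection point lies in the region jointly crossed by the two orbits, and $z\in\overline{L(\phi)}$ for precisely the leaves $\phi$ that weakly precede $\phi_z$ in the transverse order. This pins down which intersections violate the conclusion and reduces the task to controlling how the crossings of $\Gamma_\O$ and $\Gamma_{\O^\pp}$ are interleaved along the one-dimensional leaf space.

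The surgery exploits the flexibility of transverse trajectories: each $\Gamma_\O$ is a concatenation $\prod_n\gamma_{f^n(x)}$ of positively transverse arcs, any of which may be replaced by another positively transverse arc with the same endpoints, and near a point $y=f^n(x)$ of an orbit one may work inside a small ball $B$ with $f(B)\cap B=\varnothing$, where leaves may be deformed and arcs rerouted while staying transverse and keeping all leaves Brouwer lines of $f$, exactly as in the genericity construction above. The key observation is that if a sub-arc of $\Gamma_\O$ and a sub-arc of $\Gamma_{\O^\pp}$ cobound an innermost disk $D$, then, because both trajectories meet every leaf at most once, the two sub-arcs cross exactly the same leaves; hence, provided $D$ contains no point of $\OO$, sliding the sub-arc of $\Gamma_\O$ across $D$ produces another proper transverse trajectory of $\O$ — proper because the modification is supported on a locally finite family of balls, transverse because the new arc follows the arc of $\Gamma_{\O^\pp}$, and still crossing exactly the leaves of $C_\O$ — with two fewer intersection points with $\Gamma_{\O^\pp}$ and no new intersection with the other trajectories in the family.

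The step I expect to be the real obstacle is making this process converge, since infinitely many intersection points may be present at once and one must prevent cleared ones from being recreated. The natural device is local finiteness: fix a compact exhaustion $\R^2=\bigcup_k K_k$; on each $K_k$ only finitely many forbidden intersections occur, so finitely many slides — supported near $K_k$ and therefore not disturbing the family far from $K_k$ — remove them, and one arranges an ordering in which intersections already cleared inside $K_{k-1}$ are not reintroduced. The delicate points to verify are that an innermost disk with no puncture of $\OO$ inside can always be found (this is where genericity of $\F$ for $\OO$ and the structure of the jointly-crossed regions are used) and that pushing $\Gamma_\O$ toward the edge of $C_\O$ never drags it across a leaf crossed by a third orbit. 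Passing to the limit over the exhaustion then yields a family $\{\Gamma_\O\}_{\O\in\OO}$ of proper transverse trajectories with no forbidden intersection, i.e.\ satisfying $\Gamma_\O\cap\Gamma_{\O^\pp}\cap\overline{L(\phi)}=\varnothing$ for all distinct $\O,\O^\pp\in\OO$.
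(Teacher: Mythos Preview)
This theorem is not proved in the present paper. It appears in the preliminaries (Section~2.4) with the explicit attribution ``In a previous work \cite{schuback1}, we established\ldots\ Moreover\ldots\ we demonstrated the following structural result,'' and is thereafter used as a black box. There is therefore no proof in this paper for your attempt to be compared against; the argument lives in \cite{schuback1}.

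Two remarks on your sketch nonetheless. First, the statement as printed here leaves the leaf $\phi$ unquantified, and the accompanying figure and caption $\O \lesssim_L \O'$ indicate that the precise result in \cite{schuback1} carries extra structure (an ordering on the orbits and a distinguished leaf per pair) that has been suppressed. Your plan implicitly targets full pairwise disjointness, which is the strongest possible reading; before going further you should check what \cite{schuback1} actually claims.

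Second, the innermost-bigon surgery glosses over the real obstruction. The sub-arc of $\Gamma_\O$ bounding an innermost disk $D$ may itself contain orbit points of $\O$, and those points are fixed: you may only modify $\Gamma_\O$ on the individual transverse arcs $\gamma_{f^n(x)}$ between consecutive orbit points, not across them. Likewise the opposite arc may contain points of $\O'$, constraining where you can push. Nothing in your outline ensures the existence of an innermost bigon whose $\Gamma_\O$-side is free of orbit points of $\O$ and whose interior is free of $\OO$; genericity of $\F$ does not give this for free. These are precisely the details one expects \cite{schuback1} to handle, so that reference is where to look.
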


\vspace*{-0.3cm}

\begin{figure}[h!]
    \center\begin{overpic}[width=7.2cm, height=3.6cm, tics=10]{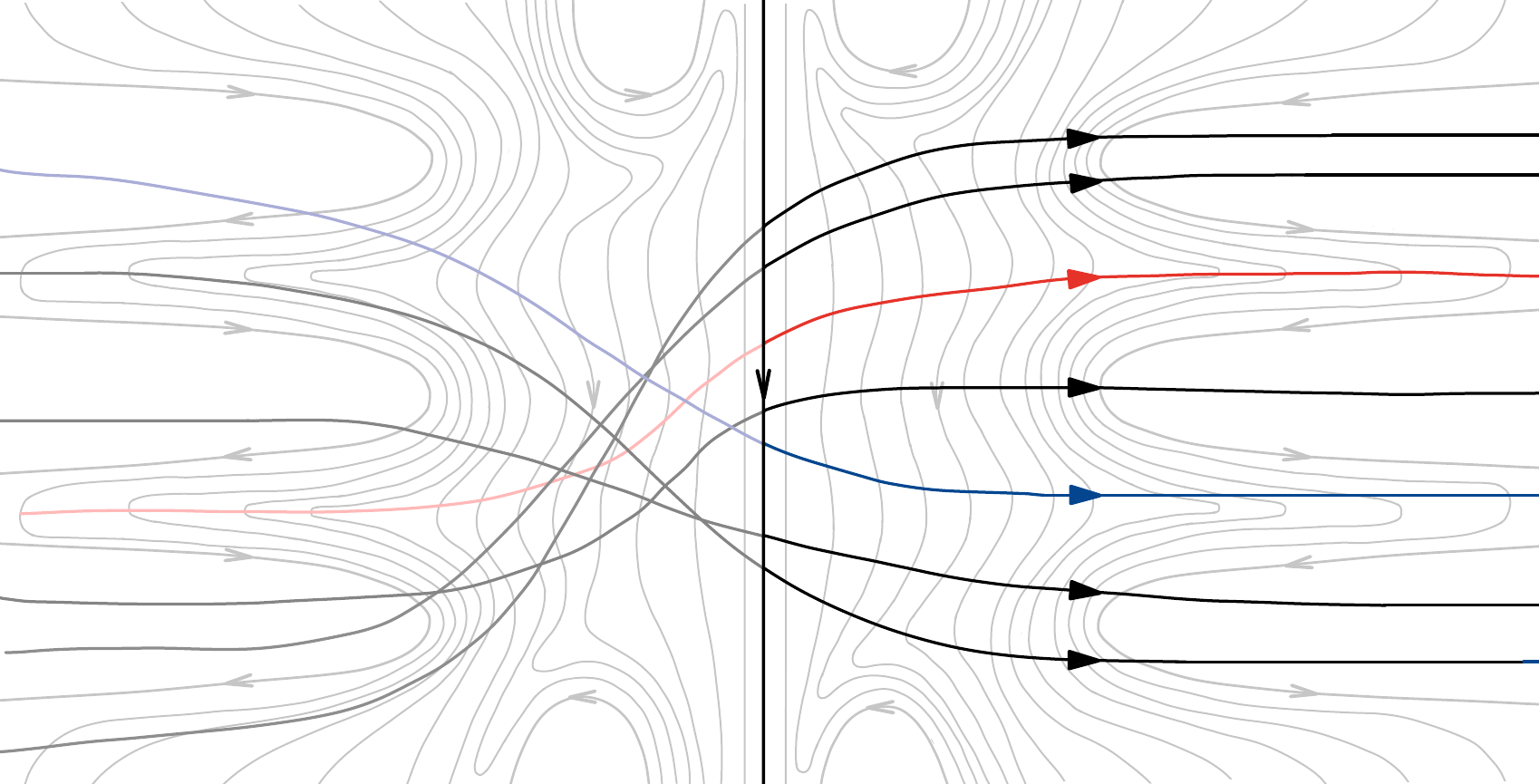}
        \put (44.5,-6) {\colorbox{white}{\color{black}\large$\displaystyle\  \phi \spc$}}
         \put (103.5,15.5) {{\color{myBLUE}\large$\displaystyle \Gamma_{\O^\pp} $}}
         \put (103.5,30) {{\color{myRED}\large$\displaystyle \Gamma_\O$}}
         \put (-25,20) {{\color{myRED}\large$\displaystyle \O \ $}{\color{black}\large$\displaystyle \lesssim_L$}{\color{myBLUE}\large $\spc\displaystyle \O^\pp$}}
\end{overpic}
\end{figure}

\vspace*{0.1cm}
A similar result holds if we replace $\overline{\rule{0pt}{3.6mm}L(\phi)}$ and $\lesssim_L$ for their right analogues $\overline{\rule{0pt}{3.6mm}R(\phi)}$ and $\lesssim_R$. 

\addtocontents{toc}{\protect\setcounter{tocdepth}{2}}

\section{A foliated-homotopy framework}\label{ch:geodesic_framework}

\subsection{Transverse geodesic laminations} Let $\F$ be a transverse foliation of $f$, and define 
 \mycomment{-0.16cm}
\begin{equation*}
    \G := \{\spc\phi^\geo \mid \phi\in \F \text{ is an essential line on } \R^2\setminus \OO\sspc\}.
\end{equation*}
By the minimal intersection property of geodesics, since the leaves of $\F$ are pairwise disjoint, the geodesic representatives in $\G$ must either coincide or be disjoint. This implies that $\G$ is a family of pairwise disjoint geodesic lines on $\R^2\setminus \OO$. Thus, viewing $\G$ as a subset of $\R^2$ via the identification $\G = \bigcup_{\lambda \in \G} \lambda$, the closure $\overline{\rule{0cm}{0.31cm}\sspc\G}$ is a geodesic lamination of $\R^2 \setminus \OO$ (Section \ref{sec:hyperbolic_geometry_R2Z}). Although the difference $\overline{\rule{0cm}{0.31cm}\sspc\G}\setminus \G$ may be non-empty, by abuse of notation, we refer to  $\G$ as a \emph{transverse geodesic lamination}, and its elements as \emph{geodesic leaves}, associated to $\class{f,\OO}$.

\begin{figure}[h!]
    \center 
    \hspace*{-0.2cm}\begin{overpic}[width=6.6cm, height=3.7cm, tics=10]{beforeF.pdf}\put (91,1) {\colorbox{white}{\color{gray}\LARGE$\displaystyle \rule{0cm}{0.46cm}\F$}}
\end{overpic}
\hspace*{1.2cm}\begin{overpic}[width=6.6cm, height=3.7cm, tics=10]{GeoLam.pdf}
    \put (90,2) {\colorbox{white}{\color{black}\LARGE$\displaystyle \rule{0cm}{0.48cm}\sspc \G$}}
\end{overpic}
\end{figure}

Observe that, for any geodesic leaf $\lambda \in \G$ and any leaf $\phi \in \F$ such that $\phi^\geo = \lambda$, we have 
 \mycomment{-0.16cm}
$$ R(\lambda) \cap \OO = R(\phi) \cap \OO \quad \text{ and } \quad L(\lambda) \cap \OO = L(\phi) \cap \OO.$$
From this property, we observe that $\G$ replicates several structural features of the foliation $\F$ with respect to $\OO$. By fixing a basepoint $x_\O \in \O$, for each $\O \in \OO$, we have the following:

\vspace*{0.1cm}
\begin{itemize}[leftmargin=0.9cm]
    \item[(1)] An orbit $\O \in \OO$ is said to cross a geodesic leaf $\lambda \in \G$ if it intersects both $R(\lambda)$ and $L(\lambda)$. Moreover, if $\O$ crosses $\lambda$, then there exist $N\in \Z$ such that
     \mycomment{-0.2cm}
    $$f^m(x_\O) \in R(\lambda) \quad \text{and} \quad  f^n(x_\O) \in L(\lambda)\sspc, \quad\  \forall m\leq N<n.$$
    In this context, we say that $\lambda$ is crossed by the orbit $\O$ at time $N$.

    \vspace*{0.1cm}
    \item[(2)] For any $\O \in \OO$ and any $N\in \Z$, there exists $\lambda \in \G$ crossed by $\O$ at time $N$.
    
    \vspace*{0.1cm}
    \item[(3)] For any two distinct orbits $\O,\O^\pp \in \OO$, one of the following alternatives holds:
    \begin{itemize}[leftmargin=1.1cm]
            \item[$\sbullet$] Either there exists a geodesic leaf $\lambda \in \mathcal G$ crossed by both orbits $\O$ and $\O^\pp$.
            \item[$\sbullet$] Or there exists a geodesic leaf $\lambda \in \mathcal G$ that separates $\O$ from $\O^\pp$, in the sense that the orbits $\O$ and $\O^\pp$ are contained in different connected components of $\R^2\setminus \lambda$.
        \end{itemize}
\end{itemize}
 \mycomment{0.2cm}
At last, if the foliation $\F$ is generic with respect to $\OO$, meaning that each leaf of $\F$ intersects at most one orbit in $\OO$, then we have a stronger version of property (2):

\vspace*{0.1cm}
\begin{itemize}[leftmargin=1.1cm]
    \item[(2*)] For any two distinct points $x,x'\in \OO\subset\R^2$, there exists a geodesic leaf $\lambda \in \G$ such that the points $x$ and $x'$ are contained in different connected components of $\R^2\setminus \lambda$.
\end{itemize}

The geodesic leaves in $\G$ can be classified into two types, according to their interaction with the orbits in $\OO$. Motivated by this, we decompose the transverse geodesic lamination into two families $\G= \mathcal P \sqcup \mathcal S$, which are defined as follows:

\vspace*{0.1cm}
\begin{itemize}[leftmargin=1.5cm]
    \item[$\sbullet$]  Every geodesic leaf $\lambda \in \P$ is crossed by at least one orbit in $\OO$.
    \vspace*{0.1cm}
    \item[$\sbullet$] Every geodesic leaf $\sigma \in \mathcal S$ is not crossed by any orbit in $\OO$.
\end{itemize}

\vspace*{0.1cm}
\noindent Throughout the text, we often refer to elements of $\mathcal P$ as \textit{pushing geodesic leaves}. Meanwhile, elements of $\mathcal S$ are called \textit{separating geodesic leaves}, and are represented by the letter $\sigma \in \mathcal S$. 
 \mycomment{0.2cm}
\begin{figure}[h!]
    \center 
    \hspace*{-0.3cm}\begin{overpic}[width=4cm, height=2.8cm, tics=10]{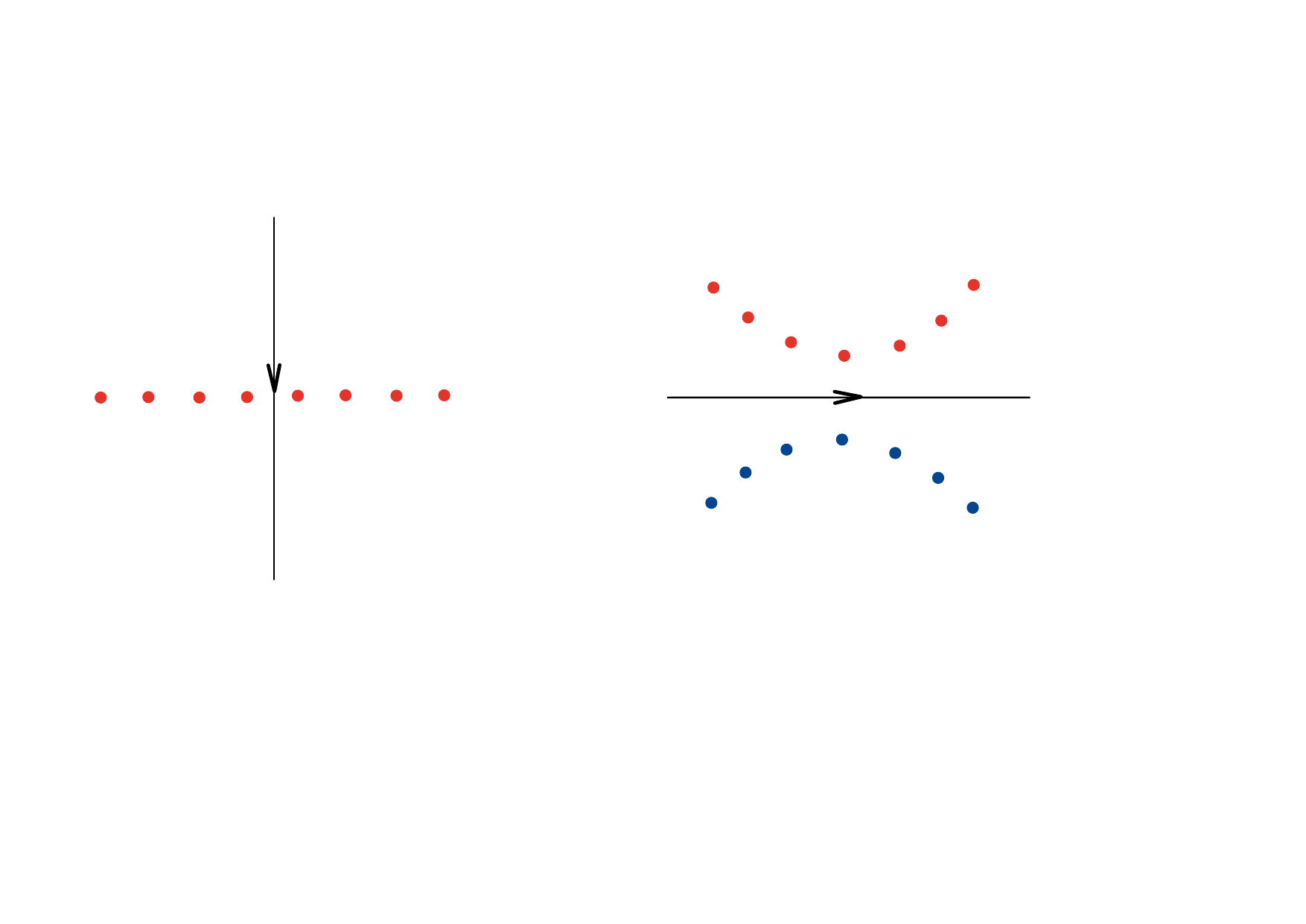}
        \put (56,56) {{\color{black}\Large$\displaystyle \lambda \in \mathcal P$}}
        \put (-14.5,31) {{\color{myRED}\large$\displaystyle \O_{\sspc i}$}}
        
\end{overpic}
\hspace*{3cm}\begin{overpic}[width=4cm, height=2.8cm, tics=10]{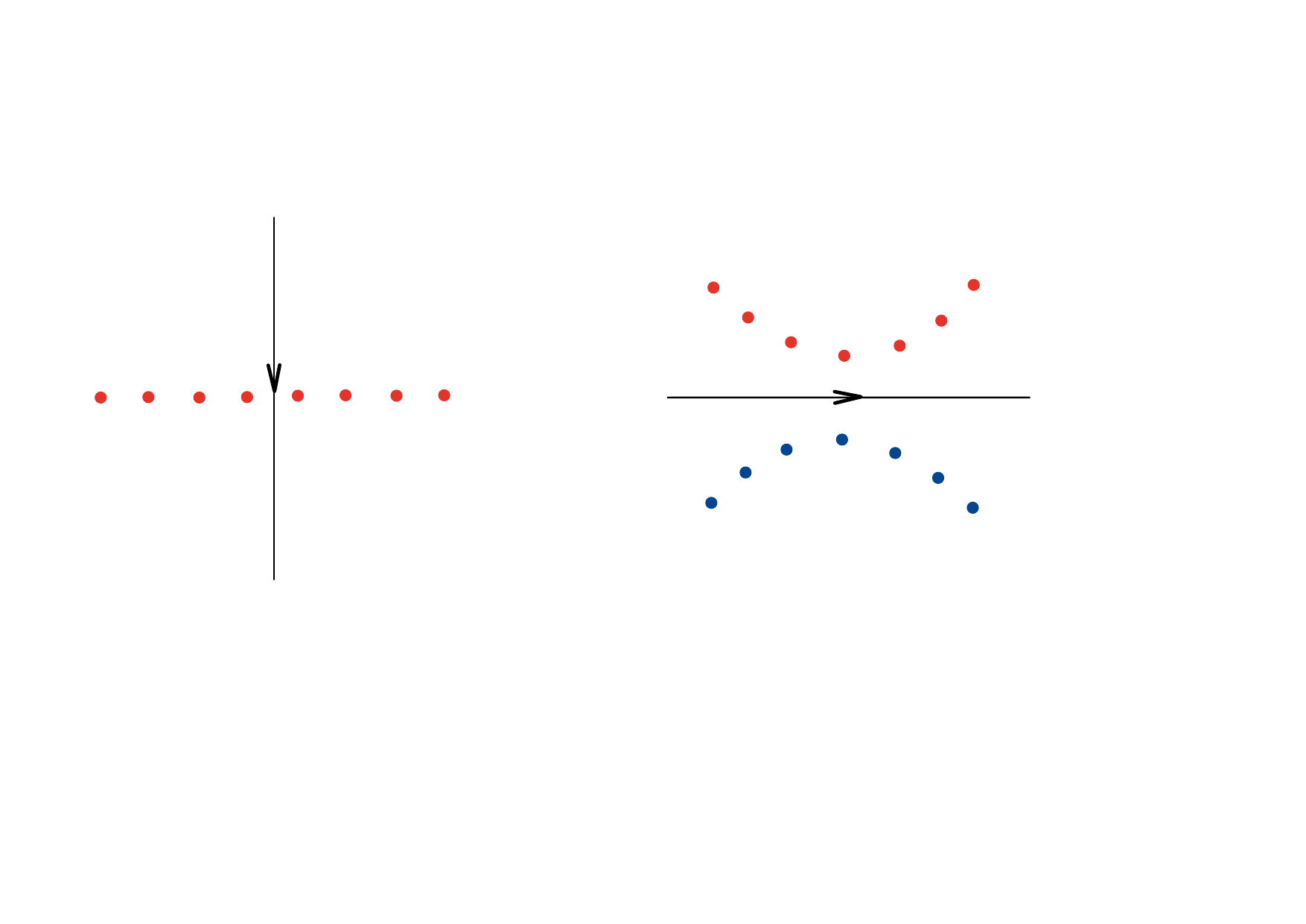}
    \put (102,30.5) {{\color{black}\Large$\displaystyle \sigma \in \mathcal S$}}
    \put (-3,54) {{\color{myRED}\large$\displaystyle \O_{\sspc i}$}}
    \put (-3,10) {{\color{myBLUE}\large$\displaystyle \O_{\sspc j}$}}
\end{overpic}
\end{figure}

 \vspace*{-0.2cm}
Note that, each geodesic leaf $\sigma \in \mathcal S$ separates at least two orbits in $\OO$, meaning that each of its sides $R(\sigma)$ and $L(\sigma)$ contain at least one orbit in $\OO$, since otherwise $\sigma$ would not be essential on $\R^2\setminus \OO$ and this contradicts the fact that every geodesic line on $\R^2\setminus \OO$ is essential. As a consequence, $\mathcal S$ has at most $r-1$ elements, where $r$ denotes the number of orbits in $\OO$.

Let us denote by $f_\geo$ the action of the homeomorphism $f\vert_{\R^2\setminus \OO}$ on the set of geodesics on the surface $\R^2\setminus \OO$ (see Section \ref{sec:hyperbolic_geometry_R2Z} for details). This action on geodesics is invariant under isotopy on $\R^2\setminus \OO$ and, therefore, studying the action of $f_\geo$ on geodesics allows us to understand the behavior of the Brouwer mapping class $\class{f,\OO}$. We recall that $f_\geo$ behaves coherently with geodesic representatives, meaning that, for any essential line $\ell$ on $\R^2\setminus \OO$, 
 \mycomment{-0.11cm}
\begin{equation*}
    f_\geo(\ell^\geo) = (f(\ell))^\geo.
\end{equation*}

The following lemma describes the fundamental dynamical properties of the geodesic leaves under the action of $f_\geo$, highlighting the differences between pushing and separating.

 \mycomment{-0.01cm}
\begin{lemma}\label{lemma:geodesic_leaves}
    The dynamical behavior of geodesic leaves under the action of $f_\geo$ is as follows:

    \vspace*{0.1cm}
    \begin{itemize}[leftmargin=1cm]
        \item[\textup{\textbf{(i)}}] Every pushing geodesic leaf $\lambda \in \P$ satisfies the following property
         \mycomment{-0.12cm}
        $$ L(f^{n+1}_\geo (\lambda)) \subset L(f^{n}_\geo (\lambda)) \quad \forall n \in \Z.$$

         \mycomment{-0.27cm}
       \noindent Moreover, if $\lambda$ is crossed by exactly $k>0$ orbits in $\OO$, then the surface
        \mycomment{-0.11cm}
       $$ S(f^{n}_\geo (\lambda)\sspc,\spc f^{n+1}_\geo (\lambda)) = L(f^{n}_\geo (\lambda)) \cap R(f^{n+1}_\geo (\lambda))\setminus \OO,$$

         \mycomment{-0.25cm}
       \noindent is homeomorphic to a $k$-punctured plane.

        \vspace*{0.1cm}
        \item[\textup{\textbf{(ii)}}] Every separating geodesic leaf $\sigma \in \mathcal S$ is fixed by $f_\geo$, that is,
        $f_\geo(\sigma) = \sigma.$
    \end{itemize}
\end{lemma}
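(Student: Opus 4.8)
The plan is to reduce both parts to a single statement comparing the geodesic representative of a Brouwer line with that of its $f$-image, and then to prove that statement using the minimal intersection property together with the Brouwer line condition.

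For the reduction I would fix a leaf $\phi\in\F$ whose geodesic representative $\phi^\geo$ is the given geodesic leaf ($\lambda\in\P$ in part (i), $\sigma\in\mathcal S$ in part (ii)). Recall from the discussion preceding the lemma that $L(\phi^\geo)\cap\OO=L(\phi)\cap\OO$, that $f^n_\geo(\phi^\geo)=(f^n(\phi))^\geo$ for all $n\in\Z$ (iterating $f_\geo(\ell^\geo)=(f(\ell))^\geo$), and that $f^{\pm 1}$ carries Brouwer lines of $f$ to Brouwer lines of $f$ (immediate from $f(L(\cdot))\subset L(\cdot)$ and $f^{-1}(R(\cdot))\subset R(\cdot)$). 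Also each $f^n(\phi)$ is essential on $\R^2\setminus\OO$, being a homeomorphic image of the essential line $\phi$, so its geodesic representative exists. It then suffices to prove the following \emph{Key Claim}: if $\psi$ is a Brouwer line of $f$ essential on $\R^2\setminus\OO$, then $\psi^\geo$ and $(f(\psi))^\geo$ are disjoint or equal, one has $L((f(\psi))^\geo)\subset L(\psi^\geo)$, and moreover $L((f(\psi))^\geo)\cap\OO=f(L(\psi)\cap\OO)$.

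To prove the Key Claim I would argue as follows. Since $f(L(\psi))\subset L(\psi)$ we get $f(\psi)\subset\overline{L(\psi)}$, so $f(\psi)$ has no transverse crossing with $\psi$; hence every intersection of $\psi$ and $f(\psi)$ is inessential, and the minimal intersection property forces $\psi^\geo$ and $(f(\psi))^\geo$ to be disjoint or equal. Using that $f$ is orientation-preserving, $f(\OO)=\OO$, and that the puncture partition is an isotopy invariant of planar lines, $L((f(\psi))^\geo)\cap\OO=L(f(\psi))\cap\OO=f(L(\psi))\cap\OO=f(L(\psi)\cap\OO)\subset L(\psi)\cap\OO=L(\psi^\geo)\cap\OO$. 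If the two geodesics coincide we are done; if disjoint, one lies entirely on one side of the other, and the case $(f(\psi))^\geo\subset R(\psi^\geo)$ is excluded, since it would force $L(\psi^\geo)\cap\OO\subset L((f(\psi))^\geo)\cap\OO$, hence equality of the two puncture sets, hence a puncture-free strip between two distinct disjoint geodesics — impossible, as such a strip would make them isotopic and therefore equal. Thus $(f(\psi))^\geo\subset L(\psi^\geo)$ and $L((f(\psi))^\geo)\subset L(\psi^\geo)$.

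Part (i) then follows: applying the Key Claim with $\psi=f^n(\phi)$ and using $(f^{n+1}(\phi))^\geo=f_\geo((f^n(\phi))^\geo)$ gives $L(f^{n+1}_\geo(\lambda))\subset L(f^n_\geo(\lambda))$ for every $n$. For the second assertion I would fix $n$, write $g=f^n_\geo(\lambda)$, $g'=f^{n+1}_\geo(\lambda)$, set $B=L(\phi)\cap\OO$, and observe that since orbits are $f$-invariant and $L(g)\cap\OO=f^n(B)$, an orbit crosses $g$ iff it crosses $\phi$, so $g$ is crossed by exactly $k$ orbits at certain times $N_1,\dots,N_k$ relative to fixed basepoints $x_1,\dots,x_k$ (the crossings are monotone, $f^n(\phi)$ being a Brouwer line). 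The punctures of $\OO$ lying in $S(g,g')=L(g)\cap R(g')$ are exactly $f^n(B)\setminus f^{n+1}(B)=f^n(B\setminus f(B))$, and $B\setminus f(B)=\{f^{N_i+1}(x_i):1\le i\le k\}$ (a crossing orbit $\O_i$ has $L(\phi)\cap\O_i=\{f^m(x_i):m>N_i\}$, whose difference with its $f$-image is the single point $f^{N_i+1}(x_i)$; non-crossing orbits contribute nothing). Hence $L(g)\cap\OO\ne L(g')\cap\OO$, so $g\ne g'$, so by the Key Claim $g,g'$ are disjoint with $g'\subset L(g)$; thus $S(g,g')$ is the strip bounded by the two disjoint lines $g,g'$, homeomorphic to $\R^2$, containing exactly those $k$ punctures, and removing them yields a $k$-punctured plane. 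For part (ii), when $\sigma\in\mathcal S$ the leaf $\phi$ crosses no orbit, so $L(\phi)\cap\OO$ is a union of full $f$-invariant orbits and $f(L(\phi)\cap\OO)=L(\phi)\cap\OO$; the Key Claim gives $L(f_\geo(\sigma))\cap\OO=L(\sigma)\cap\OO$, so $f_\geo(\sigma)$ and $\sigma$ have the same puncture partition and, by the Key Claim, are disjoint or equal; disjointness would give a puncture-free strip between them, hence they would be isotopic and equal, a contradiction, so $f_\geo(\sigma)=\sigma$. I expect the main obstacle to be the disjointness half of the Key Claim — rigorously deducing from $f(\psi)\subset\overline{L(\psi)}$ (absence of transverse crossings) that $\psi^\geo$ and $(f(\psi))^\geo$ are disjoint or coincide; the remainder is bookkeeping with puncture partitions and the monotone crossing of Brouwer lines by orbits.
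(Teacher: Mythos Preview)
Your proposal is correct and complete. The route differs from the paper's in how you transfer information from the foliation leaf $\phi$ to its geodesic representative. The paper invokes the Straightening Principle (Lemma~\ref{lemma:straightening_principle}) to obtain a single ambient isotopy $(h_t)$ with $h_1(\phi)=\lambda$ and $h_1(f(\phi))=f_\geo(\lambda)$; this isotopy then carries the strip $L(\phi)\cap R(f(\phi))\setminus\OO$ homeomorphically onto $L(\lambda)\cap R(f_\geo(\lambda))\setminus\OO$, giving the nesting and the $k$-punctured-plane structure in one stroke. You instead argue purely with minimal intersection and puncture partitions: disjoint-or-equal comes from $\phi\cap f(\phi)=\varnothing$ and minimal intersection, the side is pinned down by comparing $L(\cdot)\cap\OO$, and the punctures in the strip are counted explicitly as $f^n(B\setminus f(B))$. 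Your approach is slightly more elementary in that it bypasses the Straightening Principle; the paper's is cleaner in that the homeomorphism type of the strip is inherited directly rather than reconstructed from the puncture count plus Schoenflies. As for your self-identified obstacle: it is not actually an obstacle here, since for a leaf of a transverse foliation one has $\phi\cap f(\phi)=\varnothing$ outright (not merely an absence of transverse crossings), so the inequality $\#(\phi^\geo\cap f_\geo(\phi^\geo))\le\#(\phi\cap f(\phi))=0$ applies directly.
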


\begin{proof} We begin by proving item (i).
Consider a geodesic leaf $\lambda \in \P$ and a leaf $\phi \in \F$ with geodesic representative $\phi^\geo = \lambda$.  By the definition of $\P$, there are $k>0$ orbits in $\OO$ crossing $\lambda$, and thus crossing $\phi$ as well. Thus, the surface $ S(\phi\sspc,\sspc f(\phi)) =L(\phi) \cap R(f(\phi))\setminus \OO$ is homeomorphic to a $k$-punctured plane. This implies that $\phi$ and $f(\phi)$ are not isotopic on $\R^2\setminus \OO$, therefore they have distinct geodesic representatives $\phi^\geo = \lambda$ and $f(\phi)^\geo = f_\geo(\phi^\geo) = f_\geo(\lambda)$. In addition, since $\phi$ and $f(\phi)$ are disjoint, the geodesics $\lambda$ and $f_\geo(\lambda)$ must also be disjoint, since geodesic representatives minimize intersections in their isotopy classes.

According to the Straightening principle (see Lemma \ref{lemma:straightening_principle}), there exists an isotopy $(h_t)_{t \in [0,1]}$ on $\R^2\setminus \OO$ from the identity $h_0 = \textup{id}_{\R^2\setminus \OO}$ that satisfies
 \mycomment{-0.05cm}
$$h_1(\phi) = \lambda \quad \text{ and } \quad h_1(f(\phi)) = f_\geo(\lambda),$$

 \mycomment{-0.15cm}
\noindent not only as sets, but also with matching orientations. This first implies that
 \mycomment{-0.05cm}
$$ L(\lambda) \cap R(f_\geo(\lambda))\setminus \OO = h_1(L(\phi) \cap R(f(\phi))\setminus \OO),$$ 

 \mycomment{-0.15cm}
\noindent which is homeomorphic to a $k$-punctured plane. Secondly, since $h_1$ maps $\phi$ and $f(\phi)$ into their geodesic representatives with the correct orientation, we have that
 \mycomment{-0.05cm}
$$ h_1(L(\phi)) = L(\lambda) \quad \text{ and } \quad h_1(L(f(\phi))) = L(f_\geo(\lambda)).$$ 

 \mycomment{-0.15cm}
\noindent 
Finally, since the leaf $\phi$ satisfies $L(f(\phi))\subset L(\phi)$, we can also conclude that $L(f_\geo (\lambda)) \subset L(\lambda)$. The same reasoning can be applied, for any $n\in \Z$, to the iterates $f^n(\phi)$ and $f^{n+1}(\phi)$, and this leads to the proof of item (i).
We will now prove item (ii).

Consider a geodesic leaf $\sigma \in \mathcal S$ and a leaf $\phi \in \F$ with geodesic representative $\phi^\geo = \sigma$.  By the definition of $\mathcal S$, no orbit in $\OO$ crosses $\sigma$, and thus no orbit crosses the leaf $\phi$ as well. Thus, every orbit in $\OO$ is contained in either $L(\phi)$ or $R(\phi)$. Since each orbit is $f$-invariant,
 \mycomment{-0.05cm}
$$ \O \subset L(\phi) \iff f(\O) \subset L(f(\phi))\sspc, \quad \forall \O \in \OO.$$

 \mycomment{-0.15cm}
\noindent 
This implies that the surface $ S(\phi\sspc,\spc f(\phi)) = L(\phi) \cap R(f(\phi))\setminus \OO$ is homeomorphic to a plane.
By the Homma-Schoenflies theorem, we may suppose up to conjugacy that $\phi$ and $f(\phi)$ are vertical line oriented downwards. This allows us to perform an isotopy $(h_t)_{t\in [0,1]}$ on $\R^2\setminus \OO$ from the identity map that moves points horizontally until we get $h_1(\phi) = f(\phi).$ Consequently, the lines $\phi$ and $f(\phi)$ are isotopic on $\R^2\setminus \OO$, and thus their geodesic representatives, which are given by $\sigma$ and $f_\geo(\sigma)$, must coincide. This concludes the proof of item (ii).
\end{proof}

As a conclusion, we have proved throughout this section the following structural theorem, which is a complete version of Theorem \ref{thmx:structural_thm} stated in the introduction.

\pagebreak

\addtocounter{thmx}{-3}

\begin{thmx} Every Brouwer mapping class $\class{f,\OO}$ admits a family $\G$ formed by pairwise disjoint geodesic planar lines on $\R^2\setminus \OO$ which is decomposable into two subfamilies $\G = \mathcal S\sspc \sqcup \sspc\mathcal P$ in such a way that the following properties hold

    \vspace*{0.1cm}
    \begin{itemize}[leftmargin=1.2cm]
        \item[\textup{\textbf{(i)}} ] Every geodesic in $\G$ is isotopic to a Brouwer line of $f$ on the surface $\R^2\setminus \OO$.
        
        \vspace*{0.05cm}
        \item[\textup{\textbf{(ii)}}$\spc$] Every connected component of $(\R^2\setminus \OO)\setminus \G$ is homeomorphic to $\R^2$ or $\R^2\setminus \{0\}.$
        
        \vspace*{0.05cm}
        \item[\textup{\textbf{(iii)}}] For any two distinct orbits $\O,\O^\pp \in \OO$, we have the following dichotomy:
        
        \vspace*{0.05cm}
        \begin{itemize}
            \item Either there exists a geodesic $\lambda \in \mathcal P$ crossed by both orbits $\O$ and $\O^\pp$.
            
            \vspace*{0.05cm}
            \item Or there exists a geodesic $\sigma \in \mathcal S$ that separates $\O$ from $\O^\pp$.
        \end{itemize}

        \vspace*{0.0cm}
        \item[\textup{\textbf{(iv)}}] Every geodesic $\sigma \in \mathcal S$ is called a \emph{separating geodesic} and satisfies:
        
        \vspace*{0.05cm}
    \begin{itemize}[leftmargin=1.3cm]
            \item[\textup{\textbf{(S1)}}] There are two orbits in $\OO$ that are separated by $\sigma$.
            
            \vspace*{0.05cm}
            \item[\textup{\textbf{(S2)}}] There are no orbits in $ \OO$ that cross $\sigma$.
            
            \vspace*{0.05cm}
            \item[\textup{\textbf{(S3)}}] The action of $f_\geo$ fixes $\sigma$, meaning that $f_\geo(\sigma)=\sigma$.
    \end{itemize}

    \vspace*{0.05cm}
    \noindent In particular, $\mathcal S$ has at most $r-1$ elements, where $r$ is the number of orbits in $\OO$.

    \vspace*{0.1cm}

        \item[\textup{\textbf{(v)}}$\spc$] Every geodesic $\lambda \in \mathcal P$ is called a \emph{pushing geodesic} and satisfies:
        
        \vspace*{0.05cm}
    \begin{itemize}[leftmargin=1.3cm]
            \item[\textup{\textbf{(P1)}}] There are always at least one orbits in $\OO$ that crosses $\lambda$.
            
            \vspace*{0.05cm}
            \item[\textup{\textbf{(P2)}}] For any $n\in \Z$, the geodesics $f^{n}_\geo(\lambda)$ and $f^{n+1}_\geo(\lambda)$ are disjoint and satisfy
            $$\overline{\rule{0pt}{3.5mm}L(f^{n+1}_\geo(\lambda))}\subset L(f^{n}_\geo(\lambda)).$$

            \vspace*{0.05cm}
            \item[\textup{\textbf{(P3)}}] If there are $k>0$ orbits in $\OO$ crossing $\lambda$, then for any $n\in \Z$ the set
            $$L(f^{n}_\geo(\lambda)) \cap R(f^{n+1}_\geo(\lambda)) \setminus \OO$$ is homeomorphic to a $k$-punctured plane.
    \end{itemize}
    \end{itemize}
\end{thmx}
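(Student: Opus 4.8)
The plan is to read this theorem off from the material of the present section, fixing one object and then checking each clause in turn. First I would invoke Le Calvez's theorem to get a transverse foliation $\F$ of $f$ and, by the perturbation argument of Section~\ref{sec:foliated_brouwer}, arrange that $\F$ is generic for $\OO$. Then I would set $\G:=\{\phi^\geo \mid \phi\in\F\text{ is essential on }\R^2\setminus\OO\}$. The minimal intersection property makes $\G$ a pairwise disjoint family of geodesic lines, and each of them is a \emph{planar} line on $\R^2$, because it is isotopic on $\R^2\setminus\OO$ to the leaf $\phi$, this isotopy extends to an ambient isotopy of $\R^2$ relative to the totally disconnected set $\OO$, and $\phi$ itself is a planar line. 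Finally I would split $\G=\mathcal S\sqcup\P$ as in the text, declaring $\lambda\in\P$ precisely when some orbit of $\OO$ crosses $\lambda$, and $\sigma\in\mathcal S$ otherwise.

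Several clauses are then immediate. Clause~(i) holds because each $\lambda=\phi^\geo$ is isotopic on $\R^2\setminus\OO$ to $\phi$, which is a Brouwer line of $f$ by definition of a transverse foliation. Clauses (iv) and (v) largely unwind the construction: (S2) and (P1) are the defining conditions of $\mathcal S$ and $\P$; (S1), together with the bound $|\mathcal S|\le r-1$, follows because every geodesic line on $\R^2\setminus\OO$ is essential, so both sides of any $\sigma\in\mathcal S$ meet $\OO$ and, $\sigma$ being uncrossed, each side in fact carries a whole orbit, while a pairwise disjoint family of such separating geodesics can refine $\OO$ into at most $r$ blocks. The dynamical heart of (iv) and (v), namely (S3) and (P2)--(P3), is precisely Lemma~\ref{lemma:geodesic_leaves}, items (ii) and (i). For the dichotomy (iii) I would appeal to the dichotomy already recorded for $\G$ (property~(3)): if some $\lambda\in\G$ is crossed by both $\O$ and $\O^\pp$, it automatically lies in $\P$, so only the separating alternative needs an extra step.

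The genuine work is in clause~(ii) and in that separating alternative. For (ii) I would start from the generic refinement (property~(2*)): any two distinct points of $\OO$ are separated by some geodesic of $\G$. Given this, a connected component $U$ of $(\R^2\setminus\OO)\setminus\G$ can contain at most one point of $\OO$ as a puncture, for otherwise two such punctures would be separated in $\R^2$ by a geodesic of $\G$, while their punctured‑disk neighbourhoods both lie in the connected set $U$, which that geodesic does not meet — a contradiction. Being a complementary region of a geodesic lamination with at most one puncture, $U$ is then homeomorphic to $\R^2$, or to $\R^2\setminus\{0\}$ when the puncture is present. For the separating alternative of (iii): assuming no leaf of $\F$ is crossed by both $\O$ and $\O^\pp$, the sets $C_\O$ and $C_{\O^\pp}$ of crossed leaves are disjoint sub-arcs of the leaf space of $\F$ lying on opposite sides of every separating leaf; since only the finitely many remaining orbits of $\OO$ contribute sub-arcs of crossed leaves, one can find a separating leaf of $\F$ crossed by no orbit at all, whose geodesic representative is the desired $\sigma\in\mathcal S$.

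The main obstacle, as I see it, is exactly this last pair of steps: controlling the topology of the complementary regions of $\G$ — especially if $\overline\G\setminus\G\neq\varnothing$ — and locating an uncrossed separating leaf. Both require reasoning with the (possibly non-Hausdorff) leaf space of $\F$ and with the positions of the orbit points, which may lie on leaves of $\F$. Everything else is bookkeeping assembled from Lemma~\ref{lemma:geodesic_leaves} and the structural properties (1)--(3) and (2*) already established for $\G$.
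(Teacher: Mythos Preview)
Your proposal is correct and follows essentially the same route as the paper: build $\G$ from a generic transverse foliation, read off (i) from the definition, (ii) from property~(2*), (iii) from property~(3), and (iv)--(v) from the $\mathcal S\sqcup\P$ decomposition together with Lemma~\ref{lemma:geodesic_leaves}. The paper's own proof is in fact terser than yours and simply cites these ingredients; you are being more careful than the paper on two points. Your worry about $\overline{\G}\setminus\G$ in (ii) is unnecessary: the statement concerns components of the complement of $\G$ itself, and any component of the complement in $\R^2$ of a family of pairwise disjoint planar lines is simply connected, so once (2*) bounds the number of punctures by one you are done. Your extra leaf-space step in (iii) --- upgrading the separating geodesic supplied by property~(3) from $\G$ to $\mathcal S$ --- does address a genuine wrinkle that the paper glosses over, and your sketch is a reasonable way to fill it.
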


\begin{proof}
    Item (i) follows immediately from the fact that every geodesic leaf in $\G$ is the geodesic representative of a leaf in $\F$, which is a Brouwer line of $f$. Item (ii) is a direct consequence of property (2*), which holds if we assume that the foliation $\F$ is generic with respect to $\OO$.\break  Item (iii) is a consequence of property (3). Finally, items (iv) and (v) follow from the classification into pushing and separating geodesic leaves $\G = \mathcal S\sspc \sqcup \sspc \mathcal P$, and from the description of their dynamical behavior in Lemma \ref{lemma:geodesic_leaves}.
\end{proof}

\subsection{The pushing lemma}\label{sec:pushing-lemma}

Two geodesic leaves \(\lambda,\lambda' \in \P\) are said to be \emph{pushing-equivalent} if they are equal, or if they bound a subsurface of $\R^2\setminus\OO$ that is homeomorphic to a finitely punctured plane. Equivalently, if $\lambda$ and $\lambda'$ are crossed by exactly the same orbits in $\OO$ and, moreover, if no orbit of $\OO$ is contained in the connected component of $\R^2\setminus(\lambda\cup\lambda')$ bounded by $\lambda$ and $\lambda'$.
The pushing-equivalence relation defines an equivalence relation on $\P$, denoted by \(\sim\).
 \mycomment{0.25cm}

\begin{figure}[h!]
    \center 
    \hspace*{-0.1cm}\begin{overpic}[width=3.4cm, height=3.6cm, tics=10]{pushing-equivalent.pdf}
        \put (34,-15) {{\color{black}\large$\displaystyle \lambda\not\sim\lambda'$}}
        \put (9,95) {{\color{black}\large$\displaystyle \lambda$}}
        \put (66,95) {{\color{black}\large$\displaystyle \lambda'$}}
        \put (-14.5,78.5) {{\color{myRED}\large$\displaystyle \O_{\sspc i}$}}
        \put (-14.5,61) {{\color{myBLUE}\large$\displaystyle \O_{\sspc j}$}}
\end{overpic}
\hspace*{1.5cm}\begin{overpic}[width=3.4cm, height=3.6cm, tics=10]{pushing-equivalent2.pdf}
    \put (9,95) {{\color{black}\large$\displaystyle \lambda$}}
    \put (34,-15) {{\color{black}\large$\displaystyle \lambda\not\sim\lambda'$}}
        \put (66,95) {{\color{black}\large$\displaystyle \lambda'$}}
        \put (-14.6,81.5) {{\color{myRED}\large$\displaystyle \O_{\sspc i}$}}
        \put (-14.6,67.2) {{\color{myBLUE}\large$\displaystyle \O_{\sspc j}$}}
        \put (31,25) {{\color{myGREEN}\large$\displaystyle \O_{\sspc k}$}}
\end{overpic}
\hspace*{1.55cm}\begin{overpic}[width=3.3cm, height=3.6cm, tics=10]{pushing-equivalent3.pdf}
    \put (9,95) {{\color{black}\large$\displaystyle \lambda$}}
    \put (34,-15) {{\color{black}\large$\displaystyle \lambda\sim\lambda'$}}
        \put (66,95) {{\color{black}\large$\displaystyle \lambda'$}}
        \put (-14,78) {{\color{myRED}\large$\displaystyle \O_{\sspc i}$}}
        \put (-14,53) {{\color{myBLUE}\large$\displaystyle \O_{\sspc j}$}}
        \put (-14,28) {{\color{myGREEN}\large$\displaystyle \O_{\sspc k}$}}
\end{overpic}
\end{figure}
 \mycomment{0.2cm}

The pushing-equivalent relation exhibits the following structural property:\\[1ex]
\indent Let $\lambda, \lambda^\pp \in \P$ be two pushing-equivalent geodesic leaves with \(L(\lambda') \subset L(\lambda)\), where the surface \(S(\lambda,\lambda') = L(\lambda) \cap R(\lambda')\) is homeomorphic to a $k$-punctured plane for some \(k\geq 1\).  Assuming that the foliation $\F$ that induces $\G$ is generic with respect to $\OO$, in the sense that no leaf in $\F$ intersects two or more points in the set $\OO$, we have a finite sequence $\{\lambda_i\}_{i=0}^m$ of pushing-equivalent geodesic leaves in \(\P\) such that, for any \(0 \leq i < m\), the following hold:
\begin{itemize}[leftmargin=1.2cm]
    \item[$\sbullet$] The sequence starts at \(\lambda_0 = \lambda\) and ends at \(\lambda_m = \lambda'\).
    \item[$\sbullet$] It holds the inclusion \(L(\lambda_{i+1}) \subset L(\lambda_i)\).
    \item[$\sbullet$] The surface \(S(\lambda_i, \lambda_{i+1}) = L(\lambda_i) \cap R(\lambda_{i+1})\) is homeomorphic to a $1$-punctured plane.
\end{itemize}

\vspace*{0.4cm}

\begin{figure}[h!]
    \center\begin{overpic}[width=10.4cm, height=3.2cm, tics=10]{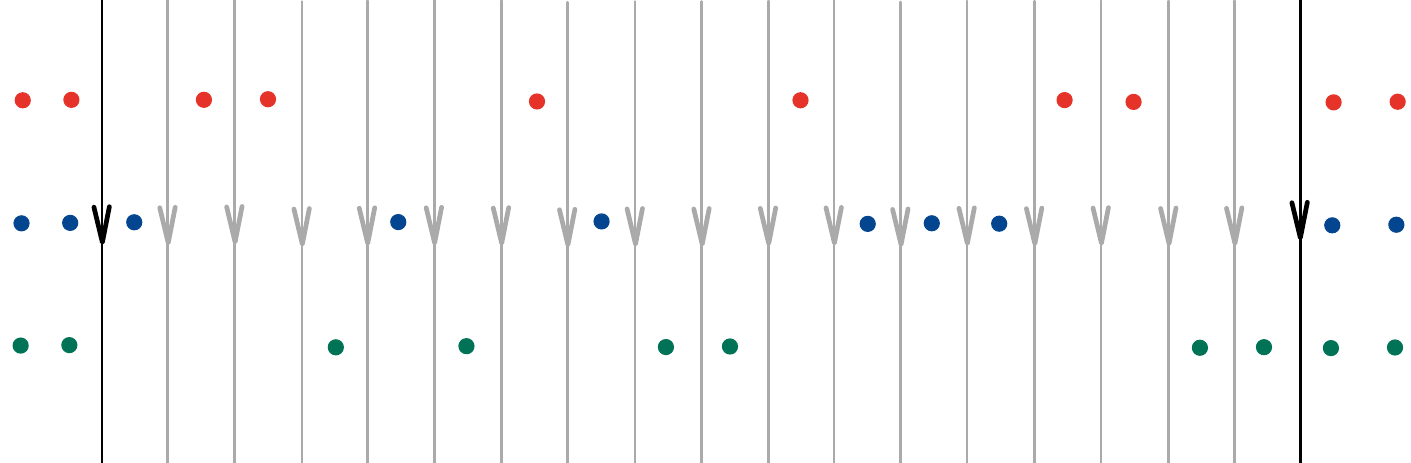}
    \put (2.5,30.5) {{\color{black}\Large$\displaystyle \lambda$}}
    \put (41,30) {\colorbox{white}{\color{gray}\large$\displaystyle \{\lambda_i\}_{i=0}^m$}}
        \put (94,30.5) {{\color{black}\Large$\displaystyle \lambda'$}}
        \put (-5,24) {{\color{myRED}\large$\displaystyle \O_{\sspc i}$}}
        \put (-5,15.5) {{\color{myBLUE}\large$\displaystyle \O_{\sspc j}$}}
        \put (-5,7) {{\color{myGREEN}\large$\displaystyle \O_{\sspc k}$}}
\end{overpic}
\end{figure}

 \mycomment{-0.25cm}

The following lemma characterizes the pushing-equivalence relation by the action of $f_\geo$.

 \mycomment{-0.1cm}
\begin{lemma}[Pushing Lemma]\label{lemma:pushing}
    Two geodesic leaves \(\lambda,\lambda^{\prime} \in \P\) satisfying \(L(\lambda') \subset L(\lambda)\)  are pushing-equivalent if, and only if, there exists an integer $N>0$ that satisfies
     \mycomment{-0.15cm}
    $$ L(f^N_\geo(\lambda)) \subset L(\lambda').
    $$
\end{lemma}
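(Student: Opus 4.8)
The plan is to prove the two implications separately, using the dynamical description of pushing geodesics from Lemma~\ref{lemma:geodesic_leaves}(i) together with the local-finiteness of geodesic representatives of locally-finite families of essential lines.

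\smallskip
\emph{($\Leftarrow$)} Suppose there is $N>0$ with $L(f^N_\geo(\lambda))\subset L(\lambda')\subset L(\lambda)$. By iterating the inclusion $\overline{L(f^{n+1}_\geo(\lambda))}\subset L(f^n_\geo(\lambda))$ from property (P2)/Lemma~\ref{lemma:geodesic_leaves}(i), we get the nested chain $L(f^N_\geo(\lambda))\subset L(\lambda')\subset L(\lambda)$, so the region $S(\lambda,\lambda')=L(\lambda)\cap R(\lambda')$ is sandwiched inside $S(\lambda, f^N_\geo(\lambda)) = L(\lambda)\cap R(f^N_\geo(\lambda))$. The latter surface is a finite disjoint union, along the intermediate geodesics $f^j_\geo(\lambda)$, of the surfaces $S(f^j_\geo(\lambda), f^{j+1}_\geo(\lambda))$, each homeomorphic to a $k$-punctured plane by (P3); hence $S(\lambda, f^N_\geo(\lambda))$ has finitely many punctures ($Nk$ of them). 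Therefore the intermediate region $S(\lambda,\lambda')$ also has finitely many punctures, so no orbit of $\OO$ lies entirely inside it. It remains to check that $\lambda$ and $\lambda'$ are crossed by exactly the same orbits: an orbit $\O$ crossing $\lambda$ but contained in, say, $R(\lambda')$ would have to lie entirely in the finitely-punctured region $S(\lambda,\lambda')$, which is impossible since orbits are infinite; and symmetrically using $f^{-N}_\geo$ in place of $f^N_\geo$ (or the fact that $\lambda'\subset \overline{L(\lambda)}$ forces the reverse containment of crossing sets up to the finite region). This gives $\lambda\sim\lambda'$.

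\smallskip
\emph{($\Rightarrow$)} Now assume $\lambda\sim\lambda'$ with $L(\lambda')\subset L(\lambda)$, so $S(\lambda,\lambda')$ has exactly $k\geq 1$ punctures (and $\lambda,\lambda'$ cross the same $k$ orbits). The key point is to locate $f^N_\geo(\lambda)$ to the left of $\lambda'$ for large $N$. Since $\lambda\in\P$ is crossed by orbits, the sequence of nested sets $L(f^n_\geo(\lambda))$ is strictly decreasing and, by (P3), each "strip" $S(f^n_\geo(\lambda), f^{n+1}_\geo(\lambda))$ contains exactly $k$ punctures. Fix a leaf $\phi\in\F$ with $\phi^\geo=\lambda$ and one of its transverse trajectories; the orbits crossing $\lambda$ advance past $\phi$, and after sufficiently many iterates all $k$ of the relevant orbit points that lie in $\overline{L(\lambda)}\cap\overline{R(\lambda')}$ — i.e., in the finite puncture set of $S(\lambda,\lambda')$ — have been pushed into $L(\lambda')$. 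More precisely, each of the $k$ crossing orbits meets the bounded-puncture region $S(\lambda,\lambda')$ in only finitely many points (it is infinite and discrete in $\R^2$, and $S(\lambda,\lambda')$ has compact closure modulo the two boundary lines in the relevant sense — here I use that $S(\lambda,\lambda')$ is a $k$-punctured plane so each orbit enters and leaves it only finitely often, being wandering). Choosing $N$ larger than the last time any crossing orbit visits $\overline{S(\lambda,\lambda')}$, we get $f^N_\geo(\lambda)\cap \overline{S(\lambda,\lambda')}$ disjoint from all punctures, and since $f^N_\geo(\lambda)$ is a geodesic disjoint from $\lambda$ and from $\lambda'$ (all iterates of $\lambda$ are pairwise disjoint by (P2)) and lies in $L(\lambda)$ while "having moved past" all punctures of $S(\lambda,\lambda')$, it must lie in $L(\lambda')$, i.e. $L(f^N_\geo(\lambda))\subset L(\lambda')$.

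\smallskip
\textbf{Main obstacle.} The delicate step is the ($\Rightarrow$) direction: turning "the orbits eventually leave the finite region $S(\lambda,\lambda')$" into the clean geometric conclusion $L(f^N_\geo(\lambda))\subset L(\lambda')$. One must rule out that $f^N_\geo(\lambda)$, though disjoint from $\lambda'$, still has part of itself in $R(\lambda')$ (it could in principle "wrap around" punctures without crossing $\lambda'$). The right tool is the minimal intersection / no-bigon property of geodesics together with the reduction, recorded just before the lemma, of a pushing-equivalence to a finite sequence $\lambda=\lambda_0,\ldots,\lambda_m=\lambda'$ with each $S(\lambda_i,\lambda_{i+1})$ a $1$-punctured plane: it suffices to prove the lemma when $S(\lambda,\lambda')$ has a single puncture and then concatenate. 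In the one-puncture case $f^n_\geo(\lambda)$ sweeps across $S(\lambda,\lambda')$ monotonically, and once the unique enclosed orbit point has been mapped into $L(\lambda')$ the disjointness of the geodesics $f^N_\geo(\lambda)$, $\lambda'$ forces $f^N_\geo(\lambda)\subset L(\lambda')$. Assembling the finitely many local bounds $N_i$ for the consecutive pairs $(\lambda_i,\lambda_{i+1})$ and taking $N=\sum N_i$ (using monotonicity of the $L(f^n_\geo(\lambda_i))$ and that $f_\geo$ respects these containments) completes the argument.
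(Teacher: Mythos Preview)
Your ($\Leftarrow$) direction is fine and matches the paper.

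The ($\Rightarrow$) direction has a genuine gap. You correctly identify the right reduction (to the one-puncture case via the chain $\lambda=\lambda_0,\ldots,\lambda_m=\lambda'$) and you correctly name the obstacle, but you do not overcome it. The sentence ``the disjointness of the geodesics $f^N_\geo(\lambda)$, $\lambda'$ forces $f^N_\geo(\lambda)\subset L(\lambda')$'' is the crux, and that disjointness is simply not available: property (P2) says the iterates $f^m_\geo(\lambda)$ are pairwise disjoint from \emph{each other}, not from $\lambda'$. Concretely, $\lambda'=(\phi')^\geo$ for some leaf $\phi'\in\F$, while $f^N_\geo(\lambda)=(f^N(\phi))^\geo$; since $\F$ is not $f$-invariant, $f^N(\phi)$ can intersect $\phi'$, and then nothing prevents $f^N_\geo(\lambda)\cap\lambda'\neq\varnothing$. (The Remark following the Pushing Lemma in the paper is precisely about this phenomenon.) Likewise, ``once the unique enclosed orbit point has been mapped into $L(\lambda')$'' gives information about where $p, f(p),\ldots$ sit, but says nothing about where the geodesic $f^N_\geo(\lambda)$ sits relative to $\lambda'$.

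The idea you are missing is an \emph{auxiliary geodesic through the puncture}. In the one-puncture case the paper takes the foliation leaf $\phi_p\in\F$ passing through the puncture $p$, splits it at $p$ into the globally essential proper multiline $(\phi_p^-,\phi_p^+)$, and lets $\Lambda_p$ be its geodesic representative. Because $\phi_p$ is disjoint from the leaves $\phi,\phi'$ representing $\lambda,\lambda'$, minimal intersection gives $L(\lambda')\subset L(\Lambda_p)\subset L(\lambda)$. Now one can use that $\phi_p$ is a Brouwer line (so $L(f_\geo(\Lambda_p))\subset L(\Lambda_p)$) together with the fact that $L(\Lambda_p)\cap R(\lambda')$ contains \emph{no} punctures to force $L(f_\geo(\Lambda_p))\subset L(\lambda')$, and symmetrically $L(\lambda)\subset L(f^{-1}_\geo(\Lambda_p))$. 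Applying $f_\geo^2$ to the latter yields $L(f^2_\geo(\lambda))\subset L(f_\geo(\Lambda_p))\subset L(\lambda')$. This $\Lambda_p$ is the bridge that converts ``the puncture has moved past $\lambda'$'' into a statement about containment of sides of geodesics; without it your argument does not close.
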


\begin{proof}
    We begin by proving the ``if'' part of the lemma. Let $N>0$ be as stated in the lemma.  According to Lemma \ref{lemma:geodesic_leaves}, the surface $S(\sspc\lambda, f^N_\geo(\lambda)\sspc) = L(\lambda) \cap R(f^N_\geo(\lambda))$ is homeomorphic to a finitely punctured plane. Moreover, the geodesic leaf $\lambda'$ is contained in $S(\sspc\lambda, f^N_\geo(\lambda)\sspc)$, because it satisfies $\lambda'\subset L(\lambda)$ and $\lambda'\subset R(f^N_\geo(\lambda))$. Consequently, the surface $S(\lambda,\lambda') = L(\lambda) \cap R(\lambda')$ is homeomorphic to a finitely punctured plane, showing that $\lambda$ and $\lambda'$ are pushing-equivalent.

    We now prove the ``only if'' part, which is the principal result of the lemma. For that, we assume initially that $S(\lambda,\lambda') = L(\lambda) \cap R(\lambda')$ is homeomorphic to a once punctured plane. 
    
    Let $p \in \OO$ be the puncture of $S(\lambda,\lambda')$, and let $\phi_p \in \F$ be the leaf passing through $p$.  Observe that $\phi_p\setminus\{p\}$ is the union of two lines $\phi_p^-$ and $\phi_p^+$ on the surface $\R^2\setminus \OO$ that satisfy
     \mycomment{-0.12cm}
    $$\omega(\phi_p^-) = \alpha(\phi_p^+) = p.$$
        
 \mycomment{-0.3cm}
    \noindent Even further, the pair $(\phi_p^-,\phi_p^+)$ defines an globally essential proper multiline on $\R^2\setminus \OO$, and therefore it admits a unique geodesic representative, which we denote by $\Lambda_p=(\sspc(\phi_p^-)^\geo, (\phi_p^+)^\geo)$. Since geodesics minimize intersections, $\Lambda_p$ is disjoint from both $\lambda$ and $\lambda'$, and it satisfies 
     \mycomment{-0.12cm}
$$ L(\lambda) \subset L(\Lambda_p) \subset L(\lambda').$$
    
 \mycomment{-0.3cm}
    \noindent According to the Homma-Schoenflies theorem, we can suppose up to conjugacy that each of the geodesics $\lambda$, $\Lambda_p$ and $\lambda'$ is vertical and oriented downwards.

   Next, observe that $f(\phi_p)\setminus\{f(p)\}$ also defines a globally essential proper multiline on $\R^2\setminus \OO$, and its geodesic representative coincides with $f_\geo(\Lambda_p)=(\sspc f_\geo(\phi_p^-), \sspc f_\geo(\phi_p^+)\sspc)$. Moreover, since  geodesics minimize intersections and $L(f(\phi_p)) \subset L(\phi_p)$, it holds that
    \mycomment{-0.12cm}
$$L(f_\geo(\Lambda_p)) \subset L(\Lambda_p).$$
       
 \mycomment{-0.25cm}
    \noindent Now, we remark that, since $L(\Lambda_p) \cap R(\lambda')$ has no punctures and $f(p) \in L(\Lambda_p)$, it follows that the point $f(p)$ consists in a puncture of $\R^2\setminus \OO$ that lies on $L(\lambda')$. In addition, any intersection between $f_\geo(\Lambda_p)$ and $\lambda'$ could be undone by a horizontal isotopy on $\R^2\setminus \OO$. Again, due to the minimal intersection property of geodesics, this means that $f_\geo(\Lambda_p)$ and $\lambda'$ are in fact disjoint, and consequently, they satisfy $L(f_\geo(\Lambda_p)) \subset L(\lambda')$.

   \begin{figure}[h!]
    \center
     \mycomment{0.4cm}\begin{overpic}[width=5.7cm, height=4cm, tics=10]{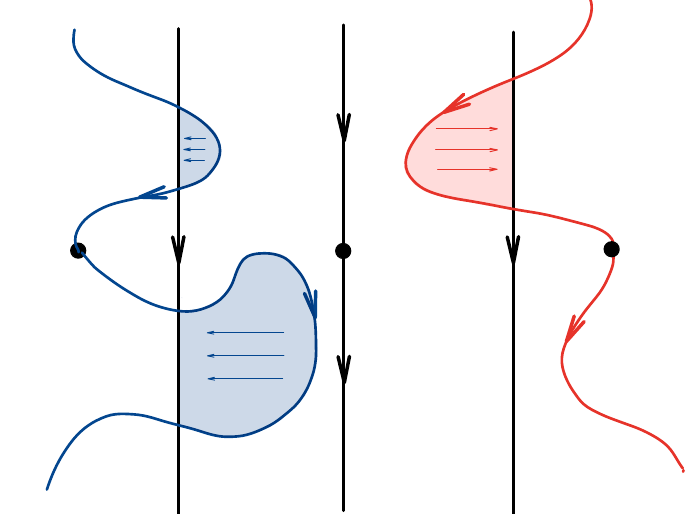}
        \put (47,69) {\color{black}\normalsize$\displaystyle \Lambda_p $}
         \put (24,68.5) {\color{black}\normalsize$\displaystyle \lambda $}
         \put (73,68.5) {\color{black}\normalsize$\displaystyle \lambda'$}
         \put (87,62) {\color{myRED}\large$\displaystyle f_\geo(\Lambda_p)$}
        \put (-15,60) {\color{myBLUE}\large$\displaystyle f^{-1}_\geo(\Lambda_p)$}
        
\end{overpic}
\end{figure}

    By repeating the same argument for $f^{-1}(\phi_p)$, we conclude that $L(\lambda) \subset L(f^{-1}_\geo(\Lambda_p))$. Yielding the series of inclusions $L(f_\geo(\Lambda_p)) \subset L(\lambda') \subset L(\lambda) \subset L(f^{-1}_\geo(\Lambda_p))$. By considering the action of $f_\geo$ on these inclusions, we conclude that $L(f^2_\geo(\Lambda_p)) \subset L(\Lambda_p)$. This implies that
     \mycomment{-0.12cm}
    $$L(f^2_\geo(\lambda)) \subset L(f^2_\geo(f^{-1}_\geo(\Lambda_p))) = L(f_\geo(\Lambda_p)) \subset L(\lambda').$$

     \mycomment{-0.27cm}
    \noindent We have then proved the lemma under the assumption that $S(\lambda,\lambda')$ contain a single puncture.

    Finally, to prove the lemma in the general case where $S(\lambda,\lambda')$ has $k>1$ punctures, we can apply the same argument $k$ times. This is because, as mentioned before, from the generic assumption on the foliation $\F$ with respect to $\OO$, we get a sequence $\{\lambda_i\}_{i=0}^k$ of pushing-equivalent geodesics with $\lambda_0 = \lambda$ and $\lambda_k =\lambda'$, such that $L(\lambda_{i+1}) \subset L(\lambda_i)$ and $S(\lambda_i,\lambda_{i+1})$ is homeomorphic to a $1$-punctured plane, for any $0\leq i <k$. Thus, we can apply the previous argument to each pair $(\lambda_i,\lambda_{i+1})$, to conclude that $L(f^{2}_\geo(\lambda_i)) \subset L(\lambda_{i+1})$.  This completes the proof of the lemma through the series of inclusions
     \mycomment{-0.12cm}
$$ \hspace*{2.8cm} L(f^{2k}_\geo(\lambda))  \subset L(f^{2(k-1)}_\geo(\lambda_1)) \subset \cdots \subset L(f^{2}_\geo(\lambda_{k-1})) \subset L(\lambda'). \hspace*{2.3cm}\qedhere $$
\end{proof}

 \mycomment{-0.2cm}
\begin{remark}
    The Pushing Lemma is a key result within our framework, being one of the main tools behind the proof of Theorem \ref{thmx:II-C}, which is an important result regarding the behavior of the Brouwer mapping class $\class{f,\OO}$.
    It describes an ``up to isotopy'' behavior of leaves in a transverse foliation that cannot be spotted without the geometric features of $\R^2\setminus \OO$. 
    
    For instance, consider a vertical transverse foliation $\F$, and assume that $\OO = \{\O\}$ is formed by a single orbit admitting a horizontal transverse trajectory $\Gamma_{\O} = \prod_{n \in \Z} \gamma_\O^n$. Then, consider any pair of leaves $\phi,\sspc \phi^\pp \in \F$ satisfying $L(\phi^\pp) \subset L(\phi)$. Now, observe that, since the orbit $\O$ crosses both leaves $\phi$ and $\phi^\pp$, we know that sufficiently large iterates $f^n(\phi)$ intersect $L(\phi^\pp)$. However,\break it is completely possible that no iterate $f^n(\phi)$ is entirely contained in $L(\phi^\pp)$.

    \vspace*{0.55cm}
    \begin{figure}[h!]
    \center
     \mycomment{0.65cm}\begin{overpic}[width=7.5cm, height=3.3cm, tics=10]{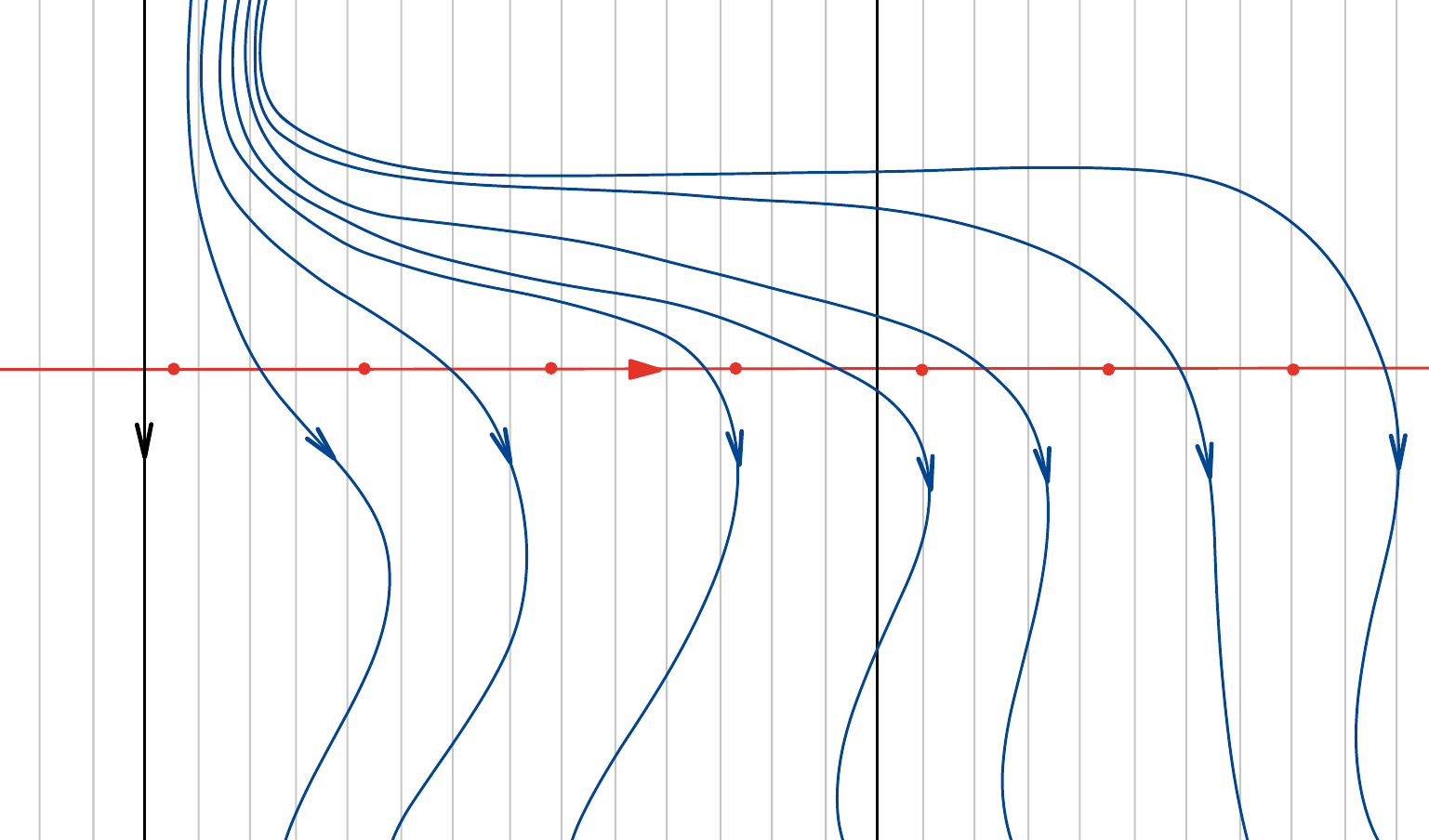}
        \put (9,-5.5) {\color{black}\large$\displaystyle \phi $}
         \put (60,-5.5) {\color{black}\large$\displaystyle \phi^\pp $}
         \put (-7,23.7) {\color{myRED}\large$\displaystyle \Gamma_\O$}
        \put (10,48) {\color{myBLUE}\large$\displaystyle \{f^n(\phi)\}_{n\geq0}$}
        
\end{overpic}
\end{figure}

\end{remark}

\newpage

\subsection{Transverse geodesic trajectories}\label{section:transverse_geodesic_trajectories}

For each $\O \in \OO$, fix a basepoint $x_\O\in \O$ and denote
\vspace*{0.1cm}
$$ \Gamma_\O = \prod_{n \in \Z} \gamma^n_\O,$$
where $\gamma_\O^n:[0,1]\longrightarrow\R^2$ is a path positively transverse to $\F$ joining $f^n(x_\O)$ and $f^{n+1}(x_\O).$\break
By abuse of notation, we can see each path $\gamma_\O^n$ as a line on the surface $\R^2\setminus \OO$ that satisfies
 \mycomment{-0.1cm}
$$\alpha(\gamma_\O^n) = f^n(x_\O) \quad \text{ and } \quad \omega(\gamma_\O^n) = f^{n+1}(x_\O).$$

 \mycomment{-0.2cm}
\noindent
As proven in \cite{schuback1}, there is a choice of paths $\{\gamma_\O^n\}_{n \in \Z}$ that is locally-finite, resulting in a proper transverse trajectory $\Gamma_\O$ for each orbit $\O \in \OO$. On the perspective of $\R^2\setminus \OO$, this allows us to construct $\Gamma_\O=\prod_{n \in \Z} \gamma^n_\O$ in order to be a globally essential proper multiline on $\R^2\setminus \OO$, in the sense that the lines in the family $\{\gamma_\O^n\}_{n \in \Z}$ are essential and pairwise non-isotopic on $\R^2\setminus \OO$.

According to the Straightening principle (see Lemma \ref{lemma:straightening_principle}), for each orbit $\O \in \OO$,
there exists an isotopy $(h_t)_{t\in [0,1]}$ on $\R^2\setminus \OO$ from the identity $h_0 = \textup{id}_{\R^2\setminus \OO}$ that satisfies
 \mycomment{-0.1cm}
$$ h_1(\gamma_\O^n) = (\gamma_\O^n)^\geo, \quad \forall n \in \Z.$$

 \mycomment{-0.2cm}
\noindent
In other words, the proper multiline $\Gamma_\O = \prod_{n \in \Z} \gamma_\O^n$ is globally isotopic to $\Gamma_\O^\geo = \prod_{n \in \Z} (\gamma_\O^n)^\geo$, which is also a proper multiline on $\R^2\setminus \OO$, called its geodesic representative (see Section \ref{sec:hyperbolic_geometry_R2Z}).

The geodesic proper multlines in the family $\{\Gamma_\O^\geo\}_{\O \in \OO}$ are transverse to $\G$, in the sense that
\begin{itemize}
    \item [$\sbullet$] If a geodesic leaf $\lambda \in \G$ is crossed by an orbit $\O \in \OO$, then $\lambda$ intersects $\Gamma_\O^\geo$ once. 
    \item [$\sbullet$] If a geodesic leaf $\lambda \in \G$ is not crossed by an orbit $\O \in \OO$, then $\lambda$ is disjoint from $\Gamma_\O^\geo$.
\end{itemize}
 \mycomment{0.2cm}
In particular, a separating geodesic $\sigma \in \mathcal S$ is disjoint from every geodesic trajectory $\Gamma_\O^\geo$.

In this section we establish two results. The first, Lemma \ref{lemma:limited_winding}, provides a mild control over the iterates of geodesic leaves through their intersections with transverse geodesic trajectories. 

\begin{lemma}\label{lemma:limited_winding}
    For any geodesic leaf $\lambda \in \P$, any orbit $\O \in \OO$, and any $N \in \Z$, we have that 
     \mycomment{-0.05cm}
    $$\# (\sspc f^N_{\geo}(\lambda) \cap \Gamma_\O^\geo\spc) < \infty.$$
\end{lemma}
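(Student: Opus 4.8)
The plan is to bound $\#(f^N_\geo(\lambda) \cap \Gamma_\O^\geo)$ by relating it, via the minimal intersection property, to the intersection number between a Brouwer line of $f$ and a proper transverse trajectory, which is finite by the positive-transversality structure.

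First I would fix a leaf $\phi \in \F$ with $\phi^\geo = \lambda$, so that $f^N(\phi)$ is a Brouwer line of $f$ with $f^N_\geo(\lambda) = (f^N(\phi))^\geo$. I would also recall that $\Gamma_\O = \prod_{n\in\Z}\gamma_\O^n$ can be chosen as a locally-finite proper transverse trajectory, and that $\Gamma_\O^\geo$ is its geodesic representative, obtained by the Straightening Principle (Lemma \ref{lemma:straightening_principle}) via an isotopy from the identity. Since geodesic representatives realize the minimal intersection number within isotopy classes (the minimal intersection property of Section \ref{sec:hyperbolic_geometry_R2Z}), applied here to the essential line $f^N(\phi)$ and the globally essential proper multiline $\Gamma_\O$, we get
$$\#\bigl(f^N_\geo(\lambda) \cap \Gamma_\O^\geo\bigr) = \#\bigl((f^N(\phi))^\geo \cap \Gamma_\O^\geo\bigr) \leq \#\bigl(f^N(\phi) \cap \Gamma_\O\bigr).$$
So it suffices to show the right-hand side is finite, i.e. that a Brouwer line of $f$ meets a proper transverse trajectory of $\O$ in finitely many points.

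Next I would bound $\#(f^N(\phi) \cap \Gamma_\O)$. Write $\Gamma_\O = \prod_{n\in\Z}\gamma_\O^n$. Each transverse arc $\gamma_\O^n$ is positively transverse to $\F$, hence meets the leaf $f^N(\phi)$ at most once (transverse trajectories cross each leaf at most once, always from right to left). Therefore $\#(f^N(\phi)\cap\Gamma_\O) \le \#\{n \in \Z : \gamma_\O^n \cap f^N(\phi) \neq \varnothing\}$, and it remains to see that only finitely many arcs $\gamma_\O^n$ meet $f^N(\phi)$. For this I would use the dynamical transversality of the Brouwer line $f^N(\phi)$: writing $\phi' = f^N(\phi)$, we have $f(L(\phi')) \subset L(\phi')$ and $f^{-1}(R(\phi')) \subset R(\phi')$. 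Since the orbit $\O$ crosses $\phi'$ (as $\O$ crosses $\phi$ iff it crosses $f^N(\phi)$, because being crossed is determined by the orbit and $\phi,f^N(\phi)$ are isotopic rel $\OO$ — or directly since $f^N(\O) = \O$), there is a unique $m$ with $f^n(x_\O) \in R(\phi')$ for $n \le m$ and $f^n(x_\O) \in L(\phi')$ for $n > m$; monotonicity forces all the points $f^n(x_\O)$ for $n \le m$ to stay in $R(\phi')$ and those for $n > m$ in $L(\phi')$. An arc $\gamma_\O^n$ joins $f^n(x_\O)$ to $f^{n+1}(x_\O)$: if both endpoints lie strictly on the same side of $\phi'$ and $\gamma_\O^n$ is positively transverse, then $\gamma_\O^n$ cannot cross $\phi'$ (a positively transverse arc crosses each leaf at most once and always in the same direction, so it cannot both enter and leave the same side). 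Hence the only arc that can meet $\phi'$ is $\gamma_\O^m$, giving $\#(\phi' \cap \Gamma_\O) \le 1$ — in fact exactly the single crossing point, consistent with the transversality bullet stated just before the lemma. Combining with the minimal intersection inequality gives $\#(f^N_\geo(\lambda)\cap\Gamma_\O^\geo) < \infty$.

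I expect the main obstacle to be making the crossing-direction argument fully rigorous: namely, justifying carefully that a positively transverse arc whose two endpoints lie on the same closed side of a leaf $\phi'$ (and not on $\phi'$ itself) does not cross $\phi'$, and handling the boundary case where $f^m(x_\O)$ or $f^{m+1}(x_\O)$ might lie on $\phi'$ (which can be excluded since $\O$ is an orbit of $f$, disjoint from the Brouwer line's invariant structure, or absorbed into the "at most one crossing" count). A cleaner alternative, avoiding these subtleties, is to argue purely geometrically with $\Gamma_\O^\geo$ directly: $\Gamma_\O^\geo$ is a proper multiline, hence locally-finite, and $f^N_\geo(\lambda)$ is a single geodesic line; two locally-finite families of geodesic lines that pairwise minimize intersections can only intersect finitely often within any compact set, but since both are properly embedded one still needs a compactness/properness argument to rule out infinitely many intersections escaping to infinity — which is precisely where the dynamical transversality (or equivalently the description of $C_\O$ as the leaves crossed by $\O$) must be invoked. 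Either way, the finiteness ultimately comes from the fact that the orbit $\O$ transitions across $\phi'$ only once.
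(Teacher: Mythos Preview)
Your approach has a genuine gap at the central step. You write that ``each transverse arc $\gamma_\O^n$ is positively transverse to $\F$, hence meets the leaf $f^N(\phi)$ at most once,'' but $f^N(\phi)$ is \emph{not} a leaf of $\F$ when $N\neq 0$. A transverse foliation is never assumed to be $f$-invariant; indeed the Remark following the Pushing Lemma in Section~\ref{sec:pushing-lemma} illustrates exactly how wildly $f^N(\phi)$ can behave relative to $\F$. Positive transversality of $\gamma_\O^n$ to $\F$ therefore gives you no control over $\#(\gamma_\O^n\cap f^N(\phi))$: even if both endpoints of $\gamma_\O^n$ lie on the same side of the Brouwer line $f^N(\phi)$, the arc may cross it an arbitrary (even) number of times, and nothing prevents infinitely many of the arcs $\gamma_\O^n$ from doing so. Consequently the bound $\#(f^N(\phi)\cap\Gamma_\O)<\infty$ is not established, and the minimal-intersection inequality you set up, while correct, leads nowhere. (There is also a minor oversight: the lemma allows any $\O\in\OO$, not only one that crosses $\lambda$, so the case analysis must include $\O\subset L(\lambda)$ and $\O\subset R(\lambda)$.)

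The paper circumvents this obstruction by working entirely on the geodesic side rather than going back to $f^N(\phi)$ and $\Gamma_\O$. The key idea is to pull back by $f^{-N}_\geo$ and use the nested structure $f^N_\geo(\lambda)\subset L(\lambda)\subset L(f^{-N}_\geo(\lambda))$: one fixes a compact disk $K$ containing all punctures of the strip $L(f^{-N}_\geo(\lambda))\cap R(\lambda)$ and proves (via a bigon/minimal-position argument) that any geodesic with limit points in $L(\lambda)$ which meets $\lambda$ but misses $f^{-N}_\geo(\lambda)$ must meet $K$. Applying this to the geodesics $f^{-N}_\geo((\gamma_\O^n)^\geo)$ and using that $\{(\gamma_\O^n)^\geo\}_n$ is locally finite (hence so is its image under $f^{-N}_\geo$) gives the finiteness. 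Your ``cleaner alternative'' at the end gestures in this direction but stops short of supplying the missing compactness mechanism --- namely the disk $K$ trapping all relevant intersections.
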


\begin{proof}[Proof of Lemma \ref{lemma:limited_winding}]
Note that, since $f^N_{\geo}(\lambda)$ is a geodesic planar line, i.e., a geodesic on $\R^2\setminus \OO$ that is a topological line on $\R^2$, we have that $f^N_{\geo}(\lambda)$ intersects each $(\gamma_\O^n)^\geo$ finitely many times.
Therefore, to prove the lemma, one should prove that the following set is finite 
     \mycomment{-0.12cm}
    $$A = \{n \in \Z \ \vert \ f^N_\geo(\lambda) \cap (\gamma_\O^n)^\geo \neq \varnothing\}.$$

     \mycomment{-0.2cm}
    The case $N=0$ holds trivially, while the cases $N>0$ and $N<0$ are completely analogous. Hence, we may initially assume that $N>0$. The orbit $\O\in \OO$ can satisfy one of the following:
    \begin{itemize}[leftmargin=1.7cm]
        \item[\textit{(i)\spc}] The orbit $\O$ crosses $\lambda$.
        \item[\textit{(ii)\sspc}] The orbit $\O$ is contained in $L(\lambda)$.
        \item[\textit{(iii)}] The orbit $\O$ is contained in $R(\lambda)$.
    \end{itemize}
     \mycomment{0.2cm}
    Since $N>0$, the geodesic $f^N_{\geo}(\lambda)$ is contained in $L(\lambda)$, and thus case (iii) becomes trivial,  as in this case $f^N_{\geo}(\lambda)$ and $\Gamma_\O^\geo$ lie on different sides of $\lambda$.

    To prove cases (i) and (ii), we fix some objects. As presented in Lemma \ref{lemma:geodesic_leaves}, the surface 
     \mycomment{-0.1cm}
    $$S(f^{-N}_{\geo}(\lambda), \lambda):=L(f^{-N}_{\geo}(\lambda)) \cap R(\lambda)$$ 
    
     \mycomment{-0.22cm}
    \noindent
    is homeomorphic to a $(Nk)$-punctured plane, where $k>0$ is the number of orbits in $\OO$ crossing the geodesic leaf $\lambda$.
    Let $K\subset \R^2$ be a closed disk containing all punctures of $S(f^{-N}_{\geo}(\lambda), \lambda)$ in its interior and that intersects the geodesic lines $f^{-N}_\geo(\lambda)$ and $\lambda$ at a single point each.
    
    \newpage
     \mycomment{-0cm}
    \begin{claim}\label{claim}
        Any geodesic line $g$ on $\R^2\setminus \OO$ with limit points $\alpha(g),\sspc \omega(g) \in L(\lambda)$ that intersects the geodesic leaf $\lambda$ and that is disjoint from $f^{-N}_\geo(\lambda)$ must also intersect the closed disk $K$.
    \end{claim}

\begin{figure}[h!]
    \center
     \mycomment{0.05cm}\begin{overpic}[width=6cm, height=4.2cm, tics=10]{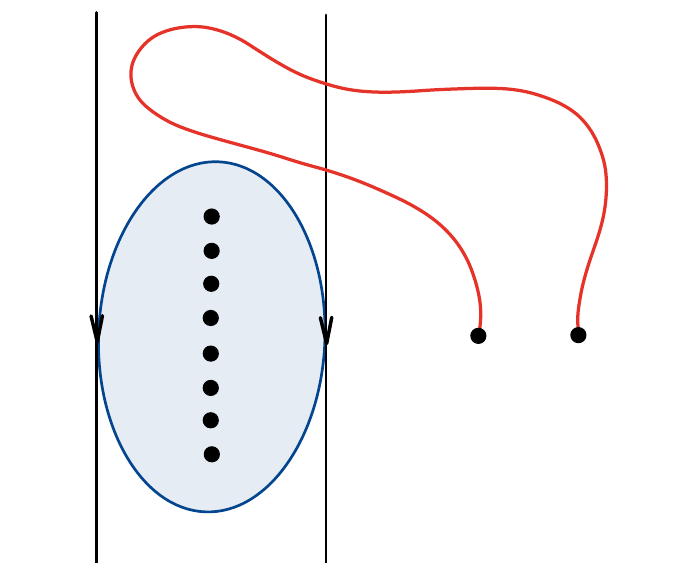}
        \put (49.5,66) {\color{black}\large$\displaystyle \lambda $}
        \put (-11,65.5) {\color{black}\large$\displaystyle f^{-N}_{\geo}(\lambda) $}
         \put (59,19) {\color{black}\normalsize$\displaystyle \alpha(g)$}
         \put (78,19) {\color{black}\normalsize$\displaystyle \omega(g)$}
         \put (87,59) {\color{myRED}\large$\displaystyle g$}
        \put (28,-2) {\color{myBLUE}\large$\displaystyle K$}
        
\end{overpic}
\end{figure}

 \mycomment{-0.5cm}
    \begin{proof}[Proof of Claim \ref{claim}]

    Let $g$ be a geodesic line on $\R^2\setminus \OO$ that satisfies $\alpha(g),\sspc \omega(g) \in L(\lambda)$.  If the line $g$ intersects $\lambda$, then they must form a planar bigon on the right side of $\lambda$. Meaning, there exists a closed disk $B\subset \R^2$ bounded by an arc of $g$ and an arc of $\lambda$ such that $B\subset \overline{R(\lambda)}$.  Since $g$ and $\lambda$ are both geodesics, they minimize the intersection number within their isotopy classes on $\R^2\setminus \OO$, which implies that the interior of $B$ must contain a point $p\in\OO$, because otherwise we would be able to decrease the number of intersections  between $\lambda$ and $g$ by performing an isotopy on $\R^2\setminus \OO$ that eliminates the bigon $B$. 
        
    Now, since the geodesic line $g$ is disjoint from $f^{-N}_\geo(\lambda)$, the point $p$ lies on the right of $\lambda$,  we conclude that $p$ is a puncture of $S(f^{-N}_{\geo}(\lambda), \lambda)$. Since the interior of $K$ contains all the punctures of $S(f^{-N}_{\geo}(\lambda), \lambda)$, we conclude that $p \in \textup{int}(K) \cap \textup{int}(B)$. Because $B$ and $K$ are closed disks with intersecting interiors, there exists a nontrivial interval of $\partial B$ that is contained in the interior of $K$. Since the interior of $K$ is disjoint from $\lambda$, and since $\partial B$ is exclusively formed by arcs of $g$ and of $\lambda$, we conclude that $K$ intersects the geodesic line $g$.
    \end{proof}

     \mycomment{-0.2cm}
    We can now proceed with the proof of Lemma \ref{lemma:limited_winding}, beginning with case (i).
    In this case, up to changing the basepoint $x_\O \in \O$, we can assume that $\lambda$ is crossed by $\O$ at time $0$. This means that $\lambda$ intersects the geodesic line $(\gamma_\O^0)^\geo$ exactly once, and we have
     \mycomment{-0.12cm}
    \begin{align*}
        (\gamma_\O^n)^\geo\subset R(\lambda)\sspc \ \text{ if } n<0 \quad \text{ and } \quad (\gamma_\O^n)^\geo\subset L(\lambda)\sspc \ \text{ if } n>0.
    \end{align*}

     \mycomment{-0.3cm}

\begin{figure}[h!]
    \center\begin{overpic}[width=13cm, height=3.6cm, tics=10]{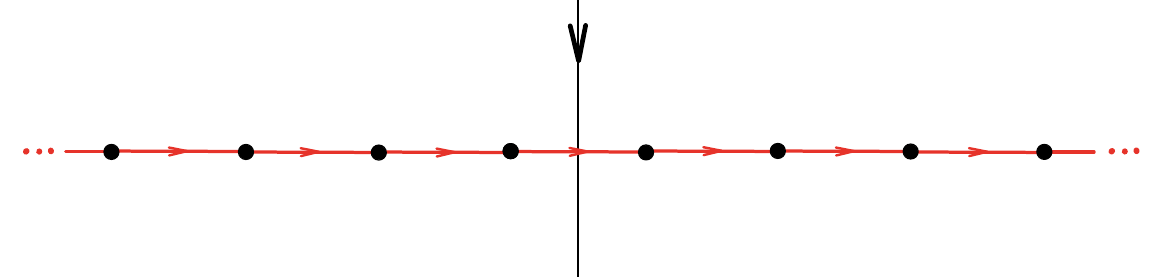}
    \put (52,24) {\color{black}\Large$\displaystyle \lambda $}
    \put (46,7.5) {\colorbox{white}{$\rule{0cm}{0.4cm}  \ \ \ \ $}}
    \put (46.5,8) {{\color{myRED}\normalsize$\displaystyle(\gamma_\O^{0})^\geo $}}
    \put (58,8) {\color{myRED}\normalsize$\displaystyle (\gamma_\O^{1})^\geo $}
    \put (70,8) {\color{myRED}\normalsize$\displaystyle (\gamma_\O^{2})^\geo $}
    \put (81,8) {\color{myRED}\normalsize$\displaystyle (\gamma_\O^{3})^\geo $}
    \put (34.5,8) {\color{myRED}\normalsize$\displaystyle (\gamma_\O^{-1})^\geo $}
    \put (23,8) {\color{myRED}\normalsize$\displaystyle (\gamma_\O^{-2})^\geo$}
    \put (11,8) {\color{myRED}\normalsize$\displaystyle (\gamma_\O^{-3})^\geo$}
\end{overpic}
\end{figure}

     \mycomment{-0.4cm}

    First, since $f^N_\geo(\lambda)$ is contained in $L(\lambda)$, we have that $f^N_\geo(\lambda) \cap (\gamma_\O^n)^\geo = \varnothing$ for all $n<0$. This gives us a lower bound for $A$. In addition, observe that the following properties hold 
     \mycomment{-0.12cm}
    \begin{align*}
        f^{-N}_\geo((\gamma_\O^n)^\geo) \cap f^{-N}_\geo(\lambda) = \varnothing, \quad \forall \spc n>0,\quad \ \ \ \spc  \\
        \alpha(\sspc f^{-N}_\geo((\gamma_\O^n)^\geo)\sspc ),\spc \omega(\sspc f^{-N}_\geo((\gamma_\O^n)^\geo)\sspc ) \in L(\lambda), \quad \forall \spc n>N+1.
    \end{align*}

     \mycomment{-0.21cm}
    \noindent 
    By applying Claim \ref{claim}, we obtain the following equality of sets
     \mycomment{-0.12cm}
    $$ \{n>N+1 \ \vert \ f^{-N}_\geo((\gamma_\O^n)^\geo) \cap K \neq \varnothing\} \ =\  \{n>N+1 \ \vert \ f^{-N}_\geo((\gamma_\O^n)^\geo) \cap \lambda \neq \varnothing\}.$$

     \mycomment{-0.21cm}
    \noindent 
    As mentioned previously, an important consequence of Theorem \ref{theorem:1B-intro} is that the family of transverse paths $\{\gamma_\O^n\}_{n \in \Z}$ can be chosen to be locally finite. This implies that the family of geodesic representatives $\{(\gamma_\O^n)^\geo\}_{n \in \Z}$ is also locally finite, resulting in a proper multiline of geodesic representatives $\Gamma_\O^\geo = \prod_{n \in \Z} (\gamma_\O^n)^\geo$ on $\R^2\setminus \OO$. In particular, this means that the first set on the equality above is finite, thus concluding that the second set is also finite. By considering the action of $f^N_\geo$, we observe that the second set in the equality above coincides with $A$, thus concluding the proof of the lemma for case (i).

    Finally, we can proceed to prove case (ii). In this case, for any integer $n\in \Z$, we have
     \mycomment{-0.12cm}
    $$ f^{-N}_\geo((\gamma_\O^n)^\geo) \cap f^{-N}_\geo(\lambda) = \varnothing,$$

     \mycomment{-0.21cm}
    \noindent 
    and moreover, both limit points of $f^{-N}_\geo((\gamma_\O^n)^\geo)$ lie on the left side $L(\lambda)$. Following the same arguments presented in case (i), we can prove that the set $A$ is also finite in case (ii).
\end{proof}

The second result of this section, Proposition \ref{prop:separated_geodesic_trajectories}, shows that in the fully separated case, meaning $\#\mathcal S = r-1$, then transverse geodesic trajectories are invariant under the action of $f_\geo$.

\begin{proposition}\label{prop:separated_geodesic_trajectories}
    If $\#\mathcal S = r-1$, where $r>0$ is the number of orbits in $\OO$, then we have
     \mycomment{-0.12cm}
    $$ f_\geo(\Gamma_\O^\geo) = \Gamma_\O^\geo, \quad \forall \O \in \OO.$$

     \mycomment{-0.23cm}
    \noindent
    More specifically, for any $\O \in \OO$ and  $n \in \Z$, it holds that $f_\geo((\gamma_\O^n)^\geo) = (\gamma_\O^{n+1})^\geo$.
\end{proposition}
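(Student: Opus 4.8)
The plan is to leverage the fully-separated hypothesis $\#\mathcal S = r-1$ to pin down the combinatorial type of each geodesic trajectory $\Gamma_\O^\geo$, and then to argue that $f_\geo$ can only act on $\Gamma_\O^\geo$ by the shift $(\gamma_\O^n)^\geo \mapsto (\gamma_\O^{n+1})^\geo$. First I would observe that when $\#\mathcal S = r-1$, the separating geodesics $\mathcal S = \{\sigma_1,\dots,\sigma_{r-1}\}$ cut $\R^2\setminus\OO$ into exactly $r$ "strips", each containing exactly one orbit $\O_i$, and by property (S2) and the transversality bullets at the start of Section \ref{section:transverse_geodesic_trajectories}, each geodesic trajectory $\Gamma_\O^\geo$ is disjoint from every $\sigma \in \mathcal S$, hence entirely contained in the strip of its own orbit. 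Crucially, in this case \emph{no} pushing geodesic $\lambda \in \mathcal P$ can be crossed by two distinct orbits: if it were, by property (iii)/(iv) of the structural theorem those two orbits could not be separated by an element of $\mathcal S$, contradicting that $\mathcal S$ already separates every pair. So each $\lambda \in \mathcal P$ is crossed by exactly one orbit, and by Lemma \ref{lemma:geodesic_leaves}(i) the region $S(f^n_\geo(\lambda), f^{n+1}_\geo(\lambda))$ is a once-punctured plane.

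Next I would fix an orbit $\O \in \OO$ and analyze the geodesic trajectory $\Gamma_\O^\geo = \prod_{n\in\Z}(\gamma_\O^n)^\geo$ inside its strip, which (removing the other punctures, which lie in other strips) is homeomorphic to $\R^2\setminus\O \cong \R^2\setminus\Z$ with $\Gamma_\O^\geo$ as a proper multiline whose consecutive lines $(\gamma_\O^n)^\geo$ join consecutive punctures $f^n(x_\O)$ and $f^{n+1}(x_\O)$. Since $f$ maps $\O$ to itself by the shift $f^n(x_\O)\mapsto f^{n+1}(x_\O)$ and preserves the strip (the strip is $f$-invariant because its bounding separating geodesics are $f_\geo$-fixed, hence isotopic to $f$-invariant Brouwer lines, and $f$ permutes the orbits within), the image $f(\gamma_\O^n)$ is an essential line from $f^{n+1}(x_\O)$ to $f^{n+2}(x_\O)$ in that strip. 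Applying $f_\geo(\ell^\geo)=(f(\ell))^\geo$, the geodesic $f_\geo((\gamma_\O^n)^\geo)$ and $(\gamma_\O^{n+1})^\geo$ share the same endpoints $f^{n+1}(x_\O), f^{n+2}(x_\O)$ and, I would argue, are isotopic rel $\OO$: the key point is that in a once-punctured-plane strip (with a discrete bi-infinite set of punctures), a properly embedded essential arc joining two \emph{consecutive} punctures is determined up to isotopy, because any two such arcs bound a bigon which, having no puncture strictly between the consecutive ones in its interior, can be removed — here I would invoke the minimal intersection property of geodesics together with the bigon criterion from Section \ref{sec:hyperbolic_geometry_R2Z}. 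Uniqueness of geodesic representatives then gives $f_\geo((\gamma_\O^n)^\geo) = (\gamma_\O^{n+1})^\geo$, and taking the union over $n$ yields $f_\geo(\Gamma_\O^\geo) = \Gamma_\O^\geo$.

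I expect the main obstacle to be the isotopy-uniqueness claim for "consecutive-puncture arcs" in the strip: one must rule out arcs that, although joining the correct two punctures, wind around other punctures of $\O$ (those of the form $f^m(x_\O)$ with $m \neq n, n+1$). The clean way to handle this is to show that $f(\gamma_\O^n)$ \emph{can be taken} to be a genuine transverse path of $\O$ — indeed $f(\Gamma_\O)$ is itself a (proper, after the locally-finite choice) transverse trajectory of $\O$ for the transverse foliation $\F$, since $\F$ is a transverse foliation of $f$ and $f$ conjugates $\F$-transversality of $\gamma_\O^n$ to $f(\F)$-transversality, but one needs $\F$ transverse rather than $f(\F)$; the correct statement is that a transverse path joining $f^{n+1}(x_\O)$ to $f^{n+2}(x_\O)$ exists and, by Theorem \ref{theorem:1B-intro} / the structural results, its geodesic representative must be $(\gamma_\O^{n+1})^\geo$ because both cross exactly the pushing geodesics of $\mathcal P$ separating $f^{n+1}(x_\O)$ from $f^{n+2}(x_\O)$ (by property (2)/(2*), each consecutive pair of points of $\O$ is separated by a pushing geodesic, and the set of pushing geodesics a transverse path crosses is determined by its endpoints). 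So the real content is a \emph{combinatorial rigidity}: in the fully-separated case, a geodesic arc between consecutive orbit points is determined by which pushing geodesics it crosses, which in turn is forced by its endpoints — this is what I would need to nail down carefully, and it is the step most likely to require a genuine argument rather than a citation.
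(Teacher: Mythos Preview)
Your setup is right --- in the fully separated case each pushing geodesic is crossed by a single orbit, each trajectory $\Gamma_\O^\geo$ sits in its own strip, and the problem reduces to showing $f(\gamma_\O^n)$ is isotopic to $\gamma_\O^{n+1}$ rel $\OO$. You also correctly isolate the obstacle: ruling out that $f(\gamma_\O^n)$ winds around other points of $\O$. But your proposed resolution via ``combinatorial rigidity'' does not close the gap. The issue is that $f(\gamma_\O^n)$ is positively transverse to $f(\F)$, not to $\F$, so you have no control over how it (or its geodesic representative) meets the pushing geodesics in $\G$. The slogan ``a geodesic arc between consecutive punctures is determined by which pushing geodesics it crosses'' fails on both ends: you cannot compute that intersection pattern for $f_\geo((\gamma_\O^n)^\geo)$, and even if you could, equal intersection pattern with a discrete family of lines does not force isotopy in an infinitely-punctured strip.

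The paper resolves this in two steps that you are missing. First, it proves the isotopy only on $\R^2\setminus\O$ (a single orbit removed), by trapping both $f(\gamma_\O^n)$ and $\gamma_\O^{n+1}$ in the region $S(\phi_n,f(\phi_{n+2}))=L(\phi_n)\cap R(f(\phi_{n+2}))$, where $\phi_k\in\F$ is the leaf through $f^k(x_\O)$; since in the separated case these leaves are crossed only by $\O$, this region meets $\O$ in exactly the two punctures $f^{n+1}(x_\O),f^{n+2}(x_\O)$, and in a twice-punctured plane there is a \emph{unique} isotopy class of simple arcs joining the punctures --- this is the clean uniqueness statement that replaces your rigidity claim. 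Second, it promotes the isotopy from $\R^2\setminus\O$ to $\R^2\setminus\OO$: replace $f$ by an isotopic $f^*$ that fixes the separating geodesics (Straightening Principle), so $f^*$ preserves the strip $M_\O$; then use a collar homeomorphism $\varphi_\O:M_\O\setminus\O\to\R^2\setminus\O$ that is the identity away from $\partial M_\O$ to transport the $\R^2\setminus\O$ isotopy into an isotopy supported in $M_\O\setminus\O$, which then extends by the identity to $\R^2\setminus\OO$. This second step is where the hypothesis $\#\mathcal S=r-1$ is actually consumed, and it is absent from your sketch.
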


\begin{figure}[h!]
    \center\begin{overpic}[width=9cm, height=4.5cm, tics=10]{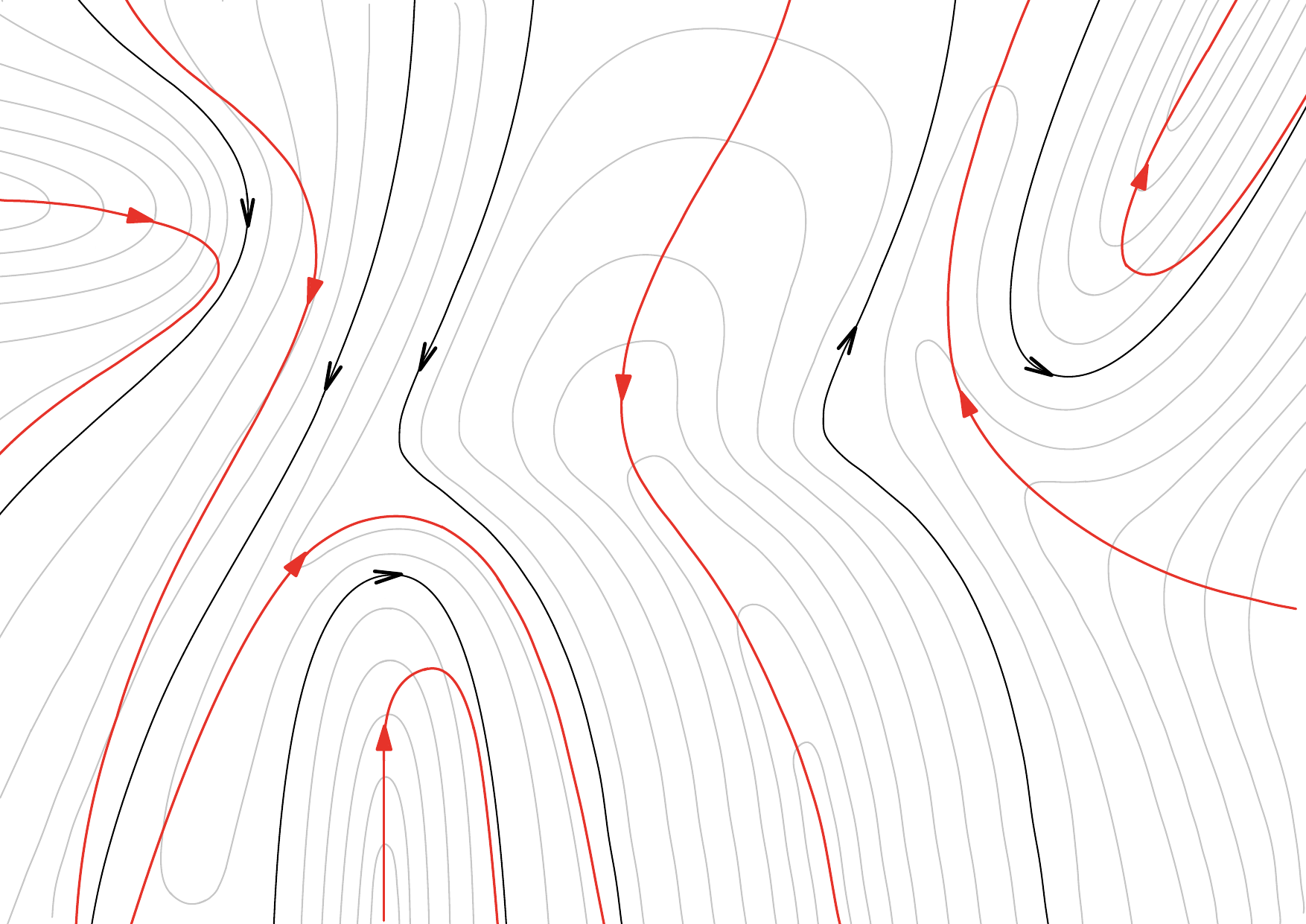}
\end{overpic}
\end{figure}

 \vspace*{-0.15cm}
\begin{proof}
    Let $X\subset \R^2\setminus \OO$ be the union of all geodesics in $\mathcal S$. Each orbit $\O \in \OO$ is contained in a unique connected component of $\R^2\setminus X$, which we denote by $M_\O$. The family $\{M_\O\}_{\O \in \OO}$ is formed by pairwise disjoint 2-submanifolds, and each $M_\O$ has a geodesic boundary $\partial M_\O\subset X$.
    According to the Straightening Principle (see Lemma \ref{lemma:straightening_principle}), there exists an isotopy $(f_t)_{t\in [0,1]}$ on $\R^2\setminus \OO$ such that $f_0 = f\vert_{\R^2\setminus \OO}$ and $f_1=:f^*$ is a homeomorphism fixing $X$ pointwise. 

    Fix an orbit $\O \in \OO$. Consider a collar neighborhood $N_\O$ of the boundary $\partial M_\O$ on the surface $M_\O\setminus \O$ that is disjoint from both $\Gamma_\O^\geo$ and $f^*(\Gamma_\O^\geo)$. Consider also a homeomorphism
     \mycomment{-0.12cm}
    $$ \varphi_\O: M_\O\setminus\O \longrightarrow \R^2\setminus \O$$

     \mycomment{-0.23cm}
    \noindent
    coinciding with the inclusion map $M_\O\setminus \O \hookrightarrow \R^2\setminus \O$ outside the collar neighborhood $N_\O$.

    Now, we need the following claims. The first, Claim \ref{claim: inspired on handel}, is inspired by an argument of Handel in \cite{handel99}. The second, Claim \ref{claim: f_geo invariance}, is the key idea of the proof, as it shows that proper transverse trajectories are ideal substitutes for homotopy translation arcs in this setting.

    \begin{claim}\label{claim: inspired on handel}
        The homeomorphism $f^*\vert_{M_\O\setminus \O}$ is isotopic to $\varphi_\O^{-1} \circ f \circ \varphi_\O$ on the surface $M_\O\setminus \O$.
    \end{claim}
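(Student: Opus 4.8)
The plan is to transport the whole configuration onto the surface $\R^2\setminus\O$ by conjugating with $\varphi_\O$, and there to recognize the result as the isotopy class of $f$ relative to the single orbit $\O$; throughout, $f$ in $\varphi_\O^{-1}\circ f\circ\varphi_\O$ is read as $f|_{\R^2\setminus\O}$, which is legitimate since $f(\O)=\O$. First I would record that $f$ and $f^*$ already define the same class on $\R^2\setminus\O$: the isotopy $(f_t)_{t\in[0,1]}$ on $\R^2\setminus\OO$ extends, by \cite[Proposition 1.5]{BeguinCrovisierLeRoux2020}, to an isotopy $(\overline{f_t})_{t\in[0,1]}$ on $\R^2$ relative to $\OO$, hence relative to $\O$, with $\overline{f_0}=f$ and $\overline{f_1}=\overline{f^*}$; restricting each $\overline{f_t}$ to $\R^2\setminus\O$ yields an isotopy from $f|_{\R^2\setminus\O}$ to $f^*|_{\R^2\setminus\O}$ there. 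Since conjugation by the homeomorphism $\varphi_\O$ preserves isotopy classes, it then suffices to prove that $\varphi_\O\circ f^*|_{M_\O\setminus\O}\circ\varphi_\O^{-1}$ is isotopic to $f^*|_{\R^2\setminus\O}$ on $\R^2\setminus\O$.

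The substantive input comes next, and it is where the hypothesis $\#\mathcal S=r-1$ is used. Because $\mathcal S$ separates the orbits and $\O\subset M_\O$, the complement $\R^2\setminus\overline{M_\O}$ contains no point of $\O$, so its components are inessential disks of $\R^2\setminus\O$; hence the loops $a_n$ encircling $f^n(x_\O)$ freely generate the fundamental group of both $M_\O\setminus\O$ and $\R^2\setminus\O$, and the inclusion $\iota\colon M_\O\setminus\O\hookrightarrow\R^2\setminus\O$ induces an isomorphism on $\pi_1$. Since $N_\O$ is disjoint from $\O$, the core $M_\O\setminus(N_\O\cup\O)$ is connected and contains a basepoint together with representatives of all the loops $a_n$, and there $\varphi_\O$ coincides with $\iota$; so $\varphi_\O$ and $\iota$ induce the same map on $\pi_1$, and as $M_\O\setminus\O$ and $\R^2\setminus\O$ are aspherical, $\varphi_\O$ is freely homotopic to $\iota$. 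I would also note that $f^*$ preserves $M_\O$ --- it fixes $X=\partial M_\O$ pointwise and preserves the orbit $\O\subset M_\O$, so it cannot carry $M_\O$ to another component of $\R^2\setminus X$ --- whence the genuine identity $\iota\circ\bigl(f^*|_{M_\O\setminus\O}\bigr)=\bigl(f^*|_{\R^2\setminus\O}\bigr)\circ\iota$.

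The payoff is then the chain, with $\simeq$ denoting free homotopy of maps,
$$\varphi_\O\circ f^*|_{M_\O\setminus\O}\circ\varphi_\O^{-1}\ \simeq\ \iota\circ f^*|_{M_\O\setminus\O}\circ\varphi_\O^{-1}\ =\ f^*|_{\R^2\setminus\O}\circ\iota\circ\varphi_\O^{-1}\ \simeq\ f^*|_{\R^2\setminus\O},$$
where the last homotopy holds because $\iota\circ\varphi_\O^{-1}\simeq\varphi_\O\circ\varphi_\O^{-1}=\textup{id}$. The two ends of this chain are homeomorphisms of the planar surface $\R^2\setminus\O$ which are homotopic and which induce the same shift on the punctures of $\O$ (hence the same action on ends), so I would invoke that they are in fact isotopic; together with the first paragraph this gives $\varphi_\O\circ f^*|_{M_\O\setminus\O}\circ\varphi_\O^{-1}\simeq f|_{\R^2\setminus\O}$, equivalently $f^*|_{M_\O\setminus\O}\simeq\varphi_\O^{-1}\circ f\circ\varphi_\O$ on $M_\O\setminus\O$, which is the claim.

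The one point resting on something beyond bookkeeping is that last upgrade, from ``homotopic'' to ``isotopic'' for homeomorphisms of the infinite-type surface $\R^2\setminus\O$ --- equivalently, that the mapping class of such a surface is pinned down by the induced outer automorphism of $\pi_1$ and its peripheral data. This is the same phenomenon (surfaces with free fundamental group admit no exotic mapping classes) that legitimizes the straightening principle and the geodesic representatives of Section \ref{sec:hyperbolic_geometry_R2Z}; granting it, the rest --- extending the isotopy across $\R^2$ and recognizing $\varphi_\O$ as homotopic to $\iota$ --- is routine, the only genuinely geometric observation being that $\mathcal S$ separating the orbits leaves no puncture of $\O$ inside $\R^2\setminus\overline{M_\O}$.
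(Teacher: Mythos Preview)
Your proof is correct but takes a genuinely different route from the paper. The paper's argument is short and entirely hands-on: after first isotoping $f^*|_{M_\O\setminus\O}$ so that it preserves the collar $N_\O$, it conjugates the given isotopy $f\simeq f^*$ on $\R^2\setminus\O$ by $\varphi_\O$ to obtain $\varphi_\O^{-1}\circ f\circ\varphi_\O\simeq\varphi_\O^{-1}\circ f^*\circ\varphi_\O$ on $M_\O\setminus\O$, and then simply observes that $\varphi_\O^{-1}\circ f^*\circ\varphi_\O$ and $f^*|_{M_\O\setminus\O}$ \emph{literally coincide} outside $N_\O$ (since $\varphi_\O$ equals the inclusion there and $f^*$ preserves the core). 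The remaining discrepancy is supported in a disjoint union of contractible strips, where an Alexander-type isotopy finishes the job. Your approach instead pushes everything to $\R^2\setminus\O$, identifies $\varphi_\O$ with the inclusion $\iota$ up to homotopy via a $\pi_1$/asphericity argument, runs a chain of free homotopies, and only at the end upgrades homotopy to isotopy for homeomorphisms of the infinite-type surface $\R^2\setminus\O$. That last step --- which you rightly flag --- is a genuine (if true) theorem, and the paper's argument buys exactly the avoidance of it: by arranging the two maps to agree on the complement of a collar, the paper never leaves the world of isotopies and never needs a Dehn--Nielsen--Baer type statement.
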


     \mycomment{-0.15cm}
    \begin{proof}[Proof of Claim \ref{claim: inspired on handel}]
        First, note that after an isotopy on $M_\O\setminus \O$, we may assume that $f^*\vert_{M_\O\setminus\O}$ leaves the collar neighborhood $N_\O$ invariant. Next, since $f$ and $f^*$ are isotopic on $\R^2\setminus \OO$,  nwe have that $\varphi_\O^{-1} \circ f \circ \varphi_\O$ is isotopic to $\varphi_\O^{-1} \circ f^* \circ \varphi_\O$ on $M_\O\setminus \O$. Finally, we observe that the homeomorphism $\varphi_\O^{-1} \circ f^* \circ \varphi_\O$ is isotopic to $f^*\vert_{M_\O\setminus \O}$ on $M_\O\setminus \O$, because they coincide outside the collar neighborhood $N_\O$. This concludes the proof of the claim.
    \end{proof}

    \vspace*{0.2cm}

    \begin{claim}\label{claim: f_geo invariance}
        For any $n \in \Z$, the image $f(\gamma_\O^{\sspc n})$ is isotopic to $\gamma_\O^{n+1}$ on the surface $\R^2\setminus \O$.
    \end{claim}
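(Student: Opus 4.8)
The statement concerns a single orbit, so the separated hypothesis of Proposition \ref{prop:separated_geodesic_trajectories} plays no role; I would prove Claim \ref{claim: f_geo invariance} directly from the transverse-trajectory structure, using only that $f$ carries $\F$ to another transverse foliation of $f$. Set $p:=f^{n+1}(x_\O)$ and $q:=f^{n+2}(x_\O)$: both $f(\gamma_\O^n)$ and $\gamma_\O^{n+1}$ are lines on $\R^2\setminus\O$ with $\alpha$-limit $p$ and $\omega$-limit $q$, so by Epstein's theorem (Theorem \ref{thm:epstein}) it is enough to show they are isotopic on $\R^2\setminus\O$, and for that I will confine each of them, up to isotopy, inside an open topological disk of $\R^2$ whose only punctures are $p$ and $q$.

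The main ingredient is a confinement observation: if $\rho^-,\rho^+\in\F$ are leaves crossed by $\O$ at times $m-1$ and $m+1$ respectively, then $\gamma_\O^m\subset L(\rho^-)\cap R(\rho^+)$. Indeed the two endpoints $f^m(x_\O),f^{m+1}(x_\O)$ of $\gamma_\O^m$ both lie in $L(\rho^-)$ and both lie in $R(\rho^+)$ (by the definition of the crossing times), while a positively transverse path meets a leaf at most once and only from its right side to its left side; hence $\gamma_\O^m$ cannot meet $\rho^-$ or $\rho^+$ at all. Leaves of $\F$ crossed by $\O$ at every integer time exist --- this is property (2) recorded at the start of Section \ref{ch:geodesic_framework}, read on $\F$ instead of on $\G$, since a leaf and its geodesic representative induce the same partition of $\OO$. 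The region $L(\rho^-)\cap R(\rho^+)$ is the part of the plane lying between the two disjoint planar lines $\rho^-$ and $\rho^+$, hence an open topological disk, and it meets $\O$ exactly in $\{f^m(x_\O),f^{m+1}(x_\O)\}$, both interior points.

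Applying this with $m=n+1$ gives $\gamma_\O^{n+1}\subset D_2:=L(\psi^-)\cap R(\psi^+)$ for leaves $\psi^-,\psi^+\in\F$ crossed by $\O$ at times $n$ and $n+2$. Applying it with $m=n$ gives $\gamma_\O^n\subset L(\phi^-)\cap R(\phi^+)$ for leaves $\phi^-,\phi^+\in\F$ crossed by $\O$ at times $n-1$ and $n+1$; pushing forward by the orientation-preserving homeomorphism $f$, and using $f(L(\phi))=L(f(\phi))$ and $f(R(\phi))=R(f(\phi))$, one obtains $f(\gamma_\O^n)\subset D_1:=L(f(\phi^-))\cap R(f(\phi^+))$, again an open disk, which meets $\O$ exactly in $f(\{f^n(x_\O),f^{n+1}(x_\O)\})=\{p,q\}$. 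Here $f(\phi^-),f(\phi^+)$ are leaves of $f(\F)$, which is a transverse foliation of $f$ because $f(\phi)$ is a Brouwer line of $f$ whenever $\phi$ is; and $\O$ crosses $f(\phi^-)$ at time $n$ and $f(\phi^+)$ at time $n+2$, since conjugating by $f$ a line that separates the orbit raises the crossing time by one. Consequently $f(\phi^-)$ and $\psi^-$ are planar lines of $\R^2\setminus\O$ inducing the same partition of $\O$, and likewise $f(\phi^+)$ and $\psi^+$; by the classification of planar lines of a flute surface by their induced partition (see \cite{LEROUX_2017}) they are isotopic on $\R^2\setminus\O$, and since $f(\phi^-)$ is not isotopic to $f(\phi^+)$, the Straightening Principle (Lemma \ref{lemma:straightening_principle}) --- applied to the two pairs, which share their geodesic representatives --- produces an isotopy of $\R^2\setminus\O$ from the identity carrying the oriented pair $(f(\phi^-),f(\phi^+))$ to $(\psi^-,\psi^+)$, hence carrying $D_1$ onto $D_2$ and $f(\gamma_\O^n)$ onto an arc of $D_2$ joining $p$ to $q$ (recall such an isotopy extends to one of $\R^2$ fixing $\O$ pointwise). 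It then remains to note that two embedded arcs joining $p$ to $q$ inside the open disk $D_2$ --- whose only punctures are $p$ and $q$ --- are ambient isotopic rel $\{p,q\}$ through an isotopy supported in $D_2$, which is in particular an isotopy of $\R^2\setminus\O$; composing everything yields $f(\gamma_\O^n)\simeq\gamma_\O^{n+1}$ on $\R^2\setminus\O$. The delicate step is this gluing: it hinges on the fact that a planar line of $\R^2\setminus\O$ separating an initial segment $\{f^j(x_\O):j\le N\}$ of the orbit from its complement is determined up to isotopy, which one must either invoke from the theory of lines on the flute surface or reprove via Epstein's theorem and a standard bigon argument.
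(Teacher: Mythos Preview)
Your argument is correct, but it takes a longer route than the paper's. The paper avoids your ``matching'' step entirely by choosing a \emph{single} strip that already contains both arcs: with $\phi_k\in\F$ the leaf through $f^k(x_\O)$, the region $S=L(\phi_n)\cap R(f(\phi_{n+2}))$ is bounded on one side by a leaf of $\F$ and on the other by a leaf of $f(\F)$. Transversality of $\gamma_\O^{n+1}$ to $\F$ places it in $\overline{L(\phi_n)}\cap\overline{R(\phi_{n+2})}\subset S$, while $\gamma_\O^n\subset\overline{L(\phi_n)}\cap\overline{R(\phi_{n+2})}$ gives $f(\gamma_\O^n)\subset\overline{L(f(\phi_n))}\cap\overline{R(f(\phi_{n+2}))}\subset S$ as well. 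Since $S$ is a plane meeting $\O$ only in $\{p,q\}$, the uniqueness of the arc class from $p$ to $q$ on a twice-punctured plane finishes immediately.

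By contrast, you confine the two arcs in two different disks $D_1,D_2$ and then need the classification of planar lines on $\R^2\setminus\O$ by the partition they induce, together with the Straightening Principle, to produce an ambient isotopy carrying $D_1$ onto $D_2$. This works, and your care in flagging that classification as the delicate point is appropriate; but the paper's hybrid strip sidesteps it completely. What your approach buys is modularity --- each arc is confined using only one foliation --- at the cost of an extra global matching step; the paper's trick of mixing a leaf of $\F$ with a leaf of $f(\F)$ trades that modularity for a one-line conclusion.
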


    \vspace*{0.1cm}

     \mycomment{-0.4cm}
    \begin{proof}[Proof of Claim \ref{claim: f_geo invariance}]
        Let $x_\O \in \O$ be the initially chosen basepoint in $\O$. For each $n \in \Z$,  let $\phi_n \in \F$ be the leaf containing $f^n(x_\O)$, and let
            $ S(\phi_n,f(\phi_{n+2}))= L(\phi_n) \cap R(f(\phi_{n+2}))\setminus \O$. Since we are in the totally separated case, i.e. $\#\mathcal S=r-1$, the leaves $\phi_n$ and $f(\phi_{n+2})$ are crossed exclusively by the orbit $\O$. This means that $S(\phi_n,f(\phi_{n+2}))$ is homeomorphic to a 2-punctured plane, where the punctures correspond to the points $f^{n+1}(x_\O)$ and $f^{n+2}(x_\O)$.
            Note that both paths $\gamma_\O^{n+1}$ and $f(\gamma_\O^n)$ are contained in $S(\phi_n,f(\phi_{n+2}))$, and they satisfy
             \mycomment{-0.1cm}
            $$ \alpha(\gamma_\O^{n+1}) = \alpha(f(\gamma_\O^n)) = f^{n+1}(x_\O) \quad \text{ and } \quad \omega(\gamma_\O^{n+1}) = \omega(f(\gamma_\O^n)) = f^{n+2}(x_\O).$$

           \begin{figure}[h!]
    \center
     \mycomment{0.2cm}\begin{overpic}[width=9cm, height=5.5cm, tics=10]{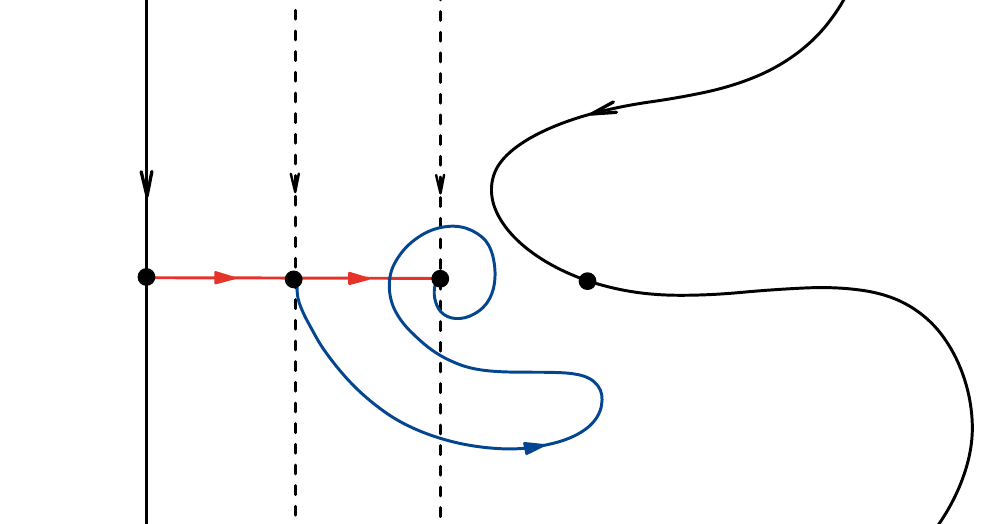}
        \put (86,55) {\color{black}\large$\displaystyle f(\phi_{n+2})$}
        \put (7,55) {\color{black}\large$\displaystyle \phi_{n}$}
        \put (41,63) {\color{black}\small$\displaystyle \phi_{n+2}$}
        \put (24.5,63) {\color{black}\small$\displaystyle \phi_{n+1}$}
        \put (63,10) {\color{myBLUE}\large$\displaystyle f(\gamma_\O^n)$}
        \put (32,31.5) {\color{myRED}\large$\displaystyle \gamma_\O^{n+1}$}
        \put (18,31.5) {\color{myRED}\large$\displaystyle \gamma_\O^{n}$}
        
\end{overpic}
\end{figure}

 \mycomment{-0.2cm}
            Now, let $p_1,p_2 \in \R^2$ be the two punctures of the 2-punctured plane $\R^2\setminus \{p_1, p_2\}$. It is a classical  and well-known fact that
            there exists a unique isotopy class of lines on $\R^2\setminus \{p_1, p_2\}$
            that connect the puncture $p_1$ to the puncture $p_2$ while still remaining injective. 
            \begin{figure}[h!]
    \center
     \mycomment{0cm}\begin{overpic}[width=3.3cm, height=3.3cm, tics=10]{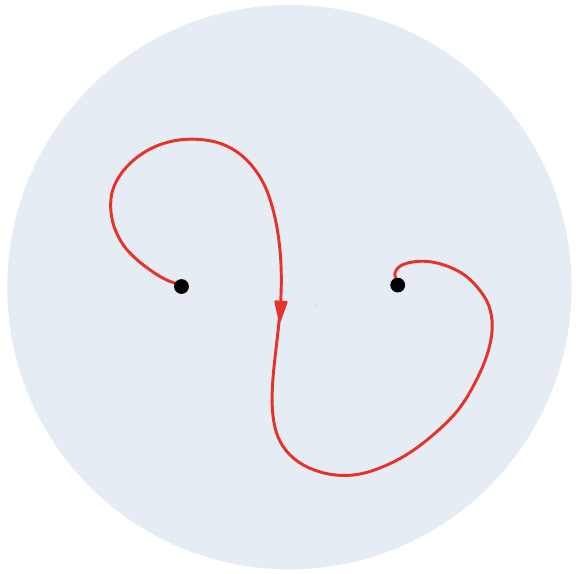}
        \put (62,40) {\color{black}\small$\displaystyle p_2$}
        \put (26,40) {\color{black}\small$\displaystyle p_1$}
        
\end{overpic}
\end{figure}
            
 \mycomment{-0.5cm}
            This can be applied to the current setting, implying that there exists an isotopy $(h_t)_{t\in[0,1]}$ on the 2-punctured plane $S(\phi_n,f(\phi_{n+2}))$ from the identity that satisfies $h_1(\gamma_\O^{n+1}) = f(\gamma_\O^n)$. Moreover, note that this isotopy can be chosen to be the identity on a collar neighborhood of the boundary $\partial S(\phi_n,f(\phi_{n+2})) = \phi_n \cup f(\phi_{n+2})$.
            This means that the isotopy $(h_t)_{t\in[0,1]}$ can be extended to an isotopy on the entire surface $\R^2\setminus \O$, which proves the claim.
    \end{proof} 

     \mycomment{-0.1cm}
    To conclude the proof of the proposition, we recall that $(\gamma_\O^n)^\geo$ denotes the geodesic representative of $\gamma_\O^n$ on the surface $\R^2\setminus \OO$. Since $\R^2\setminus \O$ has fewer punctures than $\R^2\setminus \OO$, we still have that $\gamma_\O^n$ is isotopic to $(\gamma_\O^n)^\geo$ on $\R^2\setminus \O$. Similarly, $\gamma_\O^{n+1}$ is isotopic to $(\gamma_\O^{n+1})^\geo$ on $\R^2\setminus \O$. Thus, by applying
    the result provided by Claim \ref{claim: f_geo invariance}, combined with the observation above, we conclude that $f((\gamma_\O^n)^\geo)$ is isotopic to $(\gamma_\O^{n+1})^\geo$ on the surface $\R^2\setminus \O$.

    Next, since $f$ and $f^*$ are isotopic on $\R^2\setminus \OO$, by a similar argument passing to $\R^2\setminus \O$,
    we conclude that $f((\gamma_\O^n)^\geo)$ is isotopic to $f^*((\gamma_\O^n)^\geo)$ on $\R^2\setminus \O$. Combined with the previous conclusion, we have that $f^*((\gamma_\O^n)^\geo)$ is isotopic to $(\gamma_\O^{n+1})^\geo$ on $\R^2\setminus \O$.

    Now, by applying the inverse homeomorphism $\varphi_\O^{-1}$, we conclude that $\varphi^{-1} \circ f^* ((\gamma_\O^n)^\geo)$ is isotopic to $\varphi_\O^{-1}((\gamma_\O^{n+1})^\geo)$ on the surface $M_\O\setminus \O$. Recall that the collar neighborhood $N_\O$ was chosen to be disjoint from both $\Gamma_\O^\geo$ and $f^*(\Gamma_\O^\geo)$, and that $\varphi_\O$ coincides with the inclusion map outside $N_\O$. This implies that 
    $$ \varphi_\O^{-1} \circ f^* ((\gamma_\O^n)^\geo) = f^*((\gamma_\O^n)^\geo) \quad \text{ and } \quad \varphi_\O^{-1}((\gamma_\O^{n+1})^\geo) = (\gamma_\O^{n+1})^\geo.$$
    Thus, we conclude that $f^*((\gamma_\O^n)^\geo)$ is isotopic to $(\gamma_\O^{n+1})^\geo$ on the surface $M_\O\setminus \O$. Note that an isotopy between $f^*((\gamma_\O^n)^\geo)$ and $(\gamma_\O^{n+1})^\geo$ on $M_\O\setminus \O$ can be adjusted to be the identity on the collar neighborhood $N_\O$. Therefore, such isotopy can be extended to an isotopy on the entire surface $\R^2\setminus \OO$. This concludes that $f^*((\gamma_\O^n)^\geo)$ is isotopic to $(\gamma_\O^{n+1})^\geo$ on $\R^2\setminus \OO$.

    We conclude the proof by recalling that $f^*((\gamma_\O^n)^\geo)$ is isotopic to $f_\geo((\gamma_\O^n)^\geo)$ on $\R^2\setminus \OO$, which implies that $f_\geo((\gamma_\O^n)^\geo)$ is isotopic to $(\gamma_\O^{n+1})^\geo$ on $\R^2\setminus \OO$. Since both $f_\geo((\gamma_\O^n)^\geo)$ and $(\gamma_\O^{n+1})^\geo$ are geodesics on $\R^2\setminus \OO$, they must coincide, concluding the proof of the proposition.
\end{proof}

\section{Proof of theorem \ref{thmx:II-B}}\label{chapter:proof_THM_B}

We remark that Theorem \ref{thmx:II-B} follows from applying Proposition \ref{lemma:simplification_wandering_leaves} below repeatedly to each wandering leaf, in order to simplify the transverse foliation as much as possible, thus yielding a transverse geodesic lamination $\G$ that satisfies the properties required in Theorem \ref{thmx:II-B}.

\begin{proposition}\label{lemma:simplification_wandering_leaves}
    Let $\F$ be a transverse foliation for $f$, and let $\phi\in \F$ be a leaf that is an essential line on $\R^2\setminus \OO$. Assume that the family of geodesics $\{f^n_\geo(\phi^\geo)\}_{n\geq0}$ is locally-finite. Then, there exists an isotopy $(f_t)_{t\in [0,1]}$ on the surface $\R^2\setminus \OO$ that starts at $f_0=f\vert_{\R^2\setminus \OO}$ and satisfies:
    \begin{itemize}[leftmargin=1.2cm]
        \item[\textup{\textbf{(i)}}]\ $\overline{\rule{0cm}{0.3cm} f_t}$ is a Brouwer homeomorphism, for all $t\in [0,1]$,
        \item[\textup{\textbf{(ii)}}]\ $\overline{\rule{0cm}{0.3cm} f_1}$ admits a transverse foliation $\F^\pp$ that satisfies
        $$ \F^\pp\vert_{\overline{\rule{0cm}{0.23cm} R(\phi)}}=\F\vert_{\overline{\rule{0cm}{0.23cm} R(\phi)}} \quad \ \text{ and } \ \quad \F^\pp\vert_{\overline{\rule{0cm}{0.23cm} L(\phi)}} \ \text{ is trivial and } \overline{\rule{0cm}{0.3cm} f_1}\text{-invariant}.$$
    \end{itemize}
\end{proposition}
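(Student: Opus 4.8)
The plan is to reduce the statement to a \emph{tower} picture over $\overline{L(\phi)}$ and then simplify $f$ there by an explicit equivariant construction. I assume throughout that $\phi$ is crossed by $k\ge 1$ orbits of $\OO$, i.e.\ $\phi^\geo\in\P$ (when $\phi$ is separating one has $f_\geo(\phi^\geo)=\phi^\geo$ and the statement is to be read componentwise on $L(\phi)$).

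\emph{Step 1 (the tower is exact).} The family $\{f^n(\phi)\}_{n\ge0}$ is pairwise disjoint ($\phi$ is a Brouwer line of $f$), pairwise non-isotopic on $\R^2\setminus\OO$ (by Lemma~\ref{lemma:geodesic_leaves} and iteration, $f^m(\phi)$ and $f^n(\phi)$ bound a surface with $(n-m)k>0$ punctures), and its geodesic representatives form $\{f^n_\geo(\phi^\geo)\}_{n\ge0}$, which is locally finite \emph{by hypothesis}. Straightening finitely many of the $f^n(\phi)$ at a time while keeping the already-geodesic ones fixed (Lemma~\ref{lemma:straightening_principle} together with Remark~\ref{remark:straightening_principle}) and passing to the limit, I obtain an ambient homeomorphism $h$ of $\R^2$ with $h(f^n(\phi))=f^n_\geo(\phi^\geo)=:\lambda_n$ for all $n$. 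The geodesics $\lambda_n$ are disjoint and nested, $\overline{L(\lambda_{n+1})}\subset L(\lambda_n)$ (Lemma~\ref{lemma:geodesic_leaves}), so $\lambda_0\subset R(\lambda_n)$ for $n\ge1$; since $\{\lambda_n\}$ is locally finite, a compact arc joining a point of $\lambda_0$ to a fixed $q$ meets only finitely many $\lambda_n$, yet it must cross $\lambda_n$ whenever $q\in L(\lambda_n)$, so $q\in R(\lambda_n)$ for $n$ large. Hence $\bigcap_n\overline{L(\lambda_n)}=\varnothing$, and therefore $\bigcap_n\overline{L(f^n(\phi))}=h^{-1}\big(\bigcap_n\overline{L(\lambda_n)}\big)=\varnothing$. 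Two consequences: no orbit of $\OO$ lies entirely in $L(\phi)$ (it would lie in this intersection); and, writing $A:=\overline{L(\phi)}\cap\overline{R(f(\phi))}$ for the closed strip between $\phi$ and $f(\phi)$ (whose interior minus $\OO$ is a $k$-punctured plane, Lemma~\ref{lemma:geodesic_leaves}), one gets the \emph{exact} tower $\overline{L(\phi)}=\bigcup_{n\ge0}f^n(A)$, with $f^n(A)\cap f^{n+1}(A)=f^{n+1}(\phi)$ and $f^n(A)\cap f^m(A)=\varnothing$ for $|n-m|\ge2$; in particular $\{f^n(\phi)\}_n$ is itself locally finite.

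\emph{Step 2 (the construction).} Fix a product foliation $\mathcal V_A$ of the strip $A$ (leaves parallel to $\partial A$, with $\phi$ and $f(\phi)$ as boundary leaves, the $k$ punctures on $k$ distinct interior leaves, $\mathcal V_A$ standard in a collar of $\partial A$). Within the mapping class of $f|_A$ rel $\partial A$ and rel $\OO\cap A$, pick a representative $g\colon A\to f(A)$ that agrees with $f$ on $\partial A$ and carries the $\mathcal V_A$-collar of $\phi$ onto that of $f(\phi)$ respecting $\mathcal V_A$ (obtained from $f|_A$ by a collar isotopy fixing $\partial A$ and the punctures). Since $A$ together with $\overline{R(\phi)}$ is a fundamental domain for $f$ on $\R^2$, extend $g$ equivariantly: put $\overline{f_1}=f$ on $\overline{R(\phi)}$ and $\overline{f_1}(f^n(y))=f^n(g(y))$ for $y\in A$. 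This is a homeomorphism of $\R^2$ equal to $f$ on $\overline{R(\phi)}\cup\bigcup_n f^n(\phi)$ and mapping each $f^n(\mathrm{int}\,A)$ into the disjoint $f^{n+1}(\mathrm{int}\,A)$, hence fixed-point free; extending an isotopy from $f|_A$ to $g$ equivariantly gives an isotopy $(f_t)$ on $\R^2\setminus\OO$ from $f$ to $f_1$ with each $\overline{f_t}$ fixed-point free, which is (i). For (ii), set $\F''=\F$ on $\overline{R(\phi)}$ and $\F''|_{f^n(A)}=\overline{f_1}^{\,n}(\mathcal V_A)$; the collar conditions make these pieces glue continuously across the lines $f^n(\phi)$ and across $\phi$ (a leaf of both $\F$ and $\mathcal V_A$), so $\F''$ is a foliation of $\R^2$ by lines, $\overline{f_1}$-invariant on $\overline{L(\phi)}$ and trivial there (a stack of product pieces matched along boundary leaves, leaf space $[0,\infty)$). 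It is transverse to $\overline{f_1}$: on $\overline{R(\phi)}$ because $\overline{f_1}=f$ there and $\overline{f_1}(L(\phi))=f(L(\phi))$, so every leaf of $\F$ remains a Brouwer line; on $\overline{L(\phi)}$ because the tower height $\eta\colon\overline{L(\phi)}\to[0,\infty)$ ($\eta=n+$ the $\mathcal V_A$-coordinate transported by $\overline{f_1}^{-n}$ on $f^n(A)$) satisfies $\eta\circ\overline{f_1}=\eta+1$, so each leaf $\eta^{-1}(s)$ bounds the region $\eta^{-1}((s,\infty))$, which $\overline{f_1}$ sends strictly into itself.

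The main obstacle is Step~1: the hypothesis controls only the \emph{geodesic} iterates, local finiteness is not an isotopy invariant, and what is actually needed is the emptiness of the $f$-invariant set $\bigcap_n\overline{L(f^n(\phi))}$ — equivalently, that no orbit is trapped in $L(\phi)$ — so the real work is to transport local finiteness back to the genuine iterates $f^n(\phi)$, which is exactly what the nested use of the Straightening Principle (with Remark~\ref{remark:straightening_principle}) accomplishes. Once the tower is known to be exact, Step~2 is routine equivariant bookkeeping. Applying this procedure successively to the finitely many classes of forward-wandering leaves, and its mirror on $R(\phi)$ to the backward-wandering ones, produces the family $\G$ required by Theorem~\ref{thmx:II-B}.
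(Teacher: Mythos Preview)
Your Step~1 contains a genuine gap, and in fact the intermediate claim you aim for is false in general. You assert that the nested use of the Straightening Principle produces a single homeomorphism $h$ of $\R^2$ with $h(f^n(\phi))=\lambda_n$ for all $n\ge0$, and then deduce $\bigcap_n\overline{L(f^n(\phi))}=h^{-1}\bigl(\bigcap_n\overline{L(\lambda_n)}\bigr)=\varnothing$. But the limit of your partial straightenings $h^{(N)}$ need not converge: each step modifies only $L(\lambda_{N})$ in the \emph{target}, so $h^{(N+1)}$ agrees with $h^{(N)}$ on $(h^{(N)})^{-1}(\overline{R(\lambda_N)})=\overline{R(f^N(\phi))}$, and these sets exhaust the plane exactly when $\bigcap_n L(f^n(\phi))=\varnothing$ --- the very statement you want to conclude. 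The existence of such an $h$ is in fact \emph{equivalent} to local finiteness of $\{f^n(\phi)\}_{n\ge0}$, and the remark following the Pushing Lemma (with its figure of iterates $f^n(\phi)$ accumulating) shows this can fail even for a single orbit while $\{f^n_\geo(\phi^\geo)\}_{n\ge0}$ is locally finite. Consequently your Step~2, which builds $\overline{f_1}$ and the isotopy equivariantly over the tower $\bigcup_n f^n(A)$ of the \emph{original} $f$, has no reason to produce a continuous map or a continuous isotopy on all of $\R^2$.

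The paper sidesteps this by running the iteration on the $f$-side rather than the $h$-side. At stage $N$ one conjugates $f_N$ by an identity-isotopy supported in $L(\lambda_{N-1})$, obtaining $f_{N+1}$ with $f_{N+1}^{\,n}(\lambda)=\lambda_n$ for $n\le N+1$; the point is that $f_t$ for $t\ge N$ is frozen on $\overline{R(\lambda_{N-1})}$, and \emph{these} sets do exhaust the plane by the hypothesis on the geodesic iterates. The limit $f_\infty$ is therefore well defined, Brouwer, and satisfies $f_\infty^{\,n}(\lambda)=\lambda_n$, but --- and the paper stresses this --- $f_\infty$ is merely isotopic to $f$, not conjugate in general. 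Once one has $f_\infty$, your Step~2 applied to $f_\infty$ with strip $A=\overline{L(\lambda_0)\cap R(\lambda_1)}$ is essentially the paper's construction of $\F'$; the equivariant bookkeeping you describe is fine there because the tower $\{\lambda_n\}$ \emph{is} locally finite.
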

\begin{figure}[h!]
    \center 
    \hspace*{-0.3cm}\begin{overpic}[width=7cm, height=4.2cm, tics=10]{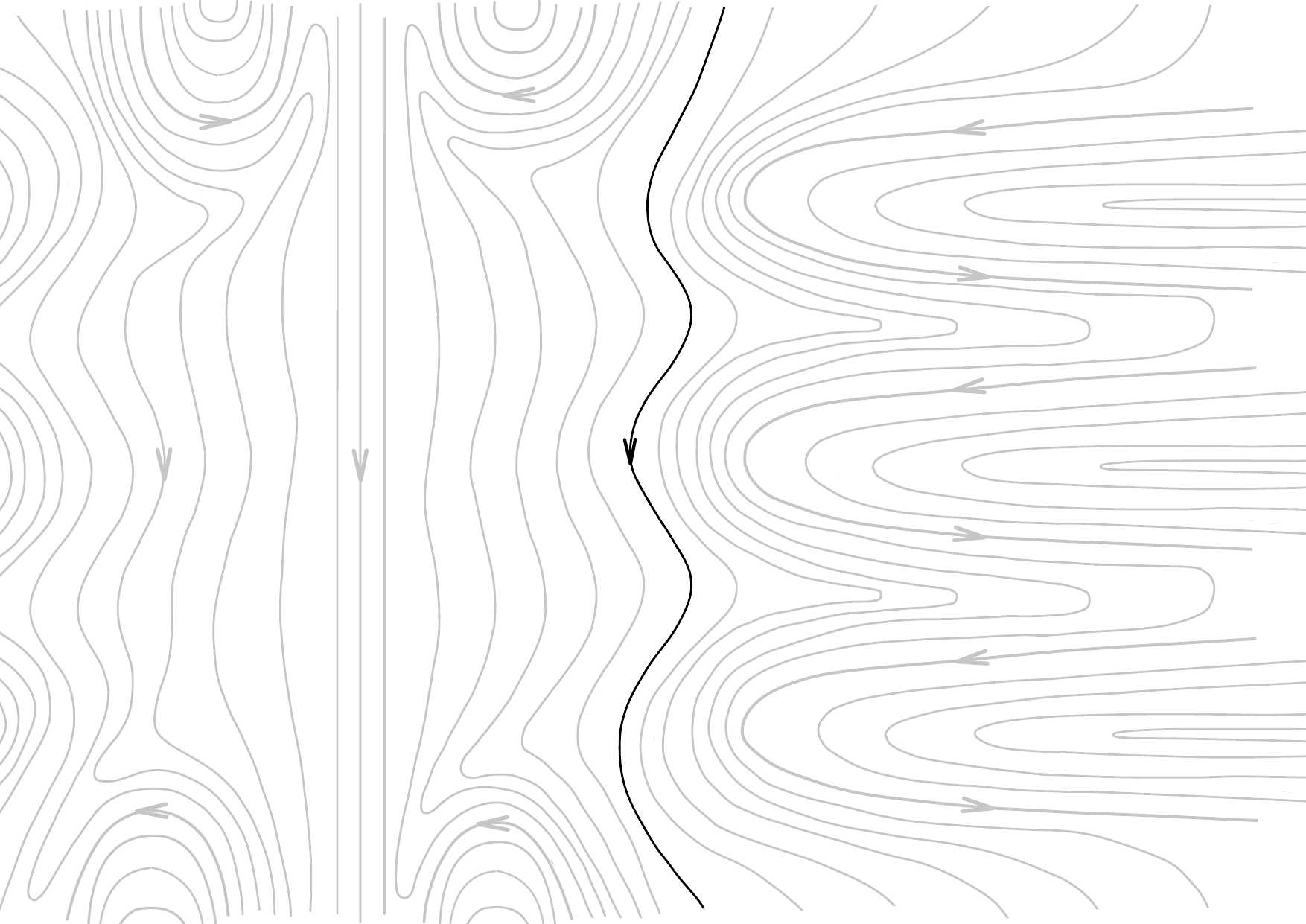}
         \put (58.5,59) {\colorbox{white}{\color{black}\large$\displaystyle \phi$}}
         \put (47,-10) {{\color{gray}\Large$\displaystyle \F$}}
        
\end{overpic}
\hspace*{1cm}\begin{overpic}[width=7cm, height=4.2cm, tics=10]{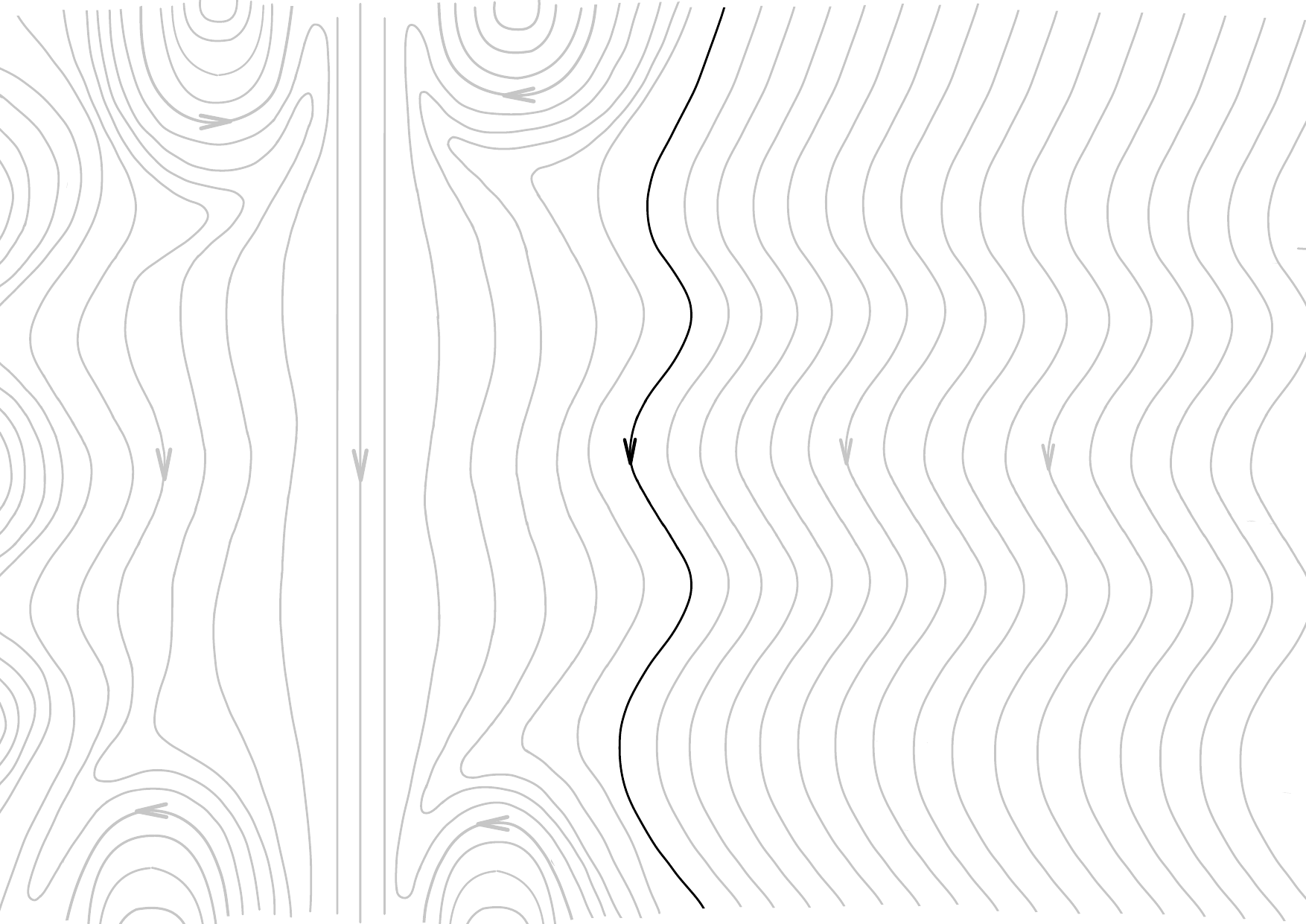}
    \put (58.5,59) {\colorbox{white}{\color{black}\large$\displaystyle \phi$}}
    \put (47,-10) {{\color{gray}\Large$\displaystyle \F^\pp$}}
\end{overpic}
\end{figure}

 \vspace*{0.5cm}

\begin{proof}[Proof of Proposition \ref{lemma:simplification_wandering_leaves}]
Let $\lambda = \phi^\geo \in \G$ be the geodesic representative of the leaf $\phi\in \F$ given in the hypothesis of the proposition. We may assume up to an identity isotopy on $\R^2\setminus \OO$  that the leaf $\phi$ and its geodesic representative $\lambda$ coincide, i.e., $\phi = \lambda$. By the hypothesis of the proposition, the family $\{f^n_\geo(\lambda)\}_{n\geq0}$ is locally-finite, however note that the family $\{f^n(\lambda)\}_{n\geq0}$ is not necessarily locally-finite. For sake of simplicity, we may assume up to conjugacy, by the Homma-Schoenflies theorem, that $f^n_\geo(\lambda)=\{n\} \times \R$ oriented downwards, for all $n\geq0$.

By the Straightening Principle, there exists an isotopy $(h_t)_{t\in [0,1]}$ on the surface $\R^2\setminus \OO$ that starts at the identity map and ends at a homeomorphism $h_1$ that satisfies
 \mycomment{-0.1cm}
$$ h_1(\lambda) = \lambda \quad \text{ and } \quad h_1(f(\lambda)) = f_\geo(\lambda).$$

 \mycomment{-0.2cm}
\noindent As mentioned in Remark \ref{remark:straightening_principle}, we can take this isotopy to fix $\lambda$, meaning that
 \mycomment{-0.1cm}
$$ h_t(x) = x, \quad \forall x \in \lambda,\ \forall t\in [0,1].$$

 \mycomment{-1cm}
\noindent Observe that, the way the isotopy $(h_t)_{t\in [0,1]}$ moves points on the right of $\lambda$ does not affect the points on the left of $\lambda$. This means that, since $f(\lambda)$ and $f_\geo(\lambda)$ are both located on the left side of $\lambda$, we can take the isotopy $(h_t)_{t\in [0,1]}$ to fix $\overline{\rule{0cm}{0.23cm}R(\lambda)}$ pointwise, meaning that
 \mycomment{-0.12cm}
$$ h_t\vert_{\overline{\rule{0cm}{0.23cm}R(\lambda)}} = \text{Id}\sspc, \quad \forall \sspc t \in [0,1].$$

 \mycomment{-0.23cm}
\noindent By defining $f_t:= h_t \circ f\vert_{\R^2\setminus \OO} \circ h^{-1}_t$, we obtain an isotopy $(f_t)_{t\in [0,1]}$ on the surface $\R^2\setminus \OO$ such that
$ \overline{\rule{0cm}{0.3cm} f_t}$ is a Brouwer homeomorphism conjugate to $f$ that coincides with $f$ on $f^{-1}(\overline{\rule{0cm}{0.23cm}R(\lambda)})$,  for every  $t\in [0,1]$. In addition, the homeomorphism $f_1$ at the end of the isotopy satisfies
 \mycomment{-0.15cm}
$$ f_1(\lambda) = f_\geo(\lambda).$$

\begin{figure}[h!]
    \center
     \vspace*{0.3cm}\begin{overpic}[width=9cm, height=4.7cm, tics=10]{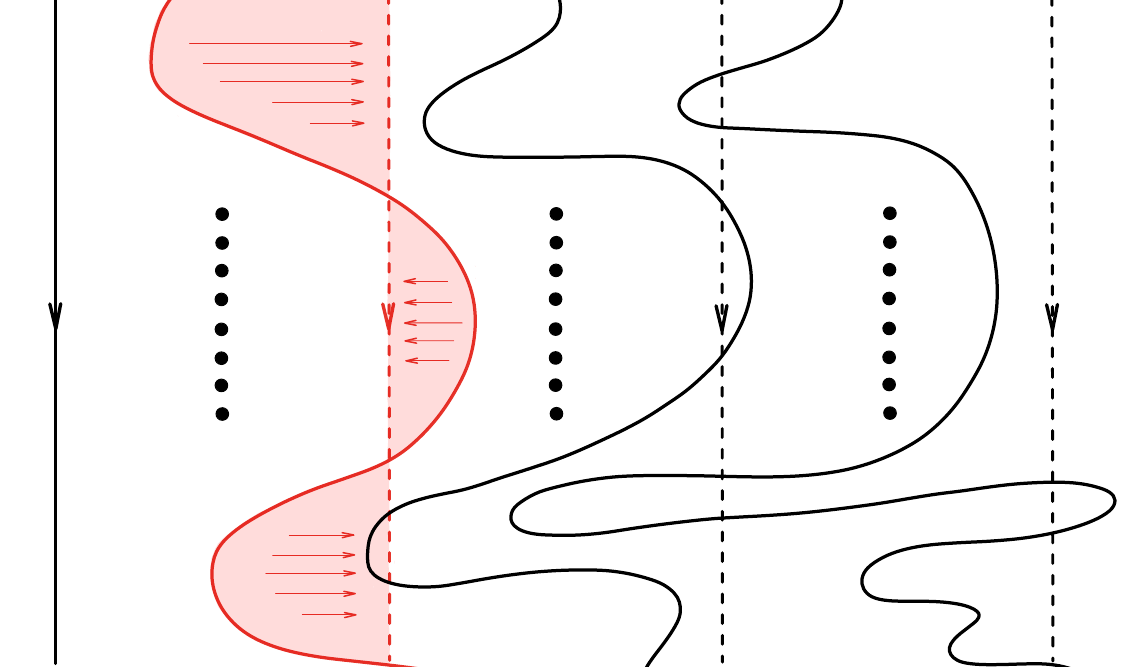}
         \put (87,54.5) {\color{black}$\displaystyle f^3_\geo(\lambda)$}
         \put (58,54.5) {\color{black}$\displaystyle f^2_\geo(\lambda)$}
         \put (-8,26) {\color{black}$\displaystyle \phi = \lambda$}
         \put (30,54.5) {\color{myRED}$\displaystyle f_\geo(\lambda)$}
         \put (12,54.5) {\color{myRED}$\displaystyle f(\lambda)$}
         \put (43,54.5) {\color{black}$\displaystyle f^2(\lambda)$}
         \put (70,54.5) {\color{black}$\displaystyle f^3(\lambda)$}
        
\end{overpic}
\end{figure}

\vspace*{0.1cm}
We end up with a setting similar to the one presented in the beginning of the proof, where the line $f_1(\lambda)$ coincides with its geodesic representative $f_\geo(\lambda)$. By repeating the same arguments presented above, we can extend the isotopy $(f_t)_{t\in [0,1]}$ into an isotopy $(f_t)_{t\in [0,2]}$  
where each $\overline{\rule{0cm}{0.3cm} f_t}$ is a Brouwer homeomorphism conjugate to $f$ and the map $f_2$ satisfies
\vspace*{0.1cm}
$$ f_2\vert_{\overline{\rule{0cm}{0.23cm}R(\lambda)}} = f_1\vert_{\overline{\rule{0cm}{0.23cm}R(\lambda)}} \quad \text{ and } \quad f_2^{\sspc n}(\lambda) = f^{\sspc n}_\geo(\lambda), \quad \forall \sspc 0 \leq n \leq 2.$$

  \vspace*{0.1cm}
 \noindent
We end up with a similar setting, again, and we can keep extending the isotopy $(f_t)_{t\in [0,1]}$  to obtain an isotopy $(f_t)_{t\in [0,\infty)}$ where each map $\overline{\rule{0cm}{0.3cm} f_t}$ is a Brouwer homeomorphism conjugate to the original map $f$ and, for any $N\geq 0$ and $t\in [N,N+1]$, the following properties hold:
  \vspace*{0.05cm}
    $$ f_t\vert_{\spc\overline{\rule{0cm}{0.23cm}f^{\sspc N-1}_\geo(R(\lambda))}\spc} = f_N\vert_{\spc\overline{\rule{0cm}{0.23cm}f^{\sspc N-1}_\geo(R(\lambda))}\spc} \quad \text{ and } \quad f_N^{\sspc n}(\lambda) = f^{\sspc n}_\geo(\lambda)\sspc ,\quad \forall\spc  0\leq n \leq N.$$

      \vspace*{0.05cm}

\begin{claim}\label{claim2}
    The isotopy $(f_t)_{t\in [0,\infty)}$ converges to a homeomorphism $f_\infty \in \homeo(\R^2\setminus \OO)$ (with respect to the compact-open topology) as $t\to \infty$, such that $\overline{\rule{0cm}{0.32cm}f_\infty}$ is a Brouwer homeomorphism and, moreover, $f_\infty^{\sspc n}(\lambda) = f^{\sspc n}_\geo(\lambda)$ for every $n\geq 0$.
\end{claim}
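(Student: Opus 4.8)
The plan is to show that the successive modifications performed along the isotopy $(f_t)_{t\in[0,\infty)}$ escape to infinity, in the sense that both $(f_t)$ and $(f_t^{-1})$ are \emph{eventually constant on every compact subset} of $\R^2\setminus\OO$. Granting this, the limits $f_\infty:=\lim_{t\to\infty}f_t$ and $\lim_{t\to\infty}f_t^{-1}$ exist in the compact-open topology; passing to the limit in $f_t\circ f_t^{-1}=f_t^{-1}\circ f_t=\mathrm{id}$ on each compact set identifies the second limit with $f_\infty^{-1}$, so that $f_\infty\in\homeo(\R^2\setminus\OO)$. It then remains only to check that $\overline{f_\infty}$ is Brouwer and that $f_\infty^{\,n}(\lambda)=f^{n}_\geo(\lambda)$ for all $n\ge 0$.

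To prove the stabilization, recall from the construction that for every $N\ge 1$ and $t\in[N,N+1]$ one has $f_t=f_N$ on $\overline{R(f^{N-1}_\geo(\lambda))}=\{x\le N-1\}$, and that $f^{\,n}_N(\lambda)=f^{n}_\geo(\lambda)$ for $0\le n\le N$. Evaluating the first relation at the integer endpoints and chaining shows $f_M=f_{N+1}$ on $\{x\le N\}$ for all $M\ge N+1$, hence $f_t=f_{N+1}$ on $\{x\le N\}$ for every $t\ge N+1$; since the half-planes $\{x\le N\}$ exhaust $\R^2$, every compact set is eventually frozen for $(f_t)$. For the inverses, I would use that $\overline{f_N}$ carries the oriented line $f^{N-1}_\geo(\lambda)=f^{\,N-1}_N(\lambda)$ onto $f^{N}_\geo(\lambda)=f^{\,N}_N(\lambda)$ and, being orientation-preserving, maps the half-plane $\overline{R(f^{N-1}_\geo(\lambda))}$ homeomorphically onto $\overline{R(f^{N}_\geo(\lambda))}=\{x\le N\}$; consequently $f_t^{-1}=f_{N+1}^{-1}$ on $f_{N+1}(\{x\le N\})=\{x\le N+1\}$ for every $t\ge N+1$, and these half-planes again exhaust $\R^2$.

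With $f_\infty\in\homeo(\R^2\setminus\OO)$ in hand, I would finish as follows. Near every point of $\R^2\setminus\OO$ the map $f_\infty$ coincides with some $f_{N+1}$, which is orientation-preserving and fixed-point free; hence $\overline{f_\infty}$ is orientation-preserving and has no fixed point in $\R^2\setminus\OO$. Moreover, the isotopy $(f_t)$ on $\R^2\setminus\OO$ extends to an isotopy on $\R^2$ that is relative to $\OO$ (as $\OO$ is closed and totally disconnected), so $\overline{f_\infty}\vert_\OO=\overline{f_0}\vert_\OO=f\vert_\OO$, which is fixed-point free; thus $\overline{f_\infty}$ is a Brouwer homeomorphism. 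Finally, $f_\infty^{\,n}(\lambda)=f^{n}_\geo(\lambda)$ follows by induction on $n$: assuming $f_\infty^{\,k}(\lambda)=f^{k}_\geo(\lambda)=\{k\}\times\R$, this line has a neighbourhood contained in $\{x<M\}$ for every $M\ge k+1$, on which $f_\infty=f_M$, whence $f_\infty^{\,k+1}(\lambda)=f_M\big(f^{\,k}_M(\lambda)\big)=f^{\,k+1}_M(\lambda)=f^{k+1}_\geo(\lambda)$.

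The step I expect to be the main obstacle is ensuring that the compact-open limit of the homeomorphisms $f_t$ is again a homeomorphism — this fails for general sequences of homeomorphisms, and the resolution here relies on \emph{both} $(f_t)$ and $(f_t^{-1})$ stabilizing on compacts. The crucial geometric input is the identification $\overline{f_N}\big(\overline{R(f^{N-1}_\geo(\lambda))}\big)=\overline{R(f^{N}_\geo(\lambda))}$, which in turn rests on $\overline{f_N}$ sending $f^{N-1}_\geo(\lambda)$ orientation-coherently onto $f^{N}_\geo(\lambda)$; this is precisely what the relations $f^{\,n}_N(\lambda)=f^{n}_\geo(\lambda)$ for $0\le n\le N$, built into the isotopy, provide.
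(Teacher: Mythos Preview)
Your proof is correct and follows essentially the same line as the paper's: both use that the isotopy is eventually constant on each half-plane $\{x\le N\}$ (exhausting $\R^2$) to obtain the compact-open limit $f_\infty$, then read off the Brouwer property and the identities $f_\infty^{\,n}(\lambda)=f^{\,n}_\geo(\lambda)$ from the fact that $f_\infty$ locally coincides with some $\overline{f_N}$. The one place you go beyond the paper is in justifying that $f_\infty$ is actually a \emph{homeomorphism}: the paper simply asserts $f_\infty\in\homeo(\R^2\setminus\OO)$, whereas you correctly observe that a compact-open limit of homeomorphisms need not be one, and resolve this by showing that $(f_t^{-1})$ also stabilizes on compacts via the relation $\overline{f_{N+1}}\big(\overline{R(f^{N}_\geo(\lambda))}\big)=\overline{R(f^{N+1}_\geo(\lambda))}$. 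That extra step is a genuine improvement. One cosmetic imprecision: in your induction step you write that $\{k\}\times\R$ has a neighbourhood in $\{x<M\}$ on which $f_\infty=f_M$, but the region where $f_\infty=f_M$ is $\{x\le M-1\}$; the argument only needs $\{k\}\times\R\subset\{x\le M-1\}$, which holds for $M\ge k+1$, so the mention of a neighbourhood is superfluous.
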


 \mycomment{-0.4cm}
\begin{proof}[Proof of Claim \ref{claim2}]
    Observe that, for any $N\geq0$ and any $x \in  \overline{\rule{0cm}{0.35cm}f^{\sspc N-1}_\geo(R(\lambda))}$, we have that
      \mycomment{-0.13cm}
    $$ f_t(x) = f_{N} (x)\sspc, \quad  \forall t\geq N.$$

      \mycomment{-0.23cm}
 \noindent For any compact $K\subset \R^2\setminus \OO$, there exists an $N>0$ large enough such that $K\subset \overline{\rule{0cm}{0.35cm}f^{\sspc N-1}_\geo(R(\lambda))}$. Consequently, there exists a homeomorphism $f_\infty \in \homeo(\R^2\setminus \OO)$ given by $ f_\infty = \lim_{t\to \infty} f_t\sspc$, where the limit is taken on uniform convergence on compact sets of $\R^2\setminus \OO$, and satisfies
    \mycomment{-0.1cm}
    \begin{equation}\label{eq:convergence_F_infty}
        f_\infty(x) = f_{N} (x)\sspc,  \quad \forall N\geq0\sspc ,\quad \forall x \in \overline{\rule{0cm}{0.355cm}f^{\sspc N-1}_\geo(R(\lambda))}\sspc.
    \end{equation}

     \mycomment{-0.2cm}
    \noindent
Since the set $\bigcup_{n\geq 0} \overline{\rule{0cm}{0.32cm}R(f^n_\geo(\lambda))}$ covers the plane, for any $x \in \R^2$ there exists $N>0$ such that
 \mycomment{-0.1cm}
   $$ \overline{\rule{0cm}{0.32cm}f_\infty}(x) = \overline{\rule{0cm}{0.32cm}f_N}(x) \neq x.$$

    \mycomment{-0.23cm}
    \noindent
   This implies that $\overline{\rule{0cm}{0.32cm}f_\infty}$ is itself a Brouwer homeomorphism.
    
    Finally, since $f^n_\geo(\lambda)\subset \overline{\rule{0cm}{0.32cm}f^{\sspc n}_\geo(R(\lambda))}$ for every $n\geq 0$, we can apply (\ref{eq:convergence_F_infty}) to conclude that
     \mycomment{-0.1cm}
    $$ f_\infty^{\sspc n}(\lambda) = f_{n+1}^{\sspc n}(\lambda) = f^{\sspc n}_\geo(\lambda)\sspc, \quad \forall \sspc n \geq 0.$$

     \mycomment{-0.2cm}
    \noindent
    Thus, we conclude the proof of the claim.
\end{proof}

 \mycomment{-0.2cm}
It is worth highlighting that homeomorphism $f_\infty$ given by Claim \ref{claim2} is isotopic to $f\vert_{\R^2\setminus \OO}$ but not necessarily conjugate to it, even though $f_t$  is conjugate to $f\vert_{\R^2\setminus \OO}$ for every $0\leq t <\infty$.
If the family $\{f^n(\lambda)\}_{n\geq 0}$ is locally finite, then in this case $f_\infty$ is also conjugate to $f\vert_{\R^2\setminus \OO}$.

At last, we have that $\phi = \lambda$ and $f^{\sspc n}_\infty(\phi) = f^{\sspc n}_\geo(\lambda)$ for every $n\geq 0$, and this allows us to define a transverse foliation of $\overline{\rule{0cm}{0.33cm}f_\infty}$, denoted by $\F^\pp$, as follows:

\vspace*{0.1cm}
\begin{itemize}[leftmargin=1.4cm]
    \item On the right side $\overline{\rule{0cm}{0.33cm}R(\phi)}$, the foliation $\F^\pp$ coincides with $\F$.
    
    \vspace*{0.1cm}
    \item For every $n\geq 0$, the oriented line $f^n_\infty(\phi)$ is a leaf in $\F^\pp$.
    
    \vspace*{0.1cm}
    \item For every $n\geq 0$, the set $D_n:=\overline{\rule{0cm}{0.38cm}L(f^n_\infty(\phi)) \cap R(f^{n+1}_\infty(\phi))}$ is trivially foliated by $\F^\pp$.
    
    \vspace*{0.1cm}
    \item For every $n\geq 0$, we have that $\F^\pp\vert_{D_{n+1}} = f_\infty(\F^\pp\vert_{D_n})$.
\end{itemize}

\begin{figure}[h!]
    \center
     \mycomment{0.2cm}\begin{overpic}[width=3.8cm, height=3.8cm, tics=10]{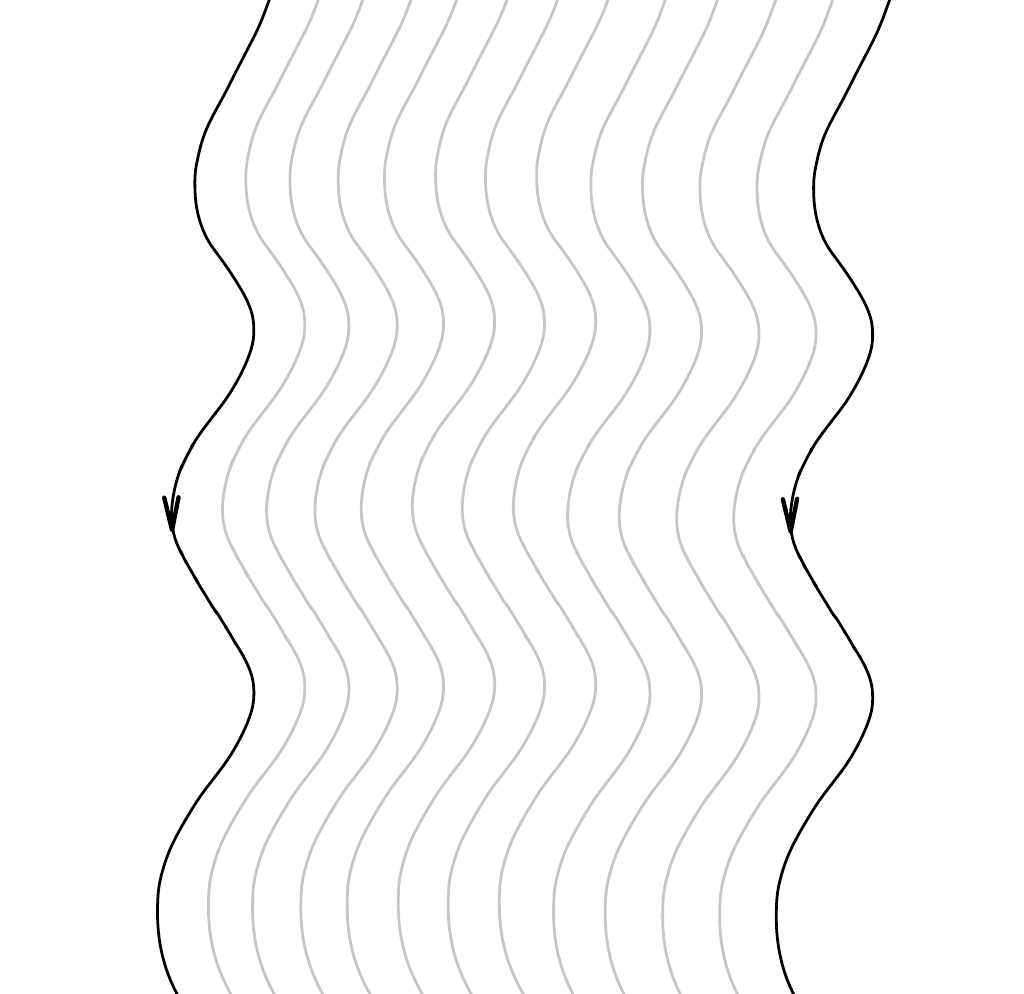}
         \put (-14,47) {\color{black}$\displaystyle f_\infty^n(\phi)$}
         \put (43.8,47) {\colorbox{white}{\large\color{gray}$\rule{0cm}{0.3cm} \ \ \sspc$}}
         \put (44,47) {\color{gray}\large$\displaystyle D_n$}
         \put (85,47) {\color{black}$\displaystyle f_\infty^{n+1}(\phi)$}
\end{overpic}
\end{figure}

 \mycomment{-0.3cm}

The isotopy $(f_t)_{t \in [0,\infty]}$ and the foliation $\F^\pp$ complete the proof of the proposition.
\end{proof}

\section{Proof of Theorem \ref{thmx:II-C}}

 \mycomment{-1cm}
\begin{remark}\label{remark:about_thm_II_A_and_II_B} Before heading to the proof of Theorem \ref{thmx:II-C}, we do  some important remarks.
    
    \vspace*{0.1cm}

    \begin{itemize}
        \item[\textbf{(i)}]  First, note that geodesics in $\mathcal S$ are fixed by $f_\geo$, and thus cannot be wandering.
        
        \vspace*{0.1cm}
        \item[\textbf{(ii)}] Second, note that every geodesic leaf $\lambda \in \P$  trivially satisfies both implications
         \mycomment{-0.17cm}
$$
\ \ \lambda \in \P \text{ is forward-wandering } \ \ \implies \text{ no orbit in } \OO \text{ lies entirely on } L(\lambda).
$$

\vspace*{-0.6cm}
$$
\ \ \lambda \in \P \text{ is backward-wandering } \implies \text{ no orbit in } \OO \text{ lies entirely on } R(\lambda).
$$

\vspace*{0.1cm}

\noindent Indeed, since $\O$ is an $f$-invariant set, if an orbit $\O \in \OO$ lies entirely on $L(\lambda)$, it must in fact lie on $\bigcap_{n>0} L(f^n_\geo(\lambda))$. Thus, any path $\alpha:[0,1]\longrightarrow\R^2$ joining $\O$ to $\lambda$ must also intersect $f^n_\geo(\lambda)$ for all $n>0$. Thus, the family $\{f^n_\geo(\lambda)\}_{n>0}$ cannot be locally-finite.  An analogous reasoning applies to the implication in the backward-wandering case.

\newpage

        \item[\textbf{(iii)}] Finally, we illustrate why the structural properties given by Theorem \ref{thmx:structural_thm} are not sufficient on their own to prove Theorem \ref{thmx:II-C}. Suggesting that the Pushing Lemma is the crucial dynamical ingredient that allows us recover \cite[Case (1) in the proof of Proposition 6.6]{handel99}\break without relying on Handel's machinery of Fitted families introduced in \cite{handel99}.   

        \vspace*{0.2cm}

    \textbf{Illustative example:}\ 

    \vspace*{0.1cm}
    Fix a basepoint $x = (0,0) \in \R^2$, and suppose that $f^n(x) = (n,0)$ for each $n \in \Z$.  Let $\OO:=\{\O=\O(f,x)\}$, and  let $\G$ be a transverse geodesic lamination $\G$ for $\class{f,\OO}$. Consider $\lambda \in \P$ to be the geodesic leaf crossed by $\O$ at time $0$. 
    
    All we can obtain from Theorem \ref{thmx:structural_thm} is that, for any $N \geq 0$, the following hold: 
    \vspace*{0.1cm}
$$ L(f^{N+1}_\geo(\lambda)) \subset L(f^N_\geo(\lambda)) \quad \text{ and } \quad \begin{cases}
(m, 0) \in R(f^N_\geo(\lambda)) \quad \text{ if } m\leq N,\\
(m, 0) \in L(f^N_\geo(\lambda)) \quad \text{ if } m> N.
\end{cases}
$$

 \mycomment{0cm}
 \vspace*{0.1cm}
\noindent As tillustrated in the below, this information is not sufficient to conclude that the family of iterates $\{f^N_\geo(\lambda)\}_{N\geq0}$ is locally-finite, suggesting that the Pushing Lemma is the key.
\end{itemize}

\vspace*{0.1cm}

\begin{figure}[h!]
    \center
    \hspace*{0.5cm}\begin{overpic}[width=14cm, height=8cm, tics=10]{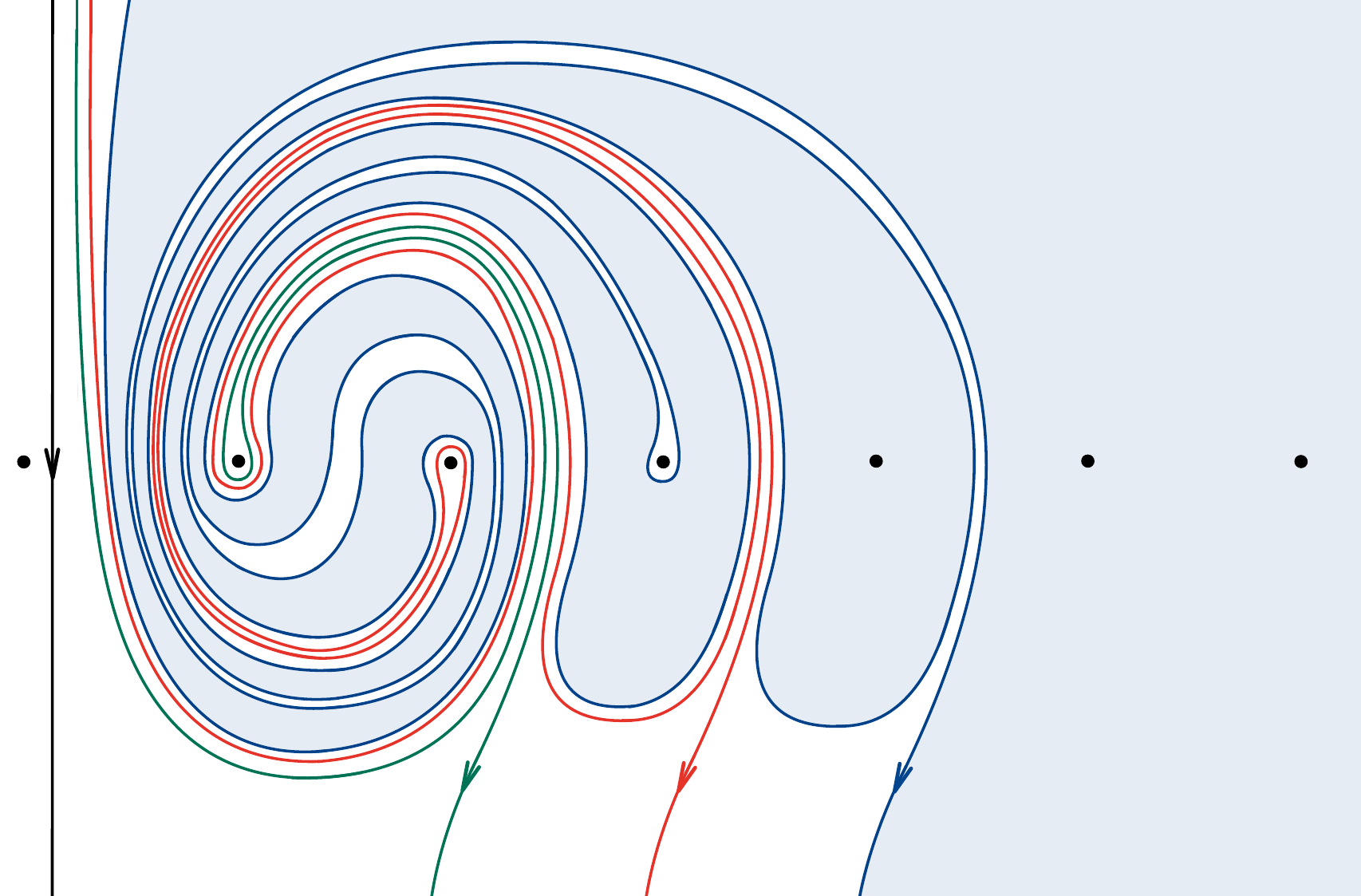}
          \put (2.5,-5) {\Large\color{black}$\displaystyle \lambda$}
          \put (26.3,-5) {\Large\color{myGREEN}$\displaystyle f_\geo(\lambda)$}
          \put (42.7,-5) {\Large\color{myRED}$\displaystyle f^2_\geo(\lambda)$}
          \put (58,-5) {\Large\color{myBLUE}$\displaystyle f^3_\geo(\lambda)$}
\end{overpic}
\end{figure}

\end{remark}

\vspace*{0.4cm}
\begin{proof}[Proof of Theorem \ref{thmx:II-C}]
    We will prove only the forward-wandering case, as the backward-wandering case can be proved through a symmetric argument.
    The ``only if'' implication was already proved in item (ii) of Remark \ref{remark:about_thm_II_A_and_II_B}. Now, we prove the ``if'' implication using an induction argument.  We assume that the theorem holds for any pushing geodesic leaf crossed by at most $r-1$ orbits in $\OO$, and then demonstrate that the theorem also holds for those crossed by $r$ orbits in $\OO$. Of course, the base case $r=1$ must be proved initially.

    \vspace*{0.3cm}
    \textbf{Proof of the base case $\mathbf{r=1}$:}
    Let $\lambda \in \P$ be crossed by exactly the orbit $\O_1 \in \OO$. Clearly, no orbit in $\OO$ lies entirely on $L(\lambda)$. Let $\Gamma_1^\geo$ be a transverse geodesic trajectory of $\O_1$, which is a proper multiline on $\R^2\setminus \OO$ because $\Gamma_1$ is constructed to be properly embedded (Theorem \ref{theorem:1B-intro}).

    According to the Homma-Schoenflies theorem, we can assume up to conjugacy that $\lambda$ is the vertical line $\{1/2\}\times \R$ oriented downwards, that  $\O_1 = \{f^n(x_1) = (n,1) \in \R^2 \mid n \in \Z\}$, and that the completion of $\Gamma_1^\geo$ is the horizontal line $\R\times\{1\}$ oriented in the positive $x$-direction.

    Observe that, in this particular case, every geodesic leaf in $\G$ must be crossed by $\O_1\sspc$.
    This means that every geodesic leaf in $\spc\G\vert_{\overline{\rule{0cm}{0.22cm}L(\lambda)}}\spc$ is pushing-equivalent to $\lambda$ and, thus, we can enumerate them as $\spc\G\vert_{\overline{\rule{0cm}{0.22cm}L(\lambda)}}\spc = \{\lambda_k\}_{k \in \N}$ such that $\lambda_0 = \lambda$ and $L(\lambda_{k+1}) \subset L(\lambda_k)$, for all $k \in \N$. Since $\lambda$ is crossed by $\O_1$ at time $0$, we have for each $k \in \N$ that $\lambda_k$ is crossed by $\O_1$ at time $k$.

    From the hypothesis given by the Homma-Schoenflies theorem, we note that for $k \in \N$, the geodesic leaf $\lambda_k$ is disjoint from $
    \lambda = \{1/2\}\times \R$ and satisfies the following properties:
    \begin{itemize}[leftmargin=1cm]
        \item[$\sbullet$] The geodesic leaf $\lambda_k$ intersects the horizontal line $\Delta_1 \cup \O_1 = \R\times\{1\}$ only once.
        \item[$\sbullet$] The forward orbit $\O_1 \cap L(\lambda_k)$ correspond to the set $\{ (j\sspc , 1) \in \R^2 \mid j \in \Z, \ j > k\}$.
    \end{itemize}

     \mycomment{0.23cm}
    \noindent With these properties, we deduce that each $\lambda_k$ must intersect the horizontal line $\R\times\{1\}$ somewhere in the arc $(k,k+1) \times \{1\}$, which is the geodesic $(\gamma_{\O_1}^k)^\geo$ constituting $\Gamma_1^\geo$. 

\begin{figure}[h!]
    \center
     \mycomment{0cm}\begin{overpic}[width=9.6cm, height=5cm, tics=10]{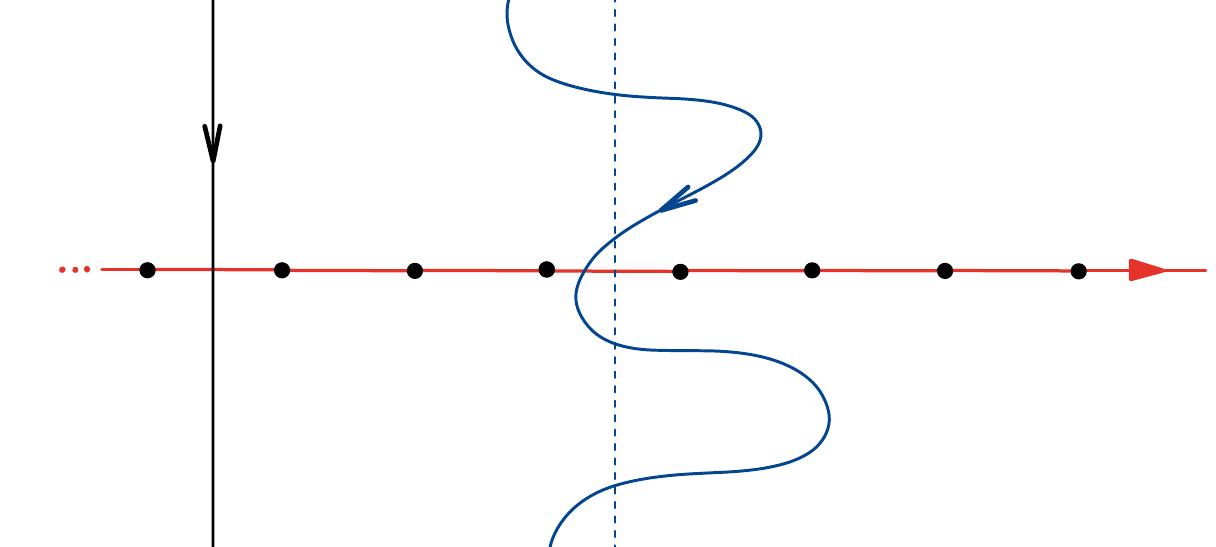}
         \put (11.5,33) {\large\color{black}$\displaystyle \lambda$}
         \put (35,43) {\large\color{myBLUE}$\displaystyle \lambda_k$}
         \put (53,44) {\large\color{myBLUE}$\displaystyle v_k$}
         \put (101.5,22.5) {\large\color{myRED}$\displaystyle \Gamma_1^\geo$}
        
\end{overpic}
\end{figure}

 \mycomment{-0.2cm}
    
    Then, for each $k \in \N$, we can perform an isotopy on $\R^2\setminus \OO$ that moves $\lambda_k$ to the vertical line $v_k = \{1/2+k\}\times \R$ oriented downwards, with matching orientations.
    Now, since the family of verticals $\{v_k\}_{k \in \N}$ is locally-finite, and each $\lambda_k$ is the geodesic representative of the vertical line $v_k$, we have that the family $\{\lambda_k\}_{k \in \N}$ is also locally-finite (see Section \ref{sec:hyperbolic_geometry_R2Z}).

    According to the Pushing Lemma, for any $k\in \N$, there exists some $M_k>0$ such that
     \mycomment{-0.1cm}
    $$ L(f^{M_k}_\geo(\lambda)) \subset L(\lambda_k).$$

     \mycomment{-0.25cm}
 \noindent Since the family $\{\lambda_k\}_{i\in \N}$ is a locally-finite, the set $\bigcap_{k\in \N} L(\lambda_k)$ is empty. This implies that \vspace*{0.1cm}
 $$\bigcap_{n\in \N} L(f^{n}_\geo(\lambda)) = \bigcap_{k\in \N} L(f^{M_k}_\geo(\lambda)) \subset \bigcap_{k\in \N} L(\lambda_k) = \varnothing\sspc.$$

 \vspace*{0cm}
 \noindent This proves that the family $\{f^n_\geo(\lambda)\}_{n\in \N}$ is locally-finite, thus concluding the base case $r=1$.

 \vspace*{0.4cm}
    \textbf{Proof of the induction step $\mathbf{(r-1 \implies r\sspc)}\sspc$:}
    Suppose the theorem holds for any geodesic leaf crossed by at most $r-1$ orbits in $\OO$.
    Let $\lambda \in \P$ be a geodesic leaf crossed by exactly $r$ orbits $\{\O_1,...,\O_r\}\subset \OO$, and assume that no orbit in $\OO$ is lies entirely on $L(\lambda)$. 

    First, we partition the set $I= \{1,...,r\}$ into two disjoint subsets, denoted  $I_\text{out}$ and $I_\omega$. The set $I_\text{out}$ is formed by the indices $i \in I$ such that the orbit $\O_{\sspc i}$ crosses a geodesic leaf  that lies on the side $L(\lambda)$ and that is not pushing-equivalent to $\lambda$. Meanwhile, $I_\omega = I\setminus I_\text{out}$,
    meaning that $I_\omega$ is formed by the indices $i \in I$ such that every geodesic leaf lying on $L(\lambda)$ crossed by the orbit $\O_{\sspc i}$ is pushing-equivalent to $\lambda$. 

    By the definition of $I_\text{out}$, for each $i \in I_\text{out}$, we have a well-defined integer $M(i)$ given by
    \vspace*{0.2cm}
\[ M(i)= \max \Biggl\{\ k \in \Z \ \ \Bigg\vert \ \  \parbox[c]{.46\linewidth}{
   $\exists \lambda^\pp \in \G$ pushing-equivalent to $\lambda$ such that  $\lambda^\pp$ is crossed by the orbit $\O_{\sspc i}$ at time $k$%
   }\ \Biggr\}.
   \]

   \vspace*{0.2cm}
     \mycomment{0.1cm}
    \begin{claim}\label{claim3}
        For each index $i \in I_\text{out}$, there exists a geodesic leaf $\eta_i \in \G$ that is not pushing-equivalent to $\lambda$ and crossed by the orbit $\O_{\sspc i}$ at time $M(i)$. Moreover, there exists $\lambda^* \in \G$ that is pushing-equivalent to $\lambda$ and crossed by the orbit $\O_{\sspc i}$ at time $M(i)$, for every $i \in I_\text{out}$.
    \end{claim}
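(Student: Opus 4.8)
Overview: the claim has two parts — producing, for each $i\in I_\text{out}$, a leaf $\eta_i\in\G$ with $\eta_i\not\sim\lambda$ crossed by $\O_{\sspc i}$ at time $M(i)$, and producing a \emph{single} leaf $\lambda^*\in\G$ with $\lambda^*\sim\lambda$ crossed by $\O_{\sspc i}$ at time $M(i)$ \emph{for every} $i\in I_\text{out}$. I would first record the structural sub-lemma underlying everything: any leaf $\nu\in\G$ with $\nu\subset L(\lambda)$ and $\nu\not\sim\lambda$ lies strictly to the left of the whole $\sim$-class of $\lambda$, i.e. $L(\nu)\subsetneq L(\lambda')$ for all $\lambda'\sim\lambda$. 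Indeed $\lambda,\lambda',\nu$ are pairwise disjoint planar lines with matching coorientation, hence linearly ordered by inclusion of left sides; $\nu$ cannot lie strictly between $\lambda$ and $\lambda'$, since then $S(\lambda,\nu)\subseteq S(\lambda,\lambda')$ would be a finitely punctured plane (as $\lambda'\sim\lambda$), contradicting $\nu\not\sim\lambda$; and $\nu\subset L(\lambda)$ rules out $\nu$ lying to the right of $\lambda$.

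This immediately makes $M(i)$ a well-defined integer. Fix $i\in I_\text{out}$ and, as the definition of $I_\text{out}$ permits, a leaf $\nu_i\in\G$ with $\nu_i\subset L(\lambda)$, $\nu_i\not\sim\lambda$, crossed by $\O_{\sspc i}$ at some time $m_i$. By the sub-lemma $R(\lambda')\subseteq R(\nu_i)$ for every $\lambda'\sim\lambda$, so if $\O_{\sspc i}$ crosses $\lambda'$ at time $k$ then $f^k(x_{\O_i})\in R(\lambda')\subseteq R(\nu_i)$ forces $k\leq m_i$. Thus the integer set in the definition of $M(i)$ is non-empty (it contains the crossing time of $\lambda$ itself) and bounded above by $m_i$, so $M(i)\in\Z$ and the maximum is attained by some $\lambda'_i\sim\lambda$ crossed by $\O_{\sspc i}$ at time exactly $M(i)$.

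The second assertion now follows quickly. Since $I_\text{out}$ is finite, choose $\lambda^*$ among the finitely many leaves $\{\lambda'_i:i\in I_\text{out}\}$ with $L(\lambda^*)$ minimal for inclusion. For $i\in I_\text{out}$ we have $L(\lambda^*)\subseteq L(\lambda'_i)$, hence $R(\lambda'_i)\subseteq R(\lambda^*)$ and $f^{M(i)}(x_{\O_i})\in R(\lambda'_i)\subseteq R(\lambda^*)$, so $\O_{\sspc i}$ crosses $\lambda^*$ at time $\geq M(i)$; since $\lambda^*\sim\lambda$, the definition of $M(i)$ forces this time to be exactly $M(i)$. So $\lambda^*\sim\lambda$ is crossed by $\O_{\sspc i}$ at time $M(i)$ for every $i\in I_\text{out}$.

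The remaining assertion — the existence of $\eta_i$ — is the heart of the claim. If some leaf of $\G$ outside the $\sim$-class of $\lambda$ is crossed by $\O_{\sspc i}$ at time exactly $M(i)$, it serves as $\eta_i$; so suppose, for contradiction, that every leaf crossed by $\O_{\sspc i}$ at time $M(i)$ is $\sim\lambda$. These leaves all separate $f^{M(i)}(x_{\O_i})$ from $p:=f^{M(i)+1}(x_{\O_i})$; each meets the geodesic $(\gamma_{\O_i}^{M(i)})^\geo$ (which joins these two points) exactly once, and, being mutually $\sim\lambda$ with no leaf in between, consecutive ones bound a $1$-punctured plane, so — using local finiteness of $\Gamma_{\O_i}^\geo$ from Theorem \ref{theorem:1B-intro} — they form a finite ordered family; let $\ell\sim\lambda$ be its leftmost member. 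A short case analysis on the planar position of a hypothetical intervening leaf shows that no leaf of $\G$ separates $\ell$ from $p$, and none separates $p$ from the rightmost leaf $\ell'$ of $\G$ crossed by $\O_{\sspc i}$ at a time $>M(i)$. Then $S(\ell,\ell')$ contains no leaf of $\G$; yet $\ell\sim\lambda\not\sim\ell'$, so $S(\ell,\ell')$ is infinitely punctured, and since no orbit of $\OO$ lies entirely in $L(\lambda)$, some orbit must have its whole forward tail inside $S(\ell,\ell')$. Applying the existence statement ``for each orbit and each time there is a leaf crossed at that time'' to that tail produces leaves of $\G$ inside $S(\ell,\ell')$, a contradiction. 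Hence not all time-$M(i)$ leaves are $\sim\lambda$, and $\eta_i$ exists. The point I expect to be genuinely delicate is the existence of the extremal leaf $\ell'$ — equivalently, ruling out that leaves of $\G$ crossed at times $>M(i)$ accumulate onto the non-planar geodesic ends of the foliation leaf through $p$ — together with the two ``no intervening leaf'' statements near $p$; for these I would argue exactly as in the proof of the Pushing Lemma (Lemma \ref{lemma:pushing}), using the genericity of $\F$ for $\OO$ and the technical machinery of \cite{schuback1}.
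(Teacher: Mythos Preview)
Your argument for the existence of $\lambda^*$ is correct and arguably cleaner than the paper's: the sub-lemma (that any $\nu\in\G$ with $\nu\subset L(\lambda)$ and $\nu\not\sim\lambda$ satisfies $L(\nu)\subset L(\lambda')$ for every $\lambda'\sim\lambda$) together with the total order on the $\sim$-class of $\lambda$ immediately gives both the well-definedness of $M(i)$ and the leaf $\lambda^*$, via a finite-minimum over $\{\lambda'_i:i\in I_\text{out}\}$.

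Your argument for $\eta_i$, however, has a genuine gap that is worse than what you flag. The family of leaves $\sim\lambda$ crossed by $\O_{\sspc i}$ at time $M(i)$ is not finite, and in fact has no leftmost member whenever $I_\omega\neq\varnothing$. Indeed, by maximality of $M(i)$, \emph{every} leaf $\sim\lambda$ lying to the left of one such leaf is itself crossed by $\O_{\sspc i}$ at time exactly $M(i)$; and when some $j\in I_\omega$ exists, the $\sim$-class of $\lambda$ contains leaves crossed by $\O_{\sspc j}$ at arbitrarily large times and therefore extends indefinitely to the left. Your appeal to ``consecutive ones bound a $1$-punctured plane'' and to local finiteness of $\Gamma^\geo_{\O_i}$ does not bound the number of geodesic leaves meeting the single line $(\gamma_{\O_i}^{M(i)})^\geo$, so your leftmost leaf $\ell$ simply fails to exist in the typical situation of the theorem, and the contradiction argument cannot even start. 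Neither the Pushing Lemma nor the machinery of \cite{schuback1} supplies extremal \emph{geodesic} leaves; the former compares iterates $f^n_\geo(\lambda)$, not raw leaves of $\G$.

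The paper avoids this by returning to the underlying foliation $\F$ rather than working in $\G$. It sets $D=\bigcap_{j=1}^r C_{\O_j}$, the union of leaves of $\F$ crossed by all of $\O_1,\dots,\O_r$; by \cite{schuback1} this is a trivially foliated plane whose left boundary $\partial_L D$ is a locally finite union of leaves. After perturbing so that $\partial_L D$ is disjoint from $\OO$, for each $i\in I_\text{out}$ some boundary leaf $\phi_i\in\partial_L D\cap C_{\O_i}$ exists, and $\eta_i:=\phi_i^\geo$ is crossed by $\O_{\sspc i}$ but not pushing-equivalent to $\lambda$ (since $\phi_i\notin D$ misses some $\O_j$). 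The leaf $\lambda^*$ is then the geodesic representative of a leaf $\phi^*\in D$ chosen close enough to $\partial_L D$. The conceptual point you are missing is that the leaf space of $\F$ is a one-manifold, so boundary leaves of the interval $D$ genuinely exist; this is precisely what fails for $\G$, which is not a priori closed.
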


    \vspace*{-0.1cm}
\begin{figure}[h!]
    \center
    \hspace*{4cm}\begin{overpic}[width=8.6cm, height=7.5cm, tics=10]{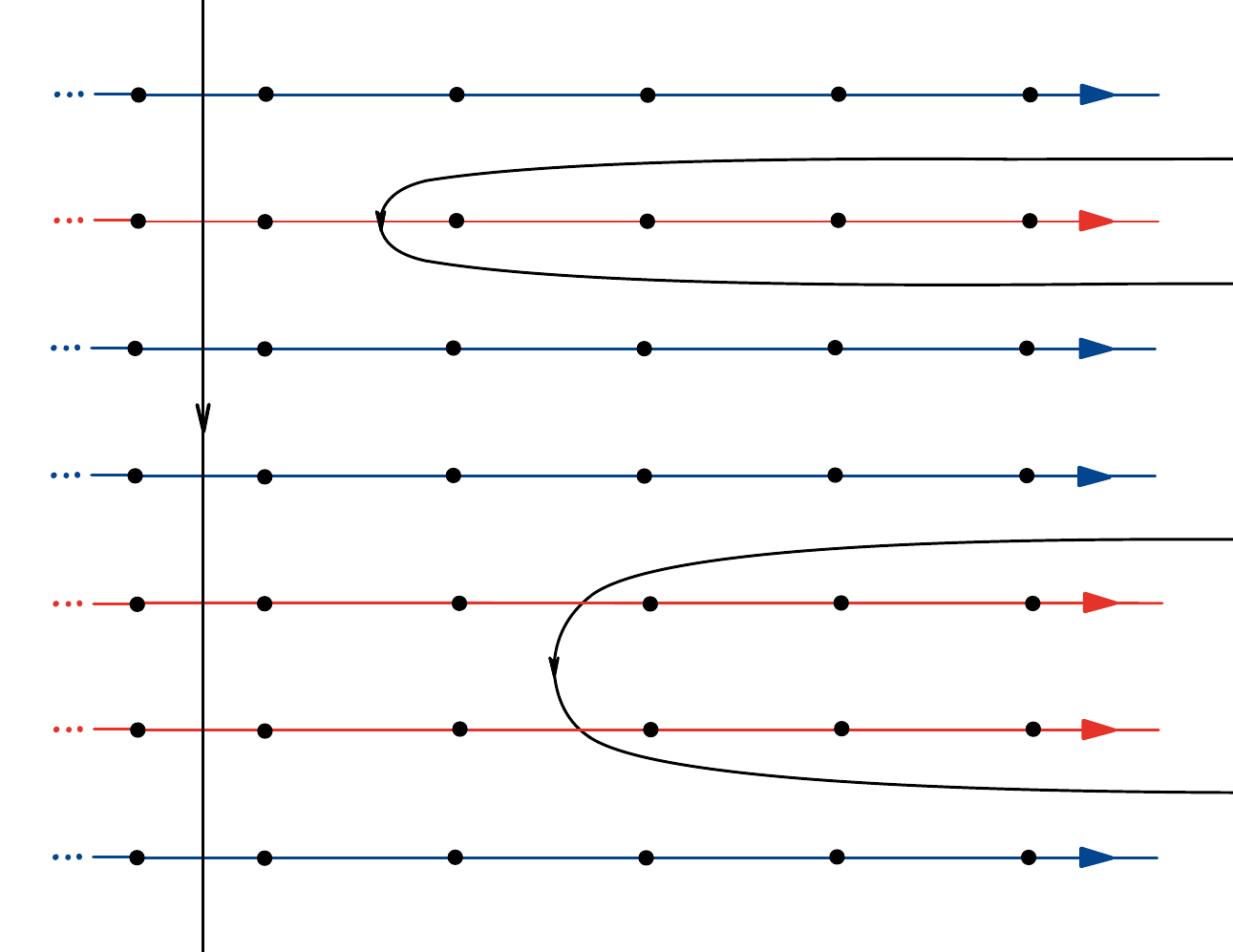}
         \put (10,48) {\large\color{black}$\displaystyle \lambda$}
         \put (103,37) {\large\color{black}$\displaystyle \eta_2 =\eta_3$}
         \put (103,72) {\large\color{black}$\displaystyle \eta_6$}
         \put (-5,19.5) {\large\color{myRED}$\displaystyle \Gamma_2^\geo$}
         \put (-5,8) {\large\color{myBLUE}$\displaystyle \Gamma_1^\geo$}
         \put (-5,31) {\large\color{myRED}$\displaystyle \Gamma_3^\geo$}
         \put (-5,43) {\large\color{myBLUE}$\displaystyle \Gamma_4^\geo$}
         \put (-5,55) {\large\color{myBLUE}$\displaystyle \Gamma_5^\geo$}
         \put (-57,49) {\large\color{myRED}$\displaystyle I_\text{out} = \{2,3,6\}$}
         \put (-57,36) {\large\color{myBLUE}$\displaystyle I_\omega = \{1,4,5,7\}$}
         \put (-5,78) {\large\color{myBLUE}$\displaystyle \Gamma_7^\geo$}
         \put (-5,66) {\large\color{myRED}$\displaystyle \Gamma_6^\geo$}
        
\end{overpic}
\end{figure}

     \vspace*{-0.2cm}
    \begin{proof}[Proof of Claim \ref{claim3}] We need to look at the underlying transverse foliation $\F$ defining $\G$.  We briefly recall some notions from \cite{schuback1}: Each orbit $\O_{\sspc i}$ in $\OO$ is associated to a set $C_{\O_{\sspc i}}\subset \R^2$, that consists in the union of all leaves $\phi \in \F$ such that $\O_{\sspc i} \cap L(\phi)$ and $\O_{\sspc i} \cap R(\phi)$ are non-empty. Each set $C_{\O_{\sspc i}}$ is trivially foliated by $\F$ and homeomorphic to a plane. Note that  there exists an essential leaf $\phi_\lambda \in \F$ lying on the set $C_{\O_{\sspc i}}$ such that $\phi_\lambda^\geo = \lambda$. Hence, the set $$D:= \bigcap_{i=1}^r C_{\O_{\sspc i}} $$
        is non-empty and, thus, $D$ is also homeomorphic to a trivially foliated plane (see \cite{schuback1}). Finally, we recall that the set $\partial_L D \subset \partial D$, which is defined as the union of all leaves $\phi \in \partial D$ satisfying $\phi \subset L(\phi')$ for any $\phi' \in D$, is the union of a locally-finite collection of leaves in $\F$.
        Since $\partial_L D$ is formed by a locally-finite collection of leaves in $\F$, we can apply the argument in Section \ref{sec:foliated_brouwer} to assume, up to perturbing $\F$, that each leaf in $\partial_L D$ is disjoint from $\OO$.
        For each index $i \in I_\text{out}$, there exists a leaf $\phi_i \in \F$ contained in $\partial_L D \cap C_{\O_{\sspc i}}$. It suffices to define $\eta_i := \phi_i^\geo$ to obtain a geodesic leaf $\eta_i \in \P$ that is not pushing-equivalent to $\lambda$ and crossed by the orbit $\O_{\sspc i}$ at time $M(i)$. At last, let $\phi^* \in \F$ be a leaf contained in $D \cap L(\phi_\lambda)$ which is sufficiently close to $\partial_L D$ so to satisfy $L(\phi^*) \cap D \cap \O_{\sspc i} = \varnothing$ for every index $i \in I_\text{out}$.  Then, it suffices to define $\lambda^* := (\phi^*)^\geo$ to obtain a geodesic leaf $\lambda^* \in \P$ that is pushing-equivalent to $\lambda$ and crossed by the orbit $\O_{\sspc i}$ at time $M(i)$, for every index $i \in I_\text{out}$.
    \end{proof}

     \mycomment{-0.1cm}
    For each index $i \in I$, we denote by $\Gamma_i^\geo = \{(\gamma_i^k)^\geo\}$ to be the transverse geodesic trajectory associated with the orbit $\O_{\sspc i}$, where $(\gamma_i^k)^\geo$ satisfies $\alpha((\gamma_i^k)^\geo) =f^k(x_i)$ and $\omega((\gamma_i^k)^\geo) = f^{k+1}(x_i)$ for a fixed basepoint $x_i \in \O_{\sspc i}$. 
    Using both Theorem \ref{theorem:1B-intro} and the Homma-Schoenflies theorem, we can assume, up to conjugacy and a choice of basepoint $x_i \in \O_{\sspc i}\sspc$, that the following hold:

    \vspace*{0.1cm}
    \begin{itemize}[leftmargin=0.9cm]
        \item[$\sbullet$] $\lambda^*=\{1/2\}\times \R$ oriented downwards.
        
        \vspace*{0.1cm}
        \item[$\sbullet$] $\O_{\sspc i}=\{f^n(x_i) = (n,1) \in \R^2 \mid n \in \Z\}$, for all $i \in I$.
        
        \vspace*{0.1cm}
        \item[$\sbullet$] $\overline{\rule{0cm}{3.6mm}\Gamma_i^\geo} =\R\times\{i\}$ oriented positively in the first coordinate.
    \end{itemize}

    \vspace*{0.1cm}
    \noindent Observe that, with the choice of basepoints above, we have that $M(i) =0$ for all $i \in I_\text{out}$.
    Moreover, for each $i \in I_\text{out}$, we know that the geodesic leaf $\eta_i$ is crossed by at most $r-1$ orbits in $\OO$, otherwise $\eta_i$ would be pushing-equivalent to $\lambda$. By the induction hypothesis, this implies that for each $i \in I_\text{out}$, the geodesic leaf $\eta_i$ is forward-wandering. 

    \vspace*{0.2cm}

    The current setting is illustrated in the figure below.

    \vspace*{-0.1cm}
    \begin{figure}[h!]
    \center
     \mycomment{0.2cm}\begin{overpic}[width=8cm, height=7cm, tics=10]{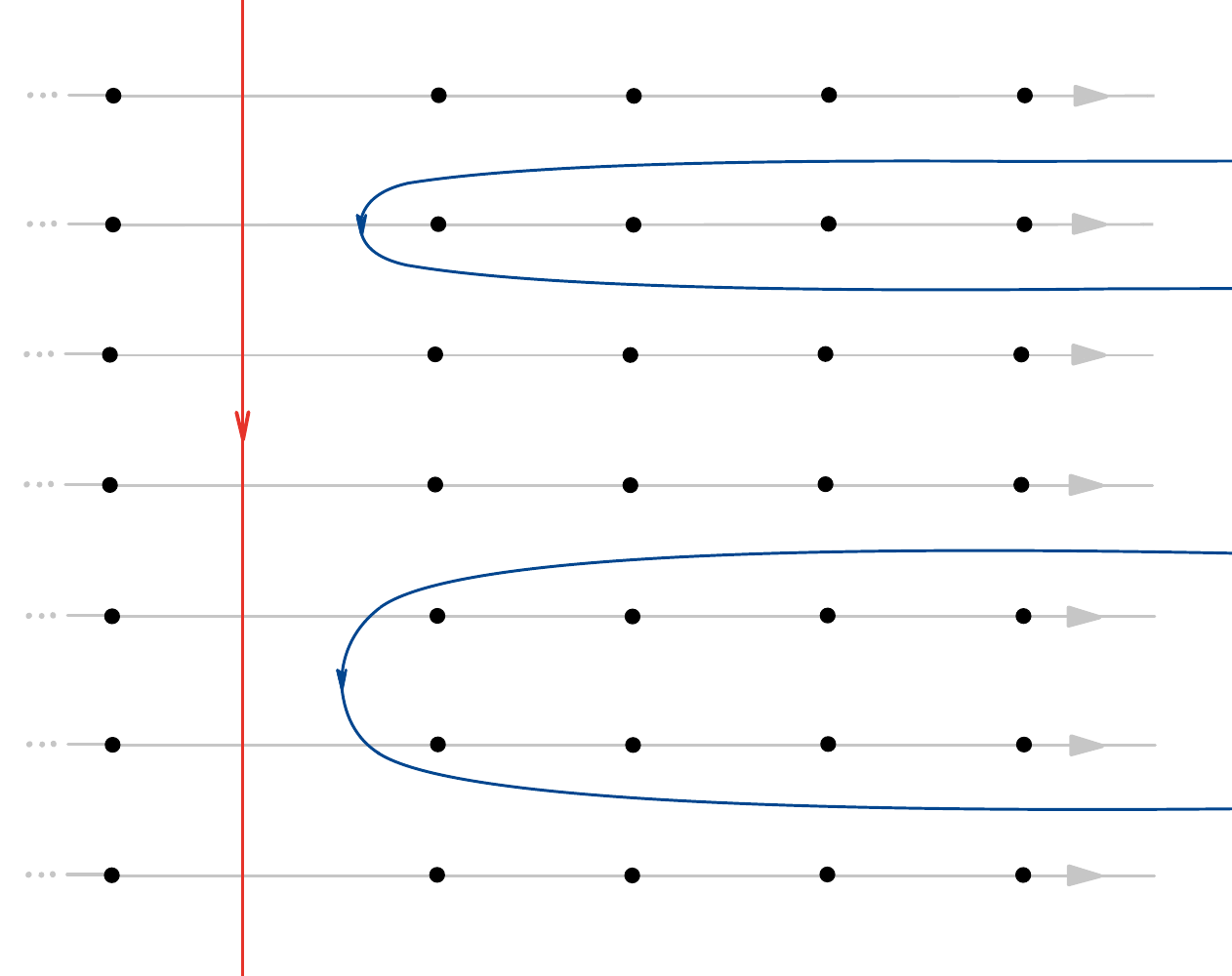}
         \put (11.4,50) {\Large\color{myRED}$\displaystyle \lambda^*$}
         \put (104,77) {\large\color{myBLUE}$\displaystyle \{\eta_i\}_{i \in I_\text{out}}$}
        
\end{overpic}
\end{figure}
    
\vspace*{0.1cm}
    According to Proposition \ref{lemma:simplification_wandering_leaves}, we can assume that for any $i \in I_\text{out}$ the following hold:

    \vspace*{0.1cm}
    \begin{itemize}
        \item The family of geodesic leaves $\G$ is trivial and $f_\geo$-invariant on the left side $L(\eta_i)$,  meaning that, for every geodesic leaf $\eta^\pp \in \G$ lying on the left side $\overline{\rule{0cm}{0.33cm}L(\eta_i)}\sspc$, it holds 
         \mycomment{-0.5cm}
        $$ \eta^\pp \sspc \text{ is pushing-equivalent to }  \sspc \eta_i\spc, \quad  \quad f_\geo(\eta^\pp) \in \G\sspc. \quad \quad $$

        \vspace*{0.1cm}
        \item The transverse geodesic trajectory $\Gamma_i^\geo$ is $f_\geo$-invariant on the left side $L(\eta_i)$, meaning that for any integer $k>0$, it holds that $f_\geo((\gamma_i^k)^\geo) = (\gamma_i^{k+1})^\geo$.
    \end{itemize}

    \vspace{0.2cm}
    \noindent Now that our setting is properly defined, we can finally prove the theorem.
\vspace*{0.3cm}

    Let $N>0$ be a positive integer and, for any index $i\in I$, define $T_N(i)$ to be 
     \mycomment{-0.13cm}
    $$ T_N(i) := \max \{\spc k \geq 0 \spc\mid\spc i(\sspc f^N_\geo (\lambda^*)\sspc, \sspc (\gamma_i^k)^\geo\sspc) \neq 0\spc\}.$$

     \mycomment{-0.25cm}
    \noindent
    Due to the Bounded Intersection Property (Lemma \ref{lemma:limited_winding}), we have that $T_N(i)$ is well-defined for each index $i \in I$. Observe that, since $f^N(x_i) \in R(f^N_\geo(\lambda^*))$ and $f^{N+1}(x_i) \in L(f^N_\geo(\lambda^*))$,  we have that $i(f^N_\geo(\lambda^*),(\gamma_i^M)^\geo)$ must be odd for all $i \in I$. In particular, it follows that
         \mycomment{-0.123cm}
    $$ T_N(i) \geq N\sspc, \quad \forall i \in I.$$

     \mycomment{-0.25cm}
    \noindent
    Next, remark that for all $i \in I_\text{out}$, it holds that $f^N_\geo(\lambda^*) \subset R(f^N_\geo(\eta_i))$. Recall that $f^N_\geo(\eta_i)\in \G$, due to the simplification of $\G$ through the forward-wandering leaf $\eta_i$. Consequently, it follows that
    $i(f^N_\geo(\lambda^*),(\gamma_i^k)^\geo) = 0$ for all $i \in I_\text{out}$ and $k \leq N+1$. This allows us to conclude that
     \mycomment{-0.13cm}
    $$ T_N(i) = N, \quad \forall i \in I_\text{out}.$$

     \mycomment{-0.25cm}
    Let $\lambda_N \in \G$ be the geodesic leaf pushing-equivalent to $\lambda^*$ such that, for all indices $i \in I_\omega$, the orbit $\O_{\sspc i}$ crosses the geodesic leaf $\lambda_N$ at time $T_N^+(i)$ for some integer $T_N^+(i)\geq T_N(i)$.
    We remark that the existence of such a geodesic leaf $\lambda_N$ follows from the definition of $I_\omega$.  In order to unify the notation, we will denote $T_N^+(i) = T_N(i)$ for all $i \in I_\text{out}$.

    \newpage

    \begin{figure}[h!]
    \center
     \mycomment{0.4cm}\begin{overpic}[width=11.7cm, height=8.5cm, tics=10]{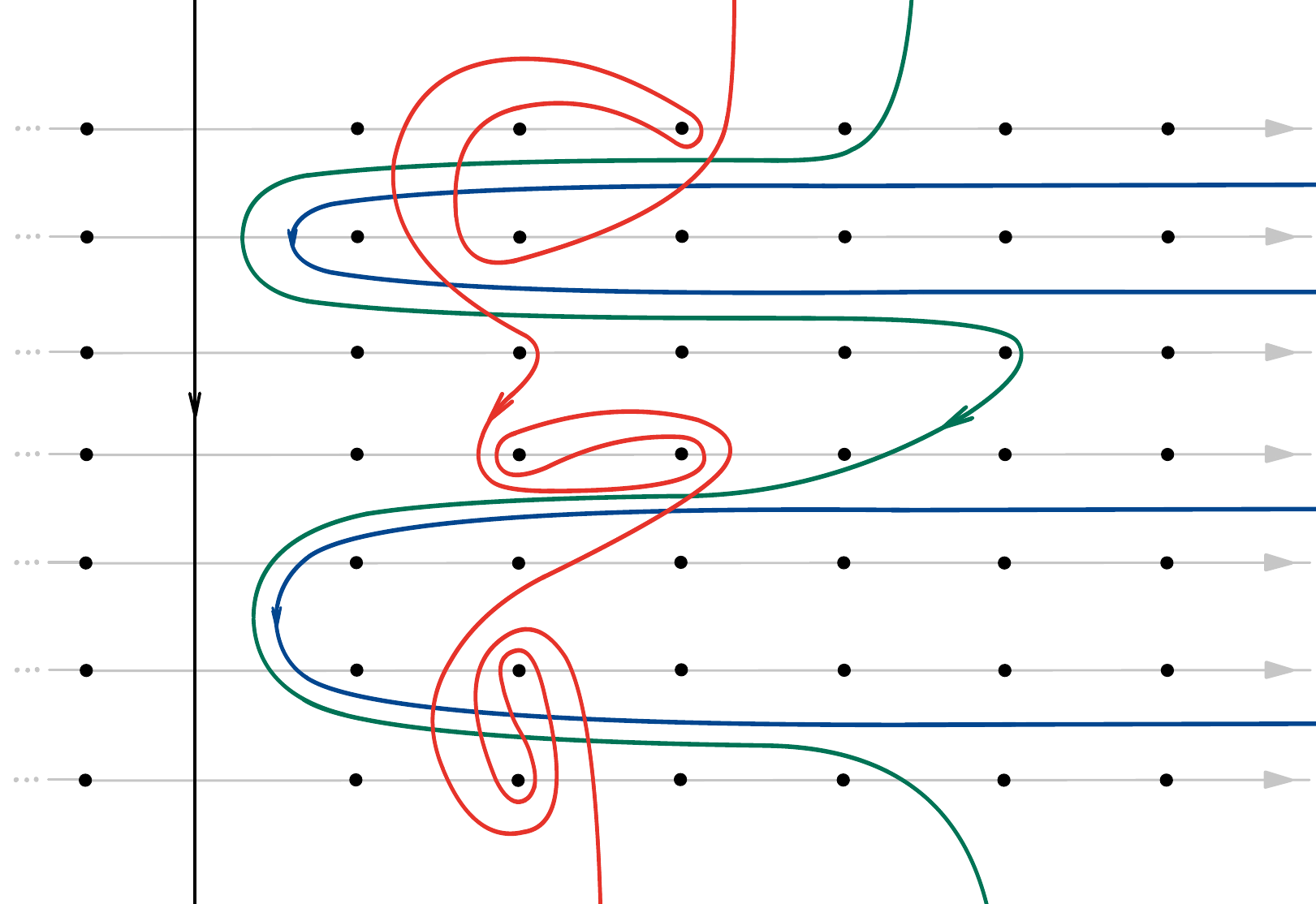}
        \put (9,38.5) {\Large\color{black}$\displaystyle \lambda^*$}
        \put (42,71) {\large\color{myRED}$\displaystyle f^N_\geo(\lambda^*)$}
        \put (63,71) {\large\color{myGREEN}$\displaystyle \lambda_N$}
         \put (104,57) {\large\color{myBLUE}$\displaystyle \{\eta_i\}_{i \in I_\text{out}}$}
\end{overpic}
\end{figure}

    Let us define, for each index $i \in I$, the line segment $v_{N,\sspc i}:[0,1] \longrightarrow \R^2$ that connects the point $(\sspc T_N^+(i)+ \frac{1}{2}\sspc, \sspc i\sspc)\in \R^2$ to the point $(\sspc T_N^+(i+1) + \frac{1}{2}\sspc, \sspc i+1\sspc)\in\R^2$.  Next, we define two half-verticals $v_{N,\sspc \alpha}:(-\infty,0\sspc] \longrightarrow \R^2$ and $v_{N,\sspc \omega}:[\sspc 0,\infty) \longrightarrow \R^2$ determined by the formulas
     \mycomment{0.1cm}
    $$ v_{N,\sspc \alpha}(t) = \left(\sspc T_N^+(r)+\frac{1}{2}\sspc, \sspc r - t\sspc\right) \quad\text{ and } \quad v_{N,\sspc \omega}(t) = \left(\sspc T_N^+(1)+\frac{1}{2}\sspc, \sspc 1 - t\sspc\right).$$

    \noindent Finally, let $V_N:\R \longrightarrow \R^2$ be the piecewise linear topological line given by the concatenation
     \mycomment{-0.5cm}
    $$V_N =  v_{N,\sspc \alpha} \sspc \cdot \sspc v_{N,\sspc r} \sspc \cdot \sspc v_{N,\sspc r-1} \sspc \cdot \sspc ... \sspc \cdot \sspc v_{N,\sspc 1} \sspc \cdot \sspc v_{N,\sspc \omega}.$$

     \mycomment{-0.23cm}
    \noindent
    Note that, since $T_N^+(i)\geq N$ for all $i \in I$, the family of vertical lines $\{V_N\}_{i \in I}$ is locally-finite.
    
 \begin{figure}[h!]
    \center
     \mycomment{0.4cm}\begin{overpic}[width=12cm, height=8.6cm, tics=10]{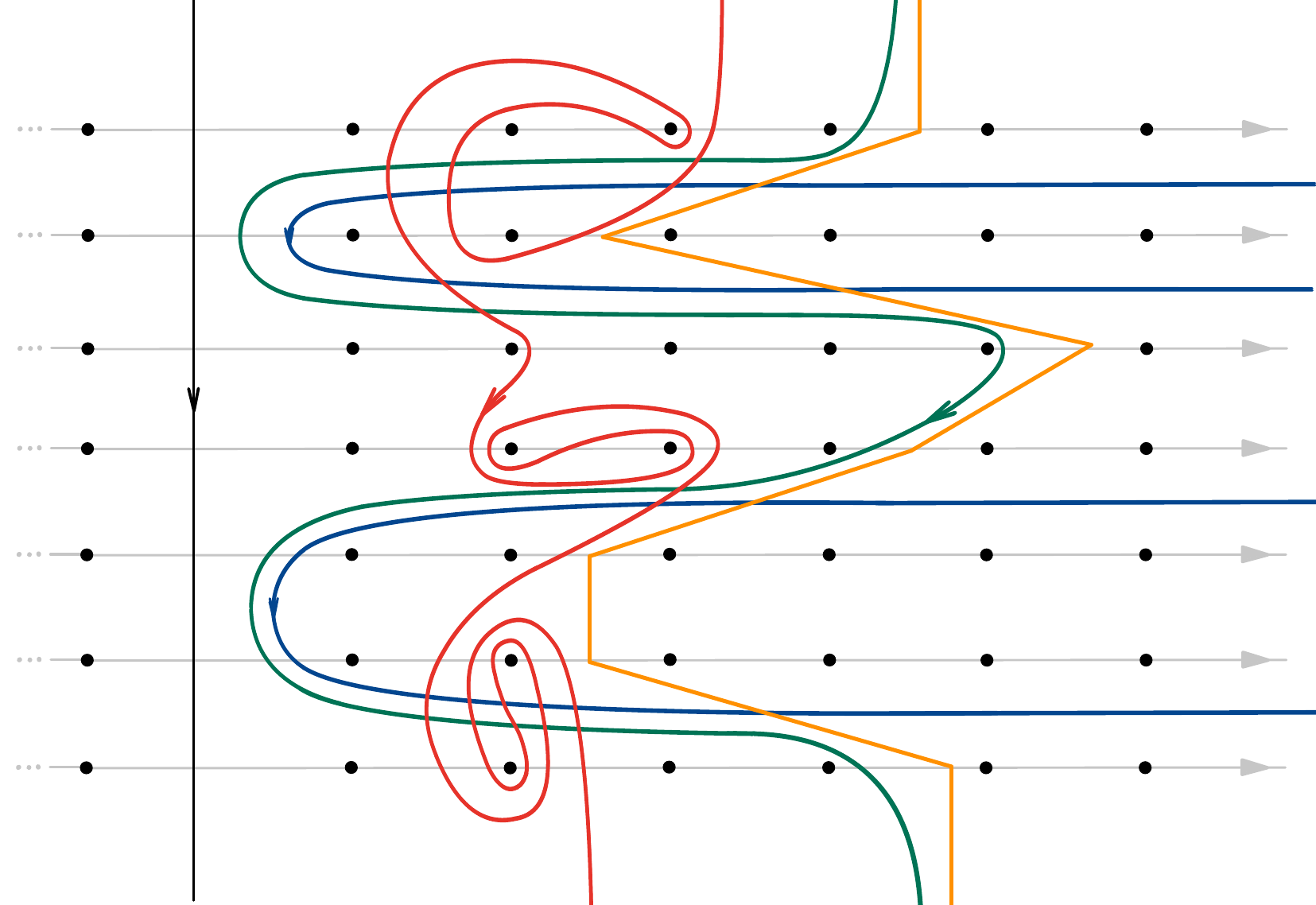}
         \put (9,38.5) {\Large\color{black}$\displaystyle \lambda^*$}
        \put (42,70) {\large\color{myRED}$\displaystyle f^N_\geo(\lambda^*)$}
        \put (62,69) {\large\color{myGREEN}$\displaystyle \lambda_N$}
        \put (72,69) {\large\color{myDARKYELLOW}$\displaystyle V_N$}
         \put (104,56.5) {\large\color{myBLUE}$\displaystyle \{\eta_i\}_{i \in I_\text{out}}$} 
\end{overpic}
\end{figure}
    
 \mycomment{-0.2cm}
    Let $V_N^\geo$ be the geodesic representative of the line $V_N$. It is worth noting that $V_N^\geo$ do not necessarily belong to $\G$, and it will only be used as a tool to prove the theorem. Note that the family $\{V_N^\geo\}_{N \geq 0}$ must be locally-finite, since $\{V_N\}_{i \in I}$ is locally-finite (see Section \ref{sec:hyperbolic_geometry_R2Z}). 
    
    Using the definition of $V_N$ and the fundamental properties of geodesics, we can deduce that each geodesic $V_N^\geo$ satisfies the following basic properties:

    \vspace*{0.1cm}
    \begin{itemize}[leftmargin=1.4cm]
        \item[(0)] $V_N^\geo \cap \lambda^* = \varnothing$.
        
        \vspace*{0.1cm}
        \item[(1)] $V_N^\geo \cap (\gamma_i^k)^\geo \neq \varnothing \iff k = T_N^+(i)$, for all $i \in I$.
        
        \vspace*{0.1cm}
        \item[(2)] $i(\sspc V_N^\geo\sspc, \sspc (\gamma_i^{ T_N^+(i)})^\geo) = 1$, for all $i \in I$.
        
        \vspace*{0.1cm}
        \item[(3)] $L(V_N^\geo) \cap R(\lambda_N) \cap \OO = \varnothing$.
        
        \vspace*{0.1cm}
        \item[(4)] $p \in R(V_N^\geo) \cap L(\lambda^*)\cap \OO \iff p=f^n(x_i)$ for some $i \in I$ and $0<n \leq T_N^+(i)$.
        
        \vspace*{0.1cm}
        \item[(5)] $p \in R(V_N^\geo) \cap L(\lambda_N)\cap \OO \iff p=f^n(x_i)$ for some $i \in I_\text{out}$ and $0<n \leq T_N^+(i)$.
        
        \vspace*{0.1cm}
        \item[(6)] $p \in L(V_N^\geo) \in \OO \iff p=f^n(x_i)$ for some $i \in I$ and $ n > T_N^+(i)$.
    \end{itemize}
    
    \vspace*{0.1cm}
    \noindent Now, we use properties (0)-(6) to prove that the properties (i)-(iii) of Claim \ref{claim4} hold as well.
    
    \vspace*{0.2cm}
    \begin{claim}\label{claim4}
        The geodesic $V_N^\geo$ satisfies the following extra properties:
        \begin{itemize}[leftmargin=1.4cm]
            \item[\textup{\textbf{(i)}}] $L(V_N^\geo) \subset L(\lambda_N)$.
            \item[\textup{\textbf{(ii)}}] $L(V_N^\geo) \subset L(f^N_\geo(\lambda^*))$.
            \item[\textup{\textbf{(iii)}}] $L(\lambda_N) \cap L(f^N_\geo(\lambda^*)) \cap R(V_N^\geo) \cap \OO= \varnothing$.
        \end{itemize}
    \end{claim}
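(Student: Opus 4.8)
The plan is to read (i)--(iii) directly off the partitions of $\OO$ induced by the geodesic planar lines $V_N^\geo$, $\lambda_N$ and $f^N_\geo(\lambda^*)$, using the data recorded in (0)--(6) together with the minimal intersection property of geodesic representatives. First I would record these partitions. As already observed, $f^N_\geo(\lambda^*)$ is crossed by each orbit $\O_{\sspc i}$ at time $N$, so that
$$L(f^N_\geo(\lambda^*))\cap\OO=\{f^n(x_i)\mid i\in I,\ n>N\}.$$
Property (6) says $L(V_N^\geo)\cap\OO=\{f^n(x_i)\mid i\in I,\ n>T_N^+(i)\}$, and since $\lambda_N$ is crossed by $\O_{\sspc i}$ at time $T_N^+(i)$ for $i\in I_\omega$ and at time $0$ for $i\in I_\text{out}$ (where $T_N^+(i)=N\geq 1$), one gets
$$L(\lambda_N)\cap\OO=\{f^n(x_i)\mid i\in I_\omega,\ n>T_N^+(i)\}\cup\{f^n(x_i)\mid i\in I_\text{out},\ n>0\}.$$

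I would prove (iii) first, as it is purely combinatorial. By property (5) we have $R(V_N^\geo)\cap L(\lambda_N)\cap\OO=\{f^n(x_i)\mid i\in I_\text{out},\ 0<n\le T_N^+(i)\}$, and since $T_N^+(i)=N$ for $i\in I_\text{out}$ this set consists of points $f^n(x_i)$ with $n\le N$; as $L(f^N_\geo(\lambda^*))\cap\OO$ contains only points $f^n(x_i)$ with $n>N$, the intersection $L(\lambda_N)\cap L(f^N_\geo(\lambda^*))\cap R(V_N^\geo)\cap\OO$ is empty, which is (iii).

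For (i) and (ii) I would invoke the following fact about essential planar lines on $\R^2\setminus\OO$: if $\ell$ and $\ell^\pp$ have nested partitions, say $L(\ell)\cap\OO\subseteq L(\ell^\pp)\cap\OO$, then they admit disjoint representatives (normalize $\ell^\pp$ to a vertical line and draw $\ell$ strictly on its left side, separating the prescribed punctures), so by the minimal intersection property $\ell^\geo$ and $(\ell^\pp)^\geo$ are disjoint; moreover a disjoint pair of essential geodesic planar lines with $L(\ell^\geo)\cap\OO\subseteq L((\ell^\pp)^\geo)\cap\OO$ necessarily satisfies $L(\ell^\geo)\subseteq L((\ell^\pp)^\geo)$, since the reverse nesting would force $L(\ell^\geo)\cap\OO=\varnothing$, contradicting essentiality. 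For (i), property (3) states exactly that $L(V_N^\geo)\cap\OO\subseteq L(\lambda_N)\cap\OO$ (as $\lambda_N$ carries no puncture), so the fact yields $L(V_N^\geo)\subseteq L(\lambda_N)$; for (ii), comparing the first two displayed partitions and using $T_N^+(i)\ge N$ gives $L(V_N^\geo)\cap\OO\subseteq L(f^N_\geo(\lambda^*))\cap\OO$, whence $L(V_N^\geo)\subseteq L(f^N_\geo(\lambda^*))$. The main point that needs care is precisely this auxiliary fact: that nested $\OO$-partitions of essential planar lines force the geodesic representatives to be disjoint and correctly nested (and that equal partitions force equal geodesics). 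This rests on the change-of-coordinates principle, Epstein's theorem (Theorem~\ref{thm:epstein}) and the minimal intersection property; one also checks at the outset that $V_N^\geo$, $\lambda_N$ and $f^N_\geo(\lambda^*)$ are essential planar lines, being geodesic representatives of topological lines on $\R^2$ each crossed by at least one orbit, which is what pins down the correct side and excludes the reverse nesting. In the degenerate case $I_\text{out}=\varnothing$ one simply has $V_N^\geo=\lambda_N$ and all three statements are immediate.
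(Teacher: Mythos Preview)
Your argument for (iii) is correct and coincides with the paper's.

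Your proofs of (i) and (ii), however, rest on an ``auxiliary fact'' that is false: the isotopy class of an essential planar line on $\R^2\setminus\OO$ is \emph{not} determined by the partition of $\OO$ into left and right sides. For a counterexample, take any essential planar line $\ell$ and an essential simple closed curve $c$ on $\R^2\setminus\OO$ with $i(\ell,c)>0$ (for instance $c$ encircling one puncture from each side of $\ell$); then $T_c(\ell)$ is a planar line with the same partition of $\OO$ (since $T_c$ fixes $\OO$ pointwise), yet $T_c(\ell)$ is not isotopic to $\ell$, and their geodesic representatives must intersect --- were they disjoint, the strip between them would be puncture-free, forcing them to be isotopic. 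Your parenthetical justification ``normalize $\ell'$ to a vertical line and draw $\ell$ strictly on its left side'' silently assumes that any planar line with the prescribed partition lies in the isotopy class of $\ell$, which is exactly what fails.

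The paper gets around this by exploiting data finer than the partition of $\OO$: properties (1) and (2), which record that $V_N^\geo$ meets each geodesic trajectory $\Gamma_i^\geo$ exactly once and at the very arc $(\gamma_i^{T_N^+(i)})^\geo$, together with the definition of $T_N(i)$, which bounds where $f_\geo^N(\lambda^*)$ can meet $\Gamma_i^\geo$. These trajectory-intersection constraints are what allow the paper, in a case-by-case analysis (disjoint with wrong nesting, bigon, half-line excursion), to show that any putative bigon or half-strip between $V_N^\geo$ and $\lambda_N$ (resp.\ $f_\geo^N(\lambda^*)$) on the wrong side is puncture-free, contradicting minimal position. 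The partition of $\OO$ alone does not carry enough information to run such an argument.
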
 

     \mycomment{-0.4cm}
    \begin{proof}[Proof of Claim \ref{claim4}] 
        \textbf{Proof of item (i):}
        Assume by contradiction that $L(V_N^\geo)\not\subset L(\lambda_N)$.  For that to occur, there should exist at least one point $p \in V_N^\geo$ lying on the right side $R(\lambda_N)$. We have three cases to consider:
        \begin{itemize}[leftmargin=1.4cm]
            \item[(a)] The whole line $V_N^\geo$ lies on the right side $R(\lambda_N)$.
            \item[(b)] The point $p$ lies on a sub-arc of $V_N^\geo$ that is contained in $R(\lambda_N)$, except for its two endpoints, which lie on $\lambda_N$.
            \item[(c)] The point $p$ lies on a sub-halfline of $V_N^\geo$ that is contained in $R(\lambda_N)$, except for its endpoint, which lies on $\lambda_N$.
        \end{itemize}

        \begin{figure}[h!]
    \center
     \mycomment{0.3cm}\begin{overpic}[width=3.6cm, height=3.8cm, tics=10]{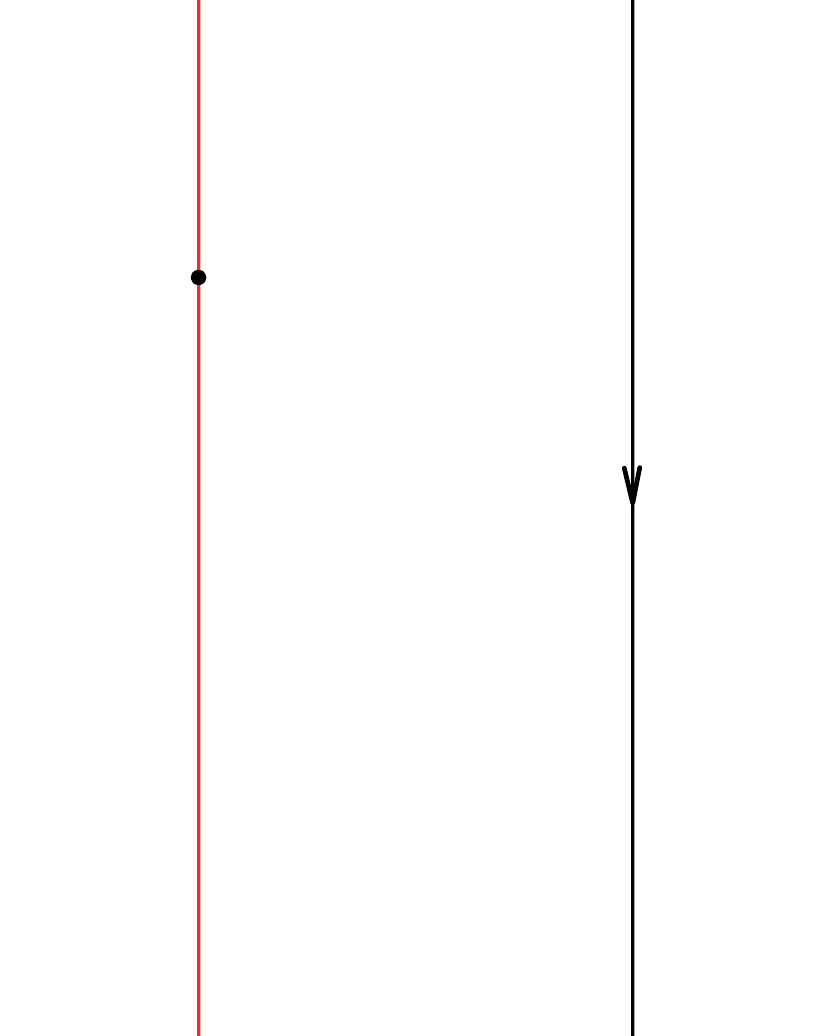}
         \put (-5,90) {\large\color{black}$\displaystyle $(a)}
         \put (26,10) {\large\color{myRED}$\displaystyle V_N^\geo$}
         \put (77,10) {\large\color{black}$\displaystyle \lambda_N$}
         \put (27,72) {\large\color{black}$\displaystyle p$}
\end{overpic}\hspace*{1cm}
\begin{overpic}[width=3.6cm, height=3.8cm, tics=10]{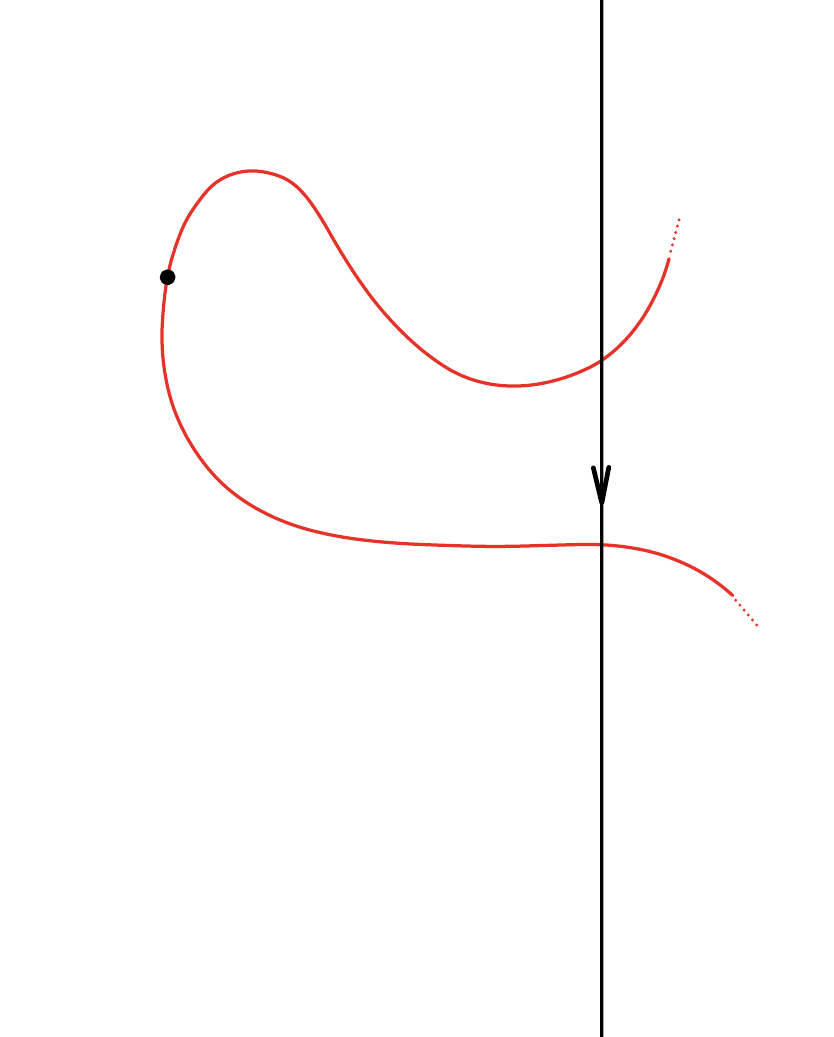}
         \put (-5,90) {\large\color{black}$\displaystyle $(b)}
         \put (26,71) {\large\color{black}$\displaystyle p$}
         \put (26,37) {\large\color{myRED}$\displaystyle V_N^\geo$}
         \put (72,10) {\large\color{black}$\displaystyle \lambda_N$}
\end{overpic}\hspace*{1.5cm}
\begin{overpic}[width=3.6cm, height=3.8cm, tics=10]{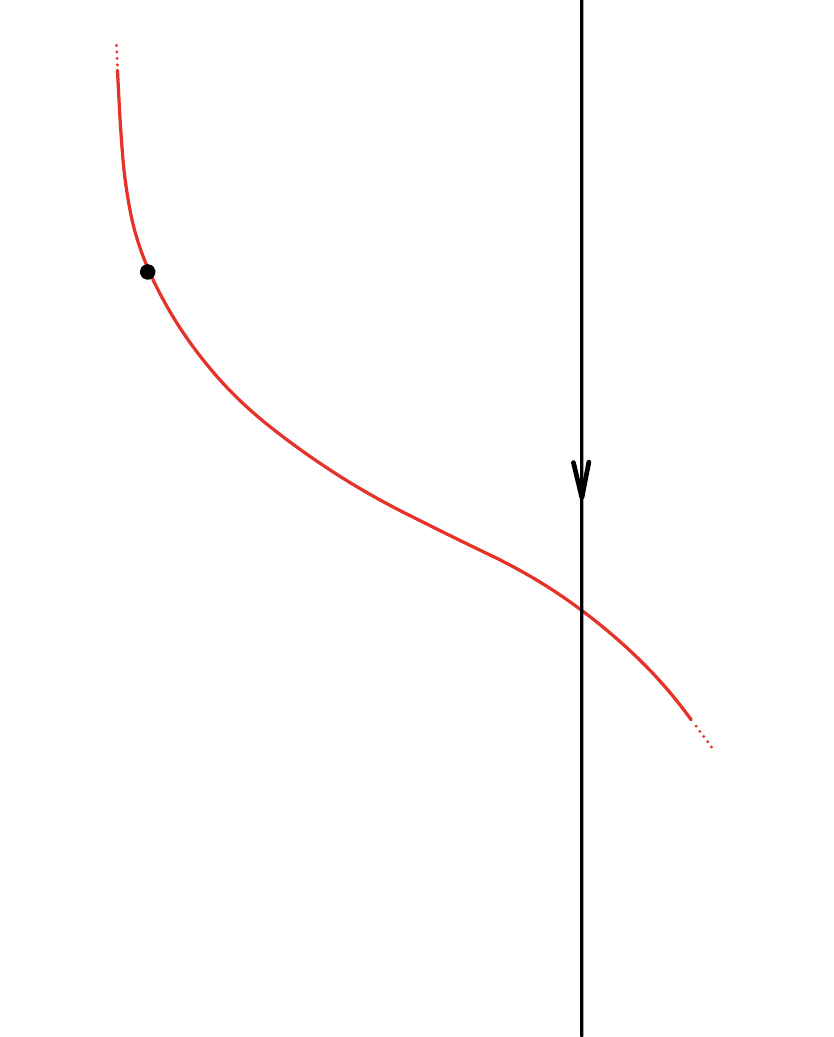}
         \put (-5,90) {\large\color{black}$\displaystyle $(c)}
         \put (24,73.5) {\large\color{black}$\displaystyle p$}
         \put (26,40) {\large\color{myRED}$\displaystyle V_N^\geo$}
         \put (70,10) {\large\color{black}$\displaystyle \lambda_N$}
\end{overpic}
\end{figure}

         \mycomment{-0.1cm}
        We begin by analyzing case (a). 
        From the definition of $T_N^+(i)$ and property (6), we know that the points in $\{f^n(x_i)\}_{i \in I, n>T_N^+(i)}$ lie simultaneously on $L(\lambda_N)$ and $L(V_N^\geo)$. 
        This means that, in case (a), the only acceptable orientation for the line $V_N^\geo$ to have while being in $R(\lambda_N)$  is the one implying $L(\lambda_N) \subset L(V_N^\geo)$. Using Homma-Schoenflies theorem, we can see both geodesic lines $V_N^\geo$ and $\lambda_N$ as disjoint vertical lines.
        Due to property (3), we can perform a horizontal isotopy on $\R^2 \setminus \OO$ that moves $V_N^\geo$ into $\lambda_N$. In other words, $V_N^\geo$ and $\lambda_N$ are geodesic lines in the same isotopy class, which implies $V_N^\geo = \lambda_N^\geo$.  This contradicts (a).
    
            Now we analyze case (b). In this case, $V_N^\geo$ and $\lambda_N$ form a bigon on the right side $R(\lambda_N)$. This bigon is a closed disk $B\subset \R^2$ with its interior contained in $R(\lambda_N)$, and its boundary composed of a sub-arc of $V_N^\geo$ and a sub-arc of $\lambda_N$. By properties (1) and (2), we see that $V_N^\geo$ intersects each $\Gamma_i^\geo$ exactly once and this intersection occurs at $(\gamma_i^{ T_N^+(i)})^\geo \in \Gamma_i^\geo$. Revisiting the definition of the geodesic $\lambda_N$, we see that $\lambda_N$ intersects each $\Gamma_i^\geo$ exactly once and, when $i \in I_\omega$,  this intersection occurs exactly at the line $(\gamma_i^{ T_N^+(i)})^\geo \in \Gamma_i^\geo$. This implies that, for every $i \in I_\omega$, if $\textup{int}(B) \cap \Gamma_i^\geo \neq\varnothing$, then $B \cap \Gamma_i^\geo$ must be a sub-arc of $(\gamma_i^{ T_N^+(i)})^\geo$. Consequently, we get that
             \mycomment{-0.13cm}
            $$ \textup{int}(B) \cap \O_i = \varnothing, \quad \forall i \in I_\omega.$$
            
             \mycomment{-0.2cm}
            From property (0), we know that $V_N^\geo$ is disjoint from $\lambda^*$. This implies that $\textup{int}(B)$ is not only contained in $R(\lambda_N)$, but in fact $\textup{int}(B) \subset R(\lambda_N) \cap L(\lambda^*)$. From the definition of $\lambda_N$, we know that $R(\lambda_N) \cap L(\lambda^*) \cap \OO \subset \bigcup_{i \in I_\omega} \O_i$. This allows us to conclude that
            $$ \textup{int}(B) \cap \OO = \bigcup_{i \in I_\omega} (\textup{int}(B) \cap \O_i) = \varnothing.$$

            Since $V_N^\geo$ and $\lambda_N$ are geodesic lines, the bigon $B$ needs to contain a point $q \in \OO$ in its interior, otherwise we would be able to perform an isotopy on $\R^2 \setminus \OO$ that decreases the intersection number between $V_N^\geo$ and $\lambda_N$, contradicting the fact that geodesic lines minimize intersections in their isotopy class. However, this is not what happens, since we proved above that $\textup{int}(B) \cap \OO = \varnothing$. This contradicts (b).

        Finally, we analyze case (c). In this case, let $\ell$ be a sub-halfine of $V_N^\geo$ contained in $R(\lambda_N)$, except for its endpoint that lies on $\lambda_N$. Since $V_N^\geo$ is disjoint from $\lambda^*$, we can see that the set $L(\lambda^*) \cap R(\lambda_N)\setminus \ell$ has two connected components, one bounded by $\lambda^*$, $\lambda_N$ and $\ell$, and the other only bounded by $\lambda_N$ and $\ell$. Let $B\subset \R^2$ denote the second component. We can use a similar argument to the one in case (b) to show that $\textup{int}(B) \cap \OO$ is empty. And this property leads to a contradiction of (c) by the same reasoning as in case (b). This proves item (i).

         \mycomment{0.2cm}

        \textbf{Proof of item (ii):}
        Similarly, we assume by contradiction that $L(V_N^\geo)\not\subset L(f^N_\geo(\lambda^*))$.  For that to occur, there should exist at least one point $p \in f^N_\geo(\lambda^*)$ lying on the side $L(V_N^\geo)$. 
        Again, we have three cases to consider:
        \begin{itemize}[leftmargin=1.4cm]
            \item[(d)] The whole line $f^N_\geo(\lambda^*)$ lies on the left side $L(V_N^\geo)$.
            \item[(e)] The point $p$ lies on a sub-arc of $f^N_\geo(\lambda^*)$ that is contained in $L(V_N^\geo)$, except for its two endpoints, which lie on $V_N^\geo$.
            \item[(f)] The point $p$ lies on a sub-halfline of $f^N_\geo(\lambda^*)$ that is contained in $L(V_N^\geo)$, except for its endpoint, which lies on $V_N^\geo$.
        \end{itemize}

        \begin{figure}[h!]
    \center
     \mycomment{0.3cm}\begin{overpic}[width=3.6cm, height=3.8cm, tics=10]{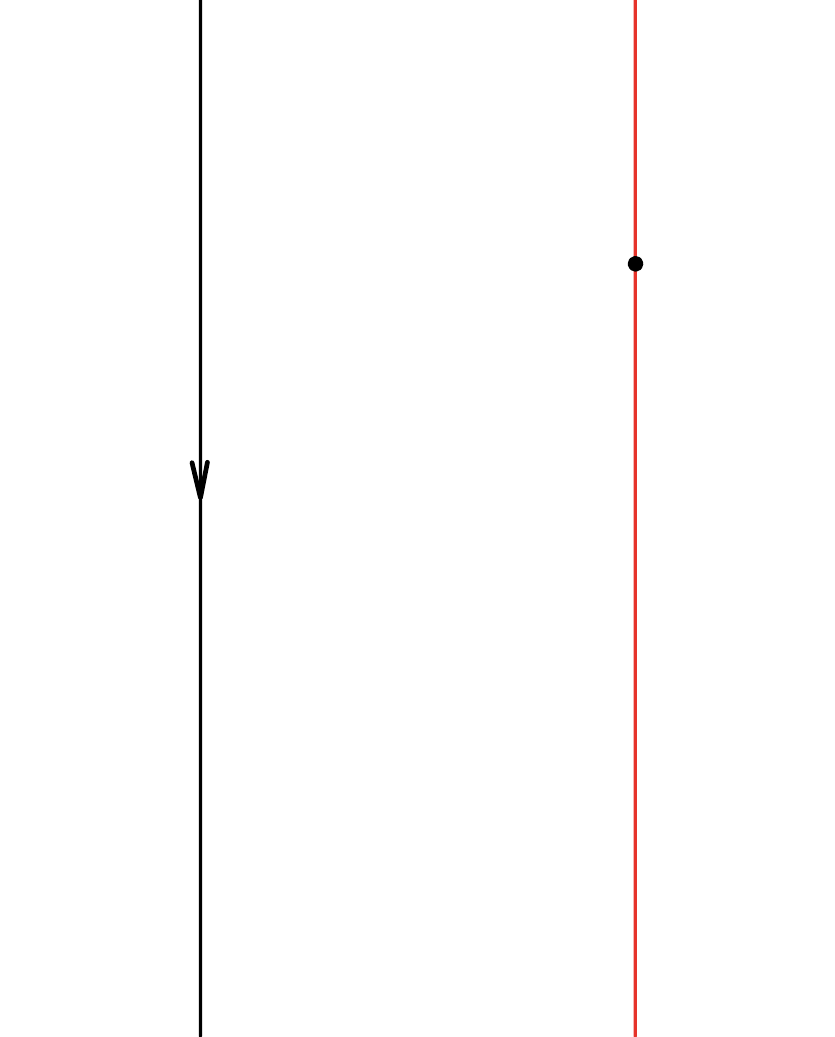}
         \put (-5,90) {\large\color{black}$\displaystyle $(d)}
         \put (76,10) {\large\color{myRED}$\displaystyle f^N_\geo(\lambda^*)$}
         \put (26.5,5) {\large\color{black}$\displaystyle V_N^\geo$}
         \put (76,72) {\large\color{black}$\displaystyle p$}
\end{overpic}\hspace*{1.5cm}
\begin{overpic}[width=3.6cm, height=3.8cm, tics=10]{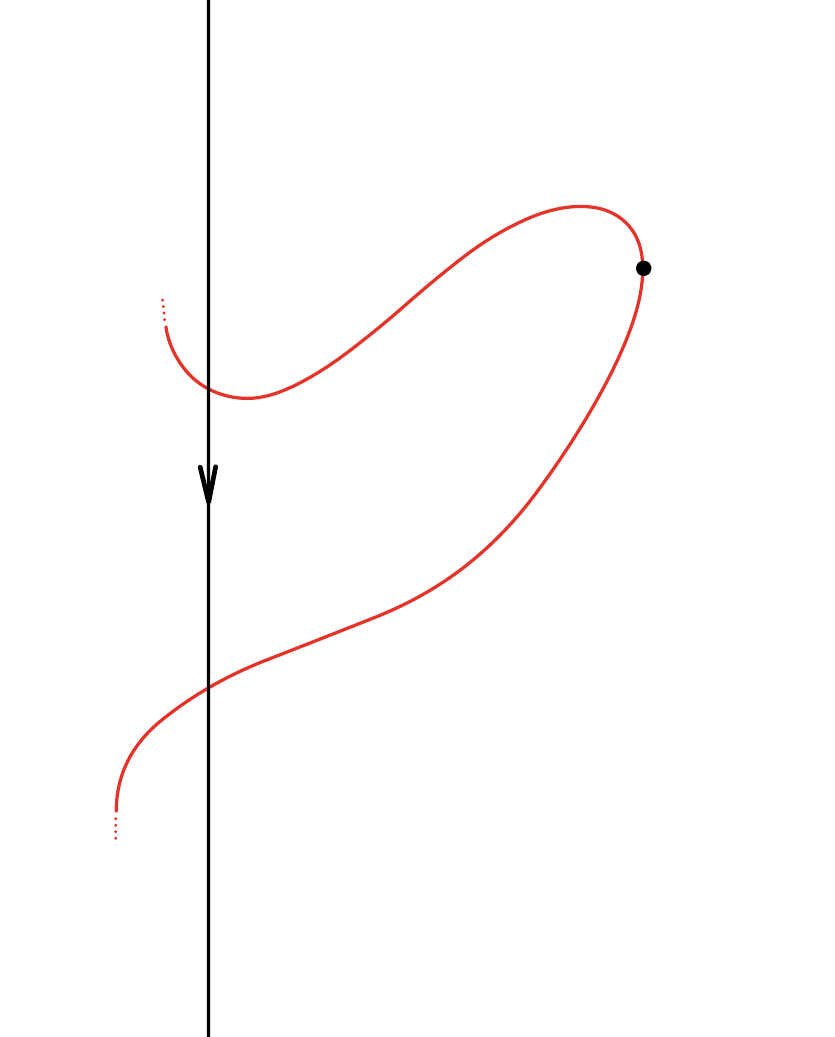}
         \put (-5,90) {\large\color{black}$\displaystyle $(e)}
         \put (78,71) {\large\color{black}$\displaystyle p$}
         \put (50,32) {\large\color{myRED}$\displaystyle f^N_\geo(\lambda^*)$}
         \put (27,5) {\large\color{black}$\displaystyle V_N^\geo$}
\end{overpic}\hspace*{1cm}
\begin{overpic}[width=3.6cm, height=3.8cm, tics=10]{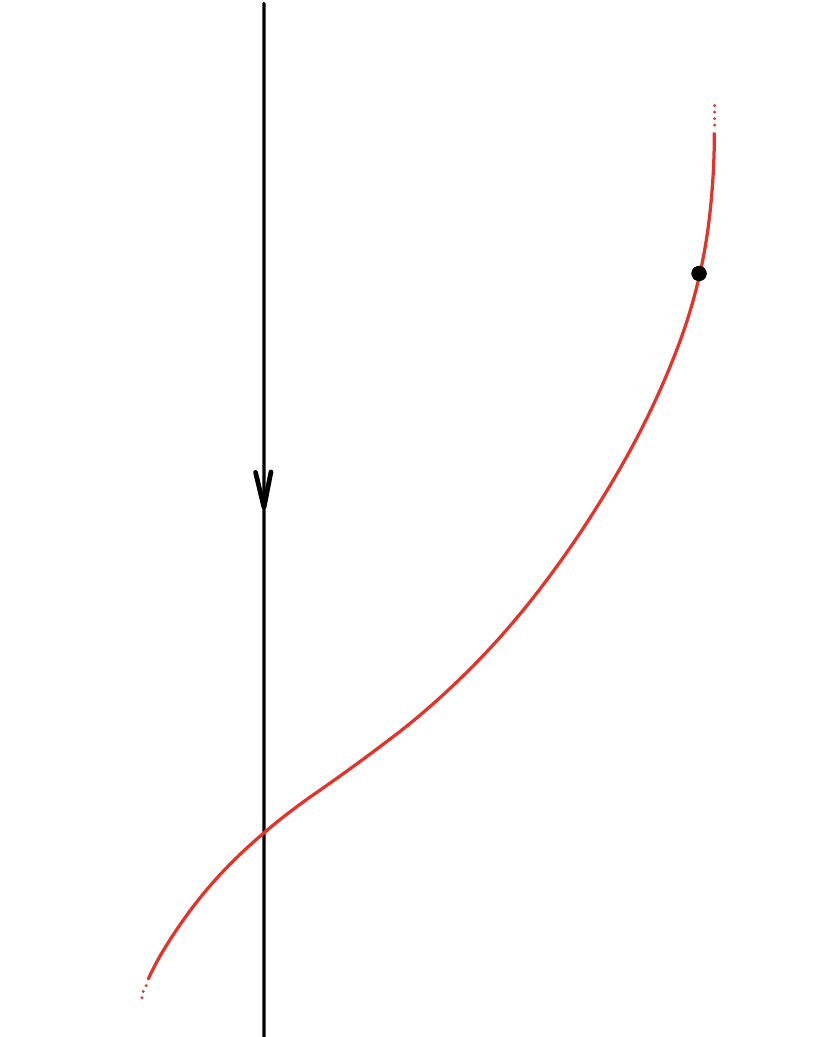}
         \put (-5,90) {\large\color{black}$\displaystyle $(f)}
         \put (83,72) {\large\color{black}$\displaystyle p$}
         \put (55,27) {\large\color{myRED}$\displaystyle f^N_\geo(\lambda^*)$}
         \put (33,5) {\large\color{black}$\displaystyle V_N^\geo$}
\end{overpic}
\end{figure}

        Case (d) is very similar to the case (a). The only difference is that, in case (d), the only acceptable orientation for the line $f^N_\geo(\lambda^*)$ to have while being in $L(V_N^\geo)$ is the one implying $L(f^N_\geo(\lambda^*)) \subset L(V_N^\geo)$. Apart from that, we can use the same argument as in case (a)  to deduce that $f^N_\geo(\lambda^*)$ and $V_N^\geo$ are geodesic lines in the same isotopy class. From unicity of geodesic representatives, we conclude that $f^N_\geo(\lambda^*) = V_N^\geo$. This contradicts (d).

         \mycomment{0cm}

        Case (e) resembles case (b) but it relies on distinct properties to attain a contradiction. 
        In case (e), the geodesics $f^N_\geo(\lambda^*)$ and $V_N^\geo$ form a bigon on the left side $L(V_N^\geo)$. This bigon is a closed disk $B\subset \R^2$ with its interior contained in $L(V_N^\geo)$, and its boundary composed of a sub-arc of $f^N_\geo(\lambda^*)$ and a sub-arc of $V_N^\geo$. By the definition of $T_N^+(i)$ and property (1), we have 
         \mycomment{-0.4cm}
        $$ f^N_\geo(\lambda^*) \cap (\gamma_i^k)^\geo = V_N^\geo \cap (\gamma_i^k)^\geo = \varnothing\sspc, \quad \forall i \in I, \ k > T_N^+(i).$$

         \mycomment{-0.2cm}
        \noindent
        Similarly to case (b), we can use the fact that $\textup{int}(B) \subset L(V_N^\geo)$ and property (6) to prove
         \mycomment{-0.15cm}
        $$ \textup{int}(B) \cap \OO = \varnothing.$$

         \mycomment{-0.25cm}
        However, as in case (b), this leads to a contradiction, as it would allow us to perform an isotopy on $\R^2 \setminus \OO$ that reduces the intersection number between $f^N_\geo(\lambda^*)$ and $V_N^\geo$, thus contradicting the minimal intersection property of geodesics. This contradicts (e).

        Finally, we analyze case (f). The argument is similar to that of case (c), but more involved.
        In this case, let us denote by $\ell$ the sub-halfline of $f^N_\geo(\lambda^*)$ contained in $L(V_N^\geo)$, except for its endpoint that lies on $V_N^\geo$. Note that the set $L(V_N^\geo)\setminus \ell$ has exactly two connected components, denoted by $B_1$ and $B_2$. First, we argue that for each $i \in I$, the forward-orbit $\O_i \cap L(V_N^\geo)$ is entirely contained in either $B_1$ or $B_2$. Indeed, otherwise there would exist $i \in I$ and $k > T_N^+(i)$  such that $f^k(x_i) \in B_1$ and $f^{k+1}(x_i) \in B_2$, or vice versa. This would imply that $f^N_\geo(\lambda^*)$ intersects $(\gamma_i^k)^\geo$, contradicting the definition of $T_N^+(i)$. This proves the property above.

        Therefore, some forward orbits $\O_i \cap L(V_N^\geo)$ are contained in $B_1$, while others are in $B_2$. Moreover, both sets $B_1\cap \OO$ and $B_2\cap \OO$ must be non-empty, otherwise we would be able to perform an isotopy on $\R^2 \setminus \OO$ that reduces an intersection between $f^N_\geo(\lambda^*)$ and $V_N^\geo$, just as in case (c), contradicting the minimal intersection property of geodesics.

        Now, by the definition of $V_N$, we observe that $\OO \cap L(V_N^\geo) \subset L(f^N_\geo(\lambda^*))$. This means that both $B_1\cap \OO$ and $B_2 \cap \OO$ must lie on the left side $L(f^N_\geo(\lambda^*))$. Implying that $\ell$ cannot be the only segment of $f^N_\geo(\lambda^*)$ contained in $L(V_N^\geo)$. In case (e), we showed that $f^N_\geo(\lambda^*)$ cannot form bigons with $V_N^\geo$ on the left side $L(V_N^\geo)$. Thus, we conclude that there must exist another sub-halfline $\ell^\pp$ of $f^N_\geo(\lambda^*)$ that is disjoint from $\ell$ and contained in $L(V_N^\geo)$, except for its endpoint lying on $V_N^\geo$. 
        
        This new sub-halfline $\ell^\pp$ divides either $B_1$ or $B_2$ into two new connected components. Without loss of generality, assume that $\ell^\pp$ divides $B_1$, and these new connected components are denoted by $B_{1,L}:=B_1 \cap L(f^N_\geo(\lambda^*))$ and $B_{1,R}:=B_1 \cap R(f^N_\geo(\lambda^*))$. Additionally, define the set $C:= R(f^N_\geo(\lambda^*))\cap R(V_N^\geo)$. Observe that $B_{1,R} \cap \OO = \varnothing$, because $\OO \cap L(V_N^\geo) \subset L(f^N_\geo(\lambda^*))$. At last, since $f^N_\geo(\lambda^*)$ cannot form bigons with $V_N^\geo$, we have that $C \cup B_{1,R} = R(f^N_\geo(\lambda^*))$. 
        
        This setting is illustrated below.

        \vspace*{-0.1cm}

    \begin{figure}[h!]
    \center
     \mycomment{0.2cm}\begin{overpic}[width=8.2cm, height=6cm, tics=10]{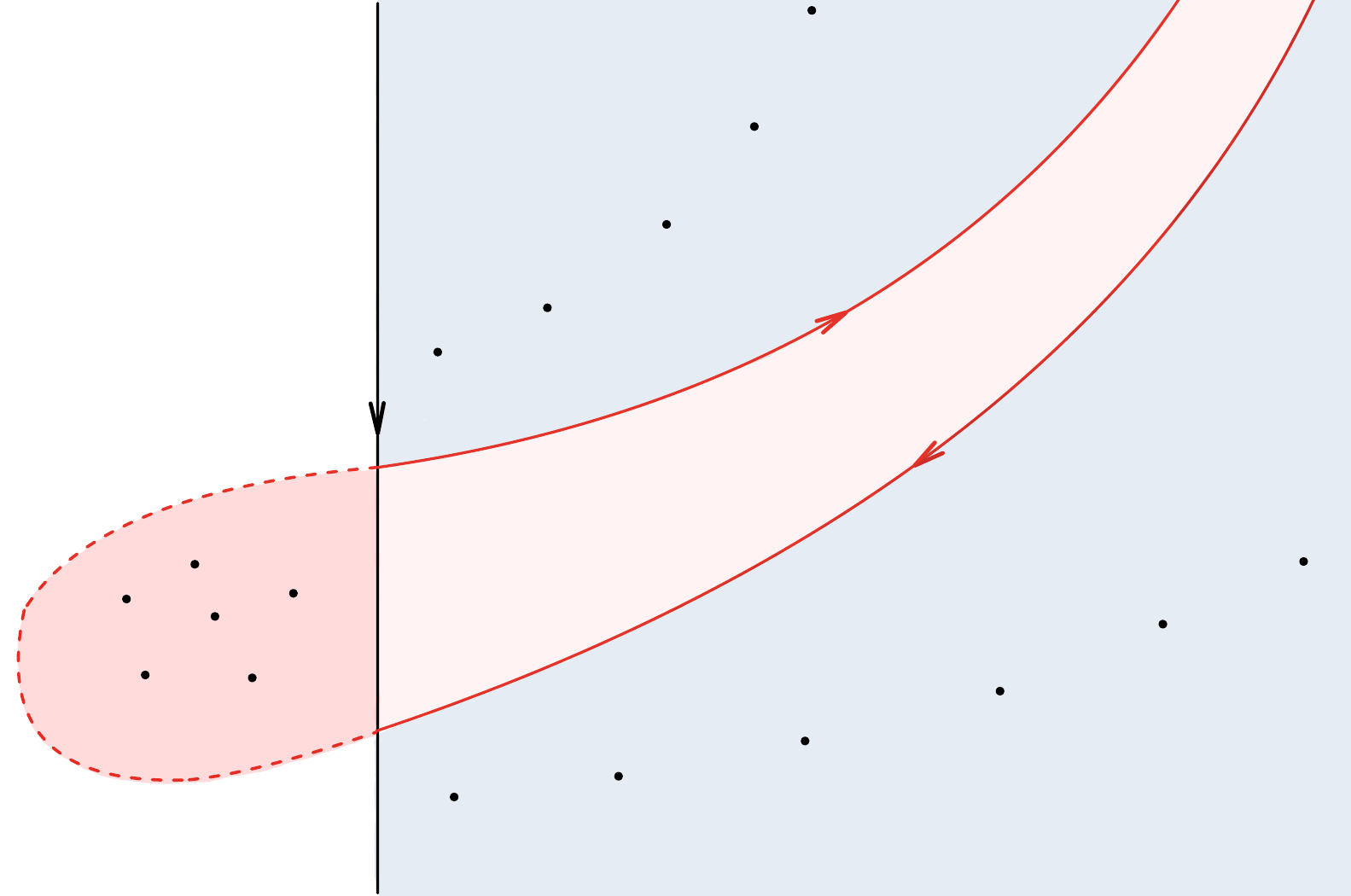}
        \put (12,12.5) {\large\color{myRED}$\displaystyle C$}
        \put (20,58.5) {\large\color{black}$\displaystyle V_N^\geo$}
        \put (50,31.5) {\large\color{myRED}$\displaystyle B_{1,R}$}
        \put (62,58.5) {\large\color{myBLUE}$\displaystyle B_{1,L}$}
        \put (80,30.5) {\large\color{myBLUE}$\displaystyle B_2$}
    \end{overpic}
    \end{figure}

        Observe that $C$ is a compact set. Since $\OO$ is a locally-finite subset of the plane, we conclude that $\OO \cap C$ is a finite set. Since $B_{1,R}$ is disjoint from $\OO$, and $R(f^N_\geo(\lambda^*)) = C \cup B_{1,R}$, this implies that $\OO \cap R(f^N_\geo(\lambda^*))$ is finite. This is a contradiction, and finally proves item (ii).

        \vspace*{0.3cm}

        \textbf{Proof of item (iii):} Let $p \in R(V_N^\geo)\cap L(\lambda^*) \cap \OO$. According to property (4), we know that $p=f^n(x_i)$ for some index $i \in I$ and $0<n \leq T_N^+(i)$. According to Lemma \ref{lemma:geodesic_leaves}, we have
         \mycomment{-0.1cm}
        $$f^n(x_i) \in R(f^N_\geo(\lambda^*))\sspc, \quad \forall n\leq N, \ i \in I.$$ 

         \mycomment{-0.2cm}
        \noindent Since $T_N^+(i) = N$ for all $i \in I_\text{out}$, we conclude that
         \mycomment{-0.1cm}
        $$R(V_N^\geo) \cap L(f^N_\geo(\lambda^*)) \cap \O_i = \varnothing, \quad \forall i \in I_\text{out}.$$

         \mycomment{-0.2cm}
        \noindent Meanwhile, it follows from property (5) that the geodesics $V_N^\geo$ and $\lambda_N$ satisfy
         \mycomment{-0.1cm}
        $$ R(V_N^\geo) \cap L(\lambda_N) \cap \O_i = \varnothing, \quad \forall i \in I_\omega.$$

         \mycomment{-0.2cm}
        \noindent 
        Therefore, we get
        $ R(V_N^\geo) \cap L(\lambda_N) \cap L(f^N_\geo(\lambda^*)) \cap \OO = \varnothing\sspc$, and this concludes the proof.
    \end{proof}

     \mycomment{-0.1cm}
    Now we can return within the proof of the main theorem.

    \noindent According to the Pushing Lemma, there exists a positive integer $M_N>N$ such that
     \mycomment{-0.12cm}
    $$ L(f^{M_N}_\geo(\lambda^*)) \subset L(\lambda_N).$$

        The next claim establishes a crucial property of the geodesic line $f^{M_N}_\geo(\lambda^*)$.

    \begin{claim}\label{claim6}
        The geodesic line $f^{M_N}_\geo(\lambda^*)$ satisfies $L(f^{M_N}_\geo(\lambda^*)) \subset L(V_N^\geo)$.
    \end{claim}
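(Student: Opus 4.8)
The plan is to compare the geodesic planar line $g:=f^{M_N}_\geo(\lambda^*)$ with $V_N^\geo$, arguing by contradiction and mimicking the case analysis used for Claim~\ref{claim4}\,(i).

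First, some bookkeeping. Any integer larger than $M_N$ also satisfies the conclusion of the Pushing Lemma (by Lemma~\ref{lemma:geodesic_leaves}\,(i) the set $L(f^{m}_\geo(\lambda^*))$ decreases with $m$), so we may assume $M_N$ is large enough that $g$ is distinct from $\lambda^*$, from $f^N_\geo(\lambda^*)$ and from $\lambda_N$; moreover if $\lambda_N=V_N^\geo$ there is nothing to prove, since the statement then reduces to the defining inequality $L(f^{M_N}_\geo(\lambda^*))\subset L(\lambda_N)$. From that inequality together with $L(g)\subset L(f^N_\geo(\lambda^*))$ (which follows from $M_N>N$ and Lemma~\ref{lemma:geodesic_leaves}\,(i)) we get
\[
L(g)\ \subset\ W:=L(\lambda_N)\cap L(f^N_\geo(\lambda^*)),\qquad \partial W\subset\lambda_N\cup f^N_\geo(\lambda^*).
\]
By Claim~\ref{claim4}\,(i)--(ii) we also have $L(V_N^\geo)\subset W$, and Claim~\ref{claim4}\,(iii) says $W\cap R(V_N^\geo)\cap\OO=\varnothing$. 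Finally, among the geodesic lines $\lambda^*,\lambda_N,f^N_\geo(\lambda^*),V_N^\geo,g$, whenever the left side of one is contained in the left side of another the two are disjoint: being distinct geodesics they meet transversally, so a crossing would violate the inclusion. In particular $g$ and $V_N^\geo$ are contained in $\overline W$ and disjoint from $\partial W$, and both lie in $L(\lambda^*)$.

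Now suppose, for contradiction, that $L(g)\not\subset L(V_N^\geo)$, so $g$ meets $R(V_N^\geo)$. Since $g\cong\R$ and $g\cap R(V_N^\geo)$ is open in $g$, exactly one of the following holds: (a) $g\subset R(V_N^\geo)$; (b) some component of $g\cap R(V_N^\geo)$ is a sub-arc $\alpha$ of $g$ with both endpoints on $V_N^\geo$; (c) some component of $g\cap R(V_N^\geo)$ is a sub-half-line $\ell$ of $g$ with its unique endpoint on $V_N^\geo$. I would dispose of these exactly as in the proof of Claim~\ref{claim4}\,(i). In case (a), as $g$ and $V_N^\geo$ are disjoint, either $L(g)\subset R(V_N^\geo)$ — impossible, since $\lambda^*\in\P$ and being crossed by an orbit is $f_\geo$-invariant, so $g$ is crossed by an orbit and $L(g)$ contains infinitely many points of $\OO$, whereas $L(g)\subset W\cap R(V_N^\geo)$ is disjoint from $\OO$ — or $L(V_N^\geo)\subset L(g)$, in which case the region $L(g)\cap R(V_N^\geo)\subset W\cap R(V_N^\geo)$ is free of $\OO$, so $g$ and $V_N^\geo$ co-bound a puncture-free region, hence are isotopic, hence equal by uniqueness of geodesic representatives, contradicting $g\subset R(V_N^\geo)$. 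In case (b), the arc $\alpha$ and the sub-arc of $V_N^\geo$ joining its endpoints bound a bigon $B$; since $\partial B\subset g\cup V_N^\geo$ is disjoint from $\lambda_N\cup f^N_\geo(\lambda^*)\supset\partial W$ and $B$ is a disk, $B\subset\overline W$, whence $\operatorname{int}(B)\subset W\cap R(V_N^\geo)$ contains no point of $\OO$ — contradicting the minimal intersection property of the geodesics $g$ and $V_N^\geo$. Case (c) is handled as case (c) of Claim~\ref{claim4}\,(i): slicing by $\lambda^*$ (which is disjoint from $\ell$) produces a region $B$ bounded only by an arc of $V_N^\geo$ and $\ell$; one checks, using that $g$ is disjoint from $\lambda_N$ and $f^N_\geo(\lambda^*)$, that $B\subset\overline W$, so $\operatorname{int}(B)\subset W\cap R(V_N^\geo)$ is again free of $\OO$, which allows one to reduce the intersection of $g$ with $V_N^\geo$, contradicting minimality. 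Having ruled out (a), (b), (c), we conclude $L(g)\subset L(V_N^\geo)$.

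The delicate point is case (c): unlike the disk $B$ of case (b), the region enclosed by $\ell$ and a half-line of $V_N^\geo$ is unbounded, so it is not automatic that the auxiliary geodesics $\lambda_N$ and $f^N_\geo(\lambda^*)$ stay outside it, and the containment $B\subset\overline W$ must be argued by hand. This is precisely the bookkeeping already carried out in the corresponding case of Claim~\ref{claim4}, and I expect it to go through here by the same device — cutting $B$ further by $\lambda^*$ and, if necessary, by $f^N_\geo(\lambda^*)$, and combining the local finiteness of $\OO$ with Claim~\ref{claim4}\,(iii).
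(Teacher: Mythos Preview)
Your approach is essentially the paper's: the same three-case analysis, and your auxiliary region $W = L(\lambda_N)\cap L(f^N_\geo(\lambda^*))$ is a clean way to package Claim~\ref{claim4}\,(iii). Cases (a) and (b) are correct; in (b) the step ``$B$ is a disk, hence $B\subset\overline W$'' works because the proper lines $\lambda_N$ and $f^N_\geo(\lambda^*)$, being disjoint from $\partial B$, cannot enter the compact disk $B$.

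The gap is case (c). Your claim $B\subset\overline W$ is in fact true, but neither ``$g$ is disjoint from $\lambda_N$ and $f^N_\geo(\lambda^*)$'' nor the analogy with Claim~\ref{claim4}\,(i)(c) justifies it: that earlier argument rests on the specific intersection properties (1)--(2) of $V_N^\geo$ with the trajectories $(\gamma_i^k)^\geo$, which are not available for $g=f^{M_N}_\geo(\lambda^*)$. What is actually needed is this. Each of $\lambda_N$ and $f^N_\geo(\lambda^*)$ lies in $(L(\lambda^*)\cap R(V_N^\geo))\setminus\ell$ and is connected, hence sits entirely in $B$ or in the other component $B'$. If, say, $\lambda_N\subset B$, then $R(\lambda_N)$ is a connected set that meets $B$ (in a one-sided neighbourhood of $\lambda_N$) and also contains $\lambda^*\not\in\overline B$, yet $R(\lambda_N)\cap\partial B=\varnothing$ since $\partial B\subset V_N^\geo\cup g\subset L(\lambda_N)$; this contradicts connectedness of $R(\lambda_N)$. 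The same works for $f^N_\geo(\lambda^*)$. This is precisely the contradiction the paper derives at the end of its own case (c) (phrased there as ``$\lambda^*$ and $f^{M_N}_\geo(\lambda^*)$ would lie on the same side of $\lambda_N$''), so once you supply this connectedness step your proof and the paper's are the same argument in reversed order.
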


     \mycomment{-0.3cm}

    \begin{proof}[Proof of Claim \ref{claim6}]
        Similarly to Claim \ref{claim4}, assume by contradiction $L(f^{M_N}_\geo(\lambda^*)) \not\subset L(V_N^\geo)$.  For that to occur, there should exist at least one point $p \in f^{M_N}_\geo(\lambda^*)$ lying on the side $R(V_N^\geo)$. 
        
        \noindent This leaves us with the following three cases to consider:
        \begin{itemize}[leftmargin=1.4cm]
            \item[(a)] The whole line $f^{M_N}_\geo(\lambda^*)$ lies on the right side $R(V_N^\geo)$.
            \item[(b)] The point $p$ lies on a sub-arc of $f^{M_N}_\geo(\lambda^*)$ that is contained in $R(V_N^\geo)$, except for its two endpoints, which lie on $V_N^\geo$.
            \item[(c)] The point $p$ lies on a sub-halfline of $f^{M_N}_\geo(\lambda^*)$ that is contained in $R(V_N^\geo)$, except for its endpoint, which lies on $V_N^\geo$.
        \end{itemize}

        \begin{figure}[h!]
    \center
     \mycomment{0.3cm}\begin{overpic}[width=3.5cm, height=3.6cm, tics=10]{85a.pdf}
         \put (-5,90) {\large\color{black}$\displaystyle $(a)}
         \put (26,10) {\large\color{myRED}$\displaystyle f^{M_N}_\geo(\lambda^*)$}
         \put (77,10) {\large\color{black}$\displaystyle V_N^\geo$}
         \put (27,72) {\large\color{black}$\displaystyle p$}
\end{overpic}\hspace*{1cm}
\begin{overpic}[width=3.5cm, height=3.6cm, tics=10]{85b.pdf}
         \put (-5,90) {\large\color{black}$\displaystyle $(b)}
         \put (25,70) {\large\color{black}$\displaystyle p$}
         \put (22,35) {\large\color{myRED}$\displaystyle f^{M_N}_\geo(\lambda^*)$}
         \put (72,10) {\large\color{black}$\displaystyle V_N^\geo$}
\end{overpic}\hspace*{1.5cm}
\begin{overpic}[width=3.5cm, height=3.6cm, tics=10]{85c.pdf}
         \put (-5,90) {\large\color{black}$\displaystyle $(c)}
         \put (24,73.5) {\large\color{black}$\displaystyle p$}
         \put (19,35) {\large\color{myRED}$\displaystyle f^{M_N}_\geo(\lambda^*)$}
         \put (70,10) {\large\color{black}$\displaystyle V_N^\geo$}
\end{overpic}
\end{figure}

        We begin by analyzing case (a). 
        Observe that, for any index $i \in I$ and any sufficiently large integer $k>0$ satisfies $f^k(x_i) \in L(V_N^\geo) \cap L(f^{M_N}_\geo(\lambda^*))$. 
        This means that, in case (a), the only acceptable orientation for the line $f^{M_N}_\geo(\lambda^*)$ to have while being in $R(V_N^\geo)$ is the one implying $L(V_N^\geo)\subset L(f^{M_N}_\geo(\lambda^*))$. 
        By construction, the geodesic $f^{M_N}_\geo(\lambda^*)$ satisfies
         \mycomment{-0.1cm}
        $$ L(f^{M_N}_\geo(\lambda^*)) \subset L(\lambda_N) \cup L(f^N_\geo(\lambda^*)).$$

         \mycomment{-0.2cm}
        \noindent Using item (iii) of Claim \ref{claim4}, we obtain that
         \mycomment{-0.1cm}
        $$ L(f^{M_N}_\geo(\lambda^*)) \cap L(V_N^\geo) \cap \OO \subset (L(\lambda_N) \cup L(f^N_\geo(\lambda^*))) \cap R(V_N^\geo) \cap \OO = \varnothing\sspc.$$
        
         \mycomment{-0.2cm}
        \noindent Thus, we can use Homma-Schoenflies theorem, we can see both lines $f^{M_N}_\geo(\lambda^*)$ and $V_N^\geo$ as disjoint vertical lines with no points in $\OO$ between them. This means that we can perform a horizontal isotopy on $\R^2 \setminus \OO$ that moves $V_N^\geo$ into $f^{M_N}_\geo(\lambda^*)$, thus showing that $V_N^\geo$ and $f^{M_N}_\geo(\lambda^*)$ are in the same isotopy class. Since they are geodesic lines, this implies that $V_N^\geo = f^{M_N}_\geo(\lambda^*)$ and, consequently, this contradicts case (a).

        Now we analyze case (b). In this case, $f^{M_N}_\geo(\lambda^*)$ and $V_N^\geo$ form a bigon on the side $R(V_N^\geo)$. This bigon is a closed disk $B\subset \R^2$ with its interior contained in $R(\lambda_N)$, and its boundary composed of a sub-arc of $V_N^\geo$ and a sub-arc of $f^{M_N}_\geo(\lambda^*)$ containing the point $p$. 
        Since both lines $f^{M_N}_\geo(\lambda^*)$ and $V_N^\geo$ are geodesics, the interior of the bigon $B$ needs to contain a point $q \in \OO$,  otherwise we would be able to perform an isotopy on $\R^2 \setminus \OO$ that decreases the number of intersections between $f^{M_N}_\geo(\lambda^*)$ and $V_N^\geo$, contradicting the fact that geodesic lines are in minimal position. Since $q \in \textup{int}(B) \subset R(V_N^\geo)$, we can use (iii) of Claim \ref{claim4} to conclude that
        $$ q \in R(\lambda_N) \cup R(f^N_\geo(\lambda^*))\sspc.$$
        Let $\alpha:[0,1]\longrightarrow\R^2$ be a continuous path contained in $B$ that joins the points $p$ and $q$. Since
        $$ p \in f^{M_N}_\geo(\lambda^*) \subset L(\lambda_N) \cap L(f^N_\geo(\lambda^*)) \quad\text{ and }\quad q \in R(\lambda_N) \cup R(f^N_\geo(\lambda^*)),$$
        we conclude that there exists $t\in [0,1]$ such that $\alpha(t)$ belongs to either $\lambda_N$ or $f^N_\geo(\lambda^*)$. Since $\lambda_N$ and $f^N_\geo(\lambda^*)$ are topological lines on the plane, this means that either $\lambda_N$ and $f^N_\geo(\lambda^*)$ must intersect the boundary of $B$ at some point. However, since $\partial B$ is formed by a sub-arc of $V_N^\geo$ and a sub-arc of $f^{M_N}_\geo(\lambda^*)$, items (i) and (ii) of Claim \ref{claim4} imply that $\partial B$ should be disjoint from both $\lambda_N$ and $f^N_\geo(\lambda^*)$. Thus, this contradicts case (b).

        We analyze case (c). In this case, let $\ell$ be a sub-halfine of $f^{M_N}_\geo(\lambda^*)$ contained in $R(V_N^\geo)$,  except for its endpoint that lies on $V_N^\geo$. Since $V_N^\geo$ is disjoint from $\lambda^*$, we can see that the set $L(\lambda^*) \cap R(V_N^\geo)\setminus \ell$ has two connected components, one bounded by $\lambda^*$, $V_N^\geo$ and $\ell$, and the other only bounded by $V_N^\geo$ and $\ell$. Let $B\subset \R^2$ be the second component. As in case (b), we know that $B$ must contain a point $q \in \OO$ in its interior, otherwise we would be able to perform an isotopy on $\R^2 \setminus \OO$ that reduces the intersection number between $f^{M_N}_\geo(\lambda^*)$ and $V_N^\geo$, thus contradicting the fact that they are geodesic lines in minimal position.
        Through a similar argument to the one in case (b), we can show that either $f^N_\geo(\lambda^*)$ or $\lambda_N$ are contained in $B$. However, this leads to a contradiction, as it would imply that both $\lambda^*$ and $f^{M_N}_\geo(\lambda^*)$ are contained in the same connected component of $\R^2\setminus f^N_\geo(\lambda^*)$ or $\R^2\setminus \lambda_N$, which is not true. This contradicts (c) and, thus, we proved the claim.
    \end{proof}

     \vspace*{-0cm}
    Observe that, if we repeat the same construction for $N+1$, it suffices to consider the geodesic leaf $\lambda_{N+1} \in \G$ far enough to the left to ensure that
    $$ T_{N+1}^+(i) > T_N^+(i), \quad \forall i \in I_\omega.$$
    Since $T_{N+1}^+(i) =N+1$ for all $i \in I_\text{out}$, we observe that this choice of geodesic leaf $\lambda_{N+1}$ in fact implies that $T_{N+1}^+(i) > T_N^+(i)$ for all $i \in I$. By constructing the piecewise linear line $V_{N+1}$  based on such choice of integers $\{T_{N+1}^+(i)\}_{i \in I}$ we ensure that $V_{N+1}$ satisfies
    $ L(V_{N+1}) \subset L(V_N).$
    This means that, by constructing such piecewise linear lines for all $N>0$, we can obtain a family of piecewise linear lines $\{V_N\}_{N>0}$ with nested left sides.
    
    Finally, observe that since $T_N^+(i) \geq N$ for all $i \in I$ and all $N>0$, we conclude that
     \mycomment{-0.1cm}
    $$ T_N^+(i) \longrightarrow \infty \quad \text{as} \quad  N \longrightarrow \infty\sspc.$$
    By the construction of the piecewise linear lines $V_N$, this implies that
    $ \bigcap_{N>0} L(V_N) = \varnothing\sspc.$

    \newpage
    \noindent This proves that the family $\{V_N\}_{N>0}$ is locally-finite and, therefore, the family of geodesic representatives $\{V_N^\geo\}_{N>0}$ is also locally-finite (see Section \ref{sec:hyperbolic_geometry_R2Z}), implying $ \bigcap_{N>0} L(V_N^\geo) = \varnothing\sspc.$

    We then conclude that the geodesic leaf $\lambda^*$ is forward-wandering, because
    $$ \bigcap_{N>0} L(f^{N}_\geo(\lambda^*)) = \bigcap_{N>0} L(f^{M_N}_\geo(\lambda^*)) \subset \bigcap_{N>0} L(V_N^\geo) = \varnothing\sspc,$$
    and this implies that the family $\{f^n_\geo(\lambda^*)\}_{n>0}$ is locally-finite.

    To finish the proof, we show that, since $\lambda^*$ is pushing-equivalent to $\lambda$, we have that $\lambda$ is also  forward-wandering. Indeed, according to the Pushing Lemma, there exists $K>0$ such that
    $$ L(f^{K}_\geo(\lambda)) \subset L(\lambda^*).$$
    This implies that
    $$ \bigcap_{n>K} L(f^{n}_\geo(\lambda))  = \bigcap_{n>0} L(f^{n}_\geo(\lambda^*)) = \varnothing\sspc.$$

    Therefore, the family $\{f^n_\geo(\lambda)\}_{n>0}$ is locally-finite, which concludes the proof.
\end{proof}

\section{Classifying (fine) Brouwer mapping classes}
\label{ch:applications_homotopy_brouwer_theory}

Let $\F$ be a transverse foliation of $f$, and let $\{\Gamma_1,...,\Gamma_r\}$ be a family of proper transverse trajectories associated to the orbits in $\OO = \{\O_{\sspc 1},..., \O_{\sspc r}\}$.
In this section, we show how the framework constructed in the previous chapters can be applied to recover Handel's classification of Brouwer mapping classes relative to at most two orbits, originally presented in \cite{handel99}.  

This is attained by studying the following two cases:

\vspace*{0.1cm}
\begin{itemize}[leftmargin = 1.4cm]
    \item[\textbf{(1)}] \textbf{Common-leaf case:} There exists a leaf $\phi \in \F$ crossed by every orbit in $\OO$.
    
    \vspace*{0.1cm}
    \item[\textbf{(2)}] \textbf{Totally separated case:} The orbits in $\OO$ are pairwise separated by some leaf of $\F$.
\end{itemize}

\vspace*{0.2cm}

\begin{figure}[h!]
    \center 
    \hspace*{0.2cm}\begin{overpic}[width=7cm, height=4cm, tics=10]{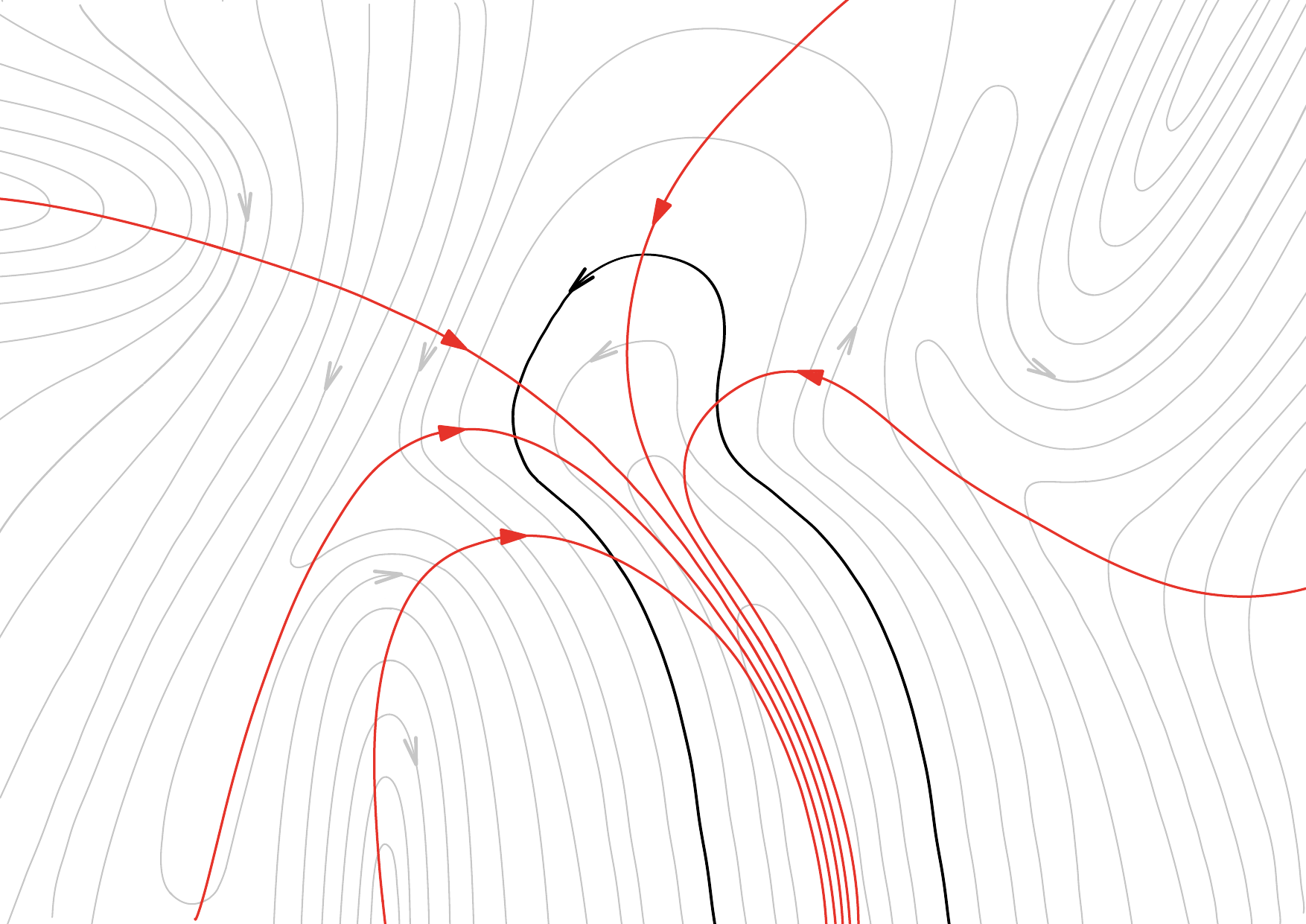}\put (-9,56) {\large\colorbox{white}{\color{black} \textbf{(1)}}}
\end{overpic}
\hspace*{1.2cm}\begin{overpic}[width=7cm, height=4cm, tics=10]{TotallySeparated.pdf}
    \put (-9,56) {\large\colorbox{white}{\color{black} \textbf{(2)}}}
\end{overpic}
\end{figure}

The following result describes (1) and implies Corollary \ref{cor:absolute_brouwer_classification-intro}, which improves Handel's classification in the case $r=1$ by allowing one to control fixed-point--free isotopies.

\begin{corollary}\label{cor:absolute_brouwer_classification_fbt}
    If there exists a leaf $\phi\in \F$ crossed by every orbit in $\OO$, then the fine Brouwer mapping class $\bclass{f,\OO}$ admits a representative $F\in\bclass{f,\OO}\sspc$ that is conjugate to $T$.
\end{corollary}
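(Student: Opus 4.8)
The plan is to straighten the transverse foliation on both sides of the common leaf, producing a representative $F$ of the fine class which carries a \emph{trivial} transverse foliation of the whole plane, and then to recognise such an $F$ as a conjugate of $T$.

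\textbf{Step 1: the common leaf is bi-wandering.} Let $\phi\in\F$ be crossed by every orbit of $\OO$ and set $\lambda=\phi^\geo\in\mathcal P$. Since every $\O\in\OO$ meets both $L(\lambda)$ and $R(\lambda)$, no orbit of $\OO$ lies entirely on $L(\lambda)$ nor entirely on $R(\lambda)$; by Theorem \ref{thmx:II-C} the leaf $\lambda$ is therefore both forward- and backward-wandering, i.e. the families $\{f^n_\geo(\lambda)\}_{n\geq0}$ and $\{f^n_\geo(\lambda)\}_{n\leq0}$ are both locally finite. Moreover $\phi$ is essential on $\R^2\setminus\OO$, because each side of $\phi$ contains infinitely many points of any crossing orbit; so Proposition \ref{lemma:simplification_wandering_leaves} applies to $\phi$.

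\textbf{Step 2: double simplification.} First I would apply Proposition \ref{lemma:simplification_wandering_leaves} to $\phi$, obtaining an isotopy $(f_t)_{t\in[0,1]}$ on $\R^2\setminus\OO$ with $f_0=f|_{\R^2\setminus\OO}$, each $\overline{f_t}$ a Brouwer homeomorphism, and a transverse foliation $\F'$ of $\overline{f_1}$ with $\F'|_{\overline{R(\phi)}}=\F|_{\overline{R(\phi)}}$ and with $\F'|_{\overline{L(\phi)}}$ trivial and $\overline{f_1}$-invariant (leaves permuted by $\overline{f_1}$). Since $\F'$ agrees with $\F$ on $\overline{R(\phi)}$, the leaf $\phi$ is still backward-wandering for $\overline{f_1}$ relative to $\F'$, so the mirror version of Proposition \ref{lemma:simplification_wandering_leaves} — obtained by running the forward statement for $\overline{f_1}^{-1}$, which interchanges $L(\phi)$ and $R(\phi)$ — prolongs the isotopy to $(f_t)_{t\in[0,2]}$, keeps each $\overline{f_t}$ a Brouwer homeomorphism, and yields a transverse foliation $\F''$ of $F:=\overline{f_2}$ that is trivial and $F$-invariant on $\overline{R(\phi)}$ while remaining trivial and $F$-invariant on $\overline{L(\phi)}$. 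Hence $\F''$ is a trivial foliation of the whole plane whose leaves $F$ permutes. As every $\overline{f_t}$ is fixed-point free and the isotopy is supported on $\R^2\setminus\OO$, hence relative to the totally disconnected set $\OO$ by \cite[Proposition 1.5]{BeguinCrovisierLeRoux2020}, this shows $F\in\bclass{f,\OO}$.

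\textbf{Step 3: recognising $T$.} It remains to prove that any Brouwer homeomorphism $F$ carrying a trivial transverse foliation $\F''$ of $\R^2$ whose leaves it permutes is conjugate to $T$. Using the Homma–Schoenflies theorem I would take $\F''$ to be the foliation of $\R^2$ by vertical lines, so that $F$ acts on the leaf space $\R\cong\R^2/\F''$ as an orientation-preserving homeomorphism; this action is fixed-point free, since positive transversality of the connecting paths forces $F(v)\cap v=\varnothing$ and $F(v)\subset L(v)$ for every leaf $v$. Thus $F$ acts on the leaf space as a conjugate of a translation, and the closed strip $\Delta$ bounded by $\phi$ and $F(\phi)$ — a copy of $[0,1]\times\R$ — has the property that the iterates $\{F^n(\Delta)\}_{n\in\Z}$ have pairwise disjoint interiors, are glued leaf-to-leaf by $F$, and cover $\R^2$. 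Collapsing $\Delta$ to the standard strip and propagating the gluing by the $\Z$-action assembles a homeomorphism of $\R^2$ conjugating $F$ to the unit horizontal translation $(x,y)\mapsto(x+1,y)$, hence to $T$. This exhibits the required representative $F\in\bclass{f,\OO}$ conjugate to $T$.

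\textbf{Expected main obstacle.} I expect Step 3 to be the delicate part: one must check that the strips $F^n(\Delta)$ genuinely tile the plane — which relies on the freeness and orientation-preservation of the leaf-space action together with the triviality of $\F''$ excluding any accumulation of the leaves $F^n(\phi)$ — and then glue the elementary homeomorphisms of a single strip into one global homeomorphism, making sure that the final conjugacy is a homeomorphism of the whole plane $\R^2$ and not merely of $\R^2\setminus\OO$. The remaining steps are essentially bookkeeping: verifying the hypotheses of Proposition \ref{lemma:simplification_wandering_leaves} and the exact form of its mirror version, and invoking Theorem \ref{thmx:II-C} on both sides.
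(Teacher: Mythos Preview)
Your proposal is correct and follows essentially the same approach as the paper: use Theorem \ref{thmx:II-C} to see that $\phi^\geo$ is both forward- and backward-wandering, apply Proposition \ref{lemma:simplification_wandering_leaves} on each side of $\phi$ through a fixed-point--free isotopy to obtain a representative with a trivial invariant transverse foliation, and conclude that such a map is conjugate to $T$. The paper's proof is more terse---it simply asserts the final conjugacy without your Step 3 tiling argument---so your exposition is, if anything, more complete than the original.
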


The following result describes (2), and together with \ref{cor:absolute_brouwer_classification_fbt}, completely recovers Handel's classification in the case $r=2$. This result aligns with an idea introduced by Handel, but here the concept of \textit{forward and backward proper homotopy streamlines} (see \cite{handel99}) is replaced with the more intuitive notion of proper transverse trajectories.

\begin{corollary}\label{corollary:separated_case}
     Assume that, for any $1 \leq i < j \leq r$ there exists a leaf in $\F$ that separates the orbits $\O_i$ and $\O_j$, then $\class{f,\OO}$ is a flow-class which admits a representative $\Phi_1 \in \class{f,\OO}$ which is the time one map of a flow $\{\Phi_t\}_{t \in \R}$ having $\Gamma_1\sspc,\sspc ... \sspc ,\sspc \Gamma_r$  as its integral trajectories.
\end{corollary}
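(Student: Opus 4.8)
\emph{Overview and Step 1.} The plan is to reduce to the totally separated case, build a translation--flow model on each complementary region of the separating geodesics, and then glue. First I would check that the hypothesis forces $\#\mathcal S=r-1$, so that Proposition~\ref{prop:separated_geodesic_trajectories} is available. For $\O_i\ne\O_j$ the dichotomy in Theorem~\ref{thmx:structural_thm} gives either a pushing geodesic $\lambda=\psi^{\geo}$ (with $\psi\in\F$) crossed by both orbits, or a separating geodesic $\sigma\in\mathcal S$ separating them; the first alternative is impossible, since a leaf of $\F$ crossed by $\O_i$ and $\O_j$ cannot coexist with a leaf of $\F$ separating $\O_i$ from $\O_j$ (two disjoint leaves of a foliation lie on opposite sides of one another, which would force one of the two orbits crossing $\psi$ to lie entirely on one side of $\psi$). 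Hence the pairwise disjoint geodesics of $\mathcal S$ separate every pair of orbits, so each component of $\R^2\setminus\bigcup_{\sigma\in\mathcal S}\sigma$ carries at most one orbit, and counting components gives $\#\mathcal S\ge r-1$; with the always--valid bound $\#\mathcal S\le r-1$ we get equality. Proposition~\ref{prop:separated_geodesic_trajectories} then yields $f_\geo((\gamma_\O^{n})^{\geo})=(\gamma_\O^{n+1})^{\geo}$ for all $\O\in\OO$ and $n\in\Z$. Since $\Gamma_\O$ is globally isotopic to $\Gamma_\O^{\geo}$ relative to $\OO$ (Lemma~\ref{lemma:straightening_principle}), I would conjugate the final flow by that isotopy at the very end and so may assume each $\Gamma_\O=\Gamma_\O^{\geo}$ is geodesic; then the $\Gamma_\O$ are pairwise disjoint and disjoint from $X:=\bigcup_{\sigma\in\mathcal S}\sigma$, with $\Gamma_{\O_i}$ contained in the component $M_i$ of $\R^2\setminus X$ that contains $\O_i$.

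\emph{Step 2: a model flow on each region.} From the proof of Proposition~\ref{prop:separated_geodesic_trajectories} I would reuse the representative $f^{*}\in\class{f,\OO}$ fixing $X$ pointwise and the homeomorphisms $\varphi_i\colon M_i\setminus\O_i\to\R^2\setminus\O_i$ for which $f^{*}|_{M_i\setminus\O_i}$ is isotopic on $M_i\setminus\O_i$ to $\varphi_i^{-1}\circ f\circ\varphi_i$; transported through $\varphi_i$, the isotopy class of $f^{*}|_{M_i\setminus\O_i}$ is exactly the Brouwer mapping class of $f$ relative to the single orbit $\O_i$. I claim this class is represented, relative to $\O_i$, by the time--one map $\tau_i$ of a flow on $M_i$ that has $\Gamma_{\O_i}$ as an integral trajectory, sends $f^{n}(x_i)$ to $f^{n+1}(x_i)$, and is the identity near $\partial M_i$. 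Indeed, $\R^2\setminus\Gamma_{\O_i}$ consists of two half--planes disjoint from $\O_i$, so the triple $(M_i,\Gamma_{\O_i},\O_i)$ is standard, and (using that disjoint properly embedded lines in the plane cobound closed strips) I may take coordinates $M_i\cong\R\times(a_i,b_i)$ with $\Gamma_{\O_i}=\R\times\{0\}$, $\O_i=\Z\times\{0\}$ and $\partial M_i\subset\{y=a_i\}\cup\{y=b_i\}$; the flow $(x,y)\mapsto(x+t\rho_i(y),y)$, with $\rho_i$ a bump function equal to $1$ at $y=0$ and supported near $\R\times\{0\}$, then does the job. That $\tau_i$ lies in the correct class follows because, by Step 1, $f$ ``shifts $\O_i$ along $\Gamma_{\O_i}$'' ($f((\gamma_{\O_i}^{n})^{\geo})\simeq(\gamma_{\O_i}^{n+1})^{\geo}$ rel $\O_i$), and any homeomorphism with this property is isotopic rel $\O_i$ to $\tau_i$: after isotoping it to fix $\Gamma_{\O_i}$ setwise it agrees with $\tau_i$ on the spine $\Gamma_{\O_i}\cup\O_i$ up to isotopy rel $\O_i$, and the Alexander trick applied to the two half--planes completes the comparison --- alternatively one invokes the $r=1$ case of Corollary~\ref{cor:absolute_brouwer_classification_fbt}. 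A standard collar adjustment makes the resulting isotopy from $f^{*}|_{M_i\setminus\O_i}$ to $\tau_i|_{M_i\setminus\O_i}$ trivial near $\partial M_i$.

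\emph{Step 3: gluing, and the hard point.} I would then set $\Phi_t|_{M_i}=$ the flow above and $\Phi_t|_X=\mathrm{id}$; this is a continuous flow on $\R^2$ (each piece is trivial near $\partial M_i$) with each $\Gamma_{\O_i}$ as an integral trajectory. Concatenating the isotopies of Step 2 (trivial near $\partial M_i\subset X$) with the constant isotopy on $X$ produces an isotopy on $\R^2\setminus\OO$ from $f^{*}$ to $\Phi_1$; since the two maps agree on the totally disconnected set $\OO$, it extends to an isotopy relative to $\OO$, so $\Phi_1\in\class{f^{*},\OO}=\class{f,\OO}$ and $\class{f,\OO}$ is a flow--class with the stated integral trajectories (after the conjugation of Step 1). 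If instead one glues the full translation flows directly across each $\sigma\in\mathcal S$, then, according to whether the coorientations induced by the two adjacent trajectories agree, one obtains a translation--type or a Reeb--type component, which moreover yields a \emph{fixed--point--free} $\Phi_t$; for $r=2$ this is conjugate to one of $T^{\pm 1},R^{\pm 1}$, recovering Handel's classification. The main obstacle is Step 2: pinning down the per--region class as the translation flow along the \emph{prescribed} trajectory $\Gamma_{\O_i}$, not some arbitrary translation, and with enough control near $\partial M_i$ that the pieces assemble into an honest flow whose time--one map is still isotopic to $f$ relative to all of $\OO$ --- this is precisely where the rigidity of proper transverse trajectories supplied by Proposition~\ref{prop:separated_geodesic_trajectories} replaces Handel's homotopy translation arcs.
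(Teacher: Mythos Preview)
Your proposal is correct but takes a more circuitous route than the paper. The paper never decomposes along the separating geodesics: once Proposition~\ref{prop:separated_geodesic_trajectories} gives that $f(\gamma_i^n)$ is isotopic to $\gamma_i^{n+1}$ on $\R^2\setminus\OO$ for every $i$ and $n$ simultaneously, it applies the Straightening Principle a single time to the whole locally-finite family to produce a representative $f'\in\class{f,\OO}$ with $f'(\gamma_i^n)=\gamma_i^{n+1}$ literally; then $f'$ preserves $\bigcup_i\Gamma_i$ setwise, and Alexander's trick on each complementary component (all open disks disjoint from $\OO$) produces the flow in one stroke. Your approach instead passes through the representative $f^*$ fixing $X=\bigcup_{\sigma\in\mathcal S}\sigma$, reduces each restriction $f^*|_{M_i}$ to a one-orbit problem, builds a local model flow, and glues --- essentially redoing the global argument region by region. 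This works, though the coordinate claim $M_i\cong\R\times(a_i,b_i)$ is not literally correct when $M_i$ borders three or more elements of $\mathcal S$ (harmless, since $M_i$ is always an open disk and you only need the flow supported near $\Gamma_{\O_i}$). What your route buys is that the verification of $\#\mathcal S=r-1$ fills in a step the paper leaves implicit, and your gluing picture makes the translation-versus-Reeb dichotomy for $r=2$ transparent; what the paper's route buys is a two-line argument once Proposition~\ref{prop:separated_geodesic_trajectories} is available.
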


\pagebreak

\begin{proof}[Proof Corollary \ref{corollary:separated_case}]
    For each $1\leq i \leq r$, we write
    $$\Gamma_i = \prod_{n \in \Z} \gamma_i^n\sspc,$$
    where $\gamma_i^n$ is a path positively transverse to $\F$ that connects the points $f^n(x_i)$ and $f^{n+1}(x_i)$, for some initial basepoint $x_i \in \O_i$. Since the transverse trajectories in the family $\{\Gamma_1,...,\Gamma_r\}$ are proper and pairwise disjoint, we have that $\{\gamma_i^n\}_{i, n}$ is a locally-finite family of pairwise disjoint lines on $\R^2\setminus \OO$ and, since $f$ is a homeomorphism, the same applies to $\{f(\gamma_i^n)\}_{i, n}$.

    According to Proposition \ref{prop:separated_geodesic_trajectories}, in this particular case, for any $1\leq i \leq r$ and any $n\in \Z$, the path $f(\gamma_{i}^n)$ is isotopic to $\gamma_{i}^{n+1}$ on the surface $\R^2\setminus \OO$. Thus, using the Straightening Principle (see Section \ref{sec:hyperbolic_geometry_R2Z}), there exists an isotopy $(h_t)_{t\in [0,1]}$  on the surface $\R^2\setminus \OO$ such that $h_0 = \text{id}_{\R^2\setminus \OO}$ and $h_1$ satisfies $h(f^n(\gamma_i^n)) = \gamma_i^{n+1}$ for all $i\in \{1,...,r\}$ and $n\in \Z$. Defining  $f_t:=h_t\circ f\vert_{\R^2\setminus \OO}$, we obtain an isotopy $(f_t)_{t\in [0,1]}$ on $\R^2\setminus \OO$ joining $f_0 = f\vert_{\R^2\setminus \OO}$ to a map $f_1=:f^\pp $ that satisfies
    $$ f^\pp(\gamma_i^n) = \gamma_i^{n+1}\sspc, \quad \forall i\in \{1,...,r\}, \ n\in \Z.$$

    By using Alexander's trick on each connected component of $\R^2\setminus \bigcup_{i=1}^r \Gamma_i$, we can obtain an isotopy $(F_t)_{t\in [0,1]}$ on $\R^2\setminus \OO$ relative to $\bigcup_{i=1}^r \Gamma_i$ joining $F_0 = f^\pp$ to  $F_1 = \Phi_1\vert_{\R^2\setminus\OO}$, which is the time one map of a flow $\{\Phi_t\}_{t \in \R}$ having $\Gamma_1\sspc,\sspc ... \sspc ,\sspc \Gamma_r$  as its integral trajectories.
\end{proof}

\begin{proof}[Proof of Corollary \ref{cor:absolute_brouwer_classification_fbt}]
    In this case, we can take an essential leaf $\phi\in \F$ that is crossed by every orbit in $\OO$. According to Theorem \ref{thmx:II-C}, the geodesic representative $\phi^\geo$ of $\phi$ is both forward-wandering and backward-wandering. This allows us to apply Proposition \ref{lemma:simplification_wandering_leaves} on the left and right sides of $\phi^\geo$ to obtain an isotopy $(f_t)_{t\in [0,1]}$ on $\R^2\setminus \OO$, where for every $t\in [0,1]$ the extension $\overline{\rule{0cm}{0.32cm}f_t\sspc}$ is a Brouwer homeomorphism, that joins $f_0 = f\vert_{\R^2\setminus \OO}$ to a homeomorphism $f_1 =: f^\pp$ admitting a transverse foliation $\F^\pp$ that is trivial and $f^\pp$-invariant. Therefore, the homeomorphism $\overline{\rule{0cm}{0.32cm}f^\pp}$ must be conjugate to the translation $T:(x,y) \longmapsto (x+1,y)$.
\end{proof}

\begin{remark}
    To observe that this result recovers Handel's original classification from \cite{handel99}, it suffices to note that in the case $r=1$ the conditions of Corollary \ref{cor:absolute_brouwer_classification_fbt} are trivially satisfied, implying that $\bclass{f,\OO}$ is conjugate to $\bclass{T,\spc\Z_1}$, in particular, $\class{f,\OO}$ is conjugate to $\class{T,\spc\Z_1}$.

    Meanwhile, in the case $r=2$, the cases (1) and (2) described in the beginning of this section are the only two possible configurations. In the case (1), the same argument from the case $r=1$ concludes the argument. In the case (2), the conditions of Corollary \ref{corollary:separated_case} are satisfied, implying that $\class{f,\OO}$ is conjugate to either $\class{T,\spc\Z_{1,2}}$, $\class{T^{-1},\spc \Z_{1,2}}$, $\class{R\sspc ,\spc \Z_{1,2}}$ or  $\class{R^{-1},\spc \Z_{1,2}}$. 
\end{remark}

\setstretch{1.6}	

    \bibliography{biblio1.bib}

\end{document}